\def\newaliasedtheorem#1[#2]#3{
  \newaliascnt{#1@alt}{#2}
  \newtheorem{#1}[#1@alt]{#3}
  \expandafter\newcommand\csname #1@altname\endcsname{#3}
}
\newtheorem{theorem}{Theorem}[section]
\newtheorem{lemma}[theorem]{Lemma}
\newtheorem{corollary}[theorem]{Corollary}
\newtheorem{proposition}[theorem]{Proposition}
\newtheorem{conjecture}[theorem]{Conjecture}
\newtheorem{remark}[theorem]{Remark}
\theoremstyle{definition}
\newtheorem{definition}[theorem]{Definition}
\newtheorem{notation}[theorem]{Notation}
\newtheorem{question}[theorem]{Question}
\newtheorem{example}[theorem]{Example}
\numberwithin{equation}{section}
\newcommand{\p}{\partial}
\newcommand{\bl}{\bm{l}}
\newcommand{\btheta}{\bm{\vartheta}}
\newcommand{\C}{\mathbb{C}}
\newcommand{\cA}{\mathcal A}
\newcommand{\dH}{\mathbb{H}}
\newcommand{\R}{\mathbb{R}}
\newcommand{\dP}{\mathbb{P}}
\newcommand{\CP}{\mathbb{CP}}
\newcommand{\dT}{\mathbb{T}}
\newcommand{\dZ}{\mathbb{Z}}
\newcommand{\dS}{\mathbb{S}}
\newcommand{\mv} {\mathfrak v}
\newcommand{\mw} {\mathfrak w}
\newcommand{\ms} {\mathfrak s}
\newcommand{\mI} {\mathfrak I}
\newcommand{\mJ} {\mathfrak J}
\newcommand{\mR} {\mathfrak R}
\newcommand{\mt} {\mathfrak t}
\newcommand{\mL} {\mathfrak L}
\newcommand{\mC} {\mathfrak C}
\newcommand{\bP}{\mathbb P}
\newcommand{\bH}{\mathbb H}
\DeclareMathOperator{\dvol}{dvol}
\DeclareMathOperator{\Id}{Id}
 \DeclareMathOperator{\Lie}{Lie}
\DeclareMathOperator{\Ric}{Ric}
\DeclareMathOperator{\Rm}{Rm}
\DeclareMathOperator{\bbv}{\mathbbm{v}}
\DeclareMathOperator{\dist}{dist}
\DeclareMathOperator{\I}{I}
\DeclareMathOperator{\II}{II}
\DeclareMathOperator{\III}{III}
\DeclareMathOperator{\Tr}{Tr}
\DeclareMathOperator{\Vol}{Vol}
\DeclareMathOperator{\bv}{{\bf v}}
\DeclareMathOperator{\Aea}{AE_\alpha}
\DeclareMathOperator{\Ae}{AE}
\DeclareMathOperator{\ALe}{ALE}
\DeclareMathOperator{\ALF}{ALF}
\DeclareMathOperator{\TN}{TN}
\DeclareMathOperator{\dTN}{dTN}
\DeclareMathOperator{\TB}{TB}
\DeclareMathOperator{\EH}{EH}
\DeclareMathOperator{\TNap}{TN^+_\alpha}
\DeclareMathOperator{\TNam}{TN^{--}_\alpha}
\DeclareMathOperator{\TNp}{TN^+}
\DeclareMathOperator{\TNm}{TN^{--}}
\DeclareMathOperator{\ALE}{ALE}
\DeclareMathOperator{\ALFp}{ALF^+}
\DeclareMathOperator{\ALFm}{ALF^{--}}
\DeclareMathOperator{\Kerr}{Kerr}
\DeclareMathOperator{\AF}{AF}
\DeclareMathOperator{\AFa}{AF_\beta}
\title{Gravitational instantons and harmonic maps}
\author{
Mingyang Li\textsuperscript{*}}
\thanks{\textsuperscript{*} Department of Mathematics, University of California, Berkeley; \url{mingyang_li@berkeley.edu}}
\author{Song Sun\textsuperscript{\dag}}
\thanks{\dag\  Institute for Advanced Study in Mathematics, Zhejiang University; \url{songsun@zju.edu.cn}}
\begin{document}

\begin{abstract}
We study the interaction between toric Ricci-flat metrics in dimension 4 and axisymmetric harmonic maps from the 3-dimensional Euclidean space into the hyperbolic plane. Applications include 
\begin{itemize}
    \item The construction of complete Ricci-flat 4-manifolds that are non-spin, simply-connected, and with arbitrary $b_2$. Our method is non-perturbative and is based on ruling out conical singularities arising from axisymmetric harmonic maps. When $b_2$ is at most $2$, these recover the Euclidean Kerr metrics and the Chen-Teo metrics; when $b_2$ is bigger than $2$, these spaces are not locally Hermitian in any orientation and they provide infinitely many new diffeomorphism types of complete Ricci-flat 4-manifolds. They also give systematic counterexamples to various versions of the Riemannian black hole uniqueness conjecture. 
    \item  A PDE classification result for axisymmetric harmonic maps of degree at most $1$, via the Gibbons-Hawking ansatz and the LeBrun-Tod ansatz, in terms of axisymmetric harmonic functions. This is motivated by the study of hyperk\"ahler and conformally K\"ahler gravitational instantons.
\end{itemize}
  
\end{abstract}

\maketitle

\tableofcontents

\section{Introduction}
 
 In this paper, a \emph{gravitational instanton} is defined as a complete oriented Ricci-flat 4-dimensional  manifold $(M, g)$ that satisfies the \emph{finite energy} condition \begin{equation}\int_{M}|\Rm_g|^2\dvol_g<\infty.
 \end{equation}
 This notion was first introduced by Hawking \cite{Hawking} to study Euclidean quantum gravity. Mathematically gravitational instantons provide the most basic singularity models for Einstein metrics in Riemannian geometry. It is a fundamental question to construct and classify gravitational instantons.

 In general, it is difficult to directly tackle the fully non-linear PDE system governing gravitational instantons.  Almost all known gravitational instantons,  up to orientation reversal, exhibit \emph{special} geometry, i.e.,  they are locally \emph{Hermitian} (indeed, hyperk\"ahler or conformally K\"ahler) with respect to some complex structure. These special gravitational instantons are closely related to complex geometry and mathematical physics. They have been widely studied, and by now we have a satisfactory classification theory. However, little is known previously about gravitational instantons without special geometry.

The main goal of this paper is to construct gravitational instantons on infinitely many new diffeomorphism types of 4-manifolds. These new metrics are not locally Hermitian. The underlying oriented smooth 4-manifolds are 
\begin{equation}\label{eqn:definition of Xn}X_n:=\begin{cases}(S^2\times \R^2)\# k {\CP^2}\# k \overline{\CP^2},  & \text{if } n=2k+1;
\\ (S^2\times \R^2)\# k{\CP^2} \# (k+1)\overline{\CP^2}, & \text{if } n=2k+2.
\end{cases}
\end{equation}
 The metrics we construct are asymptotic to a flat quotient $(\R^3\times \R^1)/\langle q_\beta\rangle$ (for any $\beta \in (0, 1)$),  where $q_\beta$ acts as a rotation by angle $2\pi\beta$ on $\R^3$ and a translation by  $1$ on $\R^1$. These asymptotic models are referred to as the $\AFa$  (asymptotically flat with rotation angle $2\pi\beta$; for a precise definition, see Definition \ref{def:Type of gravitational instantons}) models. Such models have cubic volume growth but the asymptotic cones at infinity are  more complicated: when $\beta$ is rational, the asymptotic cone is a 3-dimensional flat cone; when $\beta$ is irrational, the asymptotic cone is the 2-dimensional half-plane.

\begin{theorem}\label{t:main}
	For each $n\geq 1$ and $\beta\in (0, 1)$, there is an $\AFa$ gravitational instanton on $X_n$.
\end{theorem}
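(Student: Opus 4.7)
The proof strategy is through the Weyl--Papapetrou reduction: a toric Ricci-flat $4$-manifold with $T^2$-symmetry and $\AFa$ asymptotics is equivalent, on its orbit space $\R^3$, to an axisymmetric harmonic map $\Phi \colon \R^3 \setminus \mathcal{A} \to \dH^2$ whose boundary behavior on the axis $\mathcal{A}$ is prescribed by a rod structure, together with a conformal factor reconstructed from $\Phi$ by quadrature. The first step is to identify the rod structure $\mathcal{R}_n$ corresponding to $(X_n, \AFa)$: two semi-infinite rods whose rod vectors encode the $\AFa$ deck lattice, and $n$ finite rods whose rod vectors in $\dZ^2$ change by a transition of determinant $\pm 1$ at each interior node. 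A direct combinatorial check, using the standard correspondence between rod structures and smooth toric $4$-manifolds, matches the alternating pattern of transitions with the prescribed connect sum defining $X_n$; in particular each corner contributes either a $\CP^2$ or $\overline{\CP^2}$ factor, reproducing the parities in \eqref{eqn:definition of Xn}.

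With the rod structure fixed, I would parametrize by the positions of the interior nodes on the axis, giving a finite-dimensional family $\ell$. For each $\ell$, existence of an axisymmetric harmonic map $\Phi_\ell \colon \R^3 \setminus \mathcal{A} \to \dH^2$ realizing the rod data is a Dirichlet-type existence result for harmonic maps into $\dH^2$ with prescribed singular boundary values on the axis, in the spirit of Weinstein's work on the Ernst system. The conformal factor recovered from $\Phi_\ell$ gives a Ricci-flat metric that is smooth away from the interior nodes, where the only remaining obstruction is a \emph{conical singularity} measured by a collection of real-valued \emph{balance invariants} $c(\ell)$ (the Kasner angle deficits along the appropriate rod vectors). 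The $\AFa$ asymptotic model is automatic from the rod vectors of the semi-infinite rods.

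The main obstacle, and the part that is genuinely non-perturbative, is to solve the balance system $c(\ell) = 0$. My plan is a degree-theoretic continuity argument. One studies $c$ on the boundary of a large compact region in parameter space, where rods either collapse or separate to infinity; in each such degeneration $\Phi_\ell$ converges, after rescaling, to harmonic maps associated with rod structures for smaller $n$ or for explicit model spaces (flat quotients, the Euclidean Kerr family, the Chen--Teo family), for which the balance invariants can be computed explicitly. Showing that $c$ is proper and has nonzero Brouwer degree on a suitable compact region then produces an interior zero $\ell_\ast$, and hence a smooth Ricci-flat metric on $X_n$. Completeness, identification of the underlying smooth $4$-manifold with $X_n$, and the $\AFa$ asymptotic behavior follow from the rod structure setup and standard decay estimates for $\Phi_{\ell_\ast}$ at infinity. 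The real difficulty lies in the uniform control of $\Phi_\ell$ across all degenerations of the rod configuration; this is where the negative curvature of the target $\dH^2$ plays an essential role, via convexity of the harmonic map energy and removable singularity theorems that rule out loss of rod data at the boundary of parameter space.
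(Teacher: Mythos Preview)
Your outline matches the paper's strategy at the coarse level: the reduction to axisymmetric harmonic maps tamed by a rod structure, the existence and uniqueness of such maps, the cone-angle obstruction, and a topological degree argument on the space of rod lengths are all correct. But there is a real gap in your degeneration analysis, and it lies exactly where you claim the argument closes.

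You assert that in each degeneration the bubble limits are harmonic maps for ``explicit model spaces (flat quotients, the Euclidean Kerr family, the Chen--Teo family), for which the balance invariants can be computed explicitly.'' This is not what happens. The bubble limits in the paper are harmonic maps globally tamed by rod structures of Asymptotic Type $\Ae$, $\AF_0$, or $\ALF^\pm$; they are not Kerr or Chen--Teo in general, and for an $\Ae$ bubble with more than two turning points the cone angles \emph{cannot} be computed explicitly. The paper closes this case with a new geometric ingredient you do not mention: an \emph{angle comparison theorem} (Proposition~\ref{p:angle comparison}) derived from the Bishop--Gromov volume monotonicity applied to the $4$-dimensional conical Ricci-flat metric associated to the bubble. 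This yields the inequalities $\vartheta_1 \ge \vartheta_m$ and $\vartheta_{m-1} \ge \vartheta_0$ on the bubble, which is exactly what is needed in Case~(2) of Lemma~6.6 to show the boundary of the cube avoids the homotopy image. Without this inequality the degree argument does not close.

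Two smaller points. First, your diagnosis that ``the real difficulty lies in the uniform control of $\Phi_\ell$ \ldots via convexity of the harmonic map energy and removable singularity theorems'' misplaces the obstacle: uniform control of the harmonic maps under degeneration is handled by comparison with explicitly constructed background maps (Section~\ref{sec:degeneration of rod structures}), and the hard step is extracting cone-angle information from the bubbles, where the relevant geometry is Bishop--Gromov on the Ricci-flat $4$-manifold, not the curvature of $\mathcal H$. Second, the constraint $\beta \in (0,1)$ enters precisely through Proposition~\ref{p:cone angle AF rod length go to infinity}: when a rod length goes to infinity, the limiting cone angle is $2\pi|a+\beta b|$ for the primitive rod vector $(a,b)^\top$, and for the specific rod vectors chosen this is at most $2\pi\max(\beta,1-\beta)<2\pi$. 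You should make this dependence explicit, as it drives the large-$\ell$ side of the boundary analysis.
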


 Some remarks are in order.

 \

 (1). Since $X_n$ is non-spin, there is no hyperk\"ahler gravitational instanton on $X_n$, with respect to either orientation. The metrics constructed above are Hermitian if and only if $n\in \{1, 2\}$. In these cases, the metrics coincide with the known ones: a Euclidean Kerr metric (when $n=1$) or a Chen-Teo metric \cite{Chen-Teo} (when $n=2$). Our method for proving Theorem \ref{t:main} recovers them in these special cases and provides a geometric explanation for the existence of these metrics. When \( n \geq 3 \), previous classification results by the first author \cite{Li2} show that $X_n$ cannot support a  Hermitian gravitational instanton (for both orientations). Therefore, the metrics constructed in Theorem~\ref{t:main} provide new diffeomorphism types of smooth 4-manifolds that support gravitational instantons.

\

(2). Theorem \ref{t:main} implies that with a fixed asymptotics at infinity, there is no uniform bound on the topological type of gravitational instantons. This is new and different from the hyperk\"ahler and the Hermitian setting. 

\

 (3). Theorem \ref{t:main} is related to  questions of Gibbons \cite{Gibbons}, Lapedes \cite{Lapedes}  and Yau (\cite{Yau1982problem}, Problem 116) regarding the Riemannian version of the black hole uniqueness conjecture. In general relativity, the  black hole uniqueness conjecture states that the only asymptotically flat stationary vacuum Einstein space-time is given by the Kerr space-time.  The naive Riemannian version of the conjecture  was disproved by Chen-Teo \cite{Chen-Teo},  who wrote down explicit families of asymptotically flat toric gravitational instantons on $X_1$. More recently, it was discovered by Aksteiner-Andersson \cite{AA} that the Chen-Teo spaces are Hermitian (or one-sided Petrov Type D, in the language of relativity); subsequently Biquard-Gauduchon \cite{BG} fits the Chen-Teo spaces in the framework of  toric complex geometry via the LeBrun-Tod ansatz. In \cite{AADNS} a modified uniqueness conjecture was proposed, which says that asymptotically flat gravitational instantons must be Hermitian. Theorem \ref{t:main} showed this is still too optimistic. In Section \ref{sec:discussion} we will discuss versions of the uniqueness conjecture which are more likely to hold. 

\

(4). Our construction is non-perturbative and non-explicit. It remains an interesting question to understand the geometry of these metrics.  When the parameters degenerate (i.e. as $\beta\rightarrow0$ or $\beta\rightarrow 1$), one can show that the metrics split in the pointed Gromov-Hausdorff sense to a finite union of Taub-NUT, Schwarzschild and Taub-Bolt metrics (under suitable orientations). See the discussion in Section \ref{sec:discussion}. For the Euclidean Kerr spaces, with some computation one can explicitly see the splitting into the union of a Taub-NUT space and an anti-Taub-NUT space (i.e., a negatively oriented Taub-NUT space), See Example \ref{ex:Kerr}.

\

(5). In this paper we focus on constructing the special family of examples in Theorem \ref{t:main}. It is possible that similar techniques can be exploited to construct $\AFa$ gravitational instantons on non-compact 4-manifolds with other  topologies. One may also attempt to apply the same strategy  to construct gravitational instantons of other Asymptotic Types, for example, the $\ALF$ or $\ALE$ asymptotics. See Section \ref{sec:discussion} for more discussion.

\

To prove Theorem \ref{t:main}, we study gravitational instantons with toric symmetry, using harmonic maps. In the Lorentzian setting there is a well-known reduction of the axisymmetric stationary vacuum Einstein equation to an axisymmetric harmonic map from $\mathbb R^3$ into the hyperbolic plane $\mathcal H$ \cite{MM, Ernst, Carter, BZ, WS}. Similar reductions also exist in the Riemannian setting (see, for example, \cite{Lott, Kunduri}). In both settings, the topology of the 4-dimensional space is encoded in the data called a \emph{rod structure} (see Definition \ref{def:rod structures} for the precise definition in our specific setting), and we are interested in axisymmetric harmonic maps which are \emph{strongly tamed} by a rod structure (see Definition \ref{def:strong tameness}). Using  general theory of harmonic maps (see, for example, \cite{Weinstein,  Kunduri}), we know the existence and uniqueness of axisymmetric harmonic maps which are tamed by a given rod structure and with desired asymptotics at infinity (see  Theorem \ref{t:existence result}). Together with a choice of an enhancement (see Definition \ref{def:enhancement}) we obtain a 4-dimensional toric Ricci-flat metric, which however may have \emph{conical singularities} (see Definition \ref{def:cone angle}) along a union of  codimension 2 submanifolds. In the Lorentzian setting, the axisymmetric stationary case of the black hole uniqueness conjecture is equivalent to the statement that conical singularities must be present unless the spacetime is a Kerr space-time. The difficulty in proving this conjecture lies in the lack of explicit knowledge relating the cone angles to the rod structure.  

\

In our Riemannian setting, by contrast, we will show that solutions without conical singularities exist in abundance, which give rise to toric gravitational instantons. More precisely, for each $n\geq 1$ we will design a specific rod structure which gives rise to the smooth 4-manifold $X_n$ and such that, by adjusting the rod lengths, we can achieve that the cone angles along the rods are all equal to $2\pi$, so conical singularities are ruled out.  Our argument is based on a topological fixed point theorem.  The key observation is that, although there is in general no explicit knowledge on the cone angles, it is still possible in our setting to read out the behavior of cone angles when the rods degenerate to zero or infinity. To achieve the latter, we need to perform a bubbling analysis of axisymmetric harmonic maps. We show that the bubble limits are, up to bounded hyperbolic distance, modeled on four types of explicit axisymmetric harmonic maps (see Section \ref{subsec:local regularity} for the definitions): $\Ae$ (Asymptotically Euclidean), $\AF_0$ (Asymptotically Flat), $\ALF^+$ (positively oriented Asymptotically Locally Flat), $\ALF^-$ (negatively oriented Asymptotically Locally Flat). These are the harmonic maps generalizing those associated to the standard gravitational instantons, given respectively by the flat $\R^4$, the flat product $S^1\times \R^3$, the Taub-NUT space and the anti-Taub-NUT space. Heuristically, these model spaces serve as the fundamental building blocks for constructing the gravitational instantons in Theorem \ref{t:main}. To realize the above strategy, there are a few subtleties

\begin{itemize}
    \item In general, there are topological constraints on the rod structure of a toric gravitational instanton (see for example \cite{Nilsson}). This  means that we need to design the correct rod structure to start with (see Section \ref{ss:general case}). 
    \item  In general, it is not true that for an arbitrarily given rod structure one can always tune the cone angle by varying the rod lengths: for example, in the  hyperk\"ahler case (i.e. the case of degree $0$, see Remark \ref{rem:angle invariant in Type I}) the cone angles stay \emph{constant} when we vary the rod lengths. Our observation is that in suitable settings the cone angles can be shown to diverge to infinity or go to a number smaller than $2\pi$, due to the degeneration of the enhancement in the process of the bubbling convergence, see Proposition \ref{p:cone angle AF rod length go to zero}, Proposition \ref{p:deep scale angle goes to infinity}, Proposition \ref{p:cone angle AF rod length go to infinity}, Proposition \ref{p:cone angle TN rod length go to infinity}. Clearly, this can only occur in the non-hyperk\"ahler case. 
    \item In the scale when the enhancement does not degenerate, there is again no precise information that can be said on the limit cone angles. Here the new ingredient is that there is an \emph{angle comparison theorem} (see Proposition \ref{p:angle comparison}), which is motivated by the Bishop-Gromov volume comparison for Riemannian metrics. It provides some key inequalities in our argument.
\end{itemize}

\

In the reverse direction, we will prove a PDE classification result for axisymmetric harmonic maps, motivated by the theory of gravitational instantons.     
 Locally, an axisymmetric harmonic map $\Phi$ into the hyperbolic plane defines a 4-dimensional toric Ricci-flat metric $g$. If $g$ has special geometry (i.e., locally Hermitian), then $\Phi$ can be  described explicitly in terms of an axisymmetric harmonic function: this is given by the \emph{Gibbons-Hawking ansatz} if $g$ is hyperk\"ahler and by the \emph{LeBrun-Tod ansatz} if $g$ is Hermitian non-K\"ahler. In this paper, we give a characterization of these special axisymmetric harmonic maps in terms of a topological invariant of the rod structure. Namely, we define a notion of \emph{degree} for a rod structure (see Definition \ref{def:degrees}), which takes value in the set of non-negative integers and distinguishes different connected components of the set of all rod structures of a given  Asymptotic Type. We prove the following
\begin{theorem}\label{t:main2}
    Given an axisymmetric harmonic map $\Phi$  which is strongly tamed by a rod structure $\mR$ (see  Definition \ref{def:strong tameness}), then 
    \begin{itemize}
        \item $\Phi$ defines a locally hyperk\"ahler metric if and only if $\mR$ has degree 0;
        \item $\Phi$ defines a locally Hermitian non-K\"ahler metric if and only if $\mR$ has degree 1.
    \end{itemize}
\end{theorem}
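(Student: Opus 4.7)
The strategy is to use the Gibbons--Hawking and LeBrun--Tod ans\"atze as explicit parametrizations of locally hyperk\"ahler and locally Hermitian non-K\"ahler toric Ricci-flat $4$-metrics respectively. Each such ansatz takes as input an axisymmetric harmonic function on $\R^3$ (possibly with an additional affine background term in the axial coordinate) and produces an axisymmetric harmonic map $\Phi_V$ into $\mathcal{H}$, strongly tamed by a rod structure $\mR_V$ that can be read off directly from the poles and asymptotic behavior of the input. The plan is to match the given $\Phi$ with some $\Phi_V$ via the uniqueness part of Theorem~\ref{t:existence result}, under the corresponding hypothesis on the degree of $\mR$.

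\medskip

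\emph{Forward direction.} If $\Phi$ defines a locally hyperk\"ahler metric $g$, then about a generic point $g$ can be put in Gibbons--Hawking form for some positive axisymmetric harmonic $V$, with one commuting Killing vector identified with the $S^1$-fiber of the ansatz. One then reads off the rod structure directly from the polar locus of $V$ and the explicit formula for $\Phi_V$; the resulting rod directions fit into a canonical lattice pattern which, by Definition~\ref{def:degrees}, computes to degree $0$. The locally Hermitian non-K\"ahler case proceeds identically using LeBrun--Tod: the conformal K\"ahler factor contributes an extra affine term in $V$, shifting the rod-direction pattern by one primitive lattice vector and raising the degree to $1$.

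\medskip

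\emph{Reverse direction.} Conversely, given $\mR$ of degree $d\in\{0,1\}$, we construct an input $V$ explicitly: assemble a constant (if $d=0$) or affine (if $d=1$) background dictated by the asymptotic data encoded in $\mR$, plus Green's function singularities placed at each turning point of $\mR$, with residues tuned to produce exactly the prescribed jumps in rod direction. The degree-$d$ assumption is precisely the combinatorial condition on $\mR$ that makes this residue-matching system solvable. Substituting $V$ into the corresponding ansatz yields an axisymmetric harmonic map $\Phi_V$ that, by construction, is strongly tamed by $\mR$ with the correct asymptotics; Theorem~\ref{t:existence result} then forces $\Phi_V=\Phi$, so $\Phi$ comes from a hyperk\"ahler (resp.\ Hermitian non-K\"ahler) metric.

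\medskip

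\emph{Main obstacle.} The crux is the combinatorial step in the reverse direction: showing that the degree invariant of Definition~\ref{def:degrees} is exactly the obstruction to realizing a rod structure as $\mR_V$ for some Gibbons--Hawking or LeBrun--Tod input. This requires a careful book-keeping of how the rod direction transforms across each turning point under each ansatz, matching the cumulative twist against the degree. Once this combinatorial identity is established, both implications of the theorem collapse into uniqueness statements for axisymmetric harmonic maps strongly tamed by a given rod structure, which is precisely what Theorem~\ref{t:existence result} supplies.
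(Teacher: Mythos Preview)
Your approach to the degree-$0$ case is essentially the paper's: the Gibbons--Hawking ansatz with $V=T+\sum_j \lambda_j/|x-p_j|$ produces all degree-$0$ rod structures (after normalizing the rod vectors to the form \eqref{eqn:normalized rod vectors Gibbons-Hawking}), and uniqueness from Theorem~\ref{t:existence result} closes the loop. Good.

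The degree-$1$ case, however, has a genuine gap. You propose to realize an arbitrary degree-$1$ rod structure $\mR$ directly as $\mR_V$ for some LeBrun--Tod input, calling this a ``residue-matching system'' whose solvability is the ``combinatorial condition'' $d(\mR)=1$. But the LeBrun--Tod/Biquard--Gauduchon parametrization takes as input a positive convex piecewise-affine function $f(z)=A+\sum a_j|z-z_j|$, and the resulting rod vectors are given by \eqref{eq:BG normailzed rod vector}, where the constants $F_j$ are determined by the nonlinear recursion $F_j-F_{j-1}=\tfrac{1}{A}f_j^2(\tfrac{1}{f_j'}-\tfrac{1}{f_{j-1}'})$. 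This is \emph{not} a linear residue system: the map from inputs $f$ to rod structures is genuinely nonlinear, and proving it surjects onto all degree-$1$ rod structures is not a combinatorial identity. The paper states this explicitly: ``The naive count of parameters matches, but it does not seem obvious to see the infinitesimal openness (i.e., non-vanishing of the associated derivative map).''

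The paper therefore abandons the direct construction in degree $1$ and instead runs a connectedness argument. The set $\mathcal S'_{1,\alpha,\sharp}$ of Type~$\II$ rod structures (inside the connected space $\mathcal S_{1,\alpha,\sharp}$ of all degree-$1$ rod structures with fixed $\alpha$ and $\sharp$) is nonempty by Biquard--Gauduchon, closed by convergence of harmonic maps, and \emph{open} by an adaptation of the Biquard--Gauduchon--LeBrun argument: on the smooth Ricci-flat end one controls the eigenform $\omega_\epsilon$ of $W^+_{g_\epsilon}$ via the pointwise inequalities \eqref{eq:pointwise 1}--\eqref{eq:pointwise 2} and an integration-by-parts that forces $\nabla\omega_\epsilon=0$, hence $g_\epsilon$ is conformally K\"ahler. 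This openness step is the heart of the proof and is entirely absent from your plan. (The $\Ae$ case is then reduced to $\ALFp$ by a rescaling limit.)
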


As a consequence, when $\mR$ has degree at most $1$, $\Phi$ can be written in terms of an axisymmetric harmonic function, and the cone angles can be explicitly determined (see Remark \ref{rem:angle invariant in Type I} and Remark \ref{rem: angle in Type II}). In the degree 0 case, the proof is proceeded by showing that the Gibbons-Hawking construction yields all the possible degree 0 rod structures and then by appealing to the uniqueness of axisymmetric harmonic maps. In the degree 1 case our proof makes use of the geometry of Hermitian gravitational instantons.  The heart of the argument is to show that the class of axisymmetric harmonic maps locally generating Hermitian non-K\"ahler Ricci-flat metrics is open under deformation of rod structures. The latter then relies on the result of Biquard-Gauduchon-LeBrun \cite{BGL} for gravitational instantons.

\

This paper is organized as follows. In Section \ref{sec:dimension reduction} we recall the dimension reduction of 4-dimensional toric Ricci-flat metrics to augmented, axisymmetric harmonic maps  into the hyperbolic plane. In Section \ref{sec:rod structures} we make precise definitions of rod structures and enhancements and describe the rod structure and the axisymmetric harmonic maps for a few key examples of toric gravitational instantons. In Section \ref{sec:harmonic maps} we discuss tame harmonic maps (i.e. axisymmetric harmonic maps tamed by a rod structure) and study their local and global regularity. In particular, we define the \emph{cone angle} along each rod. Then in the case of  Asymptotic Type  $\Ae$ we prove a rigidity result (i.e., globally tame to strongly tame) and an angle comparison theorem (as a result of the Bishop volume comparison). In Section \ref{sec:degeneration of rod structures} we analyze the bubbling behavior of tame harmonic maps when the rod structure degenerates. This is done by constructing good background maps which are almost harmonic. This analysis allows us to gain information on the asymptotic behavior of the cone angles when the rod structure degenerates. In Section \ref{sec:applications} we apply the results in Section \ref{sec:degeneration of rod structures} to prove Theorem \ref{t:main}. In Section \ref{sec:degree 0 and 1} we prove Theorem \ref{t:main2}. Section \ref{sec:discussion} is devoted to further discussion and open questions. In particular, we determine the splitting of the gravitational instantons constructed in Theorem \ref{t:main} when $\beta$ approaches $0$.

\

 {\bf Acknowledgements}: We thank Lars Andersson for helpful discussions on gravitational instantons with symmetry and Marcus Khuri for helpful comments on the first version of this paper. We thank Simon Donaldson, Claude LeBrun and John Lott for their interest in this work.  The first author is grateful to the IASM at Zhejiang University for hospitality during his visit in the year 2024 and Spring 2025. The second author thanks Shaosai Huang for conversations on related topics in Fall 2021. The first author was partially supported by NSF Grant DMS-2304692.

\

 {\bf Notations and Conventions}
 
 \begin{itemize}
 	\item We denote by $\mathbb H=\{(\rho, z)|\rho\geq 0\}$ the right half plane with boundary. We also use polar coordinates $(r, \theta)$ ($\theta\in [0, \pi]$) on $\mathbb H$, where $\theta$ is the angle measured from the positive $z$-axis.  We have 
 \begin{align*}
    \rho = r\sin\theta, \ \ \ 
    z = r\cos\theta.
\end{align*}

\item We have the following actions on $\mathbb H$: 

Rescaling: 
\begin{equation}\label{eqn:dilation}\varphi_\lambda: \bH\rightarrow\bH; (\rho, z)\mapsto (\lambda^{-1}\rho, \lambda^{-1}z), \ \ \ \ \lambda>0.
\end{equation}

Translation: 
\begin{equation}\label{eqn:translation} T_{\underline z}:\bH\rightarrow\bH; (\rho, z)\mapsto (\rho, z-\underline z), \ \ \ \underline z\in \R.
\end{equation}
 	\item We denote by $\mathbb H^\circ$ the interior of $\dH$ and we identify $\Gamma=\p \mathbb H$ with the real line $\R$ parameterized by $z$. We denote 
    $$\Gamma_+=\Gamma\cap \{z>0\},$$
    $$\Gamma_-=\Gamma\cap \{z<0\}.$$
 	\item For an open subset $\Omega\subset \dH$, we denote  $\Omega^\circ=\Omega\cap \dH^\circ$. 
 	\item On $\R^3$, we use the standard coordinates $(x_1, x_2, x_3)$ and the cylindrical coordinates $(\rho, \phi, z)$. They are related via
\begin{align*}
    x_1 &= \rho\cos\phi, \\
    x_2 &= \rho\sin\phi, \\
    x_3 &= z.
\end{align*}We also identify the boundary $\Gamma=\p\mathbb H$ with the $x_3$-axis in $\R^3$.
\item We denote by $\mathcal H$ the hyperbolic upper half plane $\{(X,Y)|X>0\}$ endowed with the Riemannian hyperbolic metric $$g_{\mathcal H}=X^{-2}(dX^2+dY^2).$$
\item We also identify $\mathcal H$ with $SL(2;\R)/SO(2)$, the space of $2\times 2$ positive symmetric matrices of determinant 1. The identification is given by sending $(X, Y$) to the matrix
\begin{equation}\label{e:1.1}\left(\begin{matrix}X+X^{-1}Y^2&X^{-1}Y\\X^{-1}Y&X^{-1} \end{matrix}\right).
\end{equation}
Under this identification, for $B\in \mathcal H$,  for $B_1,  B_2\in T_B\mathcal{H}$, we have   $$g_{\mathcal H}( B_1, B_2)=\frac12\Tr(B^{-1} B_1B^{-1} B_2).$$ 
\item For two   matrices $A,B\in \mathcal H$, we denote by $d(A, B)$ the hyperbolic distance between them. It is easy to see that there is a monotonically increasing homeomorphism $F: [0, \infty)\rightarrow [0, \infty)$ such that for any $C\geq 0$,  
\begin{equation}\label{e:hyperbolic distance equivalent control}d(A, B)\leq C \Longleftrightarrow \Tr(A^{-1}B-Id)\leq F(C).	
\end{equation}
Moreover, we can see that $F(C)$ is uniformly comparable to $C^2$ for $C\in (0, 1]$.

\item We denote by $SL_\pm(2; \R)$ the subgroup of $GL(2; \R)$ with determinant $\pm1$ and by $SO_-(2)$ the subset of $O(2)$ with determinant $-1$. Denote 
$$PSO(2)=SO(2)/\{\pm1\}$$
$$PSO_-(2)=SO_-(2)/\{\pm 1\}.$$
 There is a natural (left) action of $SL_\pm(2;\R)$ on $\mathcal H$, given by 
 \begin{equation}\label{eqn:group actionon Phi}D. \Phi=(D^{-1})^{\top}\Phi D^{-1}.
 \end{equation}
\item For $\lambda\neq 0$ and $\mu\in \R$, we denote the following matrices in $GL(2; \R)$
\begin{equation}\label{e:definition of matrices}D_\lambda=\left(\begin{matrix} \lambda &0\\ 0 & \lambda^{-1}\end{matrix} \right);
 \ \ \ \ \ 
	L_{\mu}=\left(\begin{matrix}
1 &0\\ \mu & 1	
\end{matrix}
\right);
\ \ \ \ \ 
	U_{\mu}=\left(\begin{matrix}
1 & \mu \\ 0 & 1	
\end{matrix}
\right);
\ \ \ \ \ 
	R_{\lambda}=\left(\begin{matrix}
0 & \lambda \\ \lambda & 0	
\end{matrix}
\right).
\end{equation}
\item A vector in $\R^2$ will be viewed as column vectors and denoted by $(a, b)^\top$ for $a, b\in \R$.
\item For $\alpha\in \R$ we denote the vector  
\begin{equation}\label{eqn: defintion of bvalpha}\bv_{\alpha}=(1,  \alpha)^{\top}.
\end{equation}
\item An element in $\R\dP^1$ is denoted  by fraktur symbols such as $\mv$, $\mw$, and so on. It is represented as $[(a, b)^\top]$ for a vector $(a, b)^{\top}\in \R^2$.
\item Given two elements $\mv\neq \mv'\in \R\bP^1$, there is a unique $P_{\mv, \mv'}\in PSO(2)$ and $\alpha>0$  such that 
\begin{equation}\label{e:definition of P}
    P_{\mv, \mv'}.\mv=\bv_{\alpha/2}, \quad P_{\mv, \mv'}.\mv'=\bv_{-{\alpha}/2}.
\end{equation}
We denote 
\begin{equation}\label{e:definition of alpha}
    \mv\wedge\mv':=\alpha.
\end{equation}
We also make a convention that $\mv\wedge \mv'=0$ if $\mv=\mv'$.
There is also a unique $\overline{P}_{\mv,\mv'}\in PSO_-(2)$ 
such that 
\begin{equation}
    \overline{P}_{\mv,\mv'}.\mv=\bv_{2/\alpha}, \quad \overline{P}_{\mv,\mv'}.\mv'=\bv_{-2/\alpha}.
\end{equation}
We have 
\begin{equation} \label{e:definition of bar P}
    \overline{P}_{\mv,\mv'}=R_1 P_{\mv,\mv'}.
\end{equation}
\item Given an element $\mv\in \R\bP^1$, we denote by $\mv^{\perp}\in \R\dP^1$ the  line orthogonal to $\mv$.  There is a unique element $P_{\mv}=P_{\mv, \mv}\in PSO(2)$ such that 
\begin{equation}\label{e:definition of P1}P_{\mv}.\mv=[(1, 0)^{\top}], \ \ P_{\mv}.\mv^{\perp}=[(0, 1)^{\top}].
\end{equation}

 \item We denote by $\dT$ a real 2-dimensional torus, viewed as a compact abelian Lie group and by  $\mL$ the co-character lattice in $\Lie(\dT)$, i.e, $\mL$ is the lattice in $\mt$ consisting of all elements $\xi$ such that $\exp(\xi)=\Id$. A $\mathbb Z$-basis of $\mL$ is called an integral basis of $\dT$.
 \end{itemize}

\section{Local theory}
\label{sec:dimension reduction}
\subsection{Dimension reduction of the Ricci-flat equation}
\label{subsec:reduction of Ricci-flat}
Let $(M,g)$ be an oriented  4-dimensional Riemannian manifold. Let $X$ be a Killing vector field on $M$ and denote by $\eta=X^{\flat}$ its dual 1-form. We have $\mathcal L_Xg=0$, which implies that
\begin{itemize}
    \item The 1-form $\eta$ is divergence free: \begin{equation}\label{divfree}d^*\eta=0\end{equation}
    and satisfies the equation 
     \begin{equation}\label{Bochner2} \nabla^*\nabla\eta=Ric.\eta.\end{equation}
    \item Acting on $p$-forms, $\mathcal L_X$ commute with $d^*$, so the formal adjoint $\mathcal L_X^*\cdot=\eta \wedge d^*\cdot+d^*(\eta\wedge\cdot)$  commutes with $d$: \begin{equation}\label{Liedaulcommuted}\mathcal L_X^*d=d\mathcal L_X^*.\end{equation}   
\end{itemize}
Combined with the usual Bochner formula we get 
\begin{equation}\label{Laplacian eta}d^*d\eta=\Delta_H\eta-d d^*\eta=2Ric.\eta.
\end{equation}
We also have $$\mathcal L_X^*(d\eta)=d\mathcal L_X^*\eta=d(\eta\wedge d^*\eta+d^*(\eta\wedge\eta))=0.$$
 and 
 $$\mathcal L_X^*(d\eta)=\eta\wedge d^*d\eta+d^*(\eta\wedge d\eta). $$
 In particular, if $g$ is Einstein,  then 
 $$d^*(\eta\wedge d\eta)=0. $$
Locally we can find a function  $\mathcal E$, called the \emph{Ernst potential}, which is unique up to constant addition,  such that  $$d\mathcal{E}=*(\eta\wedge d\eta).$$ 
The \emph{twist 1-form} $d\mathcal{E}$ measures the non-integrability of the orthogonal distribution to $X$.

 \begin{definition}
 A 4-dimensional \emph{toric} Ricci-flat manifold  consists of an oriented 4-dimensional Ricci-flat manifold $(M,g)$  and a 2-torus $\mathbb T$ action with the property that every point on $M$ has a connected stabilizer group.  We denote by $N$ the quotient space $M/\dT$.
 \end{definition}
 
 \begin{remark}
 The last condition is purely technical: it ensures that the quotient space $N$ is a smooth manifold with boundary. Without this condition, one would have to deal with orbifolds. 	
 \end{remark}

Let $\{X_1, X_2\}$ be an integral basis of $Lie(\dT)$. They can be viewed as Killing fields on $M$. Denote by $\eta_i$ the dual 1-form of $X_i$.  Then from the above discussion we have the Ernst potentials $\mathcal{E}_1, \mathcal{E}_2$ respectively.  We have 
$$\mathcal L_{X_i}\mathcal{E}_i=\langle \eta_i, d\mathcal{E}_i\rangle=*(\eta_i\wedge *d\mathcal{E}_i)=0.$$ 
Denote $$\Theta_i:=*(\eta_1\wedge\eta_2\wedge d\eta_i),\ i=1, 2.$$  They measure the non-integrability of the horizontal distribution orthogonal to the $\mathbb T$ orbits. Then 
$$\Theta_1=\langle\eta_2, d\mathcal{E}_1\rangle=\mathcal L_{X_2}\mathcal{E}_1$$ 
and 
$$\Theta_2=-\mathcal L_{X_1}\mathcal{E}_2.$$
Since $\mathcal L_{X_1}\eta_2=\mathcal L_{X_2}\eta_1=0$, we see that $d\Theta_1=d\Theta_2=0$, so $\Theta_1$ and $\Theta_2$ are both constants. 
 
\begin{definition}
	A 4-dimensional toric Ricci-flat manifold $(M, g)$ is called \emph{integrable} if the horizontal distribution is integrable, or equivalently, if $\Theta_1=\Theta_2=0$.
\end{definition}
The integrability property is also known in the literature as \emph{surface-orthogonality} \cite{Calderbank2002,Joyce1995}. In this paper, our main interest lies in the case when the $\mathbb T$ action has a fixed point,  so we will restrict our discussion to the integrable setting.

In the rest of this subsection we will derive the local dimensional reduction of the Ricci-flat equation. It is well-known that this results into an axisymmetric harmonic map into the hyperbolic plane, see for example  \cite{Lott, Kunduri}. For the convenience of readers we include a brief derivation here.  Denote by $M^\circ$ the set of points in $M$ with trivial stabilizer and denote $N^\circ=M^\circ/\dT$.

Locally in $M^\circ$ we can write the integrable metric $g$ in the form 
\begin{equation} \label{eqn-local form for T2 invariant metric}
	g=\sum_{i,j}G_{ij}(x_1, x_2)d\phi_id\phi_j+\sum_{\alpha, \beta}h_{\alpha\beta}(x_1, x_2)dx_\alpha dx_\beta,
\end{equation}
where $\{x_1, x_2\}$ are local coordinates on $N^\circ$, $\{\phi_1, \phi_2\}$ are fiber coordinates such that $\p_{\phi_i}=X_i$. We orient the $\dT$ fibers using the form $d\phi_1\wedge d\phi_2$, which then induces an orientation on the base $N^\circ$. If we change the choice of the integral basis via an element $A\in GL(2; \dZ)$ then these orientations are preserved if and only if $\det(A)=1$.

Notice that we have $\eta_i=\sum_jG_{ij}d\phi_j$. We write  the \emph{Gram matrix} $G=(G_{ij})>0$ as $$G=\rho\Phi,$$
 where $\rho=\sqrt{\det G}>0,$ and $\Phi$ is a map into the hyperbolic plane $\mathcal H$. 
Denote \begin{equation}\label{defining equation for J-2}J:=G^{-1}dG=\rho^{-1}d\rho\cdot \Id+\Phi^{-1}d\Phi.
\end{equation}

\noindent Below will use $\ \widehat{}\ $ to denote operators on the base and $\ \widetilde{} \ $ to denote operators on the fibers. Notice that
\begin{gather}
	d\phi_j\wedge \widetilde{*}\eta_i=\delta_{ij} \rho d\phi_1\wedge d\phi_2,\notag\\
	d(\rho^{-1}\widetilde{*}\eta_i)=0.\notag
\end{gather} 
\noindent We investigate the Ricci-flat equation for a metric $g$ of the form  \eqref{eqn-local form for T2 invariant metric}.  By  \eqref{Laplacian eta},  
\begin{equation}
	d^*d\eta_i=2Ric.\eta_i.
\end{equation}
We compute \begin{equation}
 	d\eta_i=\sum_jd G_{ij}\wedge d\phi_j=\rho\sum_j J_{ji}\wedge \rho^{-1}\eta_j,
\end{equation}
\begin{equation}
	*d\eta_i=-\rho\sum_j\widehat{*} J_{ji}\wedge \widetilde{*}\rho^{-1}\eta_j,
\end{equation}
\begin{equation}
d*d\eta_i=-\sum_j\widehat d(\rho\widehat{*}J_{ji})\wedge \widetilde{*}\rho^{-1}\eta_j.
\end{equation}
It follows that the horizontal components of $Ric.\eta_i$ always vanishes,  and its vertical component vanishes if and only if 
\begin{equation}
	\widehat d(\rho \widehat{*}J)=0.
\end{equation}
Next we deal with the horizontal part of the Ricci curvature. Here we use the Bochner-Weitzenb\"ock formula. For $f=f(x_1, x_2)$, we have
\begin{equation}\label{Weitzenbock1}
	\frac{1}{2}\Delta|\nabla f|^2=|\text{Hess}f|^2+\langle \nabla \Delta f, \nabla f\rangle+Ric(\nabla f, \nabla f),
\end{equation}
\begin{equation}\label{Weitzenbock2}
	\frac{1}{2}\widehat\Delta|\widehat{\nabla} f|^2=|\widehat{\text{Hess}}f|^2+\langle \widehat{\nabla} \widehat\Delta f, \widehat{\nabla} f\rangle+\widehat{Ric}(\widehat{\nabla} f, \widehat{\nabla} f).
\end{equation}
Notice that $\nabla f$ is the horizontal lift of $\widehat{\nabla}f$. 
\begin{lemma}
The horizontal leaves are totally geodesic.	
\end{lemma}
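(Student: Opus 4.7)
The plan is to exploit the explicit block-diagonal form \eqref{eqn-local form for T2 invariant metric} of the metric on $M^\circ$ (which holds thanks to the integrability assumption $\Theta_1=\Theta_2=0$), together with the standard fact that the fixed-point set of an isometry is totally geodesic. Being totally geodesic is a local and open condition, so it suffices to verify it on $M^\circ$; the conclusion then propagates to the closure of a horizontal leaf in $M$.

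The key observation is that, for any fixed value $(\phi_1^0,\phi_2^0)$, the involution
\[
\sigma:(\phi_1,\phi_2,x_1,x_2)\longmapsto (2\phi_1^0-\phi_1,\,2\phi_2^0-\phi_2,\,x_1,x_2)
\]
is a local isometry of $(M^\circ,g)$. Indeed, the vertical part $\sum_{i,j}G_{ij}(x)\,d\phi_id\phi_j$ is preserved because each $d\phi_i$ changes sign but every term is a product of two such differentials, and because $G_{ij}$ depends only on the base coordinates; the horizontal part $\sum h_{\alpha\beta}(x)dx_\alpha dx_\beta$ is left untouched. The fixed set of $\sigma$ is precisely $\{\phi_1=\phi_1^0,\phi_2=\phi_2^0\}$, which is exactly the horizontal leaf through any point with those torus coordinates. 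Hence every horizontal leaf (locally on $M^\circ$) is totally geodesic.

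An alternative (but essentially equivalent) approach is a direct Koszul-formula computation: for coordinate horizontal vector fields $U=\partial_{x_\alpha}$, $V=\partial_{x_\beta}$ and vertical $W=\partial_{\phi_i}$, the three bracket terms in
\[
2g(\nabla_UV,W)=Ug(V,W)+Vg(U,W)-Wg(U,V)+g([U,V],W)-g([V,W],U)-g([U,W],V)
\]
vanish since all the relevant commutators are zero, the first two terms vanish by orthogonality of horizontal and vertical, and the middle term $Wg(U,V)=\partial_{\phi_i}h_{\alpha\beta}=0$ because $h_{\alpha\beta}$ is independent of $\phi$. Thus the vertical component of $\nabla_UV$ vanishes, which is the defining property of a totally geodesic submanifold.

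There is no real obstacle here: the statement is a structural consequence of the block-diagonal form of \eqref{eqn-local form for T2 invariant metric}, and the only point worth being careful about is that the argument is local on $M^\circ$, which is enough since total geodesy is a pointwise condition on the second fundamental form.
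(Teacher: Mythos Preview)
Your proof is correct, but the paper takes a different route. The paper argues via the general fact about Riemannian submersions that the horizontal lift of a base geodesic is a geodesic in the total space; hence every horizontal tangent direction is tangent to a horizontal geodesic, and since the horizontal distribution is integrable such a geodesic stays in its leaf, making the leaf totally geodesic. Your two arguments instead exploit the explicit block-diagonal form \eqref{eqn-local form for T2 invariant metric} directly: the reflection $\sigma$ trick (fixed sets of isometries are totally geodesic) and the Koszul computation. Both of yours are more elementary and hands-on, while the paper's argument is coordinate-free and would apply to any Riemannian submersion with integrable horizontal distribution, without needing the fibers to carry an involution. Your reflection argument depends on the specific $\mathbb{R}^2$/torus fiber structure, but that is exactly the setting here, so nothing is lost. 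One small remark: the aside about propagating to the closure in $M$ is unnecessary --- the lemma is stated and used entirely on $M^\circ$.
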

\begin{proof}
For a general Riemannian submersion, given a geodesic on the base and a lift of the initial point to a fiber, there is always a unique horizontal lift to a geodesic on the total space. So each horizontal tangent direction is tangential to a horizontal geodesic.  Since the horizontal distribution is integrable in our case, this shows that the horizontal leaves are totally geodesic. 
\end{proof}

From the lemma it follows that
\begin{equation}\label{eq:hess}
	\text{Hess} f=\widehat{\text{Hess}}f-df(\Pi), 
\end{equation}
where $\Pi$ is the second fundamental form of the fibers and is given by
\begin{equation}
	\Pi_{ij}=-\frac{1}{2}\widehat{\nabla} G_{ij}.
\end{equation}
So 
\begin{eqnarray*}
	|\text{Hess}f|^2&=&|\widehat{\nabla} ^2f|^2+\frac{1}{4}\sum_{i,j,k,l}G^{ik}G^{jl}\langle\widehat{\nabla} G_{ij}, \widehat{\nabla} f\rangle\langle\widehat{\nabla} G_{kl}, \widehat{\nabla} f\rangle\\&=&|\widehat{\nabla} ^2f|^2+\frac{1}{4}\Tr\langle J, \widehat{\nabla} f\rangle\langle J, \widehat{\nabla} f\rangle.
\end{eqnarray*}
Taking the trace of \eqref{eq:hess} we obtain
\begin{equation}
	\Delta f=\widehat\Delta f-\langle\widehat{\nabla} f, H\rangle,
\end{equation}
where $$H=-\widehat{\nabla} \log\rho$$ is the mean curvature vector field.
By comparing \eqref{Weitzenbock1} and \eqref{Weitzenbock2}, we obtain that the vanishing of the horizontal part of the Ricci curvature is equivalent to that for all $f$,
\begin{align*}
\widehat{Ric}(\widehat{\nabla} f, \widehat{\nabla} f)&=-\frac{1}{2}\langle \widehat{\nabla}\log\rho, \widehat{\nabla}|\widehat{\nabla} f|^2\rangle+\langle\widehat{\nabla}\langle \widehat{\nabla}\log\rho, \widehat{\nabla} f\rangle, \widehat{\nabla} f\rangle+\frac{1}{4}\Tr\langle J, \widehat{\nabla} f\rangle\langle J, \widehat{\nabla} f\rangle
	\\
&=\widehat{\text{Hess}}\log\rho (\widehat{\nabla} f, \widehat{\nabla} f)+\frac{1}{4}\Tr\langle J, \widehat{\nabla} f\rangle\langle J, \widehat{\nabla} f\rangle
	+\langle\widehat{\nabla} \log\rho, \widehat{\nabla}_{\widehat{\nabla} f}\widehat{\nabla} f-\frac{1}{2}\widehat{\nabla}|\widehat{\nabla} f|^2\rangle.
\end{align*}
It is easy to check that $\widehat{\nabla}_{\widehat{\nabla} f}\widehat{\nabla} f-\frac{1}{2}\widehat{\nabla}|\widehat{\nabla} f|^2\equiv0$. So we arrive at 
\begin{equation}
\widehat{Ric}=\widehat{\text{Hess}}\log \rho+\frac{1}{4}\Tr(J\otimes J).	
\end{equation}
Combining the above we get the dimension reduction of the Ricci-flat equation in terms of a system of two equations
\begin{numcases}{}
		\widehat d(\rho \widehat{*}J)=0, \label{dimension reduction equation 1}
\\
		\widehat{Ric}=\widehat{\text{Hess}}\log \rho+\frac{1}{4}\Tr(J\otimes J).	
\label{dimension reduction equation 2}
		\end{numcases}
Taking the trace of \eqref{dimension reduction equation 1} we see that $$\widehat{d}^*\widehat{d}\rho=0,$$ so $\rho$ is a harmonic function on $N^\circ$. Let $z$ be a local harmonic conjugate of $\rho$, that is, a function such that $\widehat dz=\widehat{*}\widehat d\rho$; it is well-defined up to an additive constant. 
\begin{definition}
	We say that $(M, g)$ is regular if $d\rho\neq 0$ on $M^\circ$. 
\end{definition}
Suppose $(M, g)$ is regular, then $\{\rho, z\}$ form local conformal coordinates on $N^\circ$ in which we may write 
$$\widehat g=e^{2\nu} (d\rho^2+dz^2)$$
for a smooth local function $\nu$.  Equation \eqref{dimension reduction equation 1} then becomes
\begin{equation}\label{eqn-EL-2}
	(\rho\Phi^{-1}\Phi_\rho)_\rho+(\rho\Phi^{-1}\Phi_z)_z=0.
\end{equation}
If we use the  cylindrical coordinates $(\rho, \phi, z)$ on $\R^3$ with  $x_1=\rho \cos\phi$, $x_2=\rho\sin\phi$ and $x_3=z$, then \eqref{eqn-EL-2} is exactly the equation for $\Phi$ which induces an axisymmetric harmonic map from a domain in $\R^3$ into $\mathcal H$. 

Denote 
$$\mathcal U=\Id+\rho\Phi^{-1}\Phi_\rho,$$ 
and 
$$\mathcal V=\rho\Phi^{-1}\Phi_z.$$ 
 By looking at different components of \eqref{dimension reduction equation 2}, it is equivalent to a combination of three equations below
\begin{gather} 
	\widehat{R}_{\rho z}=0 \Leftrightarrow \p_z\nu=\frac{1}{4\rho}\Tr(\mathcal U \mathcal V), \label{eqn-Ricci-1}\\
	\widehat{R}_{\rho\rho}=\widehat{R}_{zz}\Leftrightarrow \p_\rho\nu=-\frac{1}{2\rho}+\frac{1}{8\rho}\Tr(\mathcal U^2-\mathcal V^2), \label{eqn-Ricci-2}\\
	\widehat{R}_{zz}=\frac{1}{\rho}\partial_{\rho}\nu+\frac{1}{4\rho^2}\Tr(\mathcal V^2). \label{eqn-Ricci-3}
\end{gather}
One can  check that \eqref{eqn-Ricci-3}  follows from \eqref{eqn-EL-2}, \eqref{eqn-Ricci-1} and \eqref{eqn-Ricci-2} by taking further derivatives, hence is redundant. The compatibility condition for \eqref{eqn-Ricci-1} and \eqref{eqn-Ricci-2} also  follows from \eqref{eqn-EL-2}.

The upshot is that the Ricci-flat equation for $g$ is locally reduced to the following system of equations on the pair $(\Phi, \nu)$
\begin{align} \label{e:eqn2.33}
	\left\{\begin{aligned}
		&(\rho\Phi^{-1}\Phi_\rho)_\rho+(\rho\Phi^{-1}\Phi_z)_z=0.\\
		&\p_z\nu=\frac{1}{4\rho}\Tr(\mathcal U\mathcal V),\\
		&\p_\rho\nu=-\frac{1}{2\rho}+\frac{1}{8\rho}\Tr(\mathcal U^2-\mathcal V^2).
	\end{aligned}\right.
\end{align}

\subsection{Augmented harmonic maps}
 
Let $U$ be an open subset of $\dH^\circ$.  
\begin{definition}
	A smooth map $\Phi: U\rightarrow \mathcal H$ is called \emph{harmonic} if it induces an axisymmetric harmonic map from a domain $U\subset\R^3$ into $\mathcal{H}$, that is, $\Phi$ satisfies the equation
	\begin{equation}
		(\rho\Phi^{-1}\Phi_\rho)_\rho+(\rho\Phi^{-1}\Phi_z)_z=0.
	\end{equation}
\end{definition}
\begin{remark}
    The terminology of being harmonic used here is simply for notational convenience. It should not be confused with the usual harmonic map from the 2-dimensional space $U$.
\end{remark}

\begin{definition}
	An \emph{augmentation} for a harmonic map $\Phi:U\rightarrow\mathcal H$ is  a function $\nu$ on $U$ satisfying \eqref{e:eqn2.33}. 

A pair $(\Phi, \nu)$ is called an \emph{augmented} harmonic map.
\end{definition}

The above discussion says that a 4-dimensional regular, integrable, toric Ricci-flat metric gives rise to an augmented harmonic map. Conversely, given an augmented harmonic map $(\Phi, \nu)$ on an open subset $U\subset \dH^\circ$, on $U\times \R^2$ we can write down a Ricci-flat metric 
\begin{equation}\label{e:from aug harmonic map to Ricci flat}g=e^{2\nu}(dz^2+d\rho^2)+\rho\Phi,
\end{equation}
where we view $\rho\Phi$ as a family of flat metrics on $\R^2$ (with coordinates $\phi_1, \phi_2$ as above). Choosing any lattice $\Lambda\subset\R^2$ then yields a toric Ricci-flat metric on $U\times \R^2/\Lambda$.

Notice that if  $U$ is simply-connected, given a harmonic map $\Phi$, one can use \eqref{e:eqn2.33} to uniquely solve the function $\nu$ up to an additive constant. Giving an augmentation  amounts to fixing a choice of $\nu$.

\begin{remark}
  On the original 4-manifold $M$, the local coordinates $(\rho, z, \phi_1, \phi_2)$ are usually referred to as the {Weyl-Papapetrou} coordinates.
\end{remark}

Given an augmented harmonic map $(\Phi, \nu)$,  it is easy to check that 

\begin{itemize}
	\item  For any $\lambda>0$, $\underline z\in \mathbb R$, $(\varphi_\lambda^*T_{\underline z}^*\Phi, \varphi_\lambda^*T_{\underline z}^*\nu)$  defines an augmented harmonic map. Here $\varphi_\lambda$ and $T_{\underline z}$ are defined in \eqref{eqn:dilation} and \eqref{eqn:translation}.
	\item For $P \in SL_{\pm}(2,\mathbb{R})$, $(P. \Phi, \nu)$  defines an augmented harmonic map, where $P.\Phi$ is defined in \eqref{eqn:group actionon Phi}.
\end{itemize}

\subsection{Points of non-trivial stabilizer}\label{ss:nuts and bolts}

We introduce a few notations first. 

\begin{notation}
	Given a toric Ricci-flat metric ($M,g)$, we denote 
	\begin{itemize}
		\item by $M^\sharp$ the set of points fixed by the $\dT$ action;
		\item by $M^{*}$ the set of points with stabilizer isomorphic to $\dS^1$; 
        	\item $N^\sharp=M^\sharp/\dT\simeq M^\sharp$;
		\item $N^*=M^*/\dT$;
		\item $N=M/\dT$.
	\end{itemize} 
\end{notation}
It follows from definition that 
$$M=M^\circ\cup M^\sharp\cup M^*.$$

	Locally around a point $p\in M^*$, the $\dT$ action is modeled on the standard action of $\dT=\dS^1\times \dS^1$ on $\R^2\times \dS^1\times I$, where the first $\dS^1$ acts by rotation on the $\R^2$ factor and the second $\dS^1$ action by rotation on the $\dS^1$ factor. It is then easy to see that $N\setminus N^\sharp$ is naturally a smooth manifold with boundary and the quotient metric is  smooth up to the boundary.
	
	Locally around a point $p\in M^\sharp$, the $\dT$ action is modeled on the standard action of $\dT=\dS^1\times \dS^1$ on $\R^2\times \R^2$, where the first $\dS^1$ acts by rotation on the first $\R^2$ and the second $\dS^1$ acts by rotation on the second $\R^2$. By studying the local structure one can see that $M^\sharp$ (and also $N^\sharp$) consists of only isolated points and  $N$ is naturally a smooth manifold with corner at points in $N^\sharp$; the local model around $N^\sharp$ is the first quadrant in $\R^2$. The metric is  smooth up to the corner.

 \section{Rod structures and toric gravitational instantons}\label{sec:rod structures}
 \subsection{Rod structures} \label{ss:defintion of rod structures} To understand the global theory  we introduce the notion of an \emph{enhanced rod structure}. The terminology ``rod structure'' has been widely used in the physics literature, see for example \cite{OrlikRaymond, Harmark, Hollands, ChenTeo2, Kunduri}. Mathematically, a rod structure serves as a substitute  of the Delzant polytope in toric K\"ahler geometry when the complex structure is absent. Our precise definition used in this paper is given as follows, which suits our purpose  and differs slightly from what is used in most literature.
\begin{definition}[Rod structure]\label{def:rod structures}

A \emph{rod structure} \( \mathfrak{R} = (\{z_j\}_{j=1}^{n}, \{\mathfrak{v}_j\}_{j=0}^{n}, \sharp) \) consists of:  
\begin{itemize}
    \item A collection of \emph{turning points} \( z_1 = 0 < \dots < z_n \) on \( \Gamma := \mathbb{R} \).
    
    We also set \( z_0 = -\infty \) and \( z_{n+1} = +\infty \). 
    
    For \( 0 \leq j \leq n \), the open interval \( \mathfrak{I}_j := (z_j, z_{j+1}) \subset \Gamma \) is called a \emph{rod}.
    \item A collection of \emph{rod vectors} \( \mathfrak{v}_0, \dots, \mathfrak{v}_n \in \mathbb{RP}^1 \) satisfying the \emph{non-degeneracy condition} that \( \mathfrak{v}_j \neq \mathfrak{v}_{j+1} \) for \( 0 \leq j \leq n-1 \).
    
    Each \( \mathfrak{v}_j \) can be represented as \( \mathfrak{v}_j = [(a_j, b_j)^{\top}] \), where \( (a_j, b_j)^{\top} \) is a  vector in \( \mathbb{R}^2 \).
    \item An Asymptotic \emph{Type} $\sharp\in \{\Ae, \ALFp, \ALFm\}$ if $\mv_0\neq\mv_n$, and $\sharp \in \{\AFa|\beta\in \R)\}$ if $\mv_0=\mv_n$. 
\end{itemize}

The quantity \( \mathfrak{s}(\mathfrak{R}) = |z_n| \) is referred to as the \emph{size} of $\mR$. 

\

In the above we assume $n\geq 1 $. We also define a rod structure $\mR$ with $n=0$ to be one where the set of turning points is empty and there is exactly one rod vector $\mv_0$ (in particular, $\mv_0=\mv_n$).

When we do not specify the Type, we call $\mR$ an \emph{untyped rod structure}; when we relax the non-degeneracy condition we call $\mR$ a \emph{weak rod structure}. These notions will make convenient our later discussion. For a weak rod structure $\mR$, there is a natural rod structure $\mR'$ associated to $\mR$, obtained by combining all the adjacent rods with the same rod vectors into a single rod. 

 \end{definition} 
It is often helpful to represent an untyped rod structure by an unbounded polygon in $\R^2$ such that the length of each side represents the rod length and the normal direction of each side represents the rod vector. See Figure \ref{Figure:Example of polygon} for an illustration of a rod structure with 2 turning points, $\mv_0=\mv_2=[(1,0)^\top]$, $\mv_1=[(0, 1)^\top]$.

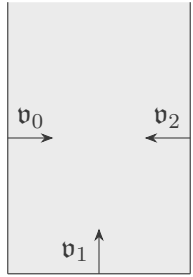
\begin{figure}[ht]
    \begin{tikzpicture}[scale=0.6]
        \draw (-2,6)--(-2,0);
        \draw (-2,0)--(2,0);
        \draw (2,0)--(2,6);

        \draw[-Stealth] (-2,3)--node [above]{$\mathfrak{v}_0$} (-1,3);
        \draw[-Stealth] (2,3)--node [above]{$\mathfrak{v}_2$} (1,3);
        \draw[-Stealth] (0,0)--node [left]{$\mv_1$} (0,1);

        \filldraw[color=gray!50,fill=gray!50,opacity=0.3](-2,6)--(-2,0) --(2,0)--(2,6);
    \end{tikzpicture}
	\caption{A polygon that represents an untyped rod structure.}
	 \label{Figure:Example of polygon}   
\end{figure}

 \begin{remark}\label{rem:term}
      Here $\Ae$ stands for {Asymptotically Euclidean}, $\ALF^{+}$ stands for  positively oriented {Asymptotically Locally Flat}, $\ALF^{-}$ stands for  negatively oriented {Asymptotically Locally Flat}, $\AFa$ stands for {Asymptotically Flat} with rotation angle $2\pi\beta$. These are possible Asymptotic Types of toric gravitational instantons that we will be interested in this paper, see Section \ref{ss:model examples} for model examples. When we discuss harmonic maps which are strongly tamed by a rod structure, the Asymptotic Type indeed uniquely determines the asymptotic model; see the discussion in Section \ref{ss:model maps and tameness}.
     \end{remark}

 \begin{definition}[Enhancement]\label{def:enhancement}
 	Given a rod structure \( \mathfrak{R} \), an \emph{enhancement} for \( \mathfrak{R} \) is a lattice \( \Lambda \subset \mathbb{R}^2 \) such that all the rod vectors can be represented by vectors in \( \Lambda \).  

A rod structure is \emph{enhanceable} if admits an enhancement.

	An \emph{enhanced rod structure} $(\mathfrak R, \Lambda)$ consists of a rod structure $\mathfrak R$ and an enhancement $\Lambda$ for $\mR$.

\end{definition}

\begin{definition}[Primitive rod vector]
Given an enhanced rod structure $(\mR, \Lambda)$, for each rod vector $\mv_j$, a \emph{primitive rod vector} $\bbv_j$ is defined to be a primitive vector in $\Lambda$ along the line defined by $\mv_j$.
\end{definition}

Notice that the primitive rod vector is unique up to multiplication by $-1$.  In particular, the norm $|\bbv_j|$ is a well-defined positive number. 
 
 \begin{definition}[Smoothness]
	An enhanced rod structure \( (\mathfrak{R}, \Lambda) \) is \emph{smooth} if, for any \( 0 \leq j \leq n-1 \), 
    $\Lambda$ is generated by primitive rod vectors \( \bbv_j \) along $\mv_j$ and \( \bbv_{j+1} \) along $\mv_{j+1}$.
\end{definition}

\begin{remark}
    Comparing with the notion of a rod structure used in most literature, the main difference here is that we have distinguished the lattice data as an ``enhancement'' for the rod structure. The main purpose is that in this way we have separated the analytic information and the topological information. As can be seen in Section \ref{sec:harmonic maps}, a rod structure defined as above  is sufficient for the analytic study of tame harmonic maps, whereas the enhancement is only necessary when we pass from harmonic maps to toric gravitational instantons. 
\end{remark}
\begin{definition}
We define the following operations on an untyped rod structure \( \mathfrak{R} = (\{z_j\}_{j=1}^n, \{\mathfrak{v}_j\}_{j=0}^n)\)
\begin{itemize}
    \item For \( \lambda > 0 \) and \( \underline{z} \in \mathbb{R} \), we define  
    \[
    \varphi_\lambda ^* \mathfrak{R} = \left(\{\lambda z_j\}_{j=1}^{n}, \{\mathfrak{v}_j\}_{j=0}^{n} \right),
    \]
    and  
    \[
    T_{\underline{z}}^* \mathfrak{R} = \left(\{ z_j + \underline{z} \}_{j=1}^{n}, \{\mathfrak{v}_j\}_{j=0}^{n} \right).
    \]
   
        \item For \( P \in GL_{+}(2; \mathbb{R}) \), we define  
    \[
    P . \mathfrak{R} = \left(\{z_j\}_{j=1}^n, \{P. \mathfrak{v}_j\}_{j=0}^n \right).
    \]
    \item For \( P \in GL_{-}(2; \mathbb{R}) \), we define  
    \[
    P . \mathfrak{R} = \left(\{z_n-z_{n+1-j}\}_{j=1}^n, \{P. \mathfrak{v}_{n-j}\}_{j=0}^n \right).
    \]
     \item { We define the \emph{opposite} rod structure
       \[
   \overline{\mathfrak{R}} = \left(\{z_j\}_{j=1}^n, \{R_1.\mathfrak{v}_{j}\}_{j=0}^n \right), 
    \]
    where $R_1$ is defined in \eqref{e:definition of matrices}.}
\end{itemize}

These also induce operations on an enhanced untyped rod structure: if $\mR$ is further enhanced by $\Lambda$, then we define
    $\varphi_\lambda^*\Lambda:=\Lambda$, 
 $T_{\underline z}^*\Lambda:=\Lambda$, and define $P.\Lambda$  by the natural action.
\end{definition}

\begin{definition}
    For a rod structure $\mathfrak R=(\{z_j\}_{j=1}^n, \{P. \mv_j\}_{j=0}^n, \sharp)$, 
  we define the \emph{opposite} rod structure
       \[
   \overline{\mathfrak{R}} = \left(\{z_j\}_{j=1}^n, \{R_1.\mathfrak{v}_{j}\}_{j=0}^n, \overline\sharp \right), 
    \]
    where $R_1$ is defined in \eqref{e:definition of matrices}.
    Here we make the convention that 
    \begin{equation}
        \overline{\ALFp}=\ALFm, \ \ \overline{\ALFm}=\ALFp, \ \ \overline{\Ae}=\Ae,  \ \ \overline{\AFa}=\AF_{-\beta}.
    \end{equation}

\end{definition}
\begin{remark}Notice that we do not define the other operations on Asymptotic Types. In particular, by our convention, when we perform the operation of rescaling, translation, or $GL(2;\R)$ action, to a rod structure, we only get an untyped rod structure. 
\end{remark}

\begin{definition}[Equivalence]

\

\begin{itemize}
	\item Two untyped rod structures $\mathfrak R_1$ and $\mathfrak R_2$ are called \emph{equivalent} if 
    there is an element $P\in GL(2;\mathbb{R})$ such that $\mathfrak{R}_1=P.\mathfrak{R}_2$.
    \item Two enhanced untyped rod structures $(\mathfrak R_1, \Lambda_1)$ and $(\mathfrak R_2, \Lambda_2)$ are called  \emph{equivalent} if there is an element $P\in GL(2;\mathbb{R})$ such that $\mathfrak{R}_2=P.\mathfrak{R}_1$ and $\Lambda_2=P.\Lambda_1$. 
\end{itemize}
	\end{definition}
In general $\mR$ and $\overline{\mR}$ are not equivalent. Instead, $\overline{\mR}$ is equivalent to the untyped rod structure $(\{z_n-z_{n+1-j}\}_{j=1}^n, \{\mv_j\}_{j=0}^n)$.

\

Given an enhanced untyped rod structure $(\mathfrak{R}, \Lambda)$ we consider the  4-manifold $M^\circ=\mathbb H^\circ\times\mathbb{R}^2/\Lambda$, with the canonical orientation induced by the 4-form $d\rho\wedge dz\wedge d\phi_1\wedge d\phi_2$. It admits the action of a 2-torus $\mathbb T$ which can be identified with $\mathbb R^2/\Lambda$. The open manifold $M^\circ$ can be compactified into a toric orbifold $M$ by adding points with stabilizers over $\Gamma=\p\mathbb H^\circ$. Over each rod $\mathfrak I_j$ the stabilizer is an $\dS^1$ whose Lie algebra is the line in $\R^2=\Lambda\otimes _{\mathbb Z}\mathbb R$ corresponding to the rod vector $\mathfrak v_j$. At each turning point $z_j$ we obtain an orbifold point in $M$, a neighborhood of which can be topologically identified with an open subset in $\mathbb R^4/H$. The orbifold group $H$ can be determined as follows. We may identify $\Lambda$ with $\mathbb Z^2$ by an element $D\in GL(2,\mathbb{R})$, such that $\mathfrak v_{j-1}=[(0,1)^{\top}]$ and $\mathfrak v_j=[(p,-q)^{\top}]$ with $0\leq q\leq p-1$ and $(p, q)=1$. Then $H$ is isomorphic to the cyclic group $\mathbb{Z}_p$ acting on $\mathbb{R}^4=\mathbb{C}^2$ in the way that $(z_1,z_2)\mapsto (\zeta_{p}z_1,\zeta_p^q z_2)$. In particular if $(\mR, \Lambda)$ is smooth, then $M$ is a smooth manifold. It is also easy to see that equivalent enhanced untyped rod structures yield isomorphic toric orbifolds. If we change $\mR$ to the opposite untyped rod structure $\overline {\mR}$, then we simply obtain an isomorphic toric manifold with the opposite orientation.  Obviously for this topological discussion the  Asymptotic Type  does not play a role. 

 \

We next define the notion of \emph{degree} for rod structures. Given a rod structure $\mathfrak{R}$, we can naturally associate a loop $l_\mathfrak{R}$ in $\mathbb{RP}^1$. Start with the point $\mathfrak{v}_0\in \R\mathbb P^1$ and choose a unit representative vector $v_0\in\mathbb{R}^2$ for $\mv_0$. Then we inductively choose a unit representative vector $v_j$ for $\mathfrak{v}_j$ (for all $j\leq n$) in a way such that $v_j$ is given by rotating $v_{j-1}$ counter-clockwise by an angle smaller than $\pi$. Then for $j\leq n-1$ we connect each $v_j$ and $v_{j+1}$ via the natural counter-clockwise rotation in $\mathbb{R}^2$. If $\mathfrak{v}_0\neq\mathfrak{v}_n$, then we further rotate $v_n$ counter-clockwise by an angle smaller than $\pi$ to connect it to $\pm v_0$ (there is exactly one choice). Passing to the projectivization $\mathbb{RP}^1$, this defines a loop $l_{\mathfrak R}$. It is straightforward to verify that the homotopy class of $l_{\mathfrak{R}}$ is well-defined and independent of the initial choice of $v_0$.
Denote
    \begin{align}
        e(\mathfrak{R})=\frac{1}{\pi}\int_{l_{\mathfrak R}}d\theta\in\mathbb{Z}_{\geq0},
    \end{align}
where $d\theta$ is the standard angular 1-form on $\R\mathbb P^1$.
\begin{definition}\label{def:degrees}
 We define the degree $d(\mathfrak{R})$ of a rod structure $\mathfrak{R}$ to be 
\begin{equation}
    d(\mR)=\begin{cases}
        e(\mR)-1, &\ \text{if}\ \sharp=\Ae \ \text{or}  \ALFp;\\
        e(\mR), &\ \text{if}\ \sharp= \ALFm \ \text{or} \ \AFa.
    \end{cases}
\end{equation}
    
\end{definition}
 In the definition, the shift by $-1$ is chosen only for notational convenience, such that the statement of Theorem \ref{thm:equivalence of degree and type} is uniform for all  asymptotic types. Notice that we always have $d(\mR)\geq 0$. Moreover, when $\sharp=\ALFm$ we must have $d(\mR)\geq 1$ and when $\sharp=\AFa$ we must have $d(\mR)\geq 1$ unless there are no turning points. 

Notice that in general $d(\mathfrak{R})\neq d(\mathfrak{\overline{R}})$. The degree  precisely characterizes connected components of the space of rod structures of fixed Type $\sharp$. Denote by $\mathcal{S}({\sharp\mid z_1,\ldots,z_n})$ the space of rod structures of  Asymptotic Type  $\sharp$, with fixed  turning points $z_1,\ldots,z_n$. Note that by definition we have for all $j$, $\mathfrak{v}_j\neq\mathfrak{v}_{j+1}$. So $\mathcal{S}({\sharp\mid z_1,\ldots,z_n})$ is an open subset of $(\mathbb R\dP^1)^{n+1}$ and we equip it with the induced topology.
\begin{proposition}\label{p:degreee topological invariant}
    Two rod structures $\mathfrak{R}$ and $\mathfrak{R}'$ in $\mathcal{S}(\sharp\mid z_1,\ldots,z_n)$ are in the same connected component if and only if $d(\mathfrak R)=d(\mathfrak R')$.

\end{proposition}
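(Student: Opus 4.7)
The plan is to pass to the universal cover of $\R\dP^1$, where the winding count $e(\mR)$ becomes a simple linear quantity and both directions of the proposition reduce to elementary convexity. Identify $\R\dP^1 \cong \R/\pi\dZ$ by angle, and let $\tilde{\mathcal{S}}$ be the set of tuples $(\theta_0, \ldots, \theta_n) \in \R^{n+1}$ with $\theta_{j+1} - \theta_j \in (0, \pi)$ for each $0 \leq j \leq n-1$. The map $(\theta_j) \mapsto ([\theta_j])$ exhibits $\tilde{\mathcal{S}}$ as a covering space of the open subset of $(\R\dP^1)^{n+1}$ defined by $\mv_j \neq \mv_{j+1}$, with deck group $\dZ$ acting by simultaneous translation of all $\theta_j$ by $\pi$. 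Every $\mR$ lifts to $\tilde{\mathcal{S}}$: pick any representative $\theta_0$ of $\mv_0$ and inductively take $\theta_{j+1}$ to be the unique representative of $\mv_{j+1}$ in $(\theta_j, \theta_j + \pi)$, which exists by the non-degeneracy condition. This lifting mirrors precisely the counter-clockwise-by-less-than-$\pi$ rule used in the construction of $l_\mR$.

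For the ``only if'' direction, unwinding the definition of $l_\mR$ shows that on any lift one has $e(\mR) = (\theta_n - \theta_0)/\pi$ in the $\AFa$ case (where $\mv_0 = \mv_n$ forces $\theta_n - \theta_0 \in \pi\dZ_{\geq 1}$) and $e(\mR) = \lceil(\theta_n - \theta_0)/\pi\rceil$ in the remaining types (where $\theta_n - \theta_0 \notin \pi\dZ$ is automatic from $\mv_0 \neq \mv_n$). Both formulas are invariant under the deck action and continuous as integer-valued functions on $\tilde{\mathcal{S}}$, so they descend to a locally constant function on $\mathcal{S}(\sharp \mid z_1, \ldots, z_n)$. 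Hence $d(\mR)$ is constant on each connected component.

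For the converse, fix an integer $e$ realized by $\mathcal{S}(\sharp \mid z_1, \ldots, z_n)$ and let $\tilde{\mathcal{S}}_e$ be its preimage in $\tilde{\mathcal{S}}$. It is cut out from $\R^{n+1}$ by the affine inequalities $0 < \theta_{j+1} - \theta_j < \pi$, together with either the linear equation $\theta_n - \theta_0 = e\pi$ (in the $\AFa$ case) or the open slab condition $(e-1)\pi < \theta_n - \theta_0 < e\pi$ (otherwise). In both cases $\tilde{\mathcal{S}}_e$ is convex, hence path-connected, and its image under projection is exactly $\{\mR : e(\mR) = e\}$, which is therefore path-connected. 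Combined with Definition \ref{def:degrees}, which determines $d$ from $e$ and $\sharp$, this completes the proof. I do not foresee any serious obstacle; the argument rests on the elementary observation that $l_\mR$ factors through the abelian universal cover $\R \to \R\dP^1$, after which convexity handles both halves, and the only care needed is the mild case distinction between the closed condition $\mv_0 = \mv_n$ (for $\AFa$) and the open condition $\mv_0 \neq \mv_n$ (for the other types).
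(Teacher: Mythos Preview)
Your proof is correct and follows essentially the same approach as the paper: both pass to the universal cover $\R\to\R\dP^1$, identify rod structures with tuples of real numbers satisfying the gap conditions $0<\theta_{j+1}-\theta_j<\pi$, compute $e(\mR)$ as (the ceiling of) $(\theta_n-\theta_0)/\pi$, and conclude connectedness from convexity of the resulting region. The only cosmetic difference is that the paper normalizes $\theta_0=0$ to kill the deck translation, whereas you keep the full $\R^{n+1}$ and quotient by it; the content is identical.
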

\begin{proof}
    By passing to the universal cover $\R^1$ of $\mathbb{RP}^1$, where a fundamental domain is given by $[0,\pi)$, the representatives $v_0,\ldots,v_n$ that we selected above can be identified as real numbers $0=v_0<\ldots<v_n\in[0,\infty)$. The integer $e(\mathfrak{R})$ is exactly the integer $k$ such that $v_n\in((k-1)\pi,k\pi]$. Note that we moreover have $0<v_j-v_{j-1}<\pi$. Hence, the space $\mathcal{S}(\sharp\mid z_1,\ldots,z_n)$ can be identified with the set of $n$-tuple of numbers $0=v_0<v_1<\ldots<v_n$, with $0<v_j-v_{j-1}<\pi$ and 
    \begin{itemize}
        \item if $\sharp\in\{\Ae, \ALFp, \ALFm\}$, then $v_n\not\in\mathbb{Z}\pi$;
        \item if $\sharp=\AFa$, then $v_n\in\mathbb{Z}\pi$.
    \end{itemize}
From here one can conclude.
\end{proof}
It is easy to see that $\mathcal S(\sharp\mid z_1, \cdots, z_n)$ has exactly $n$ connected components except when it is of Asymptotic Type $\AFa$, in which case it has exactly $n-1$ connected components when $n\geq1$.

\subsection{Toric gravitational instantons}
\begin{definition}
	A toric gravitational instanton is a complete oriented toric Ricci-flat 4-manifold $(M,g)$ which is  integrable and regular, and satisfies that $$\int_{M}|\Rm_g|^2\dvol_g<\infty.$$ 
\end{definition}

Given a toric gravitational instanton, we choose an integral basis $\{\partial_{\phi_1},\partial_{\phi_2}\}$ of $\mathrm{Lie}(\dT)$  and use them to identify $\mathrm{Lie}(\dT)$ with $\mathbb{R}^2$. The \emph{Gram matrix} $G$ is globally defined on $N^\circ$.
As before we set $$\rho:=\sqrt{\det G}\geq0.$$ It is a priori defined on $N^\circ$, and it extends continuously to $N$ and vanishes on $N\setminus N^\circ$. There is also an induced orientation on the quotient $N$. If $N$ is simply-connected (in particular, if $M$ is simply-connected), then we can define a global harmonic conjugate $z$ of $\rho$ which is continuous up to $N$. Here we are taking the harmonic conjugate with the quotient metric and the induced orientation. That is, the orientation such that $d\rho\wedge dz\wedge d\phi_1\wedge d\phi_2$ is positively oriented on $M$.

\begin{definition}
	A toric gravitational instanton is \emph{proper} if there is a globally defined harmonic conjugate $z$ of $\rho$ on $N$ such that  the map $\mathcal F:=(\rho, z)$ induces a homeomorphism from $N$ to $\mathbb H$. 
\end{definition}

	This properness assumption is a natural one since it is satisfied by all the known examples of simply-connected toric gravitational instantons.
    In this paper we only consider toric gravitational instantons which are proper.

A proper  toric gravitational instanton naturally defines a smooth enhanced untyped rod structure  using the map $\mathcal F$. The turning points are given by the images under $\mathcal F$ of the points in $M^\sharp$ and the rods are given by the images of the components of $M^*$. The closure of each component of $M^*$ is topologically either $S^2$ or $\mathbb R^2$. The rod vector is given by the line in $\Lie(\dT)\simeq \R^2$ corresponding to the Lie algebra of the stabilizer $\dS^1$. The smoothness of $M$ implies that the rod structure is both enhanced and smooth.  

If we change the integral basis by an element $P$ in $GL(2;\dZ)$, namely, using $\{P.\partial_{\phi_1},P.\partial_{\phi_2}\}$ as  the new integral basis, then the untyped rod structure is changed to $P^{-1}.\mathfrak{R}$.
For a proper toric gravitational instanton $(M,g)$, if we keep the integral basis but consider the opposite orientation on $M$, then the  resulting  untyped rod structure is equivalent  to $\overline{\mR}$.

\subsection{Key Examples}
\label{ss:model examples}

In this subsection we discuss a few important classes of examples of toric gravitational instantons. We will write down the corresponding  rod structures and  augmented harmonic maps, which will form the local models and fundamental building blocks for our construction. 
\begin{example}[Flat $\R^4$]\label{ex1}
    Consider the flat space $\R^4$, which we identify with $\C^2$ using the complex coordinates $(z_1, z_2)$. Write $$z_1=se^{i\phi_1}\sin \theta, \ \ z_2=se^{i\phi_2}\cos\theta.$$ Then the flat metric takes the form
$$g=ds^2+s^2(d\theta^2+\sin^2\theta d\phi_1^2+\cos^2\theta d\phi_2^2).$$
Consider the $\dT$ action generated by $\p_{\phi_1}$ and $\p_{\phi_2}$. Then $\rho=\frac{1}{2}s^2\sin(2\theta)$ and one can take the harmonic conjugate $z=\frac{1}{2}s^2\cos(2\theta)$.  Denote $r=\sqrt{\rho^2+z^2}$. The corresponding augmented harmonic map is
$$\Phi=\rho^{-1}\begin{pmatrix}
r-z & 0 \\
0 & r+z
\end{pmatrix}, \ \ \nu=-\log (2r).
$$
and the quotient metric on $N$ is $$\widehat g=\frac{1}{2r}(d\rho^2+dz^2),$$
which is isometric to the first quadrant in $\mathbb R^4$. The corresponding rod structure  has exactly one turning point $z_1=0$ and the rod vectors are $\mv_0=[(0, 1)^{\top}]$, $\mv_1=[(1,0)^{\top}]$.

It turns out to be more natural to view $\R^4$ in terms of the Gibbons-Hawking ansatz (see Section \ref{subsubsec:Type I case}. Then it is convenient to  consider an equivalent untyped rod structure $\mR^{\Ae}$ where $\mv_0=[(1, \frac{1}{2})^{\top}]$, $\mv_1=[(1,-\frac{1}{2})^{\top}]$, and the augmented harmonic map is
\begin{equation}\label{e:standard harmonic map R4}\Phi^{\Ae}=\rho^{-1}\begin{pmatrix}
\frac r2 & z \\
z & 2r
\end{pmatrix}, \ \ \nu^{\Ae}=-\log (2r). 
\end{equation}
We make $\mR^{\Ae}$ a rod structure by assigning the Asymptotic Type to be $\Ae$, since the corresponding Ricci-flat metric is the model end  for \emph{Asymptotically Euclidean} spaces. See Figure \ref{Figure:rod structure for R^AE} for a picture of the rod structure.
 
Notice that $\Phi^{\Ae}$ is scale invariant, i.e., $\varphi_\lambda^*\Phi^{\Ae}=\Phi^{\Ae}$ for all $\lambda>0$. This is related to the conical nature of the flat metric on $\R^4$.
\begin{figure}[ht]
    \begin{tikzpicture}[scale=0.6]
        \draw (0,0)--(4,6);
        \draw (0,0)--(-4,6);
        \draw[-Stealth] (2,3)--node [above right]  {$\mathfrak{v}_1=[(1,-\frac{1}{2})^\top]$} (2.75,2.5);
        \draw[-Stealth] (-2,3)--node [above right]{$\mathfrak{v}_0=[(1,\frac{1}{2})^\top]$} (-1.25,3.5);

        \filldraw[color=gray!50,fill=gray!50,opacity=0.3](4,6)--(0,0) --(-4,6);
    \end{tikzpicture}
	\caption{The rod structure $\mR^{\Ae}$.}
	 \label{Figure:rod structure for R^AE}   
\end{figure}
\end{example}

\begin{example}[Taub-NUT]\label{example:positive Taub-NUT}
    Consider the \emph{Taub-NUT space}, which is by definition the (positively oriented) Taub-NUT metric.  In the {Weyl-Papapetrou} coordinates it can be explicitly written as
	\begin{equation*}
		g=H^{-1}(d\phi_1+{A}d\phi_2)^2+H(d\rho^2+dz^2+\rho^2d\phi_2^2),
	\end{equation*}
    where $$H=1+\frac{1}{2r},$$and $${A}=\frac{z}{2r}.$$ The $\dT$ action is generated by $\p_{\phi_1}$ and $\p_{\phi_2}$. 
    This is a hyperk\"ahler metric so can be obtained via the Gibbons-Hawking ansatz (see Section \ref{subsubsec:Type I case}). The orientation is determined by the hyperk\"ahler complex structures. The underlying manifold is diffeomorphic to $\R^4$.
    
    The augmented  harmonic map is given by 
    \begin{equation}\label{e:standard harmonic map ALF}\Phi^{\TNp}=\frac{1}{\rho}\left(\begin{matrix} H^{-1}  & H^{-1}A\\ H^{-1}A& H\rho^2+H^{-1}A^2\end{matrix}\right), \ \ \nu^{\TNp}=\frac{1}{2}\log(1+\frac{1}{2r}).
    \end{equation}
    The rod structure $\mathfrak{R}^{\TN^{+}}$ consists of two rods $\mathfrak{I}_0=(-\infty,0)$ and $\mathfrak{I}_1=(0,\infty)$, with rod vectors given by  ${\mathfrak{v}}_0=[(\frac{1}{2},1)^{\top}]$ and ${\mathfrak{v}}_1=[(-\frac{1}{2},1)^{\top}]$ respectively. We assign the Asymptotic Type to be $\ALFp$, since the corresponding Ricci-flat metric is the model end  for \emph{Asymptotically Locally Flat} spaces. See Figure \ref{Figure:rod structure for R^TN+}.

    \begin{figure}[ht]
    \begin{tikzpicture}[scale=0.6]
        \draw (0,0)--(6,3);
        \draw (0,0)--(-6,3);

        \draw[-Stealth] (3,1.5)--node [right]{$\mathfrak{v}_1=[(-\frac{1}{2},1)^\top]$} (3.5,0.5);
        \draw[-Stealth] (-3,1.5)--node [right]{$\mathfrak{v}_0=[(\frac{1}{2},1)^\top]$} (-2.5,2.5);

        \filldraw[color=gray!50,fill=gray!50,opacity=0.3](6,3)--(0,0) --(-6,3);
    \end{tikzpicture}
	\caption{The rod structure $\mR^{\TNp}$.}
	 \label{Figure:rod structure for R^TN+}   
    \end{figure}
    \end{example}
    
\begin{example}[Anti-Taub-NUT]\label{example:negative Taub-NUT}
    Consider the \emph{anti-Taub-NUT space}, which is by definition the negatively oriented Taub-NUT metric.   We can use the same formula as in the previous example, but with $\phi_1$ and $\phi_2$ interchanged:
	\begin{equation*}
		g=H^{-1}(d\phi_2+{A}d\phi_1)^2+H(d\rho^2+dz^2+\rho^2d\phi_1^2),
	\end{equation*}
This is \emph{negatively oriented}, because the orientation we use in our setting, which is defined by $d\rho dzd\phi_1d\phi_2$, is opposite to the orientation determined by the usual hyperk\"ahler complex structure. Note that the anti-Taub-NUT space is Hermitian and conformally K\"ahler with respect to a  different complex structure (see Section \ref{subsubsec:Type II case}).
    
    The augmented harmonic map is given by 
    \begin{equation}\Phi^{\TNm}=\frac{1}{\rho}\left(\begin{matrix}H\rho^2+H^{-1}A^2 & H^{-1}A\\ H^{-1}A& H^{-1}\end{matrix}\right), \ \ \nu^{\TNm}=\frac{1}{2}\log(1+\frac{1}{2r}).
    \end{equation}
    The rod structure $\mathfrak{R}^{\TN^{-}}$ consists of two rods $\mathfrak{I}_0=(-\infty,0)$ and $\mathfrak{I}_1=(0,\infty)$, with rod vectors given by  ${\mathfrak{v}}_0=[(1,\frac{1}{2})^{\top}]$ and ${\mathfrak{v}}_1=[(1,-\frac{1}{2})^{\top}]$ respectively. We assign the Asymptotic Type to be $\ALFm$. See Figure \ref{Figure:rod structure for R^TN-}.
    
    Notice that $\mR^{\TN^-}$   differs from $\mR^{\Ae}$ only by the Asymptotic Type. It is a manifestation of the fact that the underlying smooth toric 4-manifolds of the flat metric and the Taub-NUT metric are diffeomorphic. 
    \begin{figure}[ht]
    \begin{tikzpicture}[scale=0.6]
        \draw (0,0)--(4,6);
        \draw (0,0)--(-4,6);

        \draw[-Stealth] (2,3)--node [above right]{$\mathfrak{v}_1=[(1,-\frac{1}{2})^\top]$} (2.75,2.5);
        \draw[-Stealth] (-2,3)--node [above right]{$\mathfrak{v}_0=[(1,\frac{1}{2})^\top]$} (-1.25,3.5);

        \filldraw[color=gray!50,fill=gray!50,opacity=0.3](4,6)--(0,0) --(-4,6);
    \end{tikzpicture}
	\caption{The rod structure $\mR^{\TNm}$.}
	\label{Figure:rod structure for R^TN-}   
    \end{figure} 
\end{example}

\begin{example}[Flat $\R^3\times \dS^1$]\label{ex:AF}
Consider the product space $\R^3\times \dS^1$. In terms of the polar coordinates $(r,\theta, \phi_1)$ on $\R^3$ and the rotational coordinate $\phi_2$ on $\dS^1$, the standard flat metric can be written as
$$g=dr^2+r^2(d\theta^2+\sin^2\theta d\phi_1^2)+d\phi_2^2.$$
Consider the $\dT$ action generated by $\p_{\phi_1}$ and $\p_{\phi_2}$. Then $\rho=r\sin\theta$ and one can take $z=r\cos\theta$. The augmented  harmonic map has the form
\begin{equation}\label{e:standard harmonic map AF}\Phi^{\AF_0}=\begin{pmatrix}
\rho & 0 \\
0 & \rho^{-1}
\end{pmatrix}, \ \ \nu^{\AF_0}=0.
\end{equation}
The quotient metric on $N$ is given as $$\widehat g=d\rho^2+dz^2.$$  
The rod structure $\mR^{\AF_0}$ has no turning points and with exactly one rod vector $\mv_0=[\bv_0]$. We assign the Asymptotic Type to be $\AF_0$. 

More generally, for $\beta\in \mathbb R$, we  consider the quotient $M_\beta:=(\R^3\times \R)/\langle q_\beta\rangle$,  where $q_\beta$ acts as a rotation by angle $2\pi\beta$ on $\R^3$ and a translation by distance $1$ on $\R^1$. Under the basis $\partial_{\phi_1},\beta\partial_{\phi_1}+\partial_{\phi_2}$ the augmented harmonic map has the form
 \begin{equation}
	\Phi^{\AFa}=\frac{1}{\rho}\left(\begin{matrix}\rho^2&\beta\rho^2\\\beta\rho^2&\beta^2\rho^2+1\end{matrix}\right), \ \ \nu^{\AFa}=0.
\end{equation}
Note that for $\beta_1\equiv\beta_2 \mod 1$, the $\mathbb Z$ actions generated by $q_{\beta_1}$ and $q_{\beta_2}$ are the same. The difference only lies in our choices of basis to write down the harmonic map. Also notice that different $\Phi^{\AFa}$ are related by $\Phi^{\AFa}=U_{-\beta}.\Phi^{\AF_0}$. The rod structure agrees with that of $\mR^{\AF_0}$, except we now assign the Asymptotic Type to be $\AFa$. See Figure \ref{Figure:rod structure for R^AF}.

\begin{figure}[ht]
    \begin{tikzpicture}[scale=0.6]
        \draw (0,4)--(0,-4);

        \draw[-Stealth] (0,0)--node [above]{$\mathfrak{v}_0=[(1,0)^\top]$} (1,0);

        \filldraw[color=gray!50,fill=gray!50,opacity=0.3](0,4)--(0,-4) --(4,-4)--(4,4);
    \end{tikzpicture}
	\caption{The rod structure $\mR^{\AFa}$.}
	 \label{Figure:rod structure for R^AF}   
    \end{figure}
\end{example}

\begin{remark}\label{remark3.19}
    One may notice that in the above we distinguish only between the Taub-NUT and anti-Taub-NUT spaces, not in the $\Ae$ and $\AFa$ cases. The reason is as follows. In the $\AFa$ case, we know that the space $M_\beta$ with the opposite orientation is isomorphic to $M_{-\beta}$ as oriented Ricci-flat metrics.  In the $\Ae$ case, we will in Section \ref{ss:model maps and tameness} extend each model harmonic map above to a family of model harmonic maps. Using the notations in Definition \ref{def:Ae model family} and \ref{def:AF model family}, then we have 
    $$R_1.\Phi^{\Ae_\alpha}=\Phi^{\Ae_{4/\alpha}}.$$
    This means that the flat $\R^4$ with the opposite orientation can be described using the rod structure $\mR^{\Ae_4}$ and the harmonic map $\Phi^{\Ae_4}$.  This is not the case for the Taub-NUT space, since we only have (see Definition \ref{def:TNplus family})
    $$ R_1.\Phi^{\TNap}=\Phi^{\TNam}.$$
    So we need to use two different families of model harmonic maps. 
   
\end{remark}

In Section \ref{sec:harmonic maps}  we will write down more general augmented harmonic maps based on the above four examples. A central philosophy in this paper is that the above families form the essential building blocks for constructing new gravitational instantons.  As an illustration of this idea we investigate the example of  Euclidean Kerr spaces, and show that in a limit situation they decouple into the union of a  Taub-NUT space and an anti-Taub-NUT space.

\begin{example}[Kerr]\label{ex:Kerr}The Euclidean Kerr spaces, modulo rescalings, form a one parameter family of asymptotically flat gravitational instantons $g_a$ ($a\in (-\infty, \infty)$) on  $S^2\times \mathbb R^2$. Recall that there is an explicit formula (see for example \cite{ChenTeo2})
\begin{equation}
g_a = \frac{\Delta}{\Sigma} (d\phi_1 + a \sin^2 \vartheta \, d\phi_2)^2 +\frac{1}{\Sigma}{\sin^2 \vartheta} (a d\phi_1 - (s^2 - a^2) d\phi_2)^2  + \Sigma(\frac{1}{\Delta} ds^2 +  d\vartheta^2),
\label{eq:kerr_instanton}
\end{equation}
where \(\Sigma\) and \(\Delta\) are defined as
\begin{equation}
\Sigma = s^2 - a^2 \cos^2 \vartheta, \qquad \Delta = s^2 - 2ms - a^2.
\end{equation}
Here the parameters  \(a\), and the coordinates \(s\), \(\vartheta\) take the ranges
\[
\quad a\in (-\infty, \infty), \quad s \geq s_a, \quad  \vartheta \in [0,\pi],
\]
where \(s_a\) is defined as
\[
s_a = m + \sqrt{m^2+ a^2}.
\]
We always fix $m=\frac{1}{2}$ to eliminate the scaling freedom. 
The Weyl–Papapetrou coordinates \(( \rho, z, \phi_1, \phi_2)\) are related to the above coordinates $(r, \vartheta, \phi_1, \phi_2)$ via
\begin{equation}
\rho = \sqrt{s^2 - 2ms - a^2} \, \sin \vartheta, \qquad z = (s - m) \cos \vartheta+z_a,
\label{eq:weyl_coords}
\end{equation}where $$z_a:=\sqrt{m^2+a^2}.$$
The rod structure $\mR^{\Kerr_a}$ consists of 3 rods given by 
$$\mI_0=(-\infty, 0),\quad \mI_1=(0, 2z_a),\quad \mI_2=(2z_a, \infty). $$
Notice that here we have normalized the $z$ coordinate such that the first turning point occurs at $z=0$.
The corresponding rod vectors are given by 
$$\mv_0=\mv_2=[(0, 1)^\top], \qquad \mv_1=[(\frac{1}{\kappa_a}, \frac{\Omega_a}{\kappa_a})^\top], $$
where 
$$\kappa_a:=\frac{z_a}{2ms_a}, \qquad \Omega_a:=\frac{a}{2ms_a}.$$
The enhancement $\Lambda$ is generated by $(0, 1)^\top$ and $(\frac{1}{\kappa_a}, \frac{\Omega_a}{\kappa_a})^\top$. When $a=0$ we get the Euclidean Schwarzschild space; this is the only case when the metric $g_a$ is asymptotic to the flat product $\R^3\times S^1$ at infinity. When $\Omega_a/\kappa_a$ is rational the asymptotic cone at infinity of $g_a$ is a flat orbifold $ \mathbb R^2/\mathbb Z_p\times \R$; when $\Omega_a/\kappa_a$ is irrational the asymptotic cone at infinity of $g_a$ is isometric to the half plane $\mathbb H$. See the discussion in \cite{LS2024}. With the canonical orientation $d\rho\wedge dz\wedge d\phi_1\wedge d\phi_2$, we refer to these gravitational instantons as \emph{Kerr spaces}.

The augmented harmonic map for $g_a$ is given by 
\begin{equation}
    \Phi^{\Kerr_a}=\frac{1}{\Sigma\rho}\left(\begin{matrix} \Delta+a^2\sin^2\vartheta  & -2asm\sin^2\vartheta\\ -2asm\sin^2\vartheta & \sin^2\vartheta(a^2\rho^2+(s^2-a^2)^2)\end{matrix}\right),
    \end{equation}
    
    \begin{equation} \nu^{\Kerr_a}=\frac{1}{2}\log(\frac{\Sigma\sin^2\vartheta}{\rho^2\cos^2\vartheta+(s-m)^2\sin^4\vartheta}).
\end{equation}
Now we consider the limit of $(\Phi^{\Kerr_a}, \nu^{\Kerr_a})$ as $a\rightarrow\infty$.  Naturally we need to consider  the pointed limits based at the two turning points. First we consider the turning point $z=0$. We fix a constant $C>0$ large and consider the limit of $(\Phi^{\Kerr_a}, \nu^{\Kerr_a})$ as $a\rightarrow\infty$, on the region where $r^2=z^2+\rho^2\leq C$ and $|\rho|\geq C^{-1}$. Using the equation 
\begin{equation}\frac{\rho^2}{(s-m)^2-m^2-a^2}+\frac{(z-z_a)^2}{(s-m)^2}=1,
\end{equation}it is straightforward to obtain the following asymptotics 
\begin{equation}
    s=a+\frac{1}{2}(r-z+2m)+O(a^{-1}),
\end{equation}
\begin{equation}
    \sin^2\vartheta=(r+z)a^{-1}+O(a^{-2}),
\end{equation}
\begin{equation}
    \Sigma=2(m+r)a+O(1),
\end{equation}
\begin{equation}
    \Delta=(r-z)a+O(1).
\end{equation}
It then follows that 
\begin{equation}
\Phi^\infty:=\lim_{a\rightarrow\infty}\Phi^{\Kerr_a}=\frac{1}{2(m+r)\rho}\left(\begin{matrix} 2r & -2m(r+z)\\ -2m(r+z) & (r+z)(\rho^2+(r-z+2m)^2)\end{matrix}\right)
\end{equation}
and 
\begin{equation}
    \nu^\infty:=\lim_{a\rightarrow\infty}\nu^{\Kerr_a}=\frac{1}{2}\log \frac{m+r}{r}.
\end{equation}
Recall that we have fixed $m=\frac{1}{2}$. Then one can check that 
\begin{equation}
    U_{-m}.\Phi^\infty=\eta^*\Phi^{\TNp},\quad \nu^\infty=\nu^{\TNp}=\nu^{\TNm},
\end{equation}
where 
$$\eta:\mathbb H\rightarrow \mathbb H;\quad (\rho, z)\mapsto (\rho, -z).$$
Notice $\eta^*\Phi^{\TNp}$ is essentially equivalent to $\Phi^{\TNm}$: when we define the anti-Taub-NUT space by changing the orientation on the Taub-NUT space, instead of interchanging the order of $\phi_1$ and $\phi_2$, we may change the orientation on the quotient space $\dH$, which has the effect of replacing $z$ by $-z$.  Similarly, we can study the pointed limit based at the other turning point using the translation $T_{2z_a}$, where one sees instead $(U_m.\Phi^{\TNp}, \nu^{\TNp})$. The following Figure \ref{Figure:splitting of kerr} illustrates the situation.
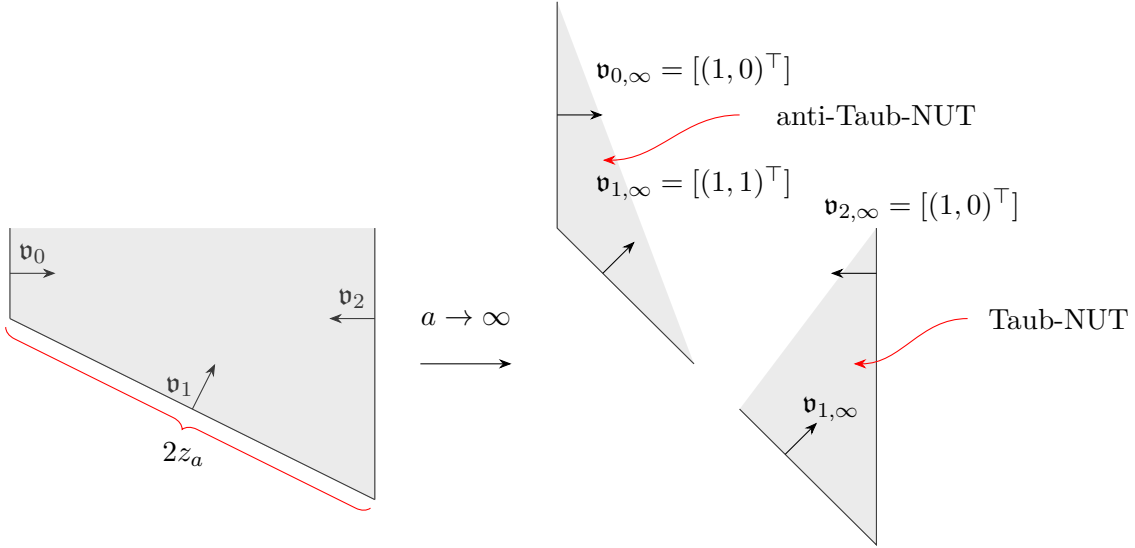
\begin{figure}[ht]
    \begin{tikzpicture}[scale=0.6]
        \draw (-4,2)--(-4,0);
        \draw (-4,0)--(4,-4);
        \draw (4,-4)--(4,2);

        \draw[-Stealth] (-4,1)--node [above]{$\mathfrak{v}_0$} (-3,1);
        \draw[-Stealth] (0,-2)--node [left]{$\mathfrak{v}_1$} (0.5,-1);
        \draw[-Stealth] (4,0)--node [above]{$\mv_2$} (3,0);

        \filldraw[color=gray!50,fill=gray!50,opacity=0.3](-4,2)--(-4,0) --(4,-4)--(4,2);
        \draw [draw = red,decorate,decoration={brace,amplitude=5pt,mirror,raise=4ex}]
        (-3.6,0.8) -- (4.4,-3.2) node[midway,yshift=-3em]{};
        \node at (-0.2,-3) {$2z_a$};

        \draw[-Stealth] (5,-1) -- (7,-1);
        \node at (6,0) {$a\to\infty$};

        \draw (8,7)--(8,2)--(11,-1);
        \filldraw[color=gray!50,fill=gray!50,opacity=0.3](8,7)--(8,2)--(11,-1);
        \draw[-Stealth] (8,4.5)-- (9,4.5);
        \node at (11,5.5) {$\mathfrak{v}_{0,\infty}=[(1,0)^\top]$};
        \draw[-Stealth] (9,1)-- (9.7,1.7);
        \node at (11,3) {$\mathfrak{v}_{1,\infty}=[(1,1)^\top]$};
        
        \draw (12,-2)--(15,-5)--(15,2);
        \filldraw[color=gray!50,fill=gray!50,opacity=0.3](12,-2)--(15,-5)--(15,2);
        \draw[-Stealth] (13,-3)-- (13.7,-2.3);
        \node at (14,-2) {$\mathfrak{v}_{1,\infty}$};
        \draw[-Stealth] (15,1)--(14,1);
        \node at (16,2.5) {$\mv_{2,\infty}=[(1,0)^\top]$};

        \draw [draw = red,-Stealth] (17,0) to [out=180,in=0] (14.5,-1);
        \node at (19,0) {Taub-NUT};

        \draw [draw = red,-Stealth] (12,4.5) to [out=180,in=0] (9,3.5);
        \node at (15,4.5) {anti-Taub-NUT};
        
    \end{tikzpicture}
	\caption{The splitting of Kerr spaces}
	 \label{Figure:splitting of kerr}   
\end{figure}

The above discussion shows that as $a\rightarrow 0$, we can see both $\Phi^{\TNp}$ and $\Phi^{\TNm}$ as pointed limits of $\Phi^{\Kerr_a}$. At the level of gravitational instantons, we may say that the Kerr spaces $g_a$ split into the union of a Taub-NUT space and an anti-Taub-NUT space. By Lemma \ref{lem:Gauss-Bonnet}, the $L^2$ energy of $g_a$ is given by
$$\int|\Rm(g_a)|^2\dvol_{g_a}=16\pi^2,$$ whereas the curvature energy of a Taub-NUT space is $8\pi^2$, so there is no energy loss in this splitting process. Geometrically this is an interesting phenomenon, which extends to  more general situation. See the discussion in Section \ref{sec:discussion}.

As a remark, notice that the Kerr spaces are Hermitian non-K\"ahler (or, Type $\II$ in terms of the terminology of Section \ref{sec:7.1}). According to \cite{BG} (see also Section \ref{sec:7.3}), they can alternatively be  described using toric K\"ahler geometry, in terms of a piecewise linear convex function in one variable.  However it does not seem straightforward to see the above splitting in this viewpoint. The reason is that among the two splitting limits, the anti-Taub-NUT is of Type $\II$ but the Taub-NUT is  of Type $\I$, so can not be put in the framework of toric K\"ahler geometry applied in \cite{BG}. 
\end{example}

\section{Tame harmonic  maps}
\label{sec:harmonic maps}

In this section we study the analytic aspect of axisymmetric harmonic maps tamed by a rod structure. 

\subsection{Model maps and tameness}\label{ss:model maps and tameness}First we write down local and infinity model harmonic maps. The local models involve \emph{rod models} and \emph{turning point models}. Then we define the notion of tameness for harmonic maps.
\begin{definition}[Rod model]
Consider the harmonic map defined on $\dH^\circ$ given by
  $$\Phi_{[\bv_0]}:=\left(\begin{matrix}
			     \rho&0\\0&\rho^{-1}
		    \end{matrix}\right).$$
		    This is the harmonic map associated to the flat product metric on $\dS^1\times \R^3$, see \eqref{e:standard harmonic map AF}. 
		    In general for $\mv\in \R\dP^1$  we define
\begin{equation} \label{e: definition of rod model}
	\Phi_{\mv}=P_\mv^{-1}. \Phi_{[\bv_0]}, 
\end{equation}
where $P_\mv$ is defined in \eqref{e:definition of P1}.
\end{definition}

\begin{definition}[Turning point model]
For $\alpha>0$ we consider the harmonic map defined on $\dH^\circ$ given by
	 \begin{equation}\label{e:turning point model}
	\Phi_{[\bv_{\frac{\alpha}{2}}], [\bv_{-\frac{\alpha}{2}}]}=\frac{1}{\rho}\left(\begin{matrix}
		\frac{\alpha r}{2} & z\\
		z & \frac{2r}{\alpha}
	\end{matrix}\right).
\end{equation}
When $\alpha=1$ this is the harmonic map associated to the flat metric on $\R^4$, see \eqref{e:standard harmonic map R4}. 
In general, for two elements $\mv\neq \mv'\in \R\mathbb P^1$, we define $$\Phi_{\mv, \mv'}=P_{\mv, \mv'}^{-1}. \Phi_{[\bv_{\frac{\alpha}{2}}], [\bv_{-\frac{\alpha}{2}}]},$$ where  $P_{\mv, \mv'}$ is  defined in \eqref{e:definition of P} and $\alpha=\mv\wedge\mv'$ is defined in \eqref{e:definition of alpha}.
\end{definition}

Next, we introduce 4 families of asymptotic (at infinity) models of augmented harmonic maps. These are motivated by the examples discussed in Section \ref{ss:model examples}.
Consider  
\begin{equation}\label{eq:Phi with alpha}
\Phi=\rho^{-1}\left(\begin{matrix}H_\alpha^{-1}A_\alpha^2+\rho^2H_\alpha& H_\alpha^{-1}A_\alpha\\ H_\alpha^{-1}A_\alpha & H_\alpha^{-1}\end{matrix}\right).
\end{equation}
and 
\begin{equation}\label{eq:nu with alpha}
    \nu=\frac{1}{2}\log H_\alpha,
\end{equation}
for functions $A_\alpha$ and $H_\alpha$ to be specified below.

\begin{definition}[$\Ae$ model family] \label{def:Ae model family}
    The $\Ae$ model family $(\Phi^{\Aea}, \nu^{\Aea})$ (for $\alpha>0$)  is formed by setting
$$H_\alpha=\frac{\alpha}{2r}, \ \ \ \  A_\alpha=\frac{\alpha z}{2r}.$$
One can check that indeed we have $$\Phi^{\Aea}=\Phi_{[\bv_{\frac{\alpha}{2}}], [\bv_{-\frac{\alpha}{2}}]}.$$
More generally, 
given  $\mv\neq \mv'\in \R\bP^1$, we define the $\Ae$ model augmented harmonic map to be 
\begin{equation}\label{e:definition of Phi Ae general}\Phi_{\mv, \mv'}^{\Ae}=P_{\mv, \mv'}^{-1}.\Phi^{\Aea}, \ \ \ \nu_{\mv, \mv'}^{\Ae}=\nu^{\Aea}=\frac{1}{2}\log(\frac{\alpha}{2r})
\end{equation}
for $\alpha=\mv\wedge\mv'$.
\end{definition}
\begin{definition}[$\TN^-$ model family]\label{def:TNplus family}
    The $\TN^{-}$ model family $(\Phi^{\TNam},  \nu^{\TNam})$ (for $\alpha\geq 0$) is formed by setting
\begin{equation}\label{e:defintion of Halpha and Aalpha}H_\alpha=1+\frac{\alpha}{2r}, \ \ \ \  A_\alpha=\frac{\alpha z}{2r}.\end{equation}
More generally, for $\mv\neq\mv'\in \R\dP^1$, 
 we define the $\ALF^-$ model augmented harmonic map to be
\begin{equation}\label{e:definition of Phi TN general}\Phi_{\mv, \mv'}^{\ALF^-}=P_{\mv, \mv'}^{-1}.\Phi^{\TNam}, \ \ \ \nu_{\mv, \mv'}^{\ALFm}=\nu^{\TNam}=\frac{1}{2}\log(1+\frac{\alpha}{2r}),
\end{equation}
for $\alpha=\mv\wedge\mv'$.
\end{definition}

\begin{definition}[$\TN^+$ model family]
The $\TN^{+}$ model family $(\Phi^{\TNap},  \nu^{\TNap})$ (for $\alpha>0$) is defined with 
    $$\Phi^{\TNap}=R_1.\Phi^{\TN^{-}_{\alpha}},$$
    $$\nu^{\TNap}=\nu^{\TN^-_{\alpha}}.$$
where $R_1$ is given in \eqref{e:definition of matrices}.
More generally, for $\mv\neq\mv'\in \R\dP^1$, 
 we define the $\ALF^+$ model augmented harmonic map to be
\begin{equation}\label{e:definition of Phi TN+ general}\Phi_{\mv, \mv'}^{\ALF^+}=P_{\mv, \mv'}^{-1}.\Phi^{\TN^+_{4/\alpha}},\ \ \ \nu_{\mv, \mv'}^{\ALFp}=\nu^{\TN_{4/\alpha}^+}=\frac{1}{2}\log(1+\frac{2}{\alpha r}), 
\end{equation}
for $\alpha=\mv\wedge\mv'$.
\end{definition}

Notice that $$\Phi_{\mv, \mv'}^{\ALF^+}=P_{\mv, \mv}^{-1}R_1P_{\mv, \mv'}. \Phi_{\mv, \mv'}^{\ALF^-}.$$

\begin{definition}[$\AF$ model family]\label{def:AF model family}
    The $\AF$ model family $(\Phi^{\AFa}, \nu^{\AFa})$ (for $\beta\in \R$)
is given by \begin{equation}
	\Phi^{\AFa}=\frac{1}{\rho}\left(\begin{matrix}\rho^2&\beta\rho^2\\\beta\rho^2&\beta^2\rho^2+1\end{matrix}\right)=U_{-\beta}. \Phi_{[\bv_0]}, \quad \nu^{\AFa}=0.
\end{equation}
Given $\mv\in \R\dP^1$ and $\beta\in \R$,  we define the $\AFa$ model augmented harmonic map to be
\begin{equation}\label{e:definition of Phi AFa in general}\Phi_{\mv, \mv}^{\AFa}=P_{\mv}^{-1}. \Phi^{\AFa}, \ \ \  \nu_{\mv, \mv}^{\AFa}=0.
\end{equation}
\end{definition} 

\begin{remark}
    Notice that by definition $$\Phi^{\TN^-_0}=\Phi^{\AF_0}.$$
    This fact will be important for our discussion in Section \ref{sec:degeneration of rod structures}.
\end{remark}

Let $U$ be an open subset of $\dH$. We denote $U^\circ=U\cap \dH^\circ$.
\begin{definition}[Local tameness] \label{def:local tameness}
A smooth map $\Phi: U^\circ\rightarrow \mathcal H$ is said to be  \emph{locally tamed} by an untyped rod structure $\mR$ if 
\begin{itemize}
	\item For any rod $\mI_j$ and $z\in \mJ_j\cap U$, there is a neighborhood $V$ of $z$ such that $$\sup_{V^\circ}d(\Phi,\Phi_{\mv_j})<\infty;$$
	\item For each turning point $z_j\in U$,  there is a neighborhood $V$ of $z_j$ such that   $$\sup_{V^\circ}d(\Phi,T_{z_j}^*\Phi_{\mv_{j}, \mv_{j+1}})<\infty.$$ 
\end{itemize}
\end{definition}

\begin{definition}[Global tameness]

A smooth map $\Phi: \dH^\circ\rightarrow \mathcal H$ is said to be \emph{globally tamed} by a rod structure $\mR$ of  Asymptotic Type  $\sharp$ if it is locally tamed by $\mR$ and $$\sup_{\dH^\circ}d(\Phi,\Phi^{\sharp}_{\mv_0, \mv_n})\leq C.$$
  \end{definition}

  \begin{definition}[Strong tameness]\label{def:strong tameness}
   We say a smooth map $\Phi:\dH^\circ\rightarrow \mathcal H$ is \emph{strongly tamed} by a rod structure $\mR$ of  Asymptotic Type  $\sharp$ if it is globally tamed by $\mR$ and 
    $$d(\Phi,\Phi^{\sharp}_{\mv_0, \mv_n})=O(r^{-1}), \ \  r\rightarrow\infty.$$ 
 \end{definition}

It is easy to check by definition that 
\begin{itemize}
    \item For $\alpha>0$, $\Phi^{\Aea}$ is strongly tamed by the rod structure  $\mR^{\Aea}$ which is of  Asymptotic Type  $\Ae$, with a single turning point $z_1=0$ and with rod vectors given by 
$$\mathfrak v_0=[\bv_{\frac{\alpha}{2}}], \ \ \mathfrak v_1=[\bv_{-\frac{\alpha}{2}}].$$
\item  For $\alpha>0$, $\Phi^{\TNam}$ is strongly tamed by the rod structure $\mR^{\TN^-_\alpha}$ which is of  Asymptotic Type  $\ALFm$,  with a single turning point $z_1=0$ and with rod vectors given by 
$$\mathfrak v_0=[\bv_{\frac{\alpha}{2}}], \ \ \mathfrak v_1=[\bv_{-\frac{\alpha}{2}}].$$
\item For $\alpha>0$, $\Phi^{\TNap}$ is strongly tamed by the rod structure $\mR^{\TN^+_{\alpha}}=\overline{\mR^{\TN^-_{\alpha}}}$ which is of  Asymptotic Type  $\ALFp$, with a single turning point $z_1=0$ and with rod vectors given by
$$\mathfrak v_0=[\bv_{\frac{2}\alpha}], \ \ \mathfrak v_1=[\bv_{-\frac{2}{\alpha}}].$$
\item For $\beta\in\R$, $\Phi^{\AFa}$ is strongly tamed by the rod structure $\mR^{\AFa}$ which is of  Asymptotic Type  $\AFa$, with no turning points and with $\mv_0=[\bv_0]$.  
\end{itemize}

In the next two subsections we will study the regularity of tame harmonic maps. From the PDE point of view, this question fits into the general study of harmonic maps with prescribed codimension 2 singularities, see \cite{Weinstein, LiTian, HKWX} for example. The focus of these previous works was on harmonic maps arising from the Lorentzian setting, which has similar but different asymptotics near the rods. In our Riemannian setting, we will combine these analytical results with the geometric point of view and take advantage of the regularity results for the Riemannian Einstein equation. The main result of this section is the following

\begin{theorem}\label{t:from tame harmonic to conical metric}
\ 
   \begin{itemize}
       \item (Local) An augmented harmonic map, which is  locally tamed by a smooth enhanced untyped rod structure, defines a toric Ricci-flat metric with possible codimension 2 conical singularities. 
       \item (Global) A smooth enhanced rod structure strongly tames a unique harmonic map on $\dH^\circ$. Moreover, there is a unique augmentation which defines a toric gravitational instanton with possible codimension 2 conical singularities. 
   \end{itemize} 
\end{theorem}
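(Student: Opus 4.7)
The argument splits into a local and a global part. For the local statement, the formula \eqref{e:from aug harmonic map to Ricci flat} already produces a smooth Ricci-flat metric on $U^\circ \times \R^2/\Lambda$ by the reduction of Section \ref{sec:dimension reduction}, so the task is to extend this across the fibers over $\Gamma$ up to codimension-2 conical singularities. Near a rod point $z\in \mI_j\cap U$ that is not a turning point, I would apply an $SL_{\pm}(2;\R)$ transformation so that $\mv_j=[(1,0)^{\top}]$, reducing the comparison to the rod model $\Phi_{[\bv_0]}=\mathrm{diag}(\rho,\rho^{-1})$, whose associated augmented metric is smooth across $\rho=0$ in the usual polar-coordinate sense. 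The local tameness gives a uniform bound on the hyperbolic distance $d(\Phi,\Phi_{\mv_j})$, and the plan is to upgrade this $C^0$ bound to a full $C^{\infty}$ asymptotic expansion of $\Phi$ in non-negative powers of $\rho$ via standard regularity theory for axisymmetric harmonic maps with prescribed codimension-2 asymptotics (in the spirit of Weinstein, Li--Tian, and the references in the excerpt). The corresponding expansion of $\nu$ then follows from \eqref{e:eqn2.33}. One checks that the metric extends smoothly except for a possible conical singularity along the codimension-2 axis where the $\mv_j$-circle collapses, with cone angle determined by the length of the primitive lattice vector $\bbv_j$ together with the leading term of $\nu$. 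Near a turning point the procedure is the same, using the turning point model $\Phi_{\mv_j,\mv_{j+1}}$; the smoothness hypothesis on the enhanced rod structure is precisely what forces $\bbv_{j-1},\bbv_j$ to be a $\Z$-basis of $\Lambda$, ensuring no orbifold singularity at the fixed point.

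For the global statement, existence and uniqueness of a harmonic map $\Phi:\dH^\circ\to \mathcal{H}$ strongly tamed by a prescribed rod structure of fixed Asymptotic Type $\sharp$ follow from the general theory of axisymmetric harmonic maps into the hyperbolic plane with prescribed codimension-2 singular behavior and prescribed asymptotics at infinity: the negative curvature of $\mathcal{H}$ yields existence by variational methods, and subharmonicity of the squared hyperbolic distance between two such maps, combined with a maximum principle, gives uniqueness. Given $\Phi$, the augmentation system \eqref{e:eqn2.33} is a compatible first-order overdetermined system for $\nu$ on the simply connected domain $\dH^\circ$, the compatibility being a consequence of $\Phi$ being harmonic as derived in Section \ref{sec:dimension reduction}. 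Hence $\nu$ exists and is unique up to an additive constant, and this ambiguity is fixed by requiring $\nu-\nu^{\sharp}_{\mv_0,\mv_n}\to 0$ at infinity, which is the normalization imposed by the strong tameness of the augmented pair.

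Assembling the pieces, $(\Phi,\nu)$ determines a toric Ricci-flat metric $g$ on the 4-manifold built from the enhanced rod structure with at worst codimension-2 conical singularities along rods, and the integrability and regularity of $g$ are built into the construction. Completeness follows from strong tameness: at infinity $g$ is $C^0$-close to the complete Ricci-flat model of Asymptotic Type $\sharp$, while in the interior the (possibly conical) strata over $\Gamma$ sit at finite distance by construction. Finiteness of $\int_M|\Rm_g|^2\,\dvol_g$ follows from the $O(r^{-1})$ decay of $d(\Phi,\Phi^{\sharp}_{\mv_0,\mv_n})$, which after differentiation yields pointwise curvature decay integrating against the model volume growth. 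The main analytic obstacle I anticipate is the quantitative regularity step in the local part: extracting a full $C^{k}$ expansion of $\Phi$ in $\rho$ from only a $C^0$ hyperbolic-distance bound near the degenerate codimension-2 locus. This is the crux of the argument and parallels the harmonic-map regularity theory developed in the Lorentzian stationary axisymmetric setting, which needs to be transplanted to our Riemannian setup and sharpened to keep track of the conical defect angle.
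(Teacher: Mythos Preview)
Your global outline is essentially what the paper does: uniqueness via subharmonicity of $d(\Phi_0,\Phi_1)$ and the maximum principle, existence by constructing a background map and solving Dirichlet problems on exhausting domains, and fixing $\nu$ by matching the model augmentation at infinity. That part is fine.

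The gap is in the local regularity, precisely where you flag it. Your plan is to push the $C^0$ hyperbolic-distance bound to a full $C^\infty$ expansion of $\Phi$ in powers of $\rho$ by transplanting the Lorentzian harmonic-map regularity theory. The paper explicitly notes that this does \emph{not} work in the Riemannian setting: the singular behavior near the rod is different, and the direct PDE argument (Li--Tian, Nguyen) yields only $C^{1,\alpha}$ regularity for $u$ and weighted estimates $|\nabla^k v|\le C\rho^{2-k}$ for $k\le 3$, not a smooth expansion. The paper then takes a different route: from the $C^{1,\alpha}$ control one computes that the four-dimensional metric $g=e^{2\nu}(d\rho^2+dz^2)+\rho\Phi$ extends as a \emph{Lipschitz} metric in Cartesian coordinates across $\rho=0$; then one passes to harmonic coordinates (available at $W^{2,p}$ regularity), observes that the Ricci-flat equation $\Delta g_{ij}+\partial g*\partial g*g^{-1}*g^{-1}=0$ holds weakly across the codimension-2 axis, and bootstraps to $C^\infty$. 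The higher regularity of $\Phi$ is recovered \emph{a posteriori} from the smoothness of $g$.

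Near a turning point the paper again avoids a PDE expansion entirely. Instead it uses the rescaling invariance of the model to show $g$ is uniformly equivalent to $g_{\R^4}$ with quadratic curvature decay on a punctured ball, obtains finite $L^2$ curvature via Chern--Gauss--Bonnet with Cheeger--Gromov chopping, and then applies the Bando--Kasue--Nakajima removable singularity theorem to extend $g$ smoothly across the isolated point. So the crux is not sharper harmonic-map asymptotics but rather exploiting the Ricci-flat equation on the total space after a coarse ($C^{0,1}$) extension.
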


The first part will be proved in Section \ref{subsec:local regularity} and the second part will be proved in Section \ref{subsec:global regularity}.

\subsection{Local theory}\label{subsec:local regularity}

\subsubsection{Near a rod interior}
\label{subsubsec:local regularity near rods}
Let $(\Phi, \nu)$ be an augmented harmonic map defined on an open set $U\subset \dH$ which is locally tamed by  a rod structure $\mR$ and assume $U\cap\Gamma$ is contained in a rod $\mI$. Without loss of generality, we assume the rod vector is $\mathfrak{v}=[\bv_0]$, and the corresponding axisymmetric harmonic map   is defined on the ball $B_{1}(0)\subset \R^3$.

  Recall that $\mathcal H=\{(X, Y)|X>0\}$. We set
$$X=e^{u+\log\rho}, \quad Y=v, $$ then the harmonic map equation for $\Phi$ reduces to the following system on $(u, v)$
\begin{align}
	\Delta u=-\frac{1}{\rho^2}e^{-2u}|\nabla v|^2,\label{eq:u}\\
	\mathrm{div}\left(\frac{1}{\rho^2}e^{-2u}\nabla v\right)=0.\label{eq:v}
\end{align}
The tameness assumption implies that $\sup_Ud(\Phi, \Phi_{[\bv_0]})\leq C_0$, which gives 
\begin{align}\label{eq:bounded distance condition}
	|u|+\rho^{-1}|v|\leq C C_0.
\end{align}
Here, $C$ is a constant independent of $C_0$.
A straightforward application of \cite{LiTian,Nguyen} gives the following regularity result. 
\begin{proposition}[$C^{1,\alpha}$ regularity]\label{thm:C11 regularity}
	We have the following uniform bounds on $B_{\frac{1}{2}}(0)$:
	\begin{align*}
		&|\nabla^{k}v|\leq CC_0\rho^{2-k},\text{ for $k=0,1,2,3$},\\
		&\|u\|_{C^{1,\alpha}}\leq CC_0,\text{ for any $0<\alpha<1$},\\
        &|\nabla^ku|\leq CC_0\rho^{1-k}\text{ for any $k=1,2,3$}.
	\end{align*}
    Here the constant $C$ depends on $\alpha$ but is independent of $C_0$.
\end{proposition}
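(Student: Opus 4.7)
The plan is to recognize the harmonic map system \eqref{eq:u}--\eqref{eq:v} as an instance of an axisymmetric map with prescribed codimension-two singularities and to apply the regularity theory developed in \cite{LiTian, Nguyen}. The key algebraic observation is that the $\rho^{-2}$ weight in \eqref{eq:v} matches exactly the dimension-lifting in which $v$ is viewed as an axisymmetric function on $\mathbb{R}^5$ with a one-dimensional axis. Explicitly, substituting $v = \rho^2 \tilde v$ recasts \eqref{eq:v} as
\begin{equation*}
\Delta_5 \tilde v - 2 \nabla u \cdot \nabla \tilde v - \frac{4 u_\rho}{\rho} \tilde v = 0,
\end{equation*}
where $\Delta_5$ and $\nabla$ are taken in $\mathbb{R}^5$ and $u$ is extended as an axisymmetric function depending only on $(\rho, z)$; note that $u_\rho/\rho$ is bounded for any axisymmetric $u \in C^{1,\alpha}(\mathbb{R}^3)$.

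The first and most delicate step is to improve the linear vanishing $|v|\le CC_0\rho$ (implied by the tameness bound \eqref{eq:bounded distance condition}) to quadratic vanishing $|v|\le CC_0\rho^2$. The tameness bound only gives $|\tilde v|\le CC_0\rho^{-1}$ a priori, which is unbounded near the axis. However, applying the De Giorgi--Nash--Moser-type estimates tailored to codimension-two singularities in \cite{LiTian, Nguyen} upgrades this to $|\tilde v|\le CC_0$, i.e. $|v|\le CC_0\rho^2$.

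Once $v = O(\rho^2)$ and $|\nabla v| = O(\rho)$ are known, the right-hand side $\rho^{-2}e^{-2u}|\nabla v|^2$ of \eqref{eq:u} is uniformly bounded on $B_1(0)$. Viewing $u$ as an axisymmetric function on $\mathbb{R}^3$, so that $\{\rho = 0\}$ is an interior set rather than a genuine boundary, standard elliptic regularity for Poisson-type equations with bounded right-hand side and bounded data yields $\|u\|_{C^{1,\alpha}(B_{1/2})}\le CC_0$ for any $\alpha<1$. With $u \in C^{1,\alpha}$ in hand, the coefficients of the $\tilde v$-equation become H\"older continuous (the potential $u_\rho/\rho$ inherits H\"older regularity thanks to the axisymmetry of $u$), so Schauder estimates give $\tilde v \in C^{2,\alpha}$ in the interior, which translates back to $|\nabla^k v|\le CC_0\rho^{2-k}$ for $k\le 2$. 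Feeding this back into \eqref{eq:u} and iterating one more time yields the $k=3$ derivative bounds for both $v$ and $u$, along with the stated pointwise bounds $|\nabla^k u|\le CC_0\rho^{1-k}$.

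The main obstacle is the sharp jump from $v = O(\rho)$ to $v = O(\rho^2)$, which cannot be obtained naively from \eqref{eq:v} because of the singular $\rho^{-2}$ weight; it requires the codimension-two harmonic map regularity of \cite{LiTian, Nguyen} as the decisive analytic input. After this first step, the rest is a routine Schauder bootstrap.
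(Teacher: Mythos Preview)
Your overall strategy matches the paper's: the decisive step is upgrading $|v|\le CC_0\rho$ to $|v|\le CC_0\rho^2$ via the codimension-two regularity theory of \cite{LiTian, Nguyen}, after which a Schauder bootstrap closes the argument. However, your ordering has a genuine circularity. The equation you write for $\tilde v$ carries coefficients $\nabla u$ and $u_\rho/\rho$, and you note that $u_\rho/\rho$ is bounded once $u\in C^{1,\alpha}$ --- but at the stage where you invoke \cite{LiTian,Nguyen} you only know $|u|\le CC_0$ from tameness, with no gradient control whatsoever. The same issue arises if you work directly with the divergence-form equation $\mathrm{div}(\rho^{-2}e^{-2u}\nabla v)=0$: Nguyen's Theorem~1 requires a quantitative gradient hypothesis on the coefficient, namely $|\nabla u|\le C\rho^{-1+\epsilon_0}$, which you have not established.

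The paper resolves this by reversing your first two steps. It first applies \cite{LiTian} (Theorem~4.1 and Lemma~5.1) to the full harmonic map $\Phi$ into $\mathcal H$ --- replacing the monotonicity formula there by a Caccioppoli inequality as in \cite{Nguyen,HKWX} to avoid the minimizing hypothesis --- and obtains the preliminary regularity that $u$ is H\"older across $\Gamma$ and $|\nabla u|\le C\rho^{-1+\epsilon_0}$. Only then, with the coefficient $\rho^{-2}e^{-2u}$ satisfying the required structural bounds, does it invoke Nguyen's theorem on the $v$-equation to conclude $|v|\le CC_0\rho^2$. The higher derivative bounds on $v$ then come from a rescaling argument plus interior Schauder estimates (so your claim that $|\nabla v|=O(\rho)$ is already in hand after the first step also needs this rescaling), and the $u$ bounds follow by feeding back into \eqref{eq:u}. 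Your $\mathbb{R}^5$ reformulation is a clean way to see the structure of the $v$-equation, but it does not bypass the need for initial $u$-regularity as input.
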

\begin{proof}
First by Theorem 4.1 and Lemma 5.1 in \cite{LiTian} we know that $u$ extends to a H\"older continuous function across $\Gamma$ and $$|\nabla u|\leq C \rho^{-1+\epsilon_0}$$ for a uniform constant $C$ and $\epsilon_0>0$. Notice that the arguments in \cite{LiTian} use a monotonicity formula for local minimizing harmonic maps. But as observed in \cite{Nguyen} (see also \cite{HKWX}), this can be replaced by a Caccioppoli inequality. 	Since we have $$C^{-1}\rho^{-2}\leq \rho^{-2}e^{-2u}\leq C\rho^{-2}$$ and $$|\nabla u|\leq C\rho^{-1+\epsilon_0},$$ with the bound $$\rho^{-1}|v|\leq CC_0,$$ by applying Theorem 1 in \cite{Nguyen} to the equation $$\mathrm{div}(\rho^{-2}e^{-2u}\nabla v)=0,$$ we get $|v|\leq CC_0\rho^2$. From here it is a simple rescaling argument, using the standard Schauder estimate, to bound the higher derivatives of $v$. The bound on $u$ then follows easily from \eqref{eq:u}.

\end{proof}
\begin{remark}It is worth pointing out that in the Lorentzian setting of \cite{LiTian, Nguyen, HKWX} the corresponding harmonic map has a different singular behavior and the regularity obtained therein is of $C^{3, \alpha}$. In our setting, we can only obtain $C^{1, \alpha}$ regularity using directly the PDE result.	
\end{remark}

Now we turn to the geometry of the  Ricci-flat metric associated to $\Phi$.  We write the Gram matrix as
$$G=\rho\Phi=\left(\begin{matrix}
	\rho^2e^u+e^{-u}v^2&e^{-u}v\\
	e^{-u}v&e^{-u}
\end{matrix}\right).$$
From this one can compute the 1-forms \begin{equation}\label{eq:U}
	\mathcal{U}=\rho G^{-1} G_\rho=\left(\begin{matrix}
		2+\rho u_\rho+\frac{1}{\rho}e^{-2u}v_\rho v & \frac{1}{\rho}e^{-2u}v_\rho\\ 
		-2v-2\rho u_\rho v-\frac{1}{\rho}e^{-2u}v_\rho v^2+\rho v_\rho &-\frac{1}{\rho}e^{-2u}v_\rho v-\rho u_\rho
	\end{matrix}\right),
\end{equation}

\begin{equation}\label{eq:V}
	\mathcal{V}=\rho G^{-1}G_z=\left(\begin{matrix}
		\rho u_z+\frac{1}{\rho}e^{-2u}vv_z & \frac{1}{\rho}e^{-2u}v_z\\
		-2\rho u_zv-\frac{1}{\rho}e^{-2u}v^2v_z+\rho v_z & -\frac{1}{\rho}e^{-2u}vv_z-\rho u_z
	\end{matrix}\right).
\end{equation}

We denote by $O'(\rho^k)$ a function which is bounded by $C\rho^k$ and with derivative bounded by $C\rho^{k-1}$. Then we see that
\begin{align*}
	\Tr(\mathcal{U}^2)&=2 \rho^2 u_\rho^2+4 \rho u_\rho+2 e^{-2 u} v_\rho^2+4=4+4\rho u_\rho+O'(\rho^2),\\
    \Tr(\mathcal{V}^2)&=2 \rho^2 u_z^2+2 e^{-2 u} v_z^2=O'(\rho^2),\\
	\Tr(\mathcal{U}\mathcal{V})&=2 \rho u_z \left(\rho u_\rho+1\right)+2 e^{-2 u} v_z v_\rho=2\rho u_z+O'(\rho^2).
\end{align*}
Using \eqref{eqn-Ricci-1} and \eqref{eqn-Ricci-2}, we see that
\begin{align}
    (2\nu-u)_\rho=O'(\rho),&& (2\nu-u)_z=O'(\rho). \label{eqn:nu}	
\end{align}
It follows that $\nu$ is a $C^{1,\alpha}$ function on $B_{\frac{1}{2}}(0)$. 
Moreover, there is a constant $c_0$ such that $$2\nu-u=c_0$$  on $\Gamma$. 
 
As in \eqref{e:from aug harmonic map to Ricci flat},  we consider $$g=e^{2\nu}(d\rho^2+dz^2)+G,$$ which is a well-defined Ricci-flat Riemannian metric on $U^\circ\times\mathbb{R}^2$. 
 Now we take the lattice $\Lambda$ generated by $[(2\pi e^{c_0/2}, 0)^{\top}]$ and $[(0, 2\pi)^{\top}]$, and consider the quotient metric $g_\Lambda$ on $U^\circ\times \dS^1\times \dS^1$. Consider the coordinate functions $x=\rho\cos(e^{-c_0/2}\phi_1),y=\rho\sin( e^{-c_0/2}\phi_1),z,\phi_2$, which diffeomorphically identify $U^\circ\times \dS^1\times \dS^1$ with an open subset $V^\circ\subset\mathbb{R}^2\times\mathbb{R}^1\times \dS^1$. The Killing fields $e^{-c_0/2}\partial_{\phi_1}$ and $\partial_{\phi_2}$ correspond to the rotation on $\mathbb{R}^2$ and the translation on  $\dS^1$. The $\mathbb{R}^1$ is parametrized by $z$.  We set $V\subset\mathbb{R}^2\times\mathbb{R}^1\times \dS^1$ to be the closure of $V^\circ$ by adding points with $\rho=0$.

\begin{lemma}\label{lem:Lipschitz metric}
	The Ricci-flat metric $g$ extends to a Lipschitz (i.e. $C^{0, 1}$) metric on $V$.
\end{lemma}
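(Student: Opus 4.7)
The plan is to verify, directly from the regularity estimates of Proposition \ref{thm:C11 regularity} together with the identity $2\nu - u = c_0 + O(\rho^2)$ extracted from \eqref{eqn:nu}, that the coefficients of $g$ in the coordinates $(x, y, z, \phi_2)$ are Lipschitz across the axis $\{\rho = 0\}$. Note that the choice of the lattice $\Lambda$ is precisely such that after the polar-to-Cartesian change on the $(\rho,\phi_1)$-plane, the angular period matches $2\pi$; this is what makes the statement a genuine Lipschitz regularity result rather than one involving a conical singularity.

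I would expand $g = e^{2\nu}(d\rho^2+dz^2) + G_{11}\,d\phi_1^2 + 2G_{12}\,d\phi_1\,d\phi_2 + G_{22}\,d\phi_2^2$ using $d\rho = (x\,dx + y\,dy)/\rho$ and $d\phi_1 = e^{c_0/2}(x\,dy - y\,dx)/\rho^2$, and treat each block separately. The coefficient $G_{22} = e^{-u}$ is $C^{1,\alpha}$ in $(\rho,z)$, so composing with $\rho = \sqrt{x^2+y^2}$ gives bounded first derivatives in $(x,y,z)$. The mixed terms from $2G_{12}\,d\phi_1\,d\phi_2$ produce coefficients proportional to $e^{-u}(v/\rho^2)x$ and $e^{-u}(v/\rho^2)y$; the bounds $|v| \le CC_0\rho^2$, $|v_\rho|\le CC_0\rho$, $|v_z|\le CC_0\rho^2$ and $|v_{\rho\rho}|,|v_{\rho z}|\le CC_0$ from Proposition \ref{thm:C11 regularity} make these coefficients Lipschitz in $(x,y,z)$. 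For the remaining block I would use the algebraic identity
\begin{equation*}
e^{2\nu}\,d\rho^2 + G_{11}\,d\phi_1^2 = e^{c_0+u}(dx^2+dy^2) + (e^{2\nu}-e^{c_0+u})\,d\rho^2 + e^{c_0-u} v^2\,d\phi_1^2,
\end{equation*}
which follows from $e^{c_0+u}(dx^2+dy^2) = e^{c_0+u}\,d\rho^2 + e^{u}\rho^2\,d\phi_1^2$. The leading term $e^{c_0+u}(dx^2+dy^2)$ is manifestly Lipschitz. For the first error term, $(e^{2\nu}-e^{c_0+u})$ vanishes to order $\rho^2$ with $\rho$- and $z$-derivatives of order $\rho$, exactly matching the singularity of $d\rho^2 = \rho^{-2}(x\,dx+y\,dy)^2$; the resulting $(x,y,z)$-coefficients become bounded functions like $(e^{2\nu}-e^{c_0+u})\cdot x^2/\rho^2$, and their first-order $(x,y,z)$-derivatives are again bounded. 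For the second error term, $v^2/\rho^4$ is bounded and the derivative bounds on $v$ give the desired Lipschitz control when paired with $(x\,dy-y\,dx)^2/\rho^4\cdot (\text{bounded factor})$.

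The main obstacle is not analytic but bookkeeping: one must verify that each polar-Cartesian angular factor such as $x^ay^b/\rho^c$ with $a+b=c$, whose first derivatives individually blow up like $\rho^{-1}$, is always multiplied by a vanishing quantity drawn from $\{v,\,v_\rho,\,v_z,\,v^2/\rho^2,\,e^{2\nu}-e^{c_0+u},\,(2\nu-u-c_0)_\rho,\,(2\nu-u-c_0)_z\}$ whose order of vanishing in $\rho$ exactly compensates. This is where the precise weights in Proposition \ref{thm:C11 regularity} and \eqref{eqn:nu} are essential, and once one verifies these cancellations termwise the Lipschitz extension across $\{\rho=0\}$ is immediate.
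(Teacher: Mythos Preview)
Your approach is exactly the paper's: substitute $d\rho=\rho^{-1}(x\,dx+y\,dy)$ and $d\phi_1=e^{c_0/2}\rho^{-2}(x\,dy-y\,dx)$ and check Lipschitz continuity of the resulting components termwise, using the weighted bounds from Proposition~\ref{thm:C11 regularity} and \eqref{eqn:nu}. The paper simply writes ``We omit the details,'' so your proposal is a faithful elaboration of the intended computation.

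Two minor bookkeeping slips worth correcting before you write it up: in your displayed identity the last term should read $e^{-u}v^2\,d\phi_1^2$, not $e^{c_0-u}v^2\,d\phi_1^2$ (check against $G_{11}=\rho^2e^u+e^{-u}v^2$); and Proposition~\ref{thm:C11 regularity} only gives $|v_z|\le CC_0\rho$, not $CC_0\rho^2$. Neither affects the argument, since the subsequent estimates only need the weaker bounds anyway.
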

\begin{proof}
	This follows from a straightforward computation. Notice that the map $(x, y, z, \phi_2)\mapsto (\rho, z)$ is distance decreasing so it suffices to check that the components of $g$ in the $(dx, dy, dz, d\phi_2)$ frame are Lipschitz functions in $(\rho, z)$. To see this, one simply substitutes $d\rho=\rho^{-1}(xdx+ydy)$ and $d\phi_1=\rho^{-2}(xdy-ydx)$ and check the Lipschitz continuity of the components of $g$. We omit the details. 
\end{proof}

Now using standard theory of harmonic coordinates we arrive at 
\begin{proposition}\label{p:rod smooth compactification}
	One can find a new $C^{1, \alpha}$ coordinate system in a neighborhood of $0\in V$ in which the components of $g$ are smooth. In particular, $g$ defines a smooth toric  Ricci-flat metric.
\end{proposition}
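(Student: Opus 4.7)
The strategy is the standard regularization via harmonic coordinates for Einstein metrics. By Lemma \ref{lem:Lipschitz metric}, $g$ is a Lipschitz Riemannian metric on $V$ which is smooth and Ricci-flat on $V^\circ = V \cap \{\rho > 0\}$. The singular set $\Sigma := V \setminus V^\circ = \{x = y = 0\}$ is a smooth $2$-dimensional submanifold of the $4$-dimensional $V$, hence has real codimension $2$.

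The first step is to verify that $\Ric(g) = 0$ holds in the distributional sense across $\Sigma$. Given a smooth compactly supported test tensor near $0 \in V$, I would multiply by logarithmic cutoff functions $\chi_\epsilon$ which vanish in an $\epsilon$-tubular neighborhood of $\Sigma$, equal $1$ outside a larger neighborhood, and satisfy $\|\nabla \chi_\epsilon\|_{L^2(V)} \to 0$ as $\epsilon \to 0$; such cutoffs exist precisely because $\Sigma$ has codimension $2$. Since $\Ric(g) = 0$ pointwise on $V^\circ$, the distributional pairing $\langle \Ric(g), \phi \rangle$ equals the limit of $\langle \Ric(g), \chi_\epsilon \phi\rangle = 0$, with the error terms controlled by combining the Lipschitz bound on $g$ with the $L^2$ smallness of $\nabla \chi_\epsilon$. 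This gives weak Ricci-flatness on $V$.

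Next, I would construct harmonic coordinates. Solving $\Delta_g y^i = 0$ on a small ball $B \subset V$ around $0$ with Dirichlet data approximating standard coordinates, one applies $L^p$ elliptic regularity for operators with bounded measurable coefficients to obtain $y^i \in W^{2,p}_{\mathrm{loc}}(B)$ for every $p < \infty$, hence $y^i \in C^{1,\alpha}(\overline{B})$ for every $\alpha \in (0,1)$. For $B$ sufficiently small and boundary data close enough to linear functions, $y = (y^1,\ldots,y^4): B \to \R^4$ is a $C^{1,\alpha}$ diffeomorphism onto its image. In the $y$-coordinates the metric $g$ is $C^{1,\alpha}$, and the Ricci-flat equation becomes the quasilinear elliptic system
\begin{equation*}
-\frac{1}{2} g^{kl} \partial_k \partial_l g_{ij} + Q_{ij}(g, \partial g) = 0,
\end{equation*}
where $Q_{ij}$ is quadratic in $\partial g$ with coefficients smooth in $g$. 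A standard Schauder bootstrap promotes $g$ from $C^{1,\alpha}$ to $C^\infty$ (in fact to real-analytic), concluding the proof. Toricity is preserved automatically since the $\dT$-action is by isometries of the smooth metric.

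The main obstacle is the first step: verifying that the distributional Ricci tensor picks up no component concentrated on $\Sigma$. The Lipschitz regularity of $g$ is just barely enough for this, and the argument uses crucially that $\Sigma$ has codimension $2$ (so has vanishing $2$-capacity), which permits the logarithmic cutoff construction. Once weak Ricci-flatness across $\Sigma$ is established, everything else is a standard application of harmonic coordinate regularity for Einstein metrics in the style of DeTurck--Kazdan.
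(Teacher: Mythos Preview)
Your proposal is correct and follows essentially the same route as the paper: harmonic coordinates for the Lipschitz metric, weak Ricci-flatness across the codimension-$2$ singular set, then elliptic bootstrap. The paper reverses your first two steps---passing to harmonic coordinates first and only then verifying that the equation $\Delta g_{ij}+Q(g,\partial g)=0$ holds weakly across $\Sigma$---which is slightly cleaner since the harmonic-coordinate form makes the weak formulation transparent; note also that in the harmonic coordinates $g$ is a priori only $W^{1,p}$ for all $p$ (hence $C^{0,\alpha}$) rather than $C^{1,\alpha}$, though this is still enough to start the bootstrap via $L^p$ regularity before invoking Schauder.
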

\begin{proof}
Since $g$ is Lipschitz one can find a local $W^{2, p}$ (for $p$ arbitrarily large) harmonic coordinate system $(x_1, x_2, x_3, x_4)$. In these coordinates the components $g_{ij}$ are $W^{1, p}$. Since $g$ is Ricci-flat whenever it is smooth, we have the equation 
$$\Delta g_{ij}+\p g*\p g*g^{-1}*g^{-1}=0$$
holds on $V^\circ$. Since $dg_{ij}$ is in $L^p$ and $V\setminus V^0$ has Hausdorff codimension 2 (noticing that $g$ is Lipschitz on $V)$, it is easy to see that the equation holds weakly across the singularities.
Using elliptic regularity and bootstrapping we obtain that $g_{ij}\in C^\infty$. Since  a Killing field of a smooth Riemannian metric is always smooth, it follows that the metric $g$ is indeed a toric Ricci-flat metric.
\end{proof}

\begin{remark}
Since $\rho$ has intrinsic meaning given by  $\sqrt{\det G}$, it follows that a posteriori that $u$ and $v$ has improved regularity. For example we know that $u_\rho=0$ when $\rho=0$.
\end{remark}

Given a rod $\mI$ with rod vector $\mv$, for any nonzero vector $v$ in the line $\mv$, from the above discussion we see that 
$$\lim_{\rho\to0}\frac{1}{e^{\nu}\rho}\sqrt{\rho\Phi(v ,v)}=c,$$
for a positive constant $c$, where the limit is taken uniformly for points in $\dH^\circ$ approaching the interior of the rod $\mI$.
\begin{definition}
	We define the \emph{normalized rod vector $\overline{\mv}$} for the rod $\mI$ to be the vector  in the line corresponding to $\mv$, which is unique up to $\pm1$, such that along $\mI$ we have  
$$\lim_{\rho\to0}\frac{1}{e^{\nu}\rho}\sqrt{\rho\Phi(\overline{\mathfrak{v}},\overline{\mathfrak{v}})}=1.$$
\end{definition}
\begin{example}
    For the model augmented harmonic maps defined  at the beginning of this section, we can explicitly compute the normalized rod vectors:
    \begin{itemize}
        \item For the model $(\Phi^{\Aea}, \nu^{\Aea})$ and $(\Phi^{\TNam}, \nu^{\TNam})$, the normalized rod vectors are given by $\bv_{\frac{\alpha}{2}}$ and $\bv_{-\frac{\alpha}{2}}$ on $\mv_0$ and $\mv_1$;
        \item For the model $(\Phi^{\TNap}, \nu^{\TNap})$, the normalized rod vectors are given by $\bv_{\frac{2}{\alpha}}$ and $\bv_{-\frac{2}{\alpha}}$ on $\mv_0$ and $\mv_1$;
        \item For the model $(\Phi^{\AFa}, \nu^{\AFa})$, the normalized rod vector is given by $\bv_0$ on $\mv_0$.
    \end{itemize}
\end{example}

 Now suppose that we are given a smooth enhancement $\Lambda$ and let  $\bbv$ be a primitive rod vector along  $\mv$. Consider the manifold $M^\circ:= U^\circ\times \mathbb R^2/\Lambda$. Endowed with the quotient metric $g_\Lambda$, $M^\circ$ is naturally a toric Ricci-flat manifold. As in Section \ref{sec:rod structures}, there is a partial compactification $M$ of $M^\circ$ to a smooth toric manifold by adding points with stabilizers over $\Gamma\cap U$.
\begin{definition}[Cone angle] \label{def:cone angle}
	The cone angle $\vartheta(\mI)$ along a rod $\mI$, associated to the augmented harmonic map $(\Phi, \nu)$ locally tamed by  the enhanced rod structure $(\mR, \Lambda)$, is defined to be 
	$$\vartheta_{\Phi, \nu}(\mI)=2\pi\lim_{\rho\rightarrow 0}\frac{1}{e^\nu\rho}\sqrt{\rho \Phi(\bbv, \bbv)}.$$
    \end{definition}
In particular, we have $$2\pi\cdot\bbv=\pm\vartheta_{\Phi, \nu}(\mI)\cdot \overline{\mv},$$ and  for any constant $C$,
$$\vartheta_{\Phi, \nu+C}(\mI)=e^{-C}\vartheta_{\Phi, \nu}(\mI).$$
 By Proposition \ref{p:rod smooth compactification}, the metric $g_\Lambda$  has cone angle $\vartheta(\mI)$ along the 2-sphere in $M$ corresponding to the rod $\mI$. This is the precise meaning of being \emph{conical} in this paper. These types of conical singularities are very mild—they arise from an identification with an incorrect period, similar to orbifold singularities.  In particular,  the curvature of $g_\Lambda$ is uniformly bounded across the conical singularities.

\

Next we explain the convergence of normalized rod vectors for a sequence of augmented harmonic maps. First we introduce a notion of convergence of harmonic maps for the convenience of later discussion. Suppose we have a sequence of smooth maps $\Phi^k$ on $U^\circ\subset \dH^\circ$ where $U\subset\mathbb H$ is an open subset with $U^\circ:=U\cap\mathbb{H}^\circ$, which is locally tamed by an untyped rod structure $\mR^k$ such that for each $k$, $U\cap \Gamma$ is contained in a unique rod $\mI^k$ of $\mR^k$. Suppose that the corresponding rod vectors $\mv^k$ converge to a limit $\mv^\infty\in \R\dP^1$. Let $\mR^\infty$ be an untyped rod structure such that one of the rods contains $U\cap \Gamma$ and the corresponding rod vector is $\mv^\infty$. Let $\Phi^\infty$ be a smooth map on $U^\circ$ which is locally tamed by $\mR^\infty$.
\begin{definition}\label{defi: local uniform convergence}
    We say $\Phi^k$ converges to a limit map $\Phi^\infty$ in $C^{1, \alpha}$ locally over $U$ if the following hold
    \begin{itemize}
        \item For any compact $K\subset U\cap \dH^\circ$, $\Phi^k$ converges to $\Phi^\infty$ in $C^{1, \alpha}(K)$ for all $\alpha\in (0, 1)$.
        \item For any $z\in U\cap \Gamma$, there is a neighborhood $V$ of $z$ in $\dH$, such that if we represent $P_{\mv^k}.\Phi^k$ by functions $(u^k, v^k)$ and $P_{\mv^\infty}. \Phi^\infty$ by functions $(u^\infty, v^\infty)$, then $(u^k, v^k)$ converges to $(u^\infty, v^\infty)$ in $C^{1, \alpha}(V)$ for all $\alpha\in (0, 1)$.
    \end{itemize}
\end{definition}

Now suppose we have a sequence of augmented harmonic maps $(\Phi^k, \nu^k)$ on $U$ which are locally tamed by  rod structures $\mR^k$ and with a uniform bound $d(\Phi^k,\Phi_{\mv^k})\leq C$. From the uniform estimate in Proposition \ref{thm:C11 regularity}, by passing to a subsequence $\Phi^k$ converges $C^{1, \alpha}$ locally over $U$ to a limit harmonic map $\Phi^\infty$ locally tamed by $\mR^\infty$. Furthermore, we have 
$$d(\Phi^\infty, \Phi_{\mv^\infty})\leq C.$$Passing to a further subsequence we may find constants $C_k$ such that $\nu^k+C_k$ converges uniformly to a limit augmentation $\nu^\infty$. It follows that we have the convergence of normalized rod vectors
\begin{equation}\label{e:angle convergence}\lim_{k\rightarrow\infty}e^{C_k}\overline{\mv}^k=\overline{\mv}^\infty.
\end{equation}
Suppose furthermore that each $\mR^k$ is enhanced by $\Lambda_k$ and let $\bbv^k$ be the corresponding primitive vector along $\mv^k$. It follows that 
\begin{equation}
     \lim_{k\to\infty}\frac{1}{\vartheta_{\Phi^k, \nu^k}(\mI^k)}e^{C_k}\bbv^k=\frac{\overline{\mathfrak{v}}^\infty}{2\pi}.
\end{equation}
In particular, to understand the limiting behavior of the cone angles $\vartheta_{\Phi^k, \nu^k}(\mI^k)$, it suffices to study the limiting behavior of the enhancements $\Lambda_k$.

\subsubsection{Near a turning point} 
\label{subsubsec:local regularity near a turning point}

We again let $(\Phi, \nu)$ be an augmented harmonic map defined on  $U^\circ=U\cap \dH$ for some pen subset $U\subset \dH$,  which is locally tamed by an untyped rod structure $\mR$.  Without loss of generality we may assume that $U$ is a neighborhood of the turning point $(0,0)\in\Gamma$  and $U\cap\Gamma$ contains exactly two rods $\mathfrak I_0$ and $\mathfrak I_1$.
 
Again, there is a  PDE study of the singular behavior at the turning points  in the Lorentzian setting, see \cite{HKWX}. In our case we will instead use geometry to directly prove the regularity of the induced toric Ricci-flat metric, which fits our purpose in this paper.  As in \cite{HKWX} it is convenient to apply an element in $SL(2;\mathbb R)$ such that the rod vectors are given by $\tilde{\mathfrak{v}}_0=[v_0]$ and $\tilde{\mathfrak{v}}_1=[v_1]$ where  
\begin{align*}
    v_0=(1,1)^{\top}, && v_1=(1,-1)^{\top}.
\end{align*}
Then the model map $\Phi_{\mv_0, \mv_1}$ given in \eqref{e:turning point model} becomes 
\begin{equation}\label{eq:sin 1/sin}
\Phi_{\tilde{\mv}_0, \tilde{\mv}_1}=\left(\begin{matrix}
	\frac{1}{\sin\theta} &\frac{\cos\theta}{\sin\theta} \vspace{10pt}\\ \frac{\cos\theta}{\sin\theta}& \frac{1}{\sin\theta}
\end{matrix}\right), 
\end{equation} 
where we used the  polar coordinates $(r, \theta)$ on $\dH$, with $\rho=r\sin\theta$ and $z=r\cos\theta$ for $\theta\in[0,\pi]$.
Writing 
$$\Phi=\left(\begin{matrix}e^u+e^{-u}v^2&e^{-u}v\\e^{-u}v&e^{-u} \end{matrix}\right).$$
The harmonic map equation reduces to the following system of two equations:
\begin{align}
	\Delta u+e^{-2u}|\nabla v|^2=0,\label{eq:harmonic u}\\
	\Delta v-2\nabla u\cdot\nabla v=0,\label{eq:harmonic v}
\end{align}
 By assumption we have $d(\Phi, \Phi_{\tilde{\mv}_0, \tilde{\mv}_1})<\infty$,  which implies that 
 \begin{equation}
	|u-\log\sin\theta|\leq C, |v-\cos\theta|\leq C\sin\theta.
\end{equation} 
Consider the Ricci-flat metric on $U^\circ\times \R^2$ given by $$g=e^{2\nu}(d\rho^2+dz^2)+\rho\Phi$$
Using the study in Section \ref{subsubsec:local regularity near rods} we know that along each rod $\mathfrak {I}_\alpha (\alpha=0, 1)$ we have that 
$$\lim_{\rho\to0}\frac{1}{e^\nu\rho}\sqrt{{\rho\Phi(v_\alpha,v_\alpha)}}=\frac{1}{2\pi}c^2_\alpha>0.$$
Now we take the lattice $\Lambda$ in $\mathbb R^2$ generated by $v_\alpha/c_\alpha$. It follows that from Proposition \ref{p:rod smooth compactification} that $g$ descends to a toric Ricci-flat metric over $U^\circ\times \R^2/\Lambda$ and it  naturally extends to a smooth toric Ricci-flat metric on $ B_1(0)\setminus\{0\}\subset \mathbb R^2\times \mathbb R^2$, such that $v_0/c_0$ and $v_1/c_1$ generate the standard torus action.

\begin{lemma}\label{lem:rescaled estimate}
	Over $U\setminus\{0\}$ we have
	\begin{align}
		&|\partial_r^m\partial_{\theta}^n(v-\cos\theta)|\leq Cr^{-m}(\sin\theta)^{2-m-n}\text{ for $0\leq m+n\leq3$}, \label{eq:puncture harmonic estimate v}\\
		&|\partial_r^m\partial_{\theta}^n(u-\log\sin\theta)|\leq Cr^{-m}(\sin\theta)^{1-m-n}\text{ for $1\leq m+n\leq3$}. \label{eq:puncture harmonic estimate u}
	\end{align}
\end{lemma}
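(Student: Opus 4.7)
The plan is to prove the estimates by a standard rescaling argument combined with elliptic regularity. Since the axisymmetric harmonic map equation is invariant under the dilation $(\rho,z)\mapsto (\lambda\rho,\lambda z)$, and the model map $\Phi_{\tilde\mv_0,\tilde\mv_1}$ in \eqref{eq:sin 1/sin} depends only on $\theta$, the rescaled map $\Phi^\lambda(r,\theta):=\Phi(\lambda r,\theta)$ is again harmonic on the corresponding domain and satisfies the same tameness bound $d(\Phi^\lambda,\Phi_{\tilde\mv_0,\tilde\mv_1})\leq C$ uniformly in $\lambda\in(0,1]$. Thus it suffices to prove the bounds on the fixed annulus $r\in[1/2,2]$ with constants depending only on $C$; setting $\lambda=r$ and observing that $\partial_r\mapsto \lambda^{-1}\partial_{\tilde r}$ under the rescaling produces the required factor $r^{-m}$ in the unrescaled estimates.

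At unit scale I would split the analysis into two regimes. First, in the interior region $\{r\in[1/2,2],\ \sin\theta\geq\varepsilon\}$, the harmonic map system \eqref{eq:harmonic u}--\eqref{eq:harmonic v} is uniformly elliptic with smooth coefficients and a right-hand side of quadratic type in $\nabla v$. Since $u$ and $v$ are bounded by the tameness assumption, standard interior Schauder theory with bootstrapping yields $C^k$ bounds on $u-\log\sin\theta$ and $v-\cos\theta$ for any $k$, and in this regime the weights $(\sin\theta)^{2-m-n}$ and $(\sin\theta)^{1-m-n}$ are bounded below so the desired estimates are immediate. Second, in the boundary region near a rod $\mathfrak{I}_\alpha$ with rod vector $(1,\pm1)^\top$, I would apply the element $P_{\tilde\mv_\alpha}\in PSO(2)$ that rotates the rod vector to $[\bv_0]$. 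The transformed map $P_{\tilde\mv_\alpha}.\Phi$ is then locally tamed by the single rod vector $[\bv_0]$ on the relevant portion of $\Gamma$, and Proposition~\ref{thm:C11 regularity} provides weighted bounds of the form $|\nabla^k v'|\leq C\rho^{2-k}$ and $|\nabla^k u'|\leq C\rho^{1-k}$ for $k\leq 3$ in the transformed $(u',v')$-variables. Transferring these back through the constant action of $P_{\tilde\mv_\alpha}^{-1}$ on the matrix $\Phi$, and unpacking the parametrization $\Phi=\left(\begin{smallmatrix}e^u+e^{-u}v^2 & e^{-u}v \\ e^{-u}v & e^{-u}\end{smallmatrix}\right)$, yields pointwise bounds on $v-\cos\theta$ and $u-\log\sin\theta$ together with their $(\rho,z)$-derivatives with the correct powers of $\rho$. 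At $r\sim 1$ we have $\rho\sim\sin\theta$ and $\partial_r,\partial_\theta$ are linear combinations of $\partial_\rho,\partial_z$ with smooth bounded coefficients, which converts these into the claimed $(\sin\theta)$-weighted form.

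After combining the interior and boundary estimates via a simple covering argument and unrescaling via $\lambda=r$, the lemma follows. The principal obstacle, in my view, lies in the boundary step: one must verify that the weighted bounds of Proposition~\ref{thm:C11 regularity}, which are stated for the ``distance-to-$\Phi_{[\bv_0]}$'' pair $(u',v')$, translate faithfully into ``distance-to-$\Phi_{\tilde\mv_0,\tilde\mv_1}$'' bounds $v-\cos\theta$ and $u-\log\sin\theta$ in our original basis, keeping the correct $\sin\theta$-power for each derivative up to order three. This amounts to a careful change-of-variables computation based on the identity $\Phi_{\tilde\mv_0,\tilde\mv_1}=P_{\tilde\mv_0}^{-1}.\Phi_{[\bv_0]}$ near $\mathfrak{I}_0$ (and the analogous one near $\mathfrak{I}_1$), together with the observation that the $\theta$-derivatives of $\log\sin\theta$ and $\cos\theta$ themselves already contribute appropriate powers of $\sin\theta$, so that the bookkeeping closes at precisely the exponents $2-m-n$ and $1-m-n$. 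Apart from this bookkeeping, the remaining ingredients are standard and amount to interior Schauder regularity, the rod-interior estimates of the previous subsection, and the scale invariance of the equation.
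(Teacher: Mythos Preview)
Your proposal is correct and follows essentially the same approach as the paper: rescaling invariance reduces to a fixed annulus, interior elliptic estimates handle the region $\sin\theta\geq\varepsilon$, and near each rod a constant $SL_\pm(2;\R)$ change of basis brings $\Phi$ into the form tamed by $\Phi_{[\bv_0]}$ so that Proposition~\ref{thm:C11 regularity} applies, after which one transforms back. The paper uses the explicit matrix $\tfrac{1}{2}\left(\begin{smallmatrix}1&1\\1&-1\end{smallmatrix}\right)$ rather than your rotation $P_{\tilde\mv_\alpha}$, but this is an immaterial difference, and your identification of the change-of-variables bookkeeping as the only nontrivial step matches the paper's terse ``returning to the original $(u,v)$ we obtain the desired conclusion.''
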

\begin{proof}

 Note that the harmonic map equations (\ref{eq:harmonic u}) and (\ref{eq:harmonic v}) are invariant under the rescaling $u_{R}(r,\theta)=u(R^{-1}r,\theta)$ and $v_{R}(r,\theta)=v(R^{-1}r,\theta)$. 
 By considering the rescaled maps $(u_R, v_R)$, it suffices to prove uniform estimates on a fixed annulus $A_{{\frac{1}{2}},1}$ in $\dH^\circ$. Away from the rods, the estimates follow from the standard interior estimates of Schoen-Uhlenbeck. It suffices to prove the estimates in a neighborhood of a rod. For simplicity we consider the rod $\theta=\pi$.
 
	Denote 
	$$\widehat\Phi=\frac12\left(\begin{matrix}
		1 &1\\ 1& -1\end{matrix}\right)\Phi\left(\begin{matrix}
			1 &1\\ 1& -1\end{matrix}\right)=\frac12\left(\begin{matrix}
				e^{u}+e^{-u}(v+1)^2&e^{u}+e^{-u}(v^2-1)\\
				e^{u}+e^{-u}(v^2-1)&e^{u}+e^{-u}(v-1)^2
				\end{matrix}\right).$$
				Then $\widehat \Phi$ has bounded distance to $\frac{1}{\rho}\mathrm{diag}(r+z,r-z)$, hence to the  model $\Phi_{[\bv_0]}$ on the annulus $ A_{{\frac{1}{2}},1}$ near the rod $\theta=\pi$. Set 
	$$\widehat\Phi=\left(\begin{matrix}
		e^{\widehat u}+e^{-\widehat u}{\widehat v}^2 & e^{-\widehat u} \widehat v\\e^{-\widehat u}\widehat v & e^{-\widehat u}
	\end{matrix}\right),$$
	then one can apply the estimates in Proposition \eqref{thm:C11 regularity} to the functions $(\widehat u-\log\rho,\widehat v)$ near $\theta=\pi$ and obtain that
	\begin{align*}
		&|\nabla^k\widehat v|\leq C\rho^{2-k}\text{ for $k=0,1,2,3$},\\
        &|\nabla^k(\widehat u-\log\rho)|\leq C\rho^{1-k}\text{ for $k=1,2,3$}.
	\end{align*}
Returning to the original $(u, v)$ we obtain the desired conclusion.\end{proof}

\begin{lemma}\label{lem:L infinity estimate}
	Over $B_1(0)\setminus\{0\}$, we have  $C^{-1}g_{\mathbb{R}^4}\leq g\leq C g_{\mathbb{R}^4}$. 
\end{lemma}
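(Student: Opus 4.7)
The plan is to compare the Ricci-flat metric $g$ to the model metric $g_m := e^{2\nu_m}(d\rho^2+dz^2) + \rho\Phi_{\tilde{\mathfrak{v}}_0,\tilde{\mathfrak{v}}_1}$, where $\Phi_{\tilde{\mathfrak{v}}_0,\tilde{\mathfrak{v}}_1}$ is the explicit model harmonic map \eqref{eq:sin 1/sin} and $\nu_m = -\tfrac{1}{2}\log r + c_0$ is the corresponding augmentation, obtained by direct integration of \eqref{e:eqn2.33}. After the linear change of torus basis compatible with the enhancement $\Lambda$, the metric $g_m$ is the standard flat Euclidean metric on an open subset of $\mathbb{R}^4\setminus\{0\}$ (cf.\ Example \ref{ex1}), so it suffices to establish $g\asymp g_m$ on $B_1(0)\setminus\{0\}$.

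For the torus block of $g$: the tameness condition gives $|u-\log\sin\theta|\leq C$, and Lemma \ref{lem:rescaled estimate} with $m=n=0$ gives $|v-\cos\theta|\leq C\sin\theta$. Plugging into the parametrization of $\Phi$ from Section \ref{subsubsec:local regularity near a turning point}, each entry of the Gram matrix $\rho\Phi$ is bounded between positive multiples of the corresponding entry of $\rho\Phi_{\tilde{\mathfrak{v}}_0,\tilde{\mathfrak{v}}_1}=r\left(\begin{smallmatrix}1 & \cos\theta\\ \cos\theta & 1\end{smallmatrix}\right)$. Combined with the intrinsic identity $\det(\rho\Phi)=\rho^2=r^2\sin^2\theta$, this upgrades entrywise control to the two-sided quadratic form comparison $\rho\Phi\asymp\rho\Phi_{\tilde{\mathfrak{v}}_0,\tilde{\mathfrak{v}}_1}$.

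For the base block of $g$ one needs $e^{2\nu}\asymp r^{-1}$, i.e.\ $|\tilde\nu|\leq C$ on $B_1(0)\setminus\{0\}$, where $\tilde\nu:=\nu-\nu_m$. Subtracting the systems \eqref{eqn-Ricci-1}--\eqref{eqn-Ricci-2} satisfied by $\nu$ and $\nu_m$ gives
\begin{align*}
\partial_\rho\tilde\nu&=\tfrac{1}{8\rho}\bigl(\Tr(\mathcal{U}^2-\mathcal{V}^2)-\Tr(\mathcal{U}_m^2-\mathcal{V}_m^2)\bigr),\\
\partial_z\tilde\nu&=\tfrac{1}{4\rho}\bigl(\Tr(\mathcal{U}\mathcal{V})-\Tr(\mathcal{U}_m\mathcal{V}_m)\bigr).
\end{align*}
A direct computation using $(u_m,v_m)=(\log\sin\theta,\cos\theta)$ produces the clean model expressions $\mathcal{U}_m-\Id=-\cos\theta\cdot\sigma_1$ and $\mathcal{V}_m=\sin\theta\cdot\sigma_1$, with $\sigma_1=\left(\begin{smallmatrix}0&1\\1&0\end{smallmatrix}\right)$. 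Writing $(u,v)=(\log\sin\theta+\tilde u,\cos\theta+\tilde v)$, expanding each entry of $\mathcal{U},\mathcal{V}$ perturbatively, and applying Lemma \ref{lem:rescaled estimate} to $(\tilde u,\tilde v)$ and their derivatives together with the cancellation identity $A^2-B^2=(A-B)(A+B)$, one verifies that the right-hand sides of this ODE system are integrable along any path from a fixed reference point in the annulus $A_{1/2,1}\subset\mathbb{H}$ (where $\tilde\nu$ is bounded by the $C^{1,\alpha}$ regularity of $\nu$ there) to an arbitrary point of $B_1(0)\setminus\{0\}$. Integrating yields the desired bound $|\tilde\nu|\leq C$.

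The principal obstacle is the base-block estimate: a naive bound on $\Tr(\mathcal{U}^2-\mathcal{V}^2)/\rho$ near the turning point would produce non-integrable singularities of order $r^{-2}$. The saving cancellation comes from the algebraic structure $(\mathcal{U}_m)_{11}=(\mathcal{U}_m)_{22}=1$ and $(\mathcal{V}_m)_{11}=(\mathcal{V}_m)_{22}=0$ of the model, which forces the most singular contributions in the subtracted ODE to come paired with $\tilde u,\tilde v$ or their derivatives, exactly where Lemma \ref{lem:rescaled estimate} supplies the integrable decay. A more robust but less elementary alternative is to exploit the rescaling invariance of the harmonic-map system together with pointed $C^0$-compactness of Ricci-flat metrics on dyadic annuli, reducing the bound on $\tilde\nu$ at all scales to a uniform $L^\infty$ bound on the fixed annulus $A_{1/2,1}$.
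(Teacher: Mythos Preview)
Your torus-block argument is fine (indeed bounded hyperbolic distance already gives the quadratic-form comparison $\rho\Phi\asymp\rho\Phi_{\tilde{\mv}_0,\tilde{\mv}_1}$ directly). The gap is in the base block. After the cancellation you describe, the estimates of Lemma~\ref{lem:rescaled estimate} yield
\[
|\partial_\rho\tilde\nu|+|\partial_z\tilde\nu|\le \frac{C}{r},
\]
but nothing better. For instance at $\theta=\pi/2$ (where $\rho=r$, $(u_m)_\rho=(u_m)_z=(v_m)_\rho=0$, $(v_m)_z=1/r$, $e^{-2u_m}=1$), the term
\[
\frac{\rho}{4}\Bigl(e^{-2u}v_z^2-e^{-2u_m}(v_m)_z^2\Bigr)
=\frac{r}{4}\Bigl((e^{-2\tilde u}-1)\,r^{-2}+\cdots\Bigr)
\]
contributes $\frac{1}{4r}(e^{-2\tilde u}-1)$ to $\partial_\rho\tilde\nu$; since Lemma~\ref{lem:rescaled estimate} gives no decay of $\tilde u$ itself (only $|\tilde u|\le C$), this is genuinely of size $r^{-1}$. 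Integrating $C/r$ along any path reaching $r_0\to 0$ produces $C|\log r_0|$, so your ODE argument only yields $|\tilde\nu|\le C|\log r|$, not a uniform bound.

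The paper's argument avoids radial integration entirely. It observes that the same computation does give $|\partial_\theta\nu|\le C$ uniformly (equivalently $r\cdot|\nabla\tilde\nu|\le C$), via rescaling and the rod-interior estimate \eqref{eqn:nu}. The missing radial information is then supplied not by integrating but by the \emph{rod boundary condition}: from Section~\ref{subsubsec:local regularity near rods}, along the rod $\theta=\pi$ the quantity $2\nu-\widehat u+\log\rho$ is constant, and the tameness bound $|u-\log\sin\theta|\le C$ translates (after the change of basis to $\widehat\Phi$) into $|(2\nu+\log r)|_{\theta=\pi}|\le C$. Combining this pinning on the rod with the angular bound gives the conclusion. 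Your alternative via rescaling is circular without this rod input, since rescaling preserves exactly the unknown additive constant in $\nu_R$ that you need to control.
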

\begin{proof}
	Notice that the flat metric $g_{\mathbb{R}^4}$ can be written as 
	$$g_{\mathbb R^4}=\frac{1}{2r}(d\rho^2+dz^2)+\rho\Phi_{\tilde\mv_0,\tilde\mv_1}.$$
	It suffices to show that $e^{2\nu}$ is uniformly comparable to $r^{-1}$.
	
	Under the above rescaling  we have $\nu_R(r,\theta)=\nu(R^{-1}r,\theta)$. Applying the estimate \eqref{eqn:nu} to $\nu_R$ on the fixed annulus $A_{\frac{1}{2}, 1}$ we see that  $\nu_\theta$ is uniformly bounded. 

	Now consider $(u,v)$ on $A_{1/R,1}$. Let $\widehat \Phi$  be as given  in the proof of Lemma \ref{lem:rescaled estimate}. From the discussion in Section \ref{subsubsec:local regularity near rods}, we know that $2\nu-\widehat u+\log\rho$ is a constant on the rod $\theta=\pi$ in $A_{1/R,1}$.  This together with our assumption $|u-\log\sin\theta|<C$ imply that $$|(2\nu+\log r)|_{\{\theta=\pi\}}|<C$$ in $A_{1/R,1}$, for  $C$ independent of $R$. 
	Let $R\to\infty$, the uniform bound on $\nu_\theta$ then implies that $e^{2\nu}$ is uniformly comparable to $r^{-1}$ on  $B_1(0)\setminus\{0\}$, as desired.
\end{proof}
It follows that the metric completion of $g$ is given by adding the origin $0$ in $B_1(0)$ and the distance function to the origin $0$ with respect to the metric $g$ is comparable to $r^{{\frac{1}{2}}}$. From Lemma \ref{lem:rescaled estimate} with a rescaling argument, we obtain the curvature bound $$|\Rm_g|_g\leq C\dist_g(0,\cdot)^{-2}.$$

\begin{proposition}\label{p:smoothness near turning point}
	There is a diffeomorphism $F:B_1(0)\setminus\{0\}\to B_1(0)\setminus\{0\}$, such that $F^*g$ extends as a $C^{\infty}$ metric to $B_1(0)$.
\end{proposition}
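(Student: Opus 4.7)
The plan is to treat this as a removable-singularity problem for the Ricci-flat Einstein equation, building on three established facts: $g$ is smooth, toric, and Ricci-flat on $B_1(0)\setminus\{0\}$ (by Proposition~\ref{p:rod smooth compactification} applied along each rod); by Lemma~\ref{lem:L infinity estimate}, $C^{-1}g_{\mathbb{R}^4}\le g\le Cg_{\mathbb{R}^4}$; and the preceding discussion gives $|\Rm_g|_g\le C\dist_g(0,\cdot)^{-2}$. The strategy is to show that the unique tangent cone at $0$ is the flat $\mathbb{R}^4$ and then invoke an $\epsilon$-regularity argument to extract smooth coordinates.

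First I would perform a blow-up analysis at the turning point. Consider the rescalings $(\Phi_\lambda,\nu_\lambda):=(\varphi_\lambda^*\Phi,\varphi_\lambda^*\nu+\tfrac12\log\lambda)$ as $\lambda\to 0$, with $\varphi_\lambda$ as in~\eqref{eqn:dilation}. The augmentation system~\eqref{e:eqn2.33} is invariant under this combined rescaling, and the turning point model $\Phi_{\tilde\mv_0,\tilde\mv_1}$ in~\eqref{eq:sin 1/sin} is manifestly scale-invariant. Combining the uniform tameness bound $d(\Phi_\lambda,\Phi_{\tilde\mv_0,\tilde\mv_1})\le C$ with the rescaled estimates in Lemma~\ref{lem:rescaled estimate} and bootstrapping the coupled elliptic equations \eqref{eq:harmonic u}--\eqref{eq:harmonic v} together with the near-rod estimates of Proposition~\ref{thm:C11 regularity}, the pairs $(\Phi_\lambda,\nu_\lambda)$ admit $C^{k,\alpha}$ subsequential limits on every fixed annulus in $\dH$ (including across the interiors of the two rods). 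The limit $(\Phi_\infty,\nu_\infty)$ is scale-invariant, is again locally tamed by the same turning point rod structure, and the associated Ricci-flat metric $g_\infty$ is bi-Lipschitz to $g_{\mathbb{R}^4}$.

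Second, identify $g_\infty$ with the flat $\mathbb{R}^4$ metric. The scale invariance plus bounded hyperbolic distance to the model, together with the regularity statements near the rods and turning point, forces $\Phi_\infty=\Phi_{\tilde\mv_0,\tilde\mv_1}$; the smoothness of the enhancement $\Lambda$ then fixes the additive constant in $\nu_\infty$ so that $g_\infty=g_{\mathbb{R}^4}$ with its standard $\mathbb{T}^2$-action. Consequently $g_\lambda\to g_{\mathbb{R}^4}$ smoothly on every annulus as $\lambda\to 0$, and the metric has a unique smooth tangent cone equal to flat $\mathbb{R}^4$. With this in hand, I would apply an Anderson-type $\epsilon$-regularity argument: for $\lambda$ sufficiently small, select harmonic coordinates for $g_\lambda$ on $A_{1/2,2}$ that are $C^{k,\alpha}$-close to Euclidean coordinates; by elliptic regularity for the Ricci-flat system in harmonic coordinates and unique continuation, these coordinates extend smoothly to the full ball $B_1(0)$, and composing with $\varphi_\lambda$ gives the desired diffeomorphism $F$.

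The hardest step will be upgrading the $C^{1,\alpha}$ regularity across the rods (Proposition~\ref{thm:C11 regularity}) to genuinely smooth convergence of $(\Phi_\lambda,\nu_\lambda)$ on annuli that include the rod segments. One must ensure that the Lipschitz structure near each rod from Lemma~\ref{lem:Lipschitz metric} is preserved in the blow-up and that the normalized rod vectors together with the lattice $\Lambda$ transform consistently so that the tangent cone is the smooth flat $\mathbb{R}^4$ rather than a non-trivial quotient; here the smoothness hypothesis on $(\mR,\Lambda)$ plays an essential role. A secondary subtlety is verifying that $\Phi_\infty$ is indeed the unique scale-invariant tame model, which relies on the rigidity of axisymmetric harmonic maps with prescribed asymptotics at the two rods meeting at the turning point.
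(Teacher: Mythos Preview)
Your blow-up analysis is reasonable and, with some work on the ODE for scale-invariant harmonic maps, would indeed identify the tangent cone at $0$ as flat $\mathbb{R}^4$. However, the final step contains a genuine gap. Having harmonic coordinates on each dyadic annulus $A_{1/2,2}$ that are $C^{k,\alpha}$-close to Euclidean does \emph{not} yield, via ``elliptic regularity and unique continuation,'' a single coordinate chart on the full punctured ball in which $g$ extends smoothly. Unique continuation concerns solutions of PDE vanishing on open sets, not extension of coordinate systems; and gluing annular harmonic coordinates into a global chart with no holonomy or drift is exactly the nontrivial content of removable-singularity theorems. Quadratic curvature blow-up and a bi-Lipschitz bound alone do not preclude, a priori, more exotic behavior; what is needed is either a direct gluing construction (as in Anderson's orbifold compactness work) or the $L^2$-curvature hypothesis that feeds into Bando--Kasue--Nakajima.

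The paper takes a shorter route that avoids your blow-up entirely: it first establishes $\int_{B_1(0)\setminus\{0\}}|\Rm_g|^2\,\dvol_g<\infty$ by combining the Cheeger--Gromov chopping theorem (to produce exhausting domains with controlled boundary geometry) with Chern--Gauss--Bonnet and Ricci-flatness, and then invokes the BKN removable-singularity theorem as a black box. Your tangent-cone identification is morally equivalent to knowing the asymptotic volume ratio is $1$, which is one ingredient in BKN, but you are missing the $L^2$-curvature finiteness that makes the BKN machinery apply. If you want to salvage your approach, the cleanest fix is to insert the paper's Gauss--Bonnet argument to obtain $\|\Rm\|_{L^2}<\infty$ and then cite BKN; alternatively, your tangent-cone convergence could be upgraded to a genuine $\epsilon$-regularity statement, but that would require reproving a substantial part of \cite{BKN} and is not what ``unique continuation'' provides.
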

\begin{proof}
Using the  chopping theorem of Cheeger-Gromov we may find a sequence of neighborhoods $V_j$ of $0$ such that $\partial V_j\subset A_{j^{-1}, 2j^{-1}}(0)$ and $\p V_j$ has second fundamental form bounded by $C_j$. Applying the Chern-Gauss-Bonnet theorem on $B_{{\frac{1}{2}}}(0)\setminus V_j$ and the Ricci-flat equation, and letting $j\rightarrow\infty$  we see that $$\int_{B_1(0)\setminus \{0\}}|\Rm_g|_g^2\dvol_g<\infty.$$
Then by the result of Bando-Kasue-Nakajima \cite{BKN} (see also \cite{DS12}, Section 5) the conclusion follows.
\end{proof}

\subsection{Global theory}
\label{subsec:global regularity}

The following existence  result should be standard (see for example \cite{Weinstein}, \cite{Kunduri}). For completeness and for our later purpose we include a short proof here.
\begin{theorem}\label{t:existence result}	
Given a rod structure $\mathfrak{R}$,  there is a unique harmonic map $\Phi: \dH^\circ\rightarrow \mathcal H$ which is strongly tamed by $\mR$. 
\end{theorem}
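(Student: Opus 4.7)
The plan is to prove existence by a variational argument against a suitably chosen background map, and uniqueness via the classical convexity of the hyperbolic distance composed with harmonic maps.

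For existence, I would first construct a smooth reference map $\Phi_0 : \dH^\circ \to \mathcal H$ that is strongly tamed by $\mR$. This is built by patching together the local and infinity models recalled in Section \ref{ss:model maps and tameness}: in a neighborhood of the interior of each rod $\mI_j$ use the rod model $\Phi_{\mv_j}$, in a neighborhood of each turning point $z_j$ use $T_{z_j}^*\Phi_{\mv_j,\mv_{j+1}}$, and on the complement of a large ball use the asymptotic model $\Phi^\sharp_{\mv_0,\mv_n}$. Because these models agree with each other up to bounded hyperbolic distance on their overlaps, one can interpolate using geodesic convex combinations in the non-positively curved target $\mathcal H$ together with a partition of unity on $\dH$.

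Next I would solve the harmonic map equation by minimizing a renormalized Dirichlet energy
\begin{equation*}
\mathcal E(\Phi) = \int_{\R^3} \bigl( e(\Phi) - e(\Phi_0) \bigr)\, dx
\end{equation*}
over the class of axisymmetric maps $\Phi$ with $\sup_{\R^3} d(\Phi,\Phi_0) < \infty$ and $d(\Phi,\Phi_0) \to 0$ at infinity, where $e(\cdot)$ is the usual harmonic map density for axisymmetric maps into $\mathcal H$. Because $\mathcal H$ has strictly negative curvature, $\Phi \mapsto d(\Phi,\Phi_0)^2$ is convex along geodesic homotopies in $\mathcal H$, which yields strict convexity of the renormalized energy and hence existence and uniqueness of a minimizer; the Euler--Lagrange equation is exactly the harmonic map equation. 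The bounded distance to $\Phi_0$ gives local tameness automatically. To upgrade global tameness to strong tameness one analyses the equation near infinity: the function $d(\Phi,\Phi_0)^2$ is subharmonic on the complement of the rod (by the standard Bochner/Eells--Sampson identity for targets with $\mathrm{Sec}\leq 0$), bounded, and tends to $0$ at infinity, so one can use comparison with explicit axially symmetric harmonic barriers on $\{r \gg 1\}$ (built by separation of variables against the scale-invariant/essentially conical structure of $\Phi^\sharp_{\mv_0,\mv_n}$) to get the $O(r^{-1})$ decay rate.

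For uniqueness, suppose $\Phi_1,\Phi_2$ are both strongly tamed by $\mR$. Then $d(\Phi_1,\Phi_2)$ is bounded on $\dH^\circ$, extends continuously to $0$ at infinity, and is $O(r^{-1})$ there. The Bochner identity again gives, on the regular set $\R^3 \setminus \Gamma$,
\begin{equation*}
\Delta_{\R^3}\, d(\Phi_1,\Phi_2)^2 \;\geq\; 0
\end{equation*}
in the weak sense. The local regularity established in Section \ref{subsubsec:local regularity near rods} and Section \ref{subsubsec:local regularity near a turning point} shows $d(\Phi_1,\Phi_2)$ is bounded up to $\Gamma$, and since $\Gamma$ has Hausdorff codimension $2$ in $\R^3$ it is removable for the bounded subharmonic function $d(\Phi_1,\Phi_2)^2$. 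Hence $d(\Phi_1,\Phi_2)^2$ is subharmonic on all of $\R^3$ and vanishes at infinity, so by the maximum principle it vanishes identically.

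The main obstacle I anticipate is defining the renormalized energy and its admissible class so that the direct method genuinely applies: the unrenormalized Dirichlet energy is infinite because of the codimension-two singularities of $\Phi_0$ along $\Gamma$, so one must verify that the subtracted integrand is integrable, bounded below, and coercive on perturbations measured by $d(\Phi,\Phi_0)$. A secondary technical point is the passage from bounded hyperbolic distance at infinity to the sharp $O(r^{-1})$ decay; this requires linearizing the harmonic map system around each asymptotic model $\Phi^\sharp_{\mv_0,\mv_n}$ and exploiting indicial-root / separation-of-variables analysis for the resulting linear elliptic operator on the asymptotic cone.
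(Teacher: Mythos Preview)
Your uniqueness argument is essentially identical to the paper's: both use that the hyperbolic distance between two harmonic maps is subharmonic on $\R^3\setminus\Gamma$, bounded (by tameness), decays at infinity (by strong tameness), and that $\Gamma$ is removable for bounded subharmonic functions, so the maximum principle forces equality. (The paper uses $d$ rather than $d^2$, but this is immaterial.)

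For existence the paper takes a different and more direct route than your variational scheme. After constructing the background map $\widehat\Phi$ from the same local and asymptotic models you describe, the paper observes that $\widehat\Phi$ can be arranged so that its tension field $\Delta\widehat\Phi$ is bounded and supported in a compact subset of $\dH^\circ$. Then the Newtonian potential
\[
F(x)=\int_{\R^3}|x-y|^{-1}\,|\Delta\widehat\Phi|(y)\,dy
\]
is a global superharmonic barrier with $F=O(r^{-1})$. One solves the Dirichlet problem for harmonic maps on an exhausting family of bounded domains $U_\delta$ with boundary data $\widehat\Phi|_{\partial U_\delta}$; the inequality $\Delta\,d(\Phi_\delta,\widehat\Phi)\ge -|\Delta\widehat\Phi|=\Delta F$ and the maximum principle give the uniform bound $d(\Phi_\delta,\widehat\Phi)\le F$, and one passes to the limit.

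The practical payoff of the paper's approach is that it sidesteps both obstacles you flag. There is no renormalized energy to justify: the direct method is applied only on bounded domains, where the ordinary Dirichlet energy is finite and existence of harmonic maps with prescribed boundary values into a nonpositively curved target is classical. And there is no separate linearization or indicial-root analysis needed to upgrade boundedness to $O(r^{-1})$ decay: the decay is inherited automatically from the barrier $F$, since $|\Delta\widehat\Phi|$ has compact support. Your variational route is in the spirit of Weinstein's work and could presumably be made to work, but the exhaustion-plus-barrier argument is shorter and yields strong tameness for free.
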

\begin{proof}
 We first prove the uniqueness statement.  Suppose we have two  harmonic maps $\Phi_0,\Phi_1$ which are strongly tamed by $\mR$. It is a standard computation that the function $f=d(\Phi_0, \Phi_1)$ can be viewed as an axisymmetric  sub-harmonic function on $\R^3\setminus \Gamma$. The strong tameness assumption implies that $f$ is uniformly bounded and decays to $0$ at infinity. In particular, $f$ extends to a nonnegative subharmonic function on the entire $\mathbb{R}^3$, so it must vanish identically by the maximum principle. 

Now we proceed to prove the existence. We only deal with the case of  Asymptotic Type  $\AFa$ (where $\mv_0=\mv_n$); the other cases are similar. 	Given $\mR$, choose $N$ bigger than the size $\ms(\mR)$, we want to construct a good background smooth map $\widehat{\Phi}: \dH^\circ\rightarrow \mathcal H$. Choose $\tau>0$ such that $100\tau\leq |z_j-z_{j-1}|$ for all $j$. We first define $\widehat{\Phi}$ to be equal to 
\begin{itemize}
    \item $\Phi_{\mv_0}^{\AFa}$ outside $B_{2N}(0)$;
    \item $T_{z_j}^*\Phi_{\mv_j, \mv_{j+1}}$ in $B_\tau(z_j)$ for a turning point $z_j$;
    \item $\Phi_{\mv_j}$ in $N_{\tau}(\mI_j)\setminus (B_{5\tau}(z_j)\cup B_{5\tau}(z_{j+1}))$ for a finite rod $\mI_j$;
    \item $\Phi_{\mv_0}$ in $(B_N(0)\cap N_{\tau}(\mI_0))\setminus B_{5\tau}(z_1)$ and $(B_N(0)\cap N_{\tau}(\mI_n))\setminus B_{5\tau}(z_n)$. 
\end{itemize}
It is easy to check that 
\begin{itemize}
    \item $d(\Phi_{\mv_j}, T_{z_j}^*\Phi_{\mv_j, \mv_{j+1}})<\infty$ in $(B_{5\tau}(z_j)\cap N_{\tau}(\mI_j))\setminus B_{\tau}(z_j)$ for a turning point $z_j$;
    \item $d(\Phi_{\mv_{j+1}}, T_{z_j}^*\Phi_{\mv_j, \mv_{j+1}})<\infty$ in $(B_{5\tau}(z_j)\cap N_{\tau}(\mI_{j+1}))\setminus B_{\tau}(z_j)$ for a turning point $z_j$;
    \item $d(\Phi^{\AFa}_{\mv_0},\Phi_{\mv_0})<\infty$ in $(B_{2N}(0)\cap N_{\tau}(\mI_0))\setminus B_{N}(0)$ and $(B_{2N}(0)\cap N_{\tau}(\mI_n))\setminus B_{N}(0)$. 
\end{itemize}
So by Lemma \ref{l:interpolation} below and a similar interpolation argument, one can extend $\widehat{\Phi}$ to $N_\tau(\Gamma)\setminus \Gamma$ such that $|\Delta\widehat{\Phi}|\leq C$.  Since $\mathcal H$ is simply-connected we can further extend $\widehat{\Phi}$ smoothly to $\dH^\circ$ such that on  $B_{2N}(0)\setminus B_{N}(0)$ we have $d(\widehat{\Phi}, \Phi^{\AFa}_{\mv_0})\leq C$. Notice that  $\Delta\widehat{\Phi}=0$ on $\mathbb H^\circ\setminus{B_{2N}(0)}$. The function 
	\begin{equation}\label{eq:potential v_0}
		F(x):=\int_{\R^3}|x-y|^{-1}|\Delta \widehat{\Phi}|(y)dy>0
	\end{equation}
	is well-defined, It satisfies that $\Delta F=-|\Delta\widehat{\Phi}|$ and $F=O(r^{-1})$ as $r\rightarrow\infty$. 
	Now for any $\delta>0$, on the domain $U_\delta=B_{1/\delta}(0)\cap\{\rho>\delta\}$ one can solve the Dirichlet problem  to get a smooth harmonic map $\Phi_\delta: U_\delta\to\mathcal{H}$ with boundary value $\widehat{\Phi}|_{\partial U_\delta}$.  As $\mathcal{H}$ is negatively curved,  we have $$\Delta{d(\Phi_\delta,\widehat{\Phi})}\geq-|\Delta\widehat{\Phi}|=\Delta F$$ in the weak sense, which gives $$\Delta(d(\Phi_\delta,\widehat{\Phi})-F)\geq0.$$
	Maximum principle then implies that  over $U_\delta$ we have
    $$d(\Phi_\delta,\widehat{\Phi})\leq F.$$
    Passing to a limit as $\delta\to0$, we get a smooth harmonic map $\Phi:\mathbb H^\circ\to\mathcal{H}$ with $d(\Phi, \widehat\Phi)\leq F$. 
\end{proof}

\begin{lemma}\label{l:interpolation}
	Suppose $\Psi$ is  harmonic map  on $B_\delta(0)\cap  \dH^\circ$ such that 
    \begin{itemize}
        \item $d(\Psi, \Phi_{[\bv_0]})\leq c$;
        \item writing $\Psi$ as $(u,v)$ as in Section \ref{subsubsec:local regularity near rods} then $|\nabla^{k}u|,|\nabla^{k}v|<C_k$. 
    \end{itemize} 
    Then there are $\epsilon\in (0, \frac{\delta}{100})$ and $C_k'>0$ depending only on $c$ and $C_k$, and  a smooth map  $\Phi: N_\epsilon(\Gamma)\cap \dH^\circ\rightarrow \mathcal H$, such that 
    \begin{itemize}
        \item $\Phi=\Psi$ in $B_\epsilon(0)$, $\Phi=\Phi_{[\bv_0]}$ in $N_\epsilon(\Gamma)\setminus B_{10\epsilon}(0)$;
        \item writing $\Phi$ in terms of $(u',v')$ as in Section \ref{subsubsec:local regularity near rods} again then $|\nabla^{k}u'|,|\nabla^kv'|<C_k'$.
    \end{itemize}
    In particular, $|\Delta\Phi|\leq C$.
\end{lemma}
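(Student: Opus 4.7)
The plan is to use a linear cutoff in the $(u,v)$ coordinates of Section \ref{subsubsec:local regularity near rods}. Since the rod model $\Phi_{[\bv_0]}$ corresponds to $(u,v)\equiv(0,0)$ in these coordinates, multiplying $(u,v)$ by a radial bump function automatically deforms $\Psi$ to the rod model. Concretely, I would fix once and for all a smooth cutoff $\chi_0\colon[0,\infty)\to[0,1]$ with $\chi_0\equiv 1$ on $[0,1]$ and $\chi_0\equiv 0$ on $[10,\infty)$, and set $\chi_\epsilon(r)=\chi_0(r/\epsilon)$. For $\epsilon>0$ small (to be chosen at the end depending on $c$ and the $C_k$), define $\Phi$ on $N_\epsilon(\Gamma)\cap\dH^\circ$ via
\begin{equation*}
u'(\rho,z) := \chi_\epsilon(r)\,u(\rho,z), \qquad v'(\rho,z) := \chi_\epsilon(r)\,v(\rho,z).
\end{equation*}
By design $\Phi=\Psi$ on $B_\epsilon(0)$ and $\Phi=\Phi_{[\bv_0]}$ on $N_\epsilon(\Gamma)\setminus B_{10\epsilon}(0)$.

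The derivative estimates $|\nabla^k u'|,|\nabla^k v'|\leq C'_k$ will follow from the Leibniz rule together with the standard bounds $|\nabla^j\chi_\epsilon|\leq C\epsilon^{-j}$ and the hypothesized bounds $|\nabla^k u|, |\nabla^k v|\leq C_k$. The resulting constants $C'_k$ depend on $\epsilon$ as well as $c$ and $C_0,\dots,C_k$, but since $\epsilon$ itself will be fixed as a function of $c$ and $\{C_k\}$, this is consistent with the stated dependence.

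The concluding bound $|\Delta\Phi|\leq C$ is automatic on $B_\epsilon(0)$ (where $\Phi=\Psi$ is harmonic) and on the exterior piece (where $\Phi=\Phi_{[\bv_0]}$ is also harmonic); all the work happens in the annular sliver $\epsilon\leq r\leq 10\epsilon$. I would expand the tension field of $\Phi$ in the $(u',v')$-coordinates; using the harmonic map equations \eqref{eq:u}, \eqref{eq:v} applied to $(u,v)$, the main pieces cancel and what remains is a combination of terms that carry at least one derivative of $\chi_\epsilon$ paired with $u,v$ and their first two derivatives, plus a residual of the shape $\rho^{-2}e^{-2u'}|\nabla v'|^2-\chi_\epsilon\rho^{-2}e^{-2u}|\nabla v|^2$. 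With $\epsilon$ fixed, the terms carrying derivatives of $\chi_\epsilon$ are uniformly bounded by the previous paragraph.

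The main (essentially only) obstacle is that the $\rho^{-2}$ factor can blow up as $\rho\to 0$, and the hypothesis $|\nabla v|\leq C_1$ is not by itself strong enough to control $\rho^{-2}|\nabla v|^2$. This will be handled by invoking the sharp rod regularity from Proposition \ref{thm:C11 regularity}: since $\Psi$ is harmonic with $d(\Psi,\Phi_{[\bv_0]})\leq c$, one has the improved bounds $|v|\leq Cc\rho^2$ and $|\nabla v|\leq Cc\rho$, whence $\rho^{-2}|\nabla v|^2\leq C$ uniformly on $B_\delta(0)\cap\dH^\circ$; the corresponding quantity for $(u',v')=(\chi_\epsilon u,\chi_\epsilon v)$ is then also uniformly bounded by the Leibniz rule. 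Combining everything and fixing $\epsilon$ appropriately yields the lemma.
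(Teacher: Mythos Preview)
Your proposal is correct and follows essentially the same approach as the paper: write $\Psi$ in the $(u,v)$ coordinates, note that $\Phi_{[\bv_0]}$ corresponds to $(0,0)$, multiply by a cutoff, and invoke Proposition~\ref{thm:C11 regularity} to obtain the crucial improved decay $|v|\leq Cc\rho^2$, $|\nabla v|\leq Cc\rho$ needed to control the $\rho^{-2}$ singularities in the tension field. The only cosmetic difference is that the paper uses a cutoff $\chi=\chi(z)$ depending only on $z$, whereas you use a radial cutoff $\chi_\epsilon(r)$; the paper's choice makes $\chi_\rho\equiv 0$ and hence kills the term $-2\rho^{-1}v\chi_\rho$ outright, but your version handles that term just as well via $|\chi_\rho|\leq C\epsilon^{-2}\rho$ on the transition annulus combined with $|v|\leq C\rho^2$.
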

\begin{proof}
We can express $\Psi$  in terms of a pair of two functions $(u, v)$ as in Section \ref{subsubsec:local regularity near rods}. On the other hand, the rod model map $\Phi_{[\bv_0]}$ corresponds to $(u_0, v_0)\equiv (0, 0)$. The assumption implies that $|u|+\rho^{-1}|v|\leq c$. Using the estimates in Proposition \ref{thm:C11 regularity} and a simple cut-off function $\chi=\chi(z)$ one can achieve the conclusion. 
\end{proof}

Let $\Phi$ be a harmonic map on $\dH^\circ$ which is strongly tamed by a rod structure $\mR$ of  Asymptotic Type  $\sharp$.  For each $\sharp$ we define the model augmentation 
$$\nu^\sharp=\nu_{\mv_0, \mv_n}^\sharp,$$
using the definitions in Section \ref{ss:model maps and tameness}.

\begin{proposition}\label{prop:normalizing augmentation}
	There is a unique choice of the augmentation $\nu$ such that $\nu=\nu^\sharp+O(r^{-1})$.
\end{proposition}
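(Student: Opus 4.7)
The plan is to treat uniqueness and existence separately. For uniqueness, suppose $\nu_1$ and $\nu_2$ are two augmentations of the same $\Phi$ both satisfying $\nu_i - \nu^\sharp = O(r^{-1})$. Since both solve the system \eqref{e:eqn2.33} with the same $\Phi$, we have $d(\nu_1 - \nu_2) = 0$ on the simply-connected domain $\dH^\circ$, so $\nu_1 - \nu_2$ is a constant that decays at infinity and therefore vanishes.

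For existence, I would first invoke that the compatibility of \eqref{e:eqn2.33} (which holds whenever $\Phi$ is harmonic) together with the simple connectivity of $\dH^\circ$ produces an augmentation $\nu$ of $\Phi$, unique up to an additive constant. The task is then to show that for some unique choice of constant, $\nu - \nu^\sharp_{\mv_0, \mv_n} = O(r^{-1})$. Subtracting the equations satisfied by $\nu^\sharp$ from those satisfied by $\nu$ eliminates the singular $-\tfrac{1}{2\rho}$ terms and gives
\begin{align*}
\p_z(\nu - \nu^\sharp) &= \tfrac{1}{4\rho}\bigl[\Tr(\mathcal U \mathcal V) - \Tr(\mathcal U^\sharp \mathcal V^\sharp)\bigr], \\
\p_\rho(\nu - \nu^\sharp) &= \tfrac{1}{8\rho}\bigl[\Tr(\mathcal U^2 - \mathcal V^2) - \Tr((\mathcal U^\sharp)^2 - (\mathcal V^\sharp)^2)\bigr].
\end{align*}
The key step is to upgrade the strong tameness bound $d(\Phi, \Phi^\sharp) = O(r^{-1})$ to the gradient decay $|\nabla(\Phi - \Phi^\sharp)| = O(r^{-2})$. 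I would do this by a scaling argument: pull back by $\varphi_r$ to place both maps on a fixed annulus in $\dH$, where the harmonic map equation is invariant; interior Schauder estimates for harmonic maps into $\mathcal H$ (away from $\Gamma$) and the boundary $C^{1,\alpha}$ estimates of Proposition \ref{thm:C11 regularity} (near $\Gamma$) then give uniform $C^{1,\alpha}$ control. The hyperbolic distance bound $O(r^{-1})$ for the rescaled maps transfers to an $O(r^{-1})$ bound on first derivatives of the difference at the rescaled scale, which is $O(r^{-2})$ in the original coordinates. Since $\mathcal U - I$ and $\mathcal V$ are schematically of the form $\rho \Phi^{-1} \p \Phi$, this yields $|\mathcal U - \mathcal U^\sharp| + |\mathcal V - \mathcal V^\sharp| = O(\rho r^{-2})$, and substituting back gives $|d(\nu - \nu^\sharp)| = O(r^{-2})$. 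Integrating from infinity produces a constant $c$ with $\nu - \nu^\sharp = c + O(r^{-1})$, and replacing $\nu$ by $\nu - c$ finishes the construction.

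The main obstacle I anticipate is the uniform gradient decay $|\nabla(\Phi - \Phi^\sharp)| = O(r^{-2})$ all the way up to the rods $\mI_0$ and $\mI_n$ at infinity. Away from $\Gamma$ this is routine interior regularity combined with rescaling, but near the rods one must carefully combine the boundary regularity of Proposition \ref{thm:C11 regularity} applied to $\Phi$ and $\Phi^\sharp$ separately, with the hyperbolic distance bound relating them, while keeping track of the factors of $r$ produced by the rescaling $\varphi_r$. Once this decay is in hand, the integration of the resulting $1$-form $d(\nu-\nu^\sharp)$ along rays from infinity is straightforward and produces the required normalization.
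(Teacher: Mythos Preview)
Your overall strategy is correct and coincides with the paper's: uniqueness is immediate, and existence comes down to showing that $\nu-\nu^\sharp$ is constant plus $O(r^{-1})$, which in turn reduces to a rescaling argument combined with the local regularity of Section~\ref{subsec:local regularity}. The gap is precisely the obstacle you flag at the end and do not resolve: the naive rescaling $\varphi_r$ alone does not give uniform boundary estimates when $\sharp\neq\Ae$.

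The point is that the models $\Phi^\sharp$ are not scale-invariant outside the $\Ae$ case. For instance $\varphi_{R^{-1}}^*\Phi^{\AF_0}=\operatorname{diag}(R\rho,(R\rho)^{-1})$, which sits at hyperbolic distance $\log R$ from the rod model $\Phi_{[\bv_0]}$; so Proposition~\ref{thm:C11 regularity} would apply with a constant $C_0\sim\log R$, and the estimates you want would blow up. The paper fixes this by composing the pullback with an $SL(2;\R)$ action on the target that brings the rescaled model back to a fixed one: in the $\AFa$ case one takes $\Phi_R=D_{\sqrt R}U_\beta.\varphi_{R^{-1}}^*\Phi$, so that $D_{\sqrt R}U_\beta.\varphi_{R^{-1}}^*\Phi^{\AFa}=\Phi^{\AF_0}$ and Proposition~\ref{thm:C11 regularity} applies with a uniform $C_0$. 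This costs nothing for the augmentation, since the traces $\Tr(\mathcal U^2)$, $\Tr(\mathcal V^2)$, $\Tr(\mathcal U\mathcal V)$ appearing in \eqref{e:eqn2.33} are invariant under conjugation and hence under the target action, so $\nu_R=\varphi_{R^{-1}}^*\nu$ unchanged. In the $\ALF^\pm$ cases the paper uses \emph{different} twists in the sector $\{\rho\geq\delta r\}$ (namely $D_{\sqrt R}$) and near each rod at infinity (namely $D_{\sqrt R}L_{\mp\alpha/2}$), because no single $SL(2;\R)$ element fixes the $\TN$ model globally. Once you insert these twisted rescalings, your argument goes through verbatim and matches the paper's.
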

\begin{proof}
First consider the case where $\Phi$ is of  Asymptotic Type  $\Ae$. Without loss of generality we assume $\mv_0=[\bv_{\frac{\alpha}{2}}]$ and $\mv_1=[\bv_{-\frac{\alpha}{2}}]$. Similar to the local analysis in Section \ref{subsec:local regularity}, since $d(\Phi,\Phi^{\Ae_\alpha})<C/r$, by scaling down one can show that $\nu=\nu^{\Ae_\alpha}+\sigma+O(r^{-1})$ for a constant $\sigma$.  We only need to choose $\nu$ such that $\sigma=0$.

Next consider the $\AFa$ case. We may assume $\mv_0=[\bv_0]$. In this case for $R$ large we consider a modified rescaling 
$$\Phi_R=D_{\sqrt{R}}.U_\beta.\varphi_{R^{-1}}^*\Phi $$
$$\nu_R=\varphi_{R^{-1}}^*\nu.$$
It is easy to see that $$D_{\sqrt{R}}.U_\beta.\varphi_{R^{-1}}^*\Phi^{\AFa}=\Phi^{\AF_0}.$$
Then  on the annulus we have $A_{1,2}$  $$d(\Phi_R, \Phi^{\AF_0})\leq CR^{-1}.$$
Then we can apply the local uniform estimates as in Section \ref{subsec:local regularity} to get that $\nu=\sigma+O(r^{-1})$ for a constant $\sigma$. We choose $\nu$ such that $\sigma=0$. 

The $\ALFm$ case can be treated in a similar fashion. Again we assume $\mv_0=[\bv_{\frac{\alpha}{2}}]$ and $\mv_1=[\bv_{-\frac{\alpha}{2}}]$. As in the $\AFa$ case we need to perform a modified rescaling. 
First consider 
$$\Phi_R=D_{\sqrt{R}}. \varphi_{R^{-1}}^*\Phi.$$
One can check that for any $\delta>0$ small, on $A_{1, 2}\setminus N_\delta(\Gamma)$, we have a uniform control on  the corresponding operation on the model $\widetilde \Phi_R:=D_{\sqrt{R}}. \varphi_{R^{-1}}^*\Phi^{\TN^{-}_\alpha}$:
$$d(\widetilde\Phi_R, \Phi^{\AF_0})\leq \frac{C_\delta}{R}.$$
From this one obtains that $\nu=\sigma+O(r^{-1})$ as $r\rightarrow\infty$ on the set $\{\rho\geq \delta r\}$. We choose $\nu$ such that $\sigma=0$.

It remains to understand $\nu$ in the set $\{\rho\leq \delta r\}$. We only consider the case when $z<0$. Consider 
$$\Phi_R=D_{\sqrt{R}}.L_{-\alpha/2}. \varphi_{R^{-1}}^*\Phi. $$
Correspondingly for $\widetilde \Phi_R:=D_{\sqrt{R}}. L_{-\alpha/2}. \varphi_{R^{-1}}^*\Phi^{\TNam}$, we have that on $A_{1, 2}\cap N_\delta(\Gamma_{-})$, 
$$d(\widetilde\Phi_R, \Phi^{\AF_0})\leq \frac{C_\delta}{R}.$$ 
Then as before we again obtain that $\nu=O(r^{-1})$ as $r\rightarrow\infty$, on the set $\{\rho\leq \delta r\}$. 

The $\ALFp$ case is similar and we omit the details.
\end{proof}
\begin{definition}[Normalized augmentation]
    We call the unique choice $\nu$ in Proposition \ref{prop:normalizing augmentation} the  \emph{normalized} augmentation. 
\end{definition}

\begin{remark}\label{rem:continuity of cone angles}
Suppose we only consider the normalized augmentation, then by Theorem \ref{t:existence result} and the discussion at the end of Section \ref{subsec:local regularity} we know that for each rod $\mI$, the normalized rod vector associated to $\mI$ depends continuously on the rod vectors. If we further fix an enhancement, then the cone angle along $\mI$  depends continuously on the rod lengths. This is sufficient for our purpose in this paper. It may be interesting to explore better behavior of the cone angle function. We mention that in the Lorentzian setting, the analogous cone angle function was shown to be  differentiable in \cite{HKWX2}. 
    
\end{remark}

\subsection{From tame harmonic maps to gravitational instantons}

Consider an augmented harmonic  map $(\Phi, \nu)$ on $\dH^\circ$ which is globally tamed by a rod structure $\mathfrak R$. Given any smooth enhancement $\Lambda$, the quotient $M^\circ:=\mathbb H^\circ\times \mathbb R^2/\Lambda$ is then naturally a toric Ricci-flat manifold, with the metric given by 
\begin{equation}\label{eq:quotient toric metric}
	g_\Lambda=e^{2\nu}(d\rho^2+dz^2)+\rho\Phi.
\end{equation}
As before $M^\circ$ may be compactified into a smooth toric manifold $M$ by  adding points with stabilizers over $\Gamma=\p \mathbb H^\circ$. The regularity results in Section \ref{subsec:local regularity} imply that $g_\Lambda$ is conical,  with a cone angle $\vartheta_j=\vartheta_{\Phi, \nu}(\mI_j)$ along the spheres $S_j^2$ corresponding to the rod $\mathfrak I_j$.

Using the model augmented harmonic maps introduced at the beginning of this section, we obtain asymptotic models of gravitational instantons. 

\begin{itemize}
    \item $(M^{\Ae}, g^{\Ae})$: consider the rod structure $\mR^{\Ae_1}$ with the smooth enhancement $\Lambda$ generated by $\bv_{\frac{1}{2}}$ and $\bv_{-\frac{1}{2}}$, and use the augmented harmonic map $(\Phi^{\Ae_1}, \nu^{\Ae_1})$. 
    This is the flat $\R^4$ (as discussed in Example \ref{ex1}). 
 \item $(M^{\TNm}, g^{\TNm})$: consider the rod structure $\mR^{\TN^{-}_1}$ with the  smooth enhancement $\Lambda$ generated by $\bv_{\frac{1}{2}}$ and $\bv_{-\frac{1}{2}}$, and use the augmented harmonic map $(\Phi^{\TN^{-}_{1}}, \nu^{\TN^{-}_1})$. This is the anti-Taub-NUT space  (as discussed in Example \ref{example:negative Taub-NUT}).  
 \item $(M^{\TNp}, g^{\TNp})$: consider the rod structure $\mR^{\TN^{+}_1}$ with the smooth enhancement $\Lambda$ generated by $\bv_{2}$ and $\bv_{-2}$, and use the augmented harmonic map $(\Phi^{\TN^{+}_{1}}, \nu^{\TN^{+}_1})$. This is the  Taub-NUT space  (as discussed in Example \ref{example:positive Taub-NUT}). 

\item $(M^{\AFa}, g^{\AFa})(\beta\in \R)$: consider the rod structure $\mR^{\AFa}$ with the standard smooth enhancement $\Lambda=\dZ^2\subset \R^2$, and use $(\Phi^{\AFa}, \nu^{\AFa})$. This is the quotient $\R^3\times \R/\langle q_\beta\rangle$, as discussed in Example \ref{ex:AF}. Notice that $(M^{\AFa}, g^{\AFa})$ and $(M^{\AF_{\beta+1}}, g^{\AF_{\beta+1}})$ are isometric as toric flat manifolds. 
\end{itemize}

In each of the above model space, we denote by $\mathfrak r$ the distance function. 

\begin{definition}
     A \emph{$\mathbb T$-isometry} between two toric Ricci-flat 4-manifolds $(M_1, g_1)$ and $(M_2, g_2)$ is an orientation-preserving diffeomorphism $F:M_1\rightarrow M_2$ such that  $F^*g_2=g_1$ and  $F(t.x)=t. F(x)$ for all $t\in \dT$.
\end{definition}

\begin{definition}\label{def:Type of gravitational instantons}
 A 4-dimensional toric Ricci-flat end $(M,g)$ is said to be  of  Asymptotic Type
 \begin{itemize}
   \item   $\ALe$ if it has a finite cover which admits a $\dT$-isometry to the end of the model $(M^{\Ae}, g^{\Ae})$, such that $$|\nabla^k_{g}(g-g^{\Ae})|_g=O(\mathfrak r^{-k-\delta})$$ for some $\delta>0$ and all $k\in\mathbb{N}$.
   \item $\ALF^{\pm}$ if it has a finite cover which admits a $\dT$-isometry to the end of the model $(M^{\ALF^{\pm}}, g^{\ALF^{\pm}})$, such that $$|\nabla^k_{g}(g-g^{\ALF^{\pm}})|_g=O(\mathfrak r^{-k-\delta})$$ for some $\delta>0$ and all $k\in\mathbb{N}$.
  \item  $\AFa$ if it admits a $\dT$-isometry to the end of the model $(M^{\AFa}, g^{\AFa})$, such that $$|\nabla^k_{g}(g-g^{\AFa})|_g=O(\mathfrak r^{-k-\delta})$$ for some $\delta>0$ and all $k\in\mathbb{N}$.
 \end{itemize}
\begin{remark}
    One can check  that (see Remark \ref{remark3.19}) if we reverse the orientation,  then up to an automorphism of the torus $\dT$, an $\ALE$ end is still an $\ALE$ end, an $\ALF^{\pm}$ end becomes an $\ALF^{\mp}$ end,  and an $\AFa$ end becomes an $\AF_{-\beta}$ end. The automorphism of the torus $\dT$ changes the $\dT$ action, but it does not change the geometry of the metrics. 
    \end{remark}
\end{definition}

\begin{definition}
A toric gravitational instanton $(M, g)$ is said to be of  Asymptotic Type $\ALe/\ALFp/\ALFm/\AFa$ if its end is of Asymptotic Type $\ALe/\ALFp/\ALFm/\AFa$ .
\end{definition}
\begin{remark}
    In the literature sometimes the $\ALF^{\pm}$ and $\AFa$ Asymptotics Types are both referred to as the $\ALF$ Asymptotic Type. We choose to distinguish between these two cases, partly due to the fact that in the $\AFa$ case we have the extra continuous parameter $\beta$ which changes the asymptotic behavior of the metric, but in the $\ALF$ case such a continuous parameter does not exist, by the result proved in \cite{LS2024}.
\end{remark}
Now given a rod structure  $\mathfrak R$. Let  $(\Phi, \nu)$ be an augmented harmonic map on $\dH^\circ$ which is  globally tamed by $\mR$, with $\nu$ normalized. Given any lattice $\Lambda$ in $\R^2$ such that $\mv_0$ and $\mv_n$ are generated by vectors in $\Lambda$, as above by working on the complement of a large compact set $K\subset \dH$, we obtain a toric Ricci-flat \emph{end} $(E_\Lambda, g_{\Lambda})$ with possible conical singularities. We can make the end smooth by choosing a canonical lattice $\Lambda$.

\begin{definition}[Canonical lattice]\label{def:canonical enhancement}
    Let $\mR$ be a rod structure of Asymptotic Type $\sharp$. We define the canonical lattice $\Lambda_{\mR}$
 associated to $\mR$ as follows. 
 \begin{itemize}
     \item If $\sharp=\Ae, \ALF^{-}$, then $\Lambda_{\mR}$ is generated by $P_{\mv_0, \mv_n}^{-1}.\bv_{\frac{\alpha}{2}}$ and $P_{\mv_0, \mv_n}^{-1}.\bv_{-\frac{\alpha}{2}}$, where $\alpha=\mv\wedge\mv'$;
     \item If $\sharp=\ALF^+$, then $\Lambda_{\mR}$ is generated by $\overline P_{\mv_0, \mv_n}^{-1}.\bv_{\frac{2}\alpha}$ and $\overline P_{\mv_0, \mv_n}^{-1}.\bv_{-\frac{2}{\alpha}}$, where $\alpha=\mv\wedge\mv'$;
     \item If $\sharp=\AFa$, then $\Lambda_{\mR}$ is generated by $P_{\mv}^{-1}. \bv_0$ and $P_{\mv}^{-1}. [(0, 1)^T]$. \end{itemize}
 \end{definition}
 \begin{remark}
     If $\mR$ is enhanceable, then $\Lambda_{\mR}$ is an enhancement for $\mR$. \end{remark}

\begin{proposition}\label{prop:smooth end lattice}
    The toric  Ricci-flat end $(E_{\Lambda_\mR},g_{\Lambda_{\mR}})$ is smooth without conical singularities and of  Asymptotic Type  $\sharp$.
\end{proposition}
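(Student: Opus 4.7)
The plan is to reduce the claim to a direct verification for the four model ends and then transfer both smoothness and the asymptotic decay from the model to $(E_{\Lambda_\mR}, g_{\Lambda_\mR})$ using strong tameness and the normalization of $\nu$. As preparation, I first check the conclusion for the model augmented harmonic map $(\Phi^\sharp_{\mv_0, \mv_n}, \nu^\sharp)$ equipped with the canonical lattice $\Lambda_\mR$. By the definition of $\Lambda_\mR$, its two generators are primitive vectors along $\mv_0$ and $\mv_n$, so the resulting end is topologically smooth at the unbounded rods. Cone angles are invariant under the joint $SL_\pm(2;\R)$-action on $(\Phi,\Lambda)$, which allows one to conjugate by $P_{\mv_0,\mv_n}$ (respectively $\overline{P}_{\mv_0,\mv_n}$ in the $\ALF^+$ case, or $P_\mv$ in the $\AFa$ case) and reduce to one of the four standard models $\Phi^{\Ae_\alpha}$, $\Phi^{\TN^\pm_\alpha}$, $\Phi^{\AF_\beta}$. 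For each a direct calculation of $\lim_{\rho\to 0}\frac{1}{e^{\nu^\sharp}\rho}\sqrt{\rho\Phi^\sharp(\bbv,\bbv)}$ along the relevant rod gives $1$, so the cone angles are $2\pi$ and the model end coincides (up to a finite cover in the $\Ae$ and $\ALF^\pm$ cases) with $(M^\sharp, g^\sharp)$.

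To transfer smoothness to $(E_{\Lambda_\mR}, g_{\Lambda_\mR})$, I use that, by Proposition \ref{p:rod smooth compactification}, $g_{\Lambda_\mR}$ is a smooth toric metric along each unbounded rod up to a possible conical singularity, and that the local identity $2\nu-u=c_0$ on the rod from Section \ref{subsubsec:local regularity near rods} shows the cone angle along each rod is independent of $z$. One may therefore evaluate the cone angle in the regime $|z|\to\infty$ on $\mI_0$ and $\mI_n$. Strong tameness gives $d(\Phi, \Phi^\sharp_{\mv_0,\mv_n}) = O(r^{-1})$, and Proposition \ref{prop:normalizing augmentation} with the normalized augmentation gives $\nu = \nu^\sharp+O(r^{-1})$. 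Combined with the uniform $C^{1,\alpha}$ estimates of Proposition \ref{thm:C11 regularity} applied to rescaled annuli at infinity, these force $\lim_{\rho\to 0}\frac{1}{e^\nu\rho}\sqrt{\rho\Phi(\bbv,\bbv)}$ along each unbounded rod to equal its model value, so all cone angles are $2\pi$ and $g_{\Lambda_\mR}$ is smooth.

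For the Asymptotic Type condition $|\nabla_g^k(g_{\Lambda_\mR}-g^\sharp)|_g = O(\mathfrak r^{-k-\delta})$, I bootstrap the $C^0$ control via the modified rescaling arguments used in the proof of Proposition \ref{prop:normalizing augmentation}. At each scale $R\to\infty$ one conjugates by the appropriate matrix in $SL_\pm(2;\R)$ (the identity in the $\Ae$ case, $D_{\sqrt R}U_\beta$ in the $\AFa$ case, and $D_{\sqrt R}$ or $D_{\sqrt R}L_{-\alpha/2}$ in the $\ALF^\pm$ case) so that on a fixed annulus of $\mathbb H$ the rescaled harmonic map $\Phi_R$ satisfies $d(\Phi_R,\Phi^\sharp_R) = O(R^{-1})$ with $\Phi^\sharp_R$ the corresponding rescaled model. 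Interior Schauder estimates for the harmonic-map equation then give higher-order derivative bounds on this fixed annulus, and equations \eqref{eqn-Ricci-1}--\eqref{eqn-Ricci-2} furnish matching $C^k$ decay of $\nu-\nu^\sharp$. Unscaling and passing from $r$ to the intrinsic model distance $\mathfrak r$ (with $\mathfrak r\sim r^{1/2}$ in the $\Ae$ case and $\mathfrak r\sim r$ otherwise) yields the required decay for some $\delta>0$. The principal technical point lies in this last step: in the $\ALF^\pm$ case one needs distinct rescalings on $\{\rho\geq\delta r\}$ and on $\{\rho\leq\delta r\}$, and one must patch the resulting local derivative bounds so that they combine into a global decay estimate in terms of $\mathfrak r$.
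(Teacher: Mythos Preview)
Your argument for smoothness (cone angles equal to $2\pi$ along the unbounded rods) is correct and essentially agrees with the paper: the paper normalizes so that $\mv_0=[\bv_{\alpha/2}]$, $\mv_n=[\bv_{-\alpha/2}]$, observes that the canonical lattice is generated by $\bv_{\pm\alpha/2}$, and asserts $\vartheta(\mI_0)=\vartheta(\mI_n)=2\pi$; your reasoning via constancy of the cone angle along a rod and evaluation at $|z|\to\infty$ using strong tameness makes this explicit.

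The Asymptotic Type verification, however, has a genuine gap. Your plan is to obtain $C^k$ decay of $(u,v,\nu)$ from Schauder estimates on rescaled annuli and then read off $|\nabla_g^k(g-g^\sharp)|_g=O(\mathfrak r^{-k-\delta})$. The difficulty is near the rods: Proposition~\ref{thm:C11 regularity} only gives $u\in C^{1,\alpha}$ uniformly, with $|\nabla^k u|\leq C\rho^{1-k}$ blowing up as $\rho\to 0$ for $k\geq 2$; correspondingly Lemma~\ref{lem:Lipschitz metric} shows the metric is only Lipschitz in the naive coordinates $(\rho\cos\phi_1,\rho\sin\phi_1,z,\phi_2)$. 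Thus your derivative bounds on the harmonic-map data do not directly translate into $C^k$ bounds on the metric components in any fixed smooth chart covering the rods. The paper handles this by deferring to Proposition~\ref{prop:appendix 1} in the Appendix, whose content is precisely a gauge-fixing argument: on each rescaled annulus one passes to harmonic coordinates (where the Ricci-flat metric becomes smooth with uniform $C^k$ control), then patches these local charts $\dT$-equivariantly via a center-of-mass construction, and finally glues adjacent annuli. Your sketch mentions patching the two sector rescalings in the $\ALF^\pm$ case, but this is a different issue from the harmonic-coordinate smoothing, and without the latter the $C^k$ decay for $k\geq 2$ cannot be established across the rods.
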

\begin{proof}
   We only discuss the case of Asymptotic Type $\ALF^-$; the other cases are similar. Without loss of generality we assume $\mv_0=[\bv_{\frac{\alpha}{2}}]$ and $\mv_n=[\bv_{-\frac{\alpha}{2}}]$ as before. In this case,  $\Lambda_{\mR}$ is generated by $\bv_{\frac{\alpha}{2}}$ and $\bv_{-\frac{\alpha}{2}}$.
    Then the cone angle 
        $$\vartheta_{\Phi, \nu}(\mI_0)=\vartheta_{\Phi, \nu}(\mI_n)=2\pi.$$
    In particular we know $(M, g)$ is smooth outside a compact set. It follows from Proposition \ref{prop:appendix 1} proved in the Appendix, that $g$ has an end of  Asymptotic Type  $\sharp$.
\end{proof}

Now let $\Lambda$ be a smooth enhancement for $\mR$. If $\Lambda$ coincides with $\Lambda_{\mR}$, then $(M, g_{\Lambda_{\mR}})$ is a gravitational instanton of  Asymptotic Type  $\sharp$, but with possible  codimension 2 conical singularities in the compact region. In general, the space is asymptotic to a quotient of the model ends.

\begin{remark}Conversely, one can show that any toric gravitational instanton of Asymptotic Type  $\ALE/\ALF^{\pm}/\AF_\beta$ (up to scaling) may arise from a harmonic map which is strongly tamed by an enhanced rod structure of Asymptotic Type $\ALE/\ALF^{\pm}/\AFa$, for an appropriate choice of a smooth enhancement. Here in the $\ALF^{\pm}/\AFa$ case a scaling of the metric is necessary, since the model end is not invariant under rescaling. 
\end{remark}

\subsection{The case of Asymptotic Type  \texorpdfstring{$\Ae$}{}} \label{ss: Type AE case}
We first prove a rigidity result.
\begin{proposition} \label{p:AE rigidity}
    Suppose $\Phi$ is a harmonic map globally tamed by a rod structure $\mR$ of  Asymptotic Type  $\Ae$. Then there is a $Q\in SL(2; \R)$ with $Q.\mv_0=\mv_0$, $Q.\mv_n=\mv_n$, such that  $Q.\Phi$ is the unique harmonic map which is strongly tamed by $Q.\mR$ (as produced in Theorem \ref{t:existence result}). Here we make $Q.\mR$ into a rod structure by assigning its Asymptotic Type to be $\Ae$.
\end{proposition}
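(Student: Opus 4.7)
The strategy is a blow-down argument at infinity: rescale $\Phi$ toward infinity, extract a scale-invariant asymptotic limit, and identify this limit---up to the $SL(2;\R)$-stabilizer of the pair $(\mv_0,\mv_n)$---with the model $\Phi^{\Ae}_{\mv_0,\mv_n}$. The element $Q$ from this identification will convert $\Phi$ into the strongly tamed harmonic map. Concretely, consider $\Phi_\lambda := \varphi_\lambda^*\Phi$ for $\lambda \in (0,1]$. Because $\Phi^{\Ae}_{\mv_0,\mv_n}$ is scale-invariant (arising from the conical structure of flat $\R^4$; see Example \ref{ex1} and Definition \ref{def:Ae model family}), one has $\varphi_\lambda^*\Phi^{\Ae}_{\mv_0,\mv_n} = \Phi^{\Ae}_{\mv_0,\mv_n}$, and global tameness yields the uniform bound $\sup_{\bH^\circ} d(\Phi_\lambda, \Phi^{\Ae}_{\mv_0,\mv_n}) \leq C$. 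The rescaled turning points $\lambda z_j$ collapse to $0$, so on any compact $K \subset \bH\setminus\{0\}$, eventually only the two outermost rods of $\varphi_\lambda^*\mR$ (with rod vectors $\mv_0,\mv_n$) meet $K$. Proposition \ref{thm:C11 regularity} near the outer rods together with interior Schauder estimates away from $\Gamma$ yield uniform $C^{1,\alpha}_{\mathrm{loc}}(\bH\setminus\{0\})$ bounds, so a subsequence $\Phi_{\lambda_k}$ converges to a limit $\Phi_\infty$ that is harmonic on $\bH^\circ$, locally tamed by the single-turning-point rod structure with rod vectors $\mv_0, \mv_n$, and satisfies $d(\Phi_\infty, \Phi^{\Ae}_{\mv_0,\mv_n}) \leq C$ globally.

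The next step is the classification of $\Phi_\infty$. Substituting $\partial_r\Phi = 0$ into the axisymmetric harmonic map equation in polar coordinates $(r,\theta)$, scale-invariant solutions $\Phi(\theta)$ satisfy $(\sin\theta\cdot \Phi^{-1}\partial_\theta\Phi)_\theta = 0$, so $\sin\theta\cdot \Phi^{-1}\partial_\theta\Phi \equiv C_0 \in \mathfrak{sl}(2;\R)$ is conserved. This gives a finite-dimensional family of solutions parametrized by $C_0$ and an initial value $\Phi(\theta_0)$. Imposing the endpoint conditions at $\theta \in \{0,\pi\}$ coming from the prescribed rod vectors (via local tameness), together with the bounded hyperbolic distance to $\Phi^{\Ae}_{\mv_0,\mv_n}$, cuts this family down to the single one-parameter orbit $\{Q^{-1}.\Phi^{\Ae}_{\mv_0,\mv_n} : Q \in \mathrm{Stab}(\mv_0,\mv_n)\}$, where $\mathrm{Stab}(\mv_0,\mv_n) \subset SL(2;\R)$ is the stabilizer of the ordered pair $(\mv_0,\mv_n)$ in $\R\dP^1 \times \R\dP^1$---a one-parameter subgroup. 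Combining this classification with a diagonal extraction over further rescalings $\varphi_\mu^*$ forces $\Phi_\infty$ itself to be scale-invariant, so $\Phi_\infty = Q^{-1}.\Phi^{\Ae}_{\mv_0,\mv_n}$ for a unique $Q \in \mathrm{Stab}(\mv_0,\mv_n)$.

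Set $\Psi := Q.\Phi$. By construction $\varphi_\lambda^*\Psi$ converges locally to $\Phi^{\Ae}_{\mv_0,\mv_n}$ as $\lambda \to 0$, so $d(\Psi, \Phi^{\Ae}_{\mv_0,\mv_n}) \to 0$ at infinity. To upgrade this to the explicit rate $d(\Psi, \Phi^{\Ae}_{\mv_0,\mv_n}) = O(r^{-1})$ required by strong tameness, I would linearize the harmonic map equation around $\Phi^{\Ae}_{\mv_0,\mv_n}$ on the asymptotic cone and analyze the Jacobi operator spectrally on the link: its zero mode is exactly the tangent space to the stabilizer orbit, and the next mode decays at rate $r^{-1}$. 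Having quotiented out the zero mode via $Q$, the desired rate follows. The uniqueness in Theorem \ref{t:existence result} then identifies $\Psi$ with the unique harmonic map strongly tamed by $Q.\mR$, where $Q.\mR$ is assigned Asymptotic Type $\Ae$ consistently since $Q$ fixes $\mv_0$ and $\mv_n$.

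The main obstacle is the classification in the second paragraph: rigorously showing that the scale-invariant solutions of the ODE $\sin\theta\cdot\Phi^{-1}\partial_\theta\Phi \equiv C_0$ with bounded hyperbolic distance to $\Phi^{\Ae}_{\mv_0,\mv_n}$ and prescribed rod-vector asymptotics at $\theta \in \{0,\pi\}$ fill out exactly the one-parameter stabilizer orbit. This demands a careful asymptotic analysis at the singular endpoints of the ODE, excluding spurious solutions whose $C_0$ or initial data would produce the wrong rod vectors or cause the trajectory to escape to infinity in $\mathcal H$. A secondary, more technical point is the decay-rate upgrade from $o(1)$ to $O(r^{-1})$, for which one appeals to weighted Fredholm theory of Lockhart--McOwen--Melrose type applied to the linearized harmonic map operator on the asymptotically conical background.
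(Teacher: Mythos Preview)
Your approach is genuinely different from the paper's and is in the spirit of the ``pure PDE method'' that the paper itself flags (Remark~\ref{rem:TN AF rigidity}) as a possible alternative.  The paper instead passes to the associated $4$-dimensional Ricci-flat end: it chooses a lattice making the end smooth, uses the turning-point regularity of Section~\ref{subsubsec:local regularity near a turning point} to show the metric is uniformly equivalent to flat $\R^4$ with quadratic curvature decay and finite $L^2$ curvature, and then invokes Bando--Kasue--Nakajima to obtain $|\nabla^k(g-g_{\R^4})|=O(\mathfrak r^{-4-k})$.  Reading the Gram matrices off this metric decay yields $d(\Phi,\Phi^{\Aea})=O(r^{-\epsilon})$, after which the subharmonicity of the distance function (as in the uniqueness proof of Theorem~\ref{t:existence result}) forces equality.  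The geometric route thus supplies the asymptotic decay directly from BKN and never confronts the question of uniqueness of blow-down limits.

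Your sketch has a real gap at exactly that point.  You claim that ``diagonal extraction over further rescalings $\varphi_\mu^*$ forces $\Phi_\infty$ itself to be scale-invariant,'' but this is not justified: the blow-down limit $\Phi_\infty$ is a harmonic map globally tamed by the single--turning-point $\Ae$ rod structure, and there is no a~priori reason it is scale-invariant.  Iterating the rescaling on $\Phi_\infty$ produces further subsequential limits, but without a mechanism forcing uniqueness (a \L ojasiewicz--Simon inequality, a monotone quantity, or an integrability argument) you cannot conclude that the $\omega$-limit set is a single point.  Your ODE classifies only the \emph{scale-invariant} solutions; to make the argument go through you would need to classify \emph{all} globally tamed harmonic maps with one turning point and show they coincide with the stabilizer orbit $\{Q^{-1}.\Phi^{\Aea}_{\mv_0,\mv_n}\}$.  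But that is precisely the $n=1$ instance of the proposition you are proving, so as written the argument is circular.  The obstacle you identify (the endpoint analysis of the ODE) is real but secondary; the essential missing ingredient is the uniqueness of the asymptotic limit, which the paper obtains for free from BKN.
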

\begin{proof}
  Without loss of generality we may assume $\bv_0=[\bv_{\frac{\alpha}{2}}]$ and $\mv_n=[\bv_{-\frac{\alpha}{2}}]$ for some $\alpha>0$.
  We first choose an arbitrary augmentation $\nu$. Let $\overline{\mv}_0=\mu_0\bv_{\frac{\alpha}{2}}$ and $\overline{\mv}_n=\mu_n \bv_{-\frac{\alpha}{2}}$ be the normalized rod vectors on the rod $\mI_0$ and $\mI_n$ respectively. Then we adjust $\nu$ by a constant such that $\mu_0\cdot \mu_n=1$. There is the matrix $P\in SL(2;\mathbb{R})$ that maps $\mu_0\bv_{\frac{\alpha}{2}},\mu_n\bv_{-\frac{\alpha}{2}}$ to $\bv_{\frac{\alpha}{2}},\bv_{-\frac{\alpha}{2}}$. 
  Replacing $\Phi$ by $P. \Phi$ we can assume $\mu_0=\mu_n=1$.
  
  Define $\Lambda$ to be the lattice generated by $\overline{\mv}_0$ and $\overline{\mv}_n$. 
     From Section \ref{subsec:local regularity}, we know that $(\mR, \Lambda, \Phi, \nu)$ defines a smooth toric Ricci-flat end $(M, g)$. Denote by $\p_{\phi_1}$ and $\p_{\phi_2}$ the Killing fields on $M$ corresponding to $\overline{\mv}_0$ and $\overline{\mv}_n$ respectively.

    As in the proof of the local regularity near a turning point in Section \ref{subsubsec:local regularity near a turning point}, by scaling down 
    we know  $g$ is uniformly equivalent to the flat metric $g_{\R^4}$ on $\R^4$ and with quadratic curvature decay.  Applying the Chern-Gauss-Bonnet theorem in Lemma \ref{lem:Gauss-Bonnet} we see that $\int_{M}|\Rm_g|^2\dvol_g<\infty$. 

   Now we may apply the arguments of Bando-Kasue-Nakajima \cite{BKN} to conclude that \( g \) is of Asymptotic Type \( \Ae \). Note that only the information of a Ricci-flat \emph{end} is needed for the arguments in \cite{BKN} to go through. One can diffeomorphically identify \( M \) with \( \mathbb{R}^4 \setminus K \) for some compact \( K \), such that  
\[
|\nabla^k(g - g_{\mathbb{R}^4})| = O(\mathfrak{r}^{-4-k}) \quad \text{for all } k \geq 0.
\]
Since a Killing field \( X \) for \( g \) satisfies \( \nabla_g^* \nabla_g X = 0 \), using the asymptotics of \( g \) and standard elliptic theory for \( g_{\mathbb{R}^4} \), we see that there is a rotational Killing field \( \widetilde{X} \) for \( \mathbb{R}^4 \) such that  
\[
|\nabla^k (X - \widetilde{X})|_g = O(\mathfrak{r}^{1-\epsilon-k})
\]
for some \( \epsilon > 0 \). It follows that the torus \( \mathbb{T} \) action is asymptotic to a standard \( \mathbb{T} \) action on \( \mathbb{R}^4 \), in the sense that the distance between the corresponding \( \mathbb{T} \) orbits is of order \( O(\mathfrak{r}^{1-\epsilon}) \).  

By the standard center-of-mass techniques of Grove-Karcher (see, for example, \cite{Rong}), we may assume the diffeomorphism is equivariant with respect to the \( \mathbb{T} \) action. Thus, one can identify the Killing fields \( \p_{\phi_1} \) and \( \p_{\phi_2} \) as the Killing fields on \( (\mathbb{R}^4, g_{\R^4}) \) which give rise to the harmonic map \( \Phi^{\Aea} \).  

Denote by \( G = \rho \Phi \) and \( \widetilde{G} = \widetilde{\rho} \widetilde{\Phi} \) the Gram matrices associated to \( g \) and \( g_{\mathbb{R}^4} \), respectively. Notice that  
\[
\widetilde{\Phi} = \frac{1}{\widetilde{\rho}}  \left(
\begin{matrix}  
\frac{\alpha \widetilde{r}}{2} & \widetilde{z} \\  
\widetilde{z} & \frac{2\widetilde{r}}{\alpha}  
\end{matrix}\right).  
\]
The asymptotics of \( g \) imply that  
\[
|\nabla^k(\widetilde{G} G^{-1})|_{g} = O(\mathfrak{r}^{-\epsilon-k}) \quad \text{for all } k \geq 0.
\]
In particular, \( d(\widetilde{\Phi}, \Phi) = O(\mathfrak{r}^{-\epsilon}) \). On the \( \mathbb{T} \)-quotient, we have \( \widetilde{\rho} \rho^{-1} = 1 + O(r^{-\epsilon}) \), which then gives \( \widetilde{z} = z + O(r^{1-\epsilon}) \). It then follows that  
\[
d(\widetilde{\Phi}, \Phi^{\Aea}) = O(r^{-\epsilon}).
\]
Altogether, we obtain  
\[
d(\Phi, \Phi^{\Aea}) = O(r^{-\epsilon}).
\]
Arguing as in the proof of the uniqueness part in Theorem \ref{t:existence result} we conclude that $\Phi=\Phi^{\Aea}$.

\end{proof}

\begin{remark}\label{rem:TN AF rigidity}
    It may be possible to prove the above result by a pure PDE method. Although not needed in the proof of Theorem \ref{t:main}, similar rigidity results also hold in the $\ALF^\pm$ and $\AFa$ case, see Proposition \ref{p:ALF/AF rigidity}.  
\end{remark}

\begin{lemma}\label{lem:Gauss-Bonnet}
    Let $(M, g)$ be a 4-dimensional Ricci-flat end satisfying $|\Rm_g|=O(r^{-2})$ as $r\rightarrow\infty$, then we have 
    $$\int_M |\Rm_g|^2\dvol_g<\infty. $$In particular, if $(M, g)$ is complete, then it is a gravitational instanton, and moreover,
        \begin{equation}\chi(M)=
        \int_M |\Rm_g|^2\dvol_g+\delta(M),
        \end{equation}
        where $\delta(M)=\frac{1}{|\Gamma|}$ if $M$ is of Asymptotic Type $\ALe$ with asymptotic cone $\R^4/\Gamma$, and $\delta(M)=0$ otherwise.
    \begin{proof}
      The first fact  follows from the  theorem of Cheeger-Gromov and Gauss-Bonnet-Chern theorem on 4-manifolds with boundary. For the second statement,  if $(M, g)$ has Euclidean volume growth then we know it is of Asymptotic Type $\ALe$ by \cite{BKN} and  $|\Rm|=O(r^{-3})$ as $r\rightarrow\infty$, then the desired formula follows from Gauss-Bonnet-Chern theorem. If $(M,g)$ has non-Euclidean volume growth then we know the end of $M$ has an $\mathcal F$-structure so we can exhaust $M$ by bounded domains $M_j$ with $\chi(M_j)=\chi(M)$. Then we apply the  chopping theorem of Cheeger-Gromov and the Gauss-Bonnet-Chern theorem to conclude, noticing that the boundary integral goes to zero as $j\rightarrow\infty$.
    \end{proof}
\end{lemma}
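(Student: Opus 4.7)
The plan is to apply the Chern--Gauss--Bonnet theorem on a well-chosen exhaustion of $M$ by compact domains with smooth boundary, then analyze the bulk and boundary contributions separately. In dimension four the Pfaffian integrand simplifies under the Ricci-flat hypothesis to a fixed multiple of $|\Rm_g|^2$, so the Gauss--Bonnet formula on a compact domain $M_j\subset M$ with smooth boundary takes the schematic form
\begin{equation*}
  \chi(M_j)=c_0\int_{M_j}|\Rm_g|^2\dvol_g+\mathcal B(\p M_j),
\end{equation*}
where $\mathcal B(\p M_j)$ is a boundary term built from the second fundamental form and the ambient curvature along $\p M_j$.

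For the integrability claim, I would use the chopping theorem of Cheeger--Gromov: given $|\Rm_g|=O(\mathfrak r^{-2})$, one can produce an exhaustion $\{M_j\}$ of the end with $\p M_j\subset A_{r_j,2r_j}$ for $r_j\to\infty$ and second fundamental form controlled by $Cr_j^{-1}$. Combined with the pointwise curvature decay, this yields a uniform bound $|\mathcal B(\p M_j)|\le C$; since $\chi(M_j)$ is also uniformly bounded (the sequence has the homotopy type of a compact subcomplex of $M$), we obtain a uniform bound on $\int_{M_j}|\Rm_g|^2\dvol_g$ and hence finiteness in the limit.

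For the precise formula in the complete case, I would split by volume growth. If the end has Euclidean volume growth, then by Bando--Kasue--Nakajima it is $\ALe$ with asymptotic cone $\R^4/\Gamma$ and improved decay $|\Rm_g|=O(\mathfrak r^{-4})$. Choosing $M_j$ to be large geodesic balls, $\p M_j$ is smoothly close to a round $S^3/\Gamma$; a direct Chern--Weil computation gives $\mathcal B(\p M_j)\to 1/|\Gamma|$, while the bulk integral converges by dominated convergence. Since the end is homotopy equivalent to $S^3/\Gamma$, the topology stabilizes with $\chi(M_j)=\chi(M)$ for large $j$, so the formula with $\delta(M)=1/|\Gamma|$ follows.

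If the volume growth is sub-Euclidean, then by Cheeger--Fukaya--Gromov collapsing theory the end of $M$ carries an $\mathcal F$-structure of positive rank. This has two consequences: first, $\chi$ of the end vanishes, so the exhaustion can be chosen to satisfy $\chi(M_j)=\chi(M)$; second, one can arrange $\p M_j$ to be invariant under the orbits of the $\mathcal F$-structure, producing a circle (or torus) fibration structure on $\p M_j$ that forces $\mathcal B(\p M_j)\to 0$. The main obstacle I anticipate is this second point: verifying in detail that $\mathcal B(\p M_j)\to 0$ in the collapsed case requires controlling the second fundamental form of $\p M_j$ along the collapsing fibers and pairing with the ambient curvature via the Chern--Weil formalism. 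Granting this, passing to the limit yields the formula with $\delta(M)=0$.
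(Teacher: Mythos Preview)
Your proposal is correct and follows essentially the same route as the paper: Cheeger--Gromov chopping combined with the Gauss--Bonnet--Chern theorem on manifolds with boundary gives integrability, and for the exact formula one splits by volume growth, invoking Bando--Kasue--Nakajima in the $\ALe$ case and the $\mathcal F$-structure (with vanishing boundary term) in the collapsed case. Your sketch is in fact more detailed than the paper's own argument, which simply asserts that the boundary integral tends to zero in the non-Euclidean volume growth case without elaborating on the mechanism you identify.
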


In the case of $2$ turning points, we can now classify all harmonic maps which are strongly tamed by a rod structure of  Asymptotic Type  $\Ae$. 
  Given $n_1, n_2, a>0$, we consider the \emph{Eguchi-Hanson harmonic map}
\begin{equation}
	\Phi_{\EH;n_1, n_2, a}=\frac{1}{\rho}\left(\begin{matrix}
		H^{-1}\cA^2+\rho^2H & H^{-1} \cA \\
		H^{-1}\cA & H^{-1}, 
	\end{matrix}\right),
\end{equation}
where $$H=\frac{n_1}{2r}+\frac{n_2}{2r_{2a}},$$ $$\cA=\frac{n_1z}{2r}+\frac{n_2(z-2a)}{2r_{2a}},$$  and $$r_{2a}=\sqrt{\rho^2+(z-2a)^2}.$$  
The associated rod structure $\mR_{\EH; n_1, n_2, a}$ consist of two turning points $z_1=0$ and $z_2=2a$, and three rod vectors 
 $$\mv_0=[\bv_{\frac{n_1+n_2}{2}}], \ \ \mv_1=[\bv_{\frac{n_2-n_1}{2}}], \ \ \mv_2=[\bv_{-\frac{n_1+n_2}{2}}].$$
 We choose the augmentation  $\nu=\frac{1}{2}\log H$, then one can compute the normalized rod vectors are given by $$\overline{\mv}_0=\bv_{\frac{n_1+n_2}{2}}, \overline{\mv}_1=\bv_{\frac{n_2-n_1}{2}}, \ \ \overline{\mv}_2=\bv_{-\frac{n_1+n_2}{2}}.$$ The associated gravitational instantons are the well-known Eguchi-Hanson gravitational instantons. These are hyperk\"ahler metrics, which can also be obtained via the Gibbons-Hawking ansatz. See the discussion in Section \ref{subsubsec:Type I case}.

Suppose we are given an enhancement $\Lambda$. Then we can find primitive rod vectors $\bbv_0, \bbv_1, \bbv_2$ such that 
$$\bbv_1=\gamma_0\bbv_0+\gamma_2\bbv_2$$
for some $\gamma_0, \gamma_2>0$. 
Denote $\vartheta_j=\vartheta_{\Phi, \nu}(\mI_j)$. Then we have 
$$\vartheta_1\bv_{\frac{n_2-n_1}{2}}=\gamma_0 \vartheta_0\bv_{\frac{n_1+n_2}{2}}+\gamma_2\vartheta_2\bv_{-\frac{n_1+n_2}{2}}.$$
 In particular, we have 
 \begin{equation}\label{e:angle equality}\vartheta_1=\gamma_0\vartheta_0+\gamma_2\vartheta_2.
 \end{equation}
 The upshot is that the cone angles along any two rods determine the cone angle along the third rod. 

\begin{lemma}\label{lem:uniquness of EH}
    Any harmonic map $\Phi$ strongly tamed by $\mR$ of  Asymptotic Type  $\Ae$ with $2$ turning points is given by the Eguchi-Hanson harmonic map $P_{\mv_0,\mv_2}^{-1}.\Phi_{\EH; n_1, n_2}$ for some $n_1, n_2>0$, where $\mv_0\wedge\mv_2=n_1+n_2$.
\end{lemma}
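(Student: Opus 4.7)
The strategy is to reduce to a canonical form using the rotation $P := P_{\mv_0,\mv_2} \in PSO(2)$, construct an explicit Eguchi--Hanson harmonic map matching the normalized rod structure, and conclude by the uniqueness established in Theorem~\ref{t:existence result}. Applying $P$ to $\Phi$ and $\mR$ yields $\Phi' := P.\Phi$, strongly tamed by the transformed rod structure $P.\mR$, whose outer rod vectors are now $\mv_0' = [\bv_{\alpha/2}]$ and $\mv_2' = [\bv_{-\alpha/2}]$ with $\alpha = \mv_0 \wedge \mv_2$. Writing the middle rod vector as $\mv_1' = [\bv_t]$, I would set
\[
n_1 = \frac{\alpha}{2} - t, \qquad n_2 = \frac{\alpha}{2} + t, \qquad a = \frac{z_2}{2},
\]
so that $n_1 + n_2 = \alpha$, $(n_2-n_1)/2 = t$, and $n_1, n_2 > 0$ in the natural Eguchi--Hanson configuration $|t| < \alpha/2$, which is the range covered by the parametrization.

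Next, I would verify that $\Phi_{\EH; n_1, n_2, a}$ itself is strongly tamed by $P.\mR$. Rod-vector matching follows from direct computation of $\cA$ on the three rods $\mI_0, \mI_1, \mI_2$, which yields the values $-(n_1+n_2)/2$, $(n_1-n_2)/2$, $(n_1+n_2)/2$ and thus the rod vectors $[\bv_{\alpha/2}], [\bv_t], [\bv_{-\alpha/2}]$ respectively, agreeing with $P.\mR$. Local taming near rod interiors and turning points follows from the Gibbons--Hawking-type singular behavior of $\Phi_{\EH}$ and the local analysis in Section~\ref{subsec:local regularity}. For the sharp decay at infinity, expanding $r_{2a} = r + O(1)$ as $r \to \infty$ gives $H = \alpha/(2r) + O(r^{-3})$ and $\cA = \alpha z/(2r) + O(r^{-1})$, and the asymptotic equivalence~\eqref{e:hyperbolic distance equivalent control} translates these matrix-entry corrections into the strong-taming bound $d(\Phi_{\EH; n_1, n_2, a}, \Phi^{\Aea}) = O(r^{-1})$.

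The uniqueness assertion of Theorem~\ref{t:existence result} then forces $\Phi' = \Phi_{\EH; n_1, n_2, a}$, and undoing the normalization yields the claimed identity $\Phi = P^{-1}.\Phi_{\EH; n_1, n_2, a}$. The main technical obstacle is the verification of the sharp $O(r^{-1})$ strong-taming decay at infinity: the $O(r^{-1})$ correction in $\cA$ arises from the offset between the center of mass of the two NUTs and the origin, and careful bookkeeping through the matrix inversion is required to confirm that this correction remains $O(r^{-1})$ in the hyperbolic distance (rather than degrading to a weaker rate) via the quadratic comparison in~\eqref{e:hyperbolic distance equivalent control}.
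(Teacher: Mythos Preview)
Your approach is essentially identical to the paper's brief proof: normalize the outer rod vectors via $P_{\mv_0,\mv_2}$, match the three rod vectors to an Eguchi--Hanson model with suitable $n_1,n_2$, and invoke the uniqueness statement of Theorem~\ref{t:existence result}. You supply considerably more detail than the paper does --- the explicit formulae for $n_1,n_2$ in terms of the middle rod vector, the rod-by-rod verification of the local taming, and the sharp $O(r^{-1})$ decay of $\Phi_{\EH;n_1,n_2,a}$ to $\Phi^{\Aea}$ --- all of which the paper takes for granted in a two-line argument.

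There is, however, one point you flag but do not resolve (and which the paper's proof glosses over entirely): after applying $P_{\mv_0,\mv_2}$ and writing $\mv_1'=[\bv_t]$, you require $|t|<\alpha/2$ in order to have $n_1,n_2>0$, but nothing in the hypothesis forces this. The middle rod vector $\mv_1'$ can lie anywhere in $\mathbb{RP}^1\setminus\{\mv_0',\mv_2'\}$, and for $|t|>\alpha/2$ (or $\mv_1'=[(0,1)^\top]$) one of $n_1,n_2$ becomes non-positive, so $H$ changes sign and $\Phi_{\EH}$ fails to take values in $\mathcal H$ globally. In fact the two arcs of $\mathbb{RP}^1\setminus\{\mv_0',\mv_2'\}$ correspond to the two connected components of $\mathcal S(\Ae\mid z_1,z_2)$ of degrees $0$ and $1$ respectively (cf.~Proposition~\ref{p:degreee topological invariant}), and the Eguchi--Hanson family as written covers only one of them. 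The paper's applications of this lemma (e.g.\ in Section~\ref{sss:limit of cone angles when turning points come together}) implicitly place $\mv_1$ in the correct arc via the positivity assumption in~\eqref{e:primitive vector relations}, so the gap is harmless in context, but your proof plan should either add a degree hypothesis to the lemma or explain how the other component is handled.
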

\begin{proof} 
It follows from the definition that $P_{\mv_0,\mv_2}.\Phi$ is strongly tamed by the rod structure $P. \mR$ (with Asymptotic Type again $\Ae$).  We can find unique $n_1, n_2>0$ such that the rod vectors coincide with that of $\mR_{\EH; n_1, n_2}$. 
The conclusion then follows from the uniqueness statement in Theorem \ref{t:existence result}. 
\end{proof}

We now prove  an \emph{angle comparison} result, which is a consequence of the Bishop-Gromov volume monotonicity. Let $(\Phi, \nu)$ be an augmented harmonic map which is strongly tamed by a rod structure $\mR$ of  Asymptotic Type  $\Ae$ and suppose that  $\Lambda$ is a smooth enhancement generated by vectors $\bbv_0\in \mv_0$ and $\bbv_n\in \mv_n$. For each $j=0, \cdots, n$, we denote $\vartheta_j=\vartheta_{\Phi, \nu}(\mI_j)$.

\begin{proposition} \label{p:angle comparison}
   Suppose $n\geq 2$, then we have
    \begin{align}\label{e:angle comparison}
        \vartheta_1\geq\vartheta_{n}, \ \ \ \  and\ \ \ \ 
        \vartheta_{n-1}\geq\vartheta_0.
    \end{align}
\end{proposition}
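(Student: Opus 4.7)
The plan is to apply the Bishop--Gromov volume monotonicity to the toric Ricci-flat 4-manifold $(M, g_\Lambda)$ defined by $(\Phi,\nu)$ and $\Lambda$. By the $z\mapsto -z$ reflection symmetry on $\dH$ (which swaps $\mv_0\leftrightarrow\mv_n$ and hence swaps the two inequalities), the inequality $\vartheta_{n-1}\geq\vartheta_0$ follows from $\vartheta_1\geq\vartheta_n$ applied to the reflected setting, so I focus on the latter.

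First I would compute the asymptotic volume ratio $\mathrm{AVR}$ of the $\Ae$ end in terms of $\vartheta_0, \vartheta_n$ and $\alpha=\mv_0\wedge\mv_n$. Since $\nu$ is the normalized augmentation, $(\Phi,\nu)$ is asymptotic to the model $(\Phi^{\Aea},\nu^{\Aea})$. Writing the 4-volume element as $V_\Lambda\rho e^{2\nu}\,d\rho\,dz$ and changing coordinates $t=\sqrt{2\alpha r}$ (which turns $\dH$ with its model quotient metric into the flat quadrant of $\R^2$) reduces the AVR to an elementary integral and expresses it as $V_\Lambda$ times an explicit function of $\alpha$. Since $\Lambda$ is generated by $\bbv_0,\bbv_n$ with $\bbv_0=\tfrac{\vartheta_0}{2\pi}\overline{\mv}_0$, $\bbv_n=\tfrac{\vartheta_n}{2\pi}\overline{\mv}_n$, and since $|\overline{\mv}_0\wedge\overline{\mv}_n|=\alpha$ follows from Proposition~\ref{prop:normalizing augmentation} (the normalized rod vectors at the semi-infinite rods are $P^{-1}\bv_{\pm\alpha/2}$ for some $P\in PSO(2)$), one gets $V_\Lambda=\tfrac{\vartheta_0\vartheta_n\alpha}{4\pi^2}$ and a corresponding explicit formula for $\mathrm{AVR}$.

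Next I would fix a point $p_1$ in the smooth interior of the rod 2-sphere $S_1^2\subset M$ corresponding to $\mI_1$, chosen away from the turning points $z_1,z_2$. By Proposition~\ref{p:rod smooth compactification} the metric near $p_1$ is locally isometric, to leading order, to $\R^2\times C_{\vartheta_1}$, where $C_{\vartheta_1}$ is the flat 2-dimensional cone of total angle $\vartheta_1$. Hence $\lim_{R\to 0} V(B_R(p_1))/R^4$ is proportional to $\vartheta_1$. By the Bishop--Gromov monotonicity, this small-radius density dominates the large-radius density $\mathrm{AVR}$; substituting the formula from the previous step and cancelling the factor of $\vartheta_0$ --- either by invoking the normalization identity $|\overline{\mv}_0\wedge\overline{\mv}_n|=\alpha$ with the precise coefficient in the AVR computation, or by combining with a companion estimate at a point of $S_{n-1}^2$ that symmetrically involves $\vartheta_0$ --- then yields the desired inequality $\vartheta_1\geq\vartheta_n$.

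The main obstacle is that the classical Bishop--Gromov monotonicity requires $\mathrm{Ric}\geq 0$, which for a conical Ricci-flat metric holds only when every cone angle is at most $2\pi$. In examples such as Eguchi--Hanson (cf.\ Lemma~\ref{lem:uniquness of EH}) the interior cone angle typically exceeds $2\pi$, so the naive monotonicity can fail. To handle this I would either smooth each over-angled singular 2-sphere by a small perturbation that is Ricci-flat to leading order, apply Bishop--Gromov, and take a limit; or, more cleanly, exploit the toric structure directly by comparing the elementary integrals $\int\rho e^{2\nu}\,d\rho\,dz$ over nested sublevel sets of a suitable potential on $\dH$, giving a toric analogue of Bishop--Gromov that is valid irrespective of the size of the cone angles.
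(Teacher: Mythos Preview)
Your approach via Bishop--Gromov is the right one, but the choice of base point creates a genuine gap. At an interior point $p_1$ of the rod $2$-sphere $S_1^2$, the tangent cone is $\R^2\times C_{\vartheta_1}$, so the small-radius volume density (relative to $\R^4$) is $\vartheta_1/(2\pi)$, while the asymptotic volume ratio of the $\Ae$ end is $\vartheta_0\vartheta_n/(4\pi^2)$. Bishop--Gromov then yields only $2\pi\vartheta_1 \geq \vartheta_0\vartheta_n$, which is strictly weaker than $\vartheta_1\geq\vartheta_n$ whenever $\vartheta_0<2\pi$; neither of your proposed cancellation mechanisms closes this gap (the companion estimate at $S_{n-1}^2$ gives $2\pi\vartheta_{n-1}\geq\vartheta_0\vartheta_n$, which combines to nothing useful). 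The paper's key move is to base instead at the \emph{fixed point} $p\in M^\sharp$ lying over the turning point $z_1$: there the tangent cone is $\R^2_{\vartheta_0}\times\R^2_{\vartheta_1}$, so the small-radius density is $\vartheta_0\vartheta_1/(4\pi^2)$, matching the structure of the AVR exactly and allowing the factor $\vartheta_0$ to cancel cleanly.

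Your concern about cone angles exceeding $2\pi$ is valid, but both of your proposed fixes (smoothing, or a toric analogue of Bishop--Gromov) are heavier than needed. The inequalities in \eqref{e:angle comparison} are homogeneous of degree one in $(\vartheta_0,\ldots,\vartheta_n)$, and shifting the augmentation $\nu\mapsto\nu+C$ rescales every cone angle by $e^{-C}$; so one may simply arrange all $\vartheta_j\in(0,2\pi)$ from the start. The paper then proves a short \emph{weak convexity} lemma: with all angles strictly below $2\pi$, any minimizing geodesic between points of $M^\circ$ stays in $M^\circ$ (since a minimizing line in $\R^2_{\vartheta}\times\R^2$ through a singular point must lie in the singular set, while the singular $2$-spheres in $M$ are totally geodesic after unwrapping). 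This gives the Laplacian comparison $\Delta\, d(p_j,\cdot)\leq 3/d(p_j,\cdot)$ weakly on $M^\circ\setminus\{p_j\}$ for approximating base points $p_j\to p$, and Bishop--Gromov follows by integration.
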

\begin{proof}
Since the inequalities are preserved if we adjust $\nu$ by a constant, we may assume all the cone angles $\vartheta_j\in (0, 2\pi)$. We only prove the first inequality; the second one can be proved in the same way. 

As before we let $(M, g)$ be the toric Ricci-flat metric with cone angle $\vartheta_j$ along the 2-sphere $S_j$ corresponding to $\mI_j$. Since the enhancement is smooth, $M$ is a smooth manifold. For $q\in M$ denote by $B_r(q)\subset M$ the geodesic ball of radius $r$ around $q$.   Denote by $p\in M^\sharp$ the point corresponding to the turning point $z_1$. 

We claim that the function $$F(r)=\omega_4^{-1}r^{-4}\Vol_g(B_r(p))$$ is decreasing in $r$, where $\omega_4$ is the volume of the unit ball in $\R^4$. This is a version of the Bishop-Gromov inequality and it is not difficult to adapt the usual proof to our conical setting. The key observation is the weak convexity lemma below, which says that $M^\circ$ is geodesically convex. Choose a sequence of points $p_j\in M^\circ$ that converges to $p$ and denote 
$$F_j(r)=\omega_4^{-1}r^{-4}\Vol_g(B_r(p_j)).$$
From the geodesic convexity we know that the inequality $$\Delta d(p_j, \cdot)\leq \frac{3}{d(p_j, \cdot)}$$ holds weakly on $M^\circ\setminus \{p_j\}$. Integrating this and noticing that the complement $M\setminus M^\circ$ has zero Hausdorff measure, we obtain the monotonicity of $F_j$. Taking limit as $j\rightarrow\infty$ the claim then follows. 

Now it is a simple computation that 
$$\lim_{r\rightarrow0}F(r)=\vartheta_0\cdot\vartheta_1.$$
On the other hand, since $\Phi$ is of  Asymptotic Type  $\Ae$, we see that
$$\lim_{r\rightarrow\infty}F(r)=\vartheta_0\cdot\vartheta_n.$$
So we have $\vartheta_1\geq\vartheta_n$. 
 
\end{proof}

\begin{lemma}[Weak convexity]
    $M^\circ$ is geodesically convex when all $\vartheta_j<2\pi$, i.e., any minimizing geodesic connecting $q_1$ and $q_2$ in $M^\circ$ is contained in $M^\circ$.
\end{lemma}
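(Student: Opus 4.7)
The plan is to argue by contradiction. Suppose some minimizing geodesic $\gamma:[0,1]\to M$ with endpoints $q_1,q_2\in M^\circ$ meets $M\setminus M^\circ$ at an interior time $t_0$. Recall that the singular locus decomposes as $M^*\cup M^\sharp$, where $M^*$ is a union of smooth $2$-spheres $S_j$ along which $g$ has a conical singularity with cone angle $\vartheta_j<2\pi$, and $M^\sharp$ is the finite set of turning points where two adjacent spheres meet. My strategy is to rescale at $\gamma(t_0)$ to produce a minimizing geodesic in the tangent metric cone passing through the apex, then exhibit a strictly shorter competitor via an unfolding argument, contradicting the minimality of $\gamma$.

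First consider the case $\gamma(t_0)\in M^*$ lying on some $S_j$ with cone angle $\alpha:=\vartheta_j<2\pi$. By the local regularity of Section~\ref{subsubsec:local regularity near rods}, $g$ is locally modeled on $C_\alpha\times D^2$ up to a smooth perturbation, where $C_\alpha$ denotes the flat $2$-cone of total angle $\alpha$ and $D^2$ is a smooth disk transverse to $S_j$. Rescaling at $\gamma(t_0)$ produces a minimizing geodesic $\widetilde\gamma$ in the tangent cone $C_\alpha\times\R^2$ that meets the apex ridge $\{0\}\times\R^2$, and splitting $\widetilde\gamma$ into product components shows that its $C_\alpha$-factor passes through the apex. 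Since $\alpha<2\pi$, cutting $C_\alpha$ along the ray through $\widetilde\gamma(-\epsilon)$ and unfolding yields a flat sector of angle $\alpha$ in which $\widetilde\gamma(\pm\epsilon)$ have angular separation $\theta\leq\alpha/2<\pi$; the straight segment joining these two points has length
\[
\sqrt{r_1^2+r_2^2-2r_1r_2\cos\theta}<r_1+r_2,
\]
strictly shorter than the apex-passing path. Lifting this competitor back to $M$ via the smooth approximation of the local model produces a strictly shorter curve from $\gamma(t_0-\epsilon)$ to $\gamma(t_0+\epsilon)$ that avoids $S_j$, contradicting minimality.

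If instead $\gamma(t_0)\in M^\sharp$ is a turning point where $S_{j-1}$ and $S_j$ meet, the tangent cone at $\gamma(t_0)$ is the product $C_{\vartheta_{j-1}}\times C_{\vartheta_j}$ of two $2$-cones, both of total angle strictly less than $2\pi$. A minimizing geodesic in a product metric cone is a product of minimizing geodesics in each factor, so if the rescaled limit $\widetilde\gamma$ passes through the corner, at least one factor passes through the apex of its $2$-cone; applying the unfolding argument of the previous paragraph to that factor yields a strictly shorter competitor, completing the contradiction. The main subtlety I anticipate lies not in the elementary cone geometry, but rather in the bookkeeping required to pass from the tangent-cone competitor to a genuinely shorter curve in $M$ at a definite positive scale; this amounts to controlling the smooth remainder in the local model near the singular locus, which is standard given the $C^{1,\alpha}$ (and higher) regularity established in Section~\ref{subsec:local regularity}.
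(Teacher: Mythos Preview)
Your argument has a genuine gap in the $M^*$ case. When you split the rescaled limit $\widetilde\gamma$ in $C_\alpha\times\R^2$ into factors, the $C_\alpha$-component is a (possibly constant) minimizing geodesic in $C_\alpha$ passing through the apex. Your unfolding argument tacitly assumes this component is non-constant: you speak of ``the ray through $\widetilde\gamma(-\epsilon)$'' in $C_\alpha$, but if the $C_\alpha$-factor is identically the apex then there is no such ray, and the limit line $\widetilde\gamma$ sits entirely inside the singular ridge $\{0\}\times\R^2$. That is a perfectly good minimizing line in $C_\alpha\times\R^2$, so no contradiction arises from cone geometry alone. (By contrast, in the $M^\sharp$ case this does not occur: if one factor of a line in $C_{\vartheta_{j-1}}\times C_{\vartheta_j}$ is constant at the apex, the other must be a non-constant line through the apex of a cone of angle $<2\pi$, which is impossible; so your argument there is fine.)

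This is precisely the case the paper isolates and handles by a different mechanism. From the fact that $\widetilde\gamma$ lies in the ridge one concludes that the original $\gamma$ is \emph{tangent} to $S_j$ at $w=\gamma(t_0)$. Now unwrap the cone: pass to the local $\dZ$-cover of a neighborhood of $w$ along the primitive rod vector and re-quotient by the normalized rod vector. This replaces the conical metric by a genuinely smooth Ricci-flat metric in which the (lift of) $S_j$ is a smooth, totally geodesic surface. A geodesic in a smooth metric that is tangent to a totally geodesic submanifold at one point must lie in it entirely, so $\gamma|_{[t_0-\epsilon,t_0]}$ is contained in $S_j$. But $t_0$ was chosen to be the \emph{first} time $\gamma$ leaves $M^\circ$, a contradiction. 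Your proposal needs this tangential-contact/totally-geodesic step (or an equivalent device) to close the argument; the competitor construction alone does not suffice.
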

\begin{proof}
    Let $\gamma:[0, L]\rightarrow M^\circ$ be a unit-speed minimizing geodesic from $q_1$ to $q_2\in M^\circ$. Suppose the claim does not hold and  let $t_0>0$ be the first time such that $w=\gamma(t_0)\in M\setminus M^\circ$. 
    
    Consider the tangent cone $\mathcal C$ at $w$, which is given by $\mathbb R^2_{\vartheta_j}\times \mathbb R^2$ if $w$ is contained in $M^*$, and is $\mathbb{R}^2_{\vartheta_j}\times\mathbb R^2_{\vartheta_{j+1}}$ if $w$ is a point in $M^\sharp$. Here $\mathbb R^2_\vartheta$ denotes the 2-dimensional flat cone with angle $\vartheta$.  It is easy to see that after passing to a subsequence $\gamma$ scales to a limit  minimizing geodesic line $\gamma_\infty$ in $\mathcal C$ passing through the limit $w_\infty$ of $w$. In particular, $w$ must be contained in $M^*$, since there is no geodesic line in $\mathbb{R}^2_{\vartheta_j}\times\mathbb R^2_{\vartheta_{j+1}}$ when $\vartheta_j,\vartheta_{j+1}<2\pi$ passing through the origin $w_\infty$. The line $\gamma_\infty$ must be contained in the singular set of $\mathcal C$. So $\gamma$ must be tangential to the closure of some component of $M^*$, say $S_j$, at the point $w$. The latter is a submanifold diffeomorphic to $S^2$ or $\R^2$.

    Next we claim that for some $\epsilon>0$ small, $\gamma|_{[t_0-\epsilon, t_0]}$ must be contained in $M\setminus M^\circ$ entirely, which leads to a contradiction. To see this, notice that by the local analysis in Section \ref{subsec:local regularity},
    locally near $w$ we may rescale the primitive rod vector $\bbv_j$ to unwrap the conical singularity. More precisely, in a neighborhood \( U \) of \( w \), consider the \( \mathbb{Z} \)-cover \( \widehat{U} \) of \( U \setminus S_j \), obtained by unwrapping along the primitive rod vector \( \bbv_j \). Next, take the \( \mathbb{Z} \)-quotient of \( \widehat{U} \), generated by the normalized rod vector \( \overline{\mv}_j \), and complete the space by adding back the part \( U \cap S_j \), denoted by \( S_j' \). This construction yields a smooth toric Ricci-flat metric \( (U', g') \), containing a totally geodesic 2-dimensional submanifold \( S_j' \).

Now, for sufficiently small \( \epsilon > 0 \), the restriction \( \gamma|_{[t_0 - \epsilon, t_0]} \) lifts to a minimizing geodesic \( \gamma' \) in \( U' \), which is tangent to some point \( w' \in S_j' \) but not contained entirely in \( S_j' \). However, since \( S_j' \) is totally geodesic and \( g' \) is smooth, it follows that any geodesic tangent to \( S_j' \) at a point must lie entirely within \( S_j' \). This leads to a contradiction.
\end{proof}

\section{Degeneration of rod structures and bubbling of harmonic maps}\label{sec:degeneration of rod structures}
In this section, we examine the limiting behavior of a sequence of  harmonic maps which are strongly tamed by a sequence of enhanced rod structures. For our purpose in Section \ref{sec:applications}, it is crucial to analyze the bubbling phenomenon when turning points come together or diverge to infinity. This is done in Section \ref{ss:background map in a fixed scale} and \ref{ss:uniform control of tame harmonic maps}. We were informed by Marcus Khuri that similar bubbling analysis in the Lorentzian setting was performed in \cite{HKWX2} for a different purpose. In Section \ref{ss:limiting behavior of the cone angles} we apply the bubbling analysis to obtain information on the limiting behavior of cone angles when we work with a fixed enhancement and Asymptotic Type but vary the rod lengths.

\subsection{Rescaling of model harmonic maps}
\label{ss:rescaling of model harmonic maps}
We first study the effect of the rescaling map $\varphi_\lambda:\dH\rightarrow \dH$ on  the model families of harmonic maps we wrote down in Section \ref{ss:model maps and tameness}. For the $\Ae$ family we have rescaling invariance, that is, for any $\lambda>0$,
\begin{equation}
	\varphi_\lambda^*\Phi^{\Aea}=\Phi^{\Aea}.
\end{equation}For the $\AF$ family we have
$$\varphi_\lambda^*\Phi^{\AFa}=\varphi_\lambda^*(U_{-\beta}.\Phi_{[\bv_0]})=U_{-\beta}.D_{{\sqrt{\lambda}}}. \Phi_{[\bv_0]}=(U_{-\beta} D_{{\sqrt{\lambda}}}U_{\beta}). \Phi^{\AFa}.$$
For the $\TN^-$ family we have 
\begin{equation}
	\varphi_\lambda^*\Phi^{\TNam}=\frac{\lambda}{\rho}\left(\begin{matrix}
		(1+\frac{{\alpha}\lambda}{2r})^{-1}(\frac{{\alpha}z}{2r})^2+\frac{\rho^2}{\lambda^2}(1+\frac{{\alpha}\lambda}{2r}) & \ \  \ \ (1+\frac{{\alpha}\lambda}{2r})^{-1}\frac{{\alpha}z}{2r}  \\
		
		\\
		
		(1+\frac{{\alpha}\lambda}{2r})^{-1}\frac{{\alpha}z}{2r}   & (1+\frac{{\alpha}\lambda}{2r})^{-1}
	\end{matrix}\right)
\end{equation}
We now examine the behavior of $\varphi_\lambda^*\Phi^{\TNam}$ as $\alpha\rightarrow 0$ when $\lambda$ also varies. 
Notice that  in any bounded subset of $\mathbb H^\circ$,  the family $\Phi^{\TNam}$ is uniformly bounded for $\alpha\in [0, 2]$. The following lemma says that as $\alpha\rightarrow 0$, we also maintain a uniform control of $\Phi^{\TNam}$ outside a fixed neighborhood of  the turning point. Recall that on $\mathbb H$ we have polar coordinates $(r, \theta)$ satisfying $\rho=r\sin\theta$ and $z=r\cos\theta$.

\begin{lemma}\label{l:ALF bound away turning points}
For all $r_0>0$ there is a $C(r_0)>0$  such that for all $\alpha\in [0, 2]$ we have
\begin{itemize}
\item 
    $d(\Phi^{\TNam}, \Phi_{[\bv_{\alpha/2}]})\leq  C(r_0)$, if $r>r_0$ and $\theta>\frac{\pi}{4}$;
\item 
    $d(\Phi^{\TNam}, \Phi_{[\bv_{-\alpha/2}]})\leq  C(r_0)$,  if $r>r_0$ and $\theta<\frac{3\pi}{4}$.
\end{itemize}
\end{lemma}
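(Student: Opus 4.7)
The plan is a direct computational verification. Using the $SL_\pm(2,\R)$-isometric action on $(\mathcal H, g_{\mathcal H})$ and the definition $\Phi_{[\bv_{\alpha/2}]}=P_{[\bv_{\alpha/2}]}^{-1}.\Phi_{[\bv_0]}$ from \eqref{e: definition of rod model}, the first bullet is equivalent to
\begin{equation*}
d\bigl(P_{[\bv_{\alpha/2}]}.\Phi^{\TN^-_\alpha},\,\Phi_{[\bv_0]}\bigr)\leq C(r_0)
\end{equation*}
uniformly on $\{r>r_0,\theta>\pi/4\}$ and in $\alpha\in[0,2]$. Writing $P_{[\bv_{\alpha/2}]}.\Phi^{\TN^-_\alpha}$ in the $(u,v)$-parametrization of Section \ref{subsubsec:local regularity near rods}, this reduces further to the pointwise estimate $|u|+\rho^{-1}|v|\leq C(r_0)$ on the region.

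The rotation matrix is explicit: $P_{[\bv_{\alpha/2}]}=\frac{1}{\sqrt{1+\alpha^2/4}}\begin{pmatrix}1 & \alpha/2\\ -\alpha/2 & 1\end{pmatrix}\in SO(2)$, and $\Phi^{\TN^-_\alpha}$ is given by \eqref{eq:Phi with alpha}--\eqref{e:defintion of Halpha and Aalpha} with $H_\alpha=1+\alpha/(2r)$ and $A_\alpha=\alpha z/(2r)$. Substituting and expanding yields rational expressions for $u,v$ in terms of $(\rho,z,r,\alpha)$. I will estimate these using the uniform bounds available on the specified region for $\alpha\in[0,2]$: $\rho\geq r\sin(\pi/4)\geq r_0/\sqrt 2$ (so $\rho$ is bounded away from zero), $|A_\alpha|\leq \alpha/2\leq 1$, $H_\alpha\in[1,1+1/(2r_0)]$, and $N^2:=1+\alpha^2/4\in[1,2]$. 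These elementary bounds are precisely what appear in the denominators and numerators of $u,v$ after the rotation, and combining them algebraically will give the required control $|u|+\rho^{-1}|v|\leq C(r_0)$. The second bullet follows from an entirely symmetric computation with $\alpha\mapsto -\alpha$, using the reflection $z\mapsto-z$ that exchanges the two rods and the corresponding rod models.

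The main obstacle to watch is that the raw entry $\Phi^{\TN^-_\alpha}_{11}=\rho^{-1}(H^{-1}A^2+\rho^2 H)$ grows linearly in $\rho$, so if the rotation did not align with the rod vector $[\bv_{\alpha/2}]$ in the correct way, the function $u$ for $P_{[\bv_{\alpha/2}]}.\Phi^{\TN^-_\alpha}$ would blow up. The crucial algebraic point is that $P_{[\bv_{\alpha/2}]}$ is chosen precisely so that after conjugation, the leading $\rho^2$-growth is accounted for by the prefactor $\rho^2 e^u$ in the $(u,v)$-parametrization, and the residual $u$ and $v$ depend only on bounded combinations of $H_\alpha,A_\alpha,\alpha,N$. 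Verifying this cancellation uniformly down to $\alpha=0$---where $P_{[\bv_0]}=\Id$, $\Phi^{\TN^-_0}=\Phi^{\AF_0}=\Phi_{[\bv_0]}$, and the distance is trivially zero---is the technical heart of the argument, and it follows by inspecting the explicit expressions for the rotated matrix entries together with continuity in $\alpha$.
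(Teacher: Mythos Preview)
Your approach via the $(u,v)$-parametrization is in principle equivalent to the paper's trace computation, but there is a genuine gap in your execution: the claim that $\rho\geq r\sin(\pi/4)\geq r_0/\sqrt 2$ on the region $\{r>r_0,\ \theta>\pi/4\}$ is false. Since $\theta$ ranges in $(\pi/4,\pi]$, the region includes a full neighborhood of the rod $\Gamma_-=\{\theta=\pi\}$, where $\rho=r\sin\theta\to 0$. The entire point of the lemma is that the bound holds \emph{uniformly up to} this rod (where both $\Phi^{\TN^-_\alpha}$ and $\Phi_{[\bv_{\alpha/2}]}$ are individually singular but share the same rod vector). Your list of ``uniform bounds available on the specified region'' therefore does not apply, and the subsequent sketch---which relies on $\rho$ bounded below to control the rational expressions---collapses.

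To see what is actually needed, note that near $\theta=\pi$ the quantity $A_\alpha+\tfrac{\alpha}{2}=\tfrac{\alpha}{2}(1+\cos\theta)$ vanishes to first order in $\pi-\theta$, and it is this cancellation (not any lower bound on $\rho$) that keeps $\rho^{-1}|v|$ finite. The paper makes this transparent by working with the shear $L_{\alpha/2}$ rather than the rotation $P_{[\bv_{\alpha/2}]}$: one computes
\[
\Tr\bigl((L_{\alpha/2}.\Phi_{[\bv_0]})^{-1}\Phi^{\TN^-_\alpha}\bigr)=H_\alpha^{-1}\frac{\alpha^2}{4r^2}\frac{(1+\cos\theta)^2}{\sin^2\theta}+H_\alpha+H_\alpha^{-1},
\]
where the factor $(1+\cos\theta)^2/\sin^2\theta=(1+\cos\theta)/(1-\cos\theta)$ is bounded on $(\pi/4,\pi]$ and in fact vanishes at $\theta=\pi$. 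A separate easy estimate bounds $d(L_{\alpha/2}.\Phi_{[\bv_0]},\Phi_{[\bv_{\alpha/2}]})$ uniformly, and the triangle inequality finishes. If you want to salvage your rotation-based approach, you must identify and exploit the analogous cancellation in the rotated $(u,v)$ explicitly; invoking a nonexistent lower bound on $\rho$ does not.
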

\begin{proof}
	It suffices to prove the first item; the other one is similar. By a straightforward computation, we have 
    \begin{align*}
        \Tr((L_{\alpha/2}.\Phi_{[\bv_0]})^{-1}\Phi^{\TNam})&=\Tr(\Phi^{-1}_{[\bv_0]}(L_{-\alpha/2}.\Phi^{\TNam}))=\rho^{-2}H_\alpha^{-1}(A_{\alpha}+\frac{\alpha}{2})^2+H_\alpha+H_{\alpha}^{-1},\\
        &=H_{\alpha}^{-1}\frac{\alpha^2}{4r^2}\frac{(1+\cos\theta)^2}{\sin^2\theta}+H_{\alpha}+H_{\alpha}^{-1},
    \end{align*}
    where $H_\alpha$ and $A_\alpha$ are given in \eqref{e:defintion of Halpha and Aalpha}.
    The right hand side is clearly bounded by $C(r_0)$ when $\theta\in(\frac{\pi}{4},\pi]$. Next we have        
    $$\Tr((L_{\alpha/2}.\Phi_{[\bv_0]})^{-1}\Phi_{[\bv_{\alpha/2}]})=\Tr(\Phi_{[\bv_0]}^{-1}(L_{-\alpha/2}P_{[\bv_{\alpha/2}]}^{-1}.\Phi_{[\bv_0]}))<C.$$
    The conclusion then follows from \eqref{e:hyperbolic distance equivalent control}. The last inequality is also a direct computation.
\end{proof}

For the $\TN^+$ family we have a similar statement, with essentially the same proof. We omit the details.
\begin{lemma}\label{l:TN+ bound away turning points}
  For all $r_0>0$ there is a $C(r_0)>0$ such that for all $\alpha\in [1,\infty)$ we have
\begin{itemize}
    \item $d(\Phi^{\TN^+_{4/\alpha}}, \Phi_{[\bv_{\alpha/2}]})\leq  C(r_0)$, if $r>r_0$ and $\theta>\frac{\pi}{4}$;
    \item $d(\Phi^{\TN^+_{4/\alpha}}, \Phi_{[\bv_{-\alpha/2}]})\leq  C(r_0)$, if $r>r_0$ and $\theta<\frac{3\pi}{4}$.
\end{itemize}
\end{lemma}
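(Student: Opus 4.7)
The strategy is to reduce to Lemma \ref{l:ALF bound away turning points} using the defining identity $\Phi^{\TN^+_\beta}=R_1.\Phi^{\TN^-_\beta}$ from Definition \ref{def:TNplus family} together with the fact that the $\SL_\pm(2;\R)$ action on $\mathcal H$ is by isometries. Since $R_1=R_1^{-1}$, this immediately gives
\[
d\bigl(\Phi^{\TN^+_{4/\alpha}},\Phi_{[\bv_{\pm\alpha/2}]}\bigr)=d\bigl(\Phi^{\TN^-_{4/\alpha}},R_1.\Phi_{[\bv_{\pm\alpha/2}]}\bigr).
\]

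The next step is the pointwise identification
\[
R_1.\Phi_{[\bv_{\pm\alpha/2}]}=\Phi_{[\bv_{\pm 2/\alpha}]}.
\]
At the level of $\R\dP^1$ this is clear, since $R_1$ exchanges the two coordinates of $\R^2$ and so sends $[(1,\pm\alpha/2)^\top]$ to $[(\pm\alpha/2,1)^\top]=[\bv_{\pm 2/\alpha}]$. At the matrix level, writing $\Phi_{[\bv_\mu]}=P_{[\bv_\mu]}^\top\Phi_{[\bv_0]}P_{[\bv_\mu]}$, the claim reduces to checking that $P_{[\bv_\mu]}R_1$ and $P_{[\bv_{1/\mu}]}$ differ by an element of the pointwise stabilizer of $\Phi_{[\bv_0]}$ under the $\SL_\pm(2;\R)$-action, namely the discrete group $\{\pm\mathrm{diag}(\pm 1,\pm 1)\}$. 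This is a short direct verification, and may also be viewed as an instance of the relation $\overline{P}_{\mv,\mv'}=R_1 P_{\mv,\mv'}$ recorded in \eqref{e:definition of bar P}.

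Combining the two steps and setting $\beta:=4/\alpha$, the lemma reduces to
\[
d\bigl(\Phi^{\TN^-_\beta},\Phi_{[\bv_{\pm\beta/2}]}\bigr)\leq C(r_0)
\]
for $\beta\in(0,4]$ in the indicated angular regions. This is precisely the conclusion of Lemma \ref{l:ALF bound away turning points}, the only difference being that the admissible parameter range in that lemma is $[0,2]$ rather than $(0,4]$. Inspecting the proof, the trace quantity $H_\beta^{-1}\tfrac{\beta^2}{4r^2}\tfrac{(1\mp\cos\theta)^2}{\sin^2\theta}+H_\beta+H_\beta^{-1}$ is visibly bounded uniformly in $\beta$ on any bounded interval, provided $r\geq r_0$ and $\theta$ avoids the appropriate pole; the same argument therefore yields the estimate for all $\beta\in[0,4]$, which gives the lemma for $\alpha\in[1,\infty)$.

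The only mildly delicate point is the matrix identification in the second step. As a safety net one can sidestep it entirely and redo the trace computation of Lemma \ref{l:ALF bound away turning points} directly for $\Phi^{\TN^+_{4/\alpha}}$, taking for instance $U_{\mp 2/\alpha}.R_1.\Phi_{[\bv_0]}$ as the natural proxy for $\Phi_{[\bv_{\pm\alpha/2}]}$ in place of $L_{\pm\beta/2}.\Phi_{[\bv_0]}$; the resulting trace expression has the same structure as before and is controlled by an essentially identical computation.
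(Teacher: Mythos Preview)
Your proof is correct. The paper merely states that the proof is ``essentially the same'' as that of Lemma \ref{l:ALF bound away turning points} and omits the details; your reduction via the isometric action of $R_1$ together with the identity $R_1.\Phi_{[\bv_{\pm\alpha/2}]}=\Phi_{[\bv_{\pm 2/\alpha}]}$ (which you verify correctly, since $P_{[\bv_{2/\alpha}]}R_1P_{[\bv_{\alpha/2}]}^{-1}$ fixes $[\bv_0]$ in $\R\dP^1$ and hence lies in the diagonal stabilizer of $\Phi_{[\bv_0]}$) is a clean and legitimate way to make this precise, and your observation that the explicit trace bound in Lemma \ref{l:ALF bound away turning points} extends from $\beta\in[0,2]$ to $\beta\in[0,4]$ is correct by inspection.
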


Now, we take into account the parameter \( \lambda>0 \).  If \( \alpha \lambda \leq 2 \), then we have  
\begin{equation}\label{e:scaling ALF model}
    \varphi_\lambda^*\Phi^{\TNam} = D_{\sqrt{\lambda}}. \Phi^{\TN^-_{\alpha\lambda}},
\end{equation}
and we may apply Lemma \ref{l:ALF bound away turning points}. In particular, the behavior of \( \varphi_\lambda^*\Phi^{\TNam} \) is controlled by the action of the twist \( D_{\sqrt{\lambda}} \) away from the turning point.  

If \( \alpha\lambda \geq \frac{1}{4} \), then we instead write  
\begin{equation}
    \varphi_\lambda^*\Phi^{\TNam} = D_{{1}/{\sqrt{\alpha}}}. \Upsilon_{\alpha\lambda}.
\end{equation}
Here, we define  
\begin{equation}
    \Upsilon_{\alpha} = \frac{1}{\rho} \begin{pmatrix}
        \alpha H_\alpha^{-1} A_1^2 + \rho^2 \alpha^{-1} H_\alpha & \alpha H_\alpha^{-1} A_1 \\
        \alpha H_\alpha^{-1} A_1 & \alpha H_\alpha^{-1}
    \end{pmatrix}.
\end{equation}
Notice that if \( \alpha = \infty \), we have  
\begin{equation}
    \Upsilon_{\infty} = \frac{1}{\rho} \begin{pmatrix}
        \frac{r}{2} & z \\
        z & 2r
    \end{pmatrix} = \Phi^{\Ae_1},
\end{equation}
which is the  harmonic map associated with the standard flat metric on \( \mathbb{R}^4 \).  

\begin{lemma}\label{l:TN rescale upsilon}
For all $r_0\in(0,1)$ there is a $C(r_0)$ such that for all ${\alpha}\geq \frac{1}{4}$ we have
\begin{itemize}
    \item $d(\Upsilon_{{\alpha}}, \Phi_{[\bv_{{1}/{2}}]})\leq  C(r_0)$, if $r_0<r<1/r_0$ and $\theta>\frac{\pi}{4}$;
    \item $d(\Upsilon_{{\alpha}}, \Phi_{[\bv_{-{1}/{2}}]})\leq  C(r_0)$, if if $r_0<r<1/r_0$ and $\theta<\frac{3\pi}{4}$.
\end{itemize}

\end{lemma}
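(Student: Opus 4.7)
The plan is to adapt the trace-based strategy of Lemmas \ref{l:ALF bound away turning points} and \ref{l:TN+ bound away turning points}, now exploiting the fact that our region $\{r_0 < r < 1/r_0,\ \theta > \pi/4\}$ is relatively compact in $\dH^\circ$. The idea is to compare $\Upsilon_\alpha$ to the rod model $\Phi_{[\bv_{1/2}]}$ through the intermediate matrix $L_{1/2}.\Phi_{[\bv_0]}$, bounding each leg of the triangle separately by means of the trace characterization \eqref{e:hyperbolic distance equivalent control}.

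First, I would compute $\Tr((L_{1/2}.\Phi_{[\bv_0]})^{-1}\Upsilon_\alpha)$ directly. Writing $\widetilde H = \alpha H_\alpha^{-1} = 2\alpha r/(2r+\alpha)$ so that
\[
\Upsilon_\alpha = \rho^{-1}\begin{pmatrix} \widetilde H A_1^2 + \rho^2/\widetilde H & \widetilde H A_1 \\ \widetilde H A_1 & \widetilde H\end{pmatrix},
\]
the trace collapses, after using $A_1 + 1/2 = (1+\cos\theta)/2$ and $\rho = r\sin\theta$, to
\[
\widetilde H\,\frac{\cot^2(\theta/2)}{4r^2} + \widetilde H + \widetilde H^{-1}.
\]
The key observation is that $\widetilde H$ is monotone increasing in $\alpha$ and, for $\alpha \in [1/4,\infty)$ and $r \in (r_0, 1/r_0)$, is bounded above and below by positive constants depending only on $r_0$: the endpoints $\alpha = 1/4$ and $\alpha \to \infty$ give $\widetilde H = r/(4r + 1/2)$ and $\widetilde H = 2r$ respectively, and both lie in a fixed positive interval on $r \in (r_0, 1/r_0)$. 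Combined with the boundedness of $\cot^2(\theta/2)$ for $\theta \geq \pi/4$ and of $r^{-2}$ for $r \geq r_0$, this gives $\Tr \leq C(r_0)$, hence $d(L_{1/2}.\Phi_{[\bv_0]},\Upsilon_\alpha) \leq C(r_0)$ by \eqref{e:hyperbolic distance equivalent control}.

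Next, I would bound $d(L_{1/2}.\Phi_{[\bv_0]},\Phi_{[\bv_{1/2}]})$, which is a statement about two explicit matrices depending only on $\rho$. On our region $\rho = r\sin\theta$ lies in the compact interval $[r_0\sin(\pi/4), 1/r_0]$, and a straightforward trace computation (of the type carried out at the end of the proof of Lemma \ref{l:ALF bound away turning points}) shows this distance is bounded by some $C(r_0)$. The triangle inequality then yields the first item. The second item is completely symmetric: one replaces $L_{1/2}$ by $L_{-1/2}$, observes that $A_1 - 1/2 = -(1-\cos\theta)/2$, and uses the analogous boundedness of $\tan^2(\theta/2)$ for $\theta \leq 3\pi/4$. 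I do not anticipate any serious obstacle; the upper bound $r < 1/r_0$ (absent from the previous two lemmas) is precisely what makes the auxiliary triangle leg immediate rather than requiring any new estimate.
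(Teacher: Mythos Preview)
Your proposal is correct and follows essentially the same route as the paper: compute $\Tr((L_{1/2}.\Phi_{[\bv_0]})^{-1}\Upsilon_\alpha)$, observe that $\alpha H_\alpha^{-1}$ and its reciprocal are uniformly bounded on the annulus $r_0<r<1/r_0$ for all $\alpha\geq 1/4$, then close the triangle via the fixed bound on $d(L_{1/2}.\Phi_{[\bv_0]},\Phi_{[\bv_{1/2}]})$. Your expression $\widetilde H\,\cot^2(\theta/2)/(4r^2)+\widetilde H+\widetilde H^{-1}$ is in fact the correct form of the trace (the paper's displayed formula drops the $r^{-2}$, which is a typo), and your explicit monotonicity argument for $\widetilde H$ is a slight elaboration of the paper's one-line remark ``uniformly bounded as long as $r$ is bounded.''
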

\begin{proof}
	Again it suffices to consider the first inequality. We have 
    \begin{align*}
        \Tr((L_{1/2}.\Phi_{[\bv_0]})^{-1}\Upsilon_{ {\alpha}})&=\Tr(\Phi_{[\bv_0]}^{-1} (L_{{-{1}/{2}}}.\Upsilon_{ {\alpha}}))= \rho^{-2}{\alpha}H_\alpha^{-1}(A_1+\frac{1}{2})^2+({\alpha}^{-1}H_\alpha+{\alpha}H_\alpha^{-1})\\
        &=\alpha H_{\alpha}^{-1}\frac{1}{4}\frac{(1+\cos\theta)^2}{\sin^2\theta}+({\alpha}^{-1}H_\alpha+{\alpha}H_\alpha^{-1}).
    \end{align*}
    This is uniformly bounded as long as $r$ is bounded. Now notice as before we have
    $$\Tr((L_{1/2}.\Phi_{[\bv_0]})^{-1}\Phi_{[\bv_{1/2}]})=\Tr(\Phi_{[\bv_0]}^{-1}(L_{-1/2}P_{[\bv_{1/2}]}^{-1}.\Phi_{[\bv_0]}))<C.$$
    The last inequality can be proved in a similar way.
\end{proof}

In particular, if $\alpha\lambda\geq\frac14$, the behavior of $\varphi_\lambda^*\Phi^{\TNam}$ is controlled by the action of the twist $D_{{1}/{\sqrt{\alpha}}}$ away from the turning point. 

\

Fix a smooth function  
\[
\sigma: \mathbb{R}_{\geq 0} \times \mathbb{R}_{>0} \to \mathbb{R}_{>0}
\]  
such that  
\begin{equation}\label{e:definition of sigma}
\sigma(\alpha, \lambda) =
\begin{cases}
    \sqrt{\lambda}, & \text{if } \alpha\lambda < \frac{1}{4}, \\ 
 \sqrt{\lambda}/2 \leq \sigma(\alpha, \lambda) \leq 2\sqrt{\lambda}, & \text{if } \alpha\lambda \in [\frac{1}{4},1],\\
    1/\sqrt{\alpha}, & \text{if } \alpha\lambda >1.
\end{cases}
\end{equation}
Notice that for all $\alpha>0$ and $\lambda>0$ we have
\begin{equation}\label{e:bound on sigma}\alpha\sigma(\alpha, \lambda)^2\leq 4,
\end{equation}
and 
\begin{equation}\label{e:bound on sigma2}
\alpha \sigma(4/\alpha, \lambda)^{-2}\geq 1. 
\end{equation}

For $\lambda>0$  and $\alpha\geq0$
we set 
\begin{equation}\label{eq:scaling relation for TN}
	\Phi^{\lambda; \alpha}=	
     D_{\sigma(\alpha,\lambda)}^{-1}.\varphi_{\lambda}^*\Phi^{\TNam}.
	\end{equation}
Notice that here we allow $\alpha=0$. One can see that the harmonic map $\Phi^{\lambda;\alpha}$ is locally tamed by the untyped weak rod structure with only one turning point $z_1=0$ and two rod vectors $\mv_0,\mv_1$ given by
\begin{align}\label{eq:lambda;alpha rod vector 1}
    [\bv_{\alpha\sigma(\alpha,\lambda)^2/2}]\text{ and }[\bv_{-\alpha\sigma(\alpha,\lambda)^2/2}].
\end{align}
The upshot is that, by Lemma \ref{l:ALF bound away turning points}-\ref{l:TN rescale upsilon}, for any value of $\alpha\geq0,\lambda>0$, up to some explicit twist that lies in a bounded subset of $SL(2;\R)$, the map $\Phi^{\lambda;\alpha}$ always has a controlled distance to one of the model maps on the corresponding regions away from the turning point.
For the $\TN^+$ family the rescaled harmonic map can be controlled using the fact that 
\begin{equation}\label{eq:rescaling relation for TN+}
    \varphi_{\lambda}^*\Phi^{\TNap}=R_1.D_{\sigma(\alpha,\lambda)}.\Phi^{\lambda;\alpha}=D_{\sigma(\alpha,\lambda)}^{-1}.R_1.\Phi^{\lambda;\alpha}.
\end{equation}

\subsection{Background map in a fixed scale}
\label{ss:background map in a fixed scale}

\begin{definition}[$(\epsilon, \delta)$-separate rod structure]
Given \( \epsilon \in (0, \frac{1}{2}) \) and \( \delta \in (0, \frac{\epsilon}{100}) \), a rod structure is called \emph{\((\epsilon, \delta)\)-separate} if any two turning points \( z_j \) and \( z_k \) satisfy either \( |z_j - z_k| \leq \delta \) or \( |z_j - z_k| \geq \epsilon \).  

\end{definition}
Now suppose we are given a rod structure \( \mathfrak{R} \) with $n$ turning points. If $\mR$ is \((\epsilon, \delta)\)-separate, then we can partition the set of turning points of \( \mathfrak{R} \) into the union of \( s\leq n \) clusters, \( \mathfrak{C}_1\), \( \cdots\), \(  \mathfrak{C}_s \). Each cluster is given by  
\[
\mathfrak{C}_p = \{ z_{p, 1}, \dots, z_{p, n_p} \}
\]  
with \( z_{p,1} < \dots < z_{p, n_p} \) and $p=1,\ldots,s$, satisfying the conditions  
\[
z_{p, n_p} - z_{p, 1} \leq \delta, \quad \text{and} \quad z_{p+1, 1} - z_{p, n_p} \geq \epsilon,
\]  
where we drop the second condition when $p=s$.
\begin{figure}[ht]
    \begin{tikzpicture}[scale=0.9]
        \draw (-5,0) -- (0,0);
        \node at (0,0) {$\bullet$};
        \draw (0,0) -- (0.4,0);
        \node at (0.4,0) {$\bullet$};
        \draw[dotted] (0.4,0)--(1.1,0);
        \node at (1.1,0) {$\bullet$};
        \draw (1.1,0)--(5,0);
        \node at (5,0){$\bullet$};
        \draw (5,0) -- (5.3,0);
        \node at (5.3,0) {$\bullet$};
        \draw[dotted] (5.3,0)--(6.1,0);
        \node at (6.1,0) {$\bullet$};
        \draw (6.1,0)--(10,0);
        
        \draw [decorate,decoration={brace,amplitude=5pt,mirror,raise=4ex}]
        (0,0.4) -- (1.1,0.4) node[midway,yshift=-3em]{$\leq\delta$};
        \draw [decorate,decoration={brace,amplitude=5pt,mirror,raise=4ex}]
        (5,0.4) -- (6.1,0.4) node[midway,yshift=-3em]{$\leq\delta$};
        \draw [decorate,decoration={brace,amplitude=5pt,mirror,raise=4ex}]
        (1.3,0.4) -- (4.8,0.4) node[midway,yshift=-3em]{$\geq\epsilon$};

        \node at (0.5,0.6) {$\mC_p$};
        \node at (5.5,0.6) {$\mC_{p+1}$};
        \node at (4,0.6) {$\mathfrak{J}_p$};
        \node at (2.5,0.6) {$\mathfrak{w}_{p}$};
        \draw[-Stealth] (2.1,0)--(2.1,0.4);

        \draw[-Stealth] (-3,0)--(-3,0.4);
        \node at (-2.4,0.6) {$\mathfrak{w}_{p-1}$};
        \draw[-Stealth] (7.5,0)--(7.5,0.4);
        \node at (8.1,0.6) {$\mathfrak{w}_{p+1}$};
    \end{tikzpicture}
	\caption{Clusters of $(\epsilon,\delta)$-separate rods.}
	 \label{Figure:rod clusters}   
\end{figure}
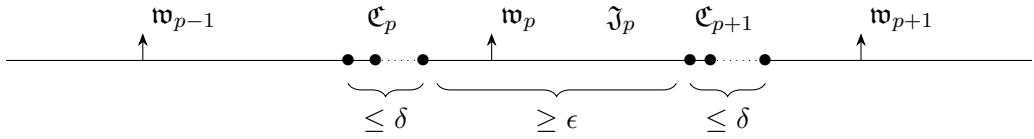
For \( p \geq 1 \), denote by \( \mathfrak{J}_p \) the rod of \( \mathfrak{R} \) connecting \( z_{p, n_p} \) and \( z_{p+1,1} \). Denote by \( \mathfrak{w}_p \) the rod vector associated with the rod \( \mathfrak{J}_p \). We also set \( \mathfrak{w}_0 = \mathfrak{v}_0 \) and \( \mathfrak{w}_s = \mathfrak{v}_n \).  
For \( \tau > 0 \), we define  
\[
\mathfrak{J}_p^\tau = N_{\tau}(\mathfrak{J}_p) \setminus N_{\tau}(\mathfrak{C}_p \cup \mathfrak{C}_{p+1}).
\]  
For each \( p \) with \( n_p > 1 \), we define 
\[
\underline{z}(\mC_p)=z_{p, 1},
\] 
\[
\lambda(\mathfrak{C}_p) = (z_{p, n_p} - z_{p, 1})^{-1},
\]  
and (see \eqref{e:definition of alpha}) \[
\alpha(\mathfrak{C}_p) = \mathfrak{w}_{p-1}\wedge \mathfrak{w}_{p}.
\]
We then define an untyped rod structure \( \widetilde{\mathfrak{R}}_{\mathfrak{C}_p} \) associated with \( \mathfrak{C}_p \), 
whose turning points are given by  
\[
\lambda(\mathfrak{C}_p) (z_{p, \alpha} - z_{p, 1}), \quad
\alpha=1, \cdots, n_p,
\]  
and whose rod vectors coincide with those of \( \mathfrak{R} \) inside the cluster $\mC_p$ on the finite rods and are given by \( \mathfrak{w}_{p-1} \) and \( \mathfrak{w}_{p} \) on the infinite rods. Finally, we define the untyped rod structure
\begin{equation}\label{eq:def R mC_p}
    \mR_{\mC_p}=\begin{cases}
        D_{\sigma(\alpha(\mC_{p}),\lambda(\mC_{p}))}^{-1}. P_{\mathfrak{w}_{p-1},\mathfrak{w}_{p}}. \widetilde{\mR}_{\mC_p}, & \text{if }\alpha(\mC_p)<1,\\
        D_{\sigma(4/\alpha(\mC_p),\lambda(\mC_p))}.P_{\mw_{p-1},\mw_{p}}.\widetilde{\mR}_{\mC_{p}}, & \text{if }\alpha(\mC_p)\geq1.
    \end{cases} 
\end{equation}
By construction we have $\ms(\mR_{\mC_p})=1$. Note that the infinite rod vectors of $\mR_{\mC_p}$ are given by 
\begin{equation} \label{e: rod vectors two cases}
    \begin{cases}
        [\bv_{\frac{1}{2}\alpha(\mC_p)\sigma(\alpha(\mC_{p}),\lambda(\mC_{p}))^2}]\text{ and }[\bv_{-\frac{1}{2}\alpha(\mC_p)\sigma(\alpha(\mC_{p}),\lambda(\mC_{p}))^2}],&\text{if }\alpha(\mC_p)<1,\\
        [\bv_{\frac{1}{2}\alpha(\mC_p)\sigma(4/\alpha(\mC_{p}),\lambda(\mC_{p}))^{-2}}]\text{ and }[\bv_{-\frac{1}{2}\alpha(\mC_p)\sigma(4/\alpha(\mC_{p}),\lambda(\mC_{p}))^{-2}}], &\text{if }\alpha(\mC_p)\geq1.
    \end{cases}     
\end{equation}
 Recall that for $\mv\in \R\dP^1$, we have defined
 \begin{equation} 
 	\Phi_{\mv}=P_\mv^{-1}. \Phi_{[\bv_0]}. 
 \end{equation}
For $\mv, \mv'\in \R\dP^1$ and $\underline z\in \R$, we now  define
\begin{equation}\label{e:definition of Phi mv mv' z}
    \Phi_{\mv, \mv'; \underline z}:=T_{\underline z}^*P_{\mv, \mv'}^{-1}.\Phi^{\TNam},
\end{equation}
where $\alpha=\mv\wedge\mv'$ (see \eqref{e:definition of alpha}), and  
\begin{equation}\label{e:definition of Phi mv mv' z TN+}
    \overline{\Phi}_{\mv, \mv'; \underline z}:=T_{\underline z}^*\overline{P}_{\mv, \mv'}^{-1}.\Phi^{\TN^-_{4/\alpha}}=T_{\underline z}^*{P}_{\mv, \mv'}^{-1}.\Phi^{\TN^+_{4/\alpha}}.
\end{equation}
For the definition of  $P_{\mv, \mv'}$ and $\overline{P}_{\mv, \mv'}$ see \eqref{e:definition of P} through \eqref{e:definition of bar P}.

The proofs of the following two propositions are very similar to the construction in the proof of Theorem \ref{t:existence result}, using Lemma \ref{l:interpolation}	and the fact that $\mathcal H$ is simply-connected. See Figure \ref{Figure:rod clusters model map}. We omit the details. 

\begin{proposition}[Unrescaled model maps] \label{p:background map}
	For all $\epsilon \in(0, \frac{1}{2})$, $\delta\in (0, \frac{\epsilon}{100})$ and $\alpha_0>1$,  there exits a constant $C=C(\epsilon, \alpha_0)$ such that for any rod structure $\mR$  satisfying 
    \begin{itemize}
        \item $\mR$ is $(\epsilon, \delta)$-separate with size $\ms(\mR)\leq 1$;
        \item if the Asymptotic Type $\sharp\in \{\Ae, \ALF^{-}, \ALF^+\}$, then $\mv_0=\bv_{\alpha/2}$ and $\mv_n=\bv_{-\alpha/2}$ with
        \begin{itemize}
            \item  $\alpha\in (0, \alpha_0]$ if $\sharp=\Ae$ or $\sharp=\ALFm$;
            \item $\alpha\in [\alpha_0^{-1}, \infty)$ if $\sharp=\Ae$ or $\sharp=\ALFp$;        \end{itemize}

        \item if the Asymptotic Type $\sharp=\AFa$, then $\mv_0=\mv_n=\bv_0$ and $|\beta|\leq \alpha_0$,
    \end{itemize}
    there is a smooth map
	$$\Psi_{\mR}^\sharp: \mathbb H^\circ\setminus \bigcup_{p=1}^sN_{\frac{\epsilon}{20}}(\mC_p)\rightarrow \mathcal H$$
	such that 
	\begin{itemize}
		\item $\Psi_{\mR}^\sharp=\Phi^\sharp_{\mv_0, \mv_n}$ $($see \eqref{e:definition of Phi Ae general}, \eqref{e:definition of Phi TN general}, \eqref{e:definition of Phi TN+ general}$)$ on $\mathbb H^\circ\setminus B_{\frac{1}{\epsilon}}(0)$;
		\item $\Psi_{\mR}^\sharp=\begin{cases}
		    \Phi_{\mw_{p-1}, \mw_{p}; z_{p,1}},&\text{if }\alpha(\mC_{p})<1,\\
            \overline{\Phi}_{\mw_{p-1}, \mw_{p}; z_{p,1}},&\text{if }\alpha(\mC_{p})\geq1,
		\end{cases}$ on $N_{\frac{\epsilon}{10}}(\mC_p)\setminus N_{\frac{\epsilon}{20}}(\mC_p)$ for $p=1, \cdots, s$;
		\item  $\Psi_{\mR}^\sharp=\Phi_{\mw_p}$ on $\mathbb H^\circ\cap  \mJ_p^{\frac{\epsilon}{5}}$;
		\item  $|\Delta\Psi_{\mR}^\sharp|\leq C$ on $\mathbb H^\circ\setminus \bigcup_{p=1}^sN_{\frac{\epsilon}{20}}(\mC_p)$;
        \item For any $k\geq 1$, $|\nabla^k\Psi_{\mR}^\sharp|\leq C(k, \epsilon)$ on $B_{\frac{1}{\epsilon}}(0)\setminus \bigcup_{p=1}^sN_{\frac{\epsilon}{10}}(\mC_p)$.
	\end{itemize}
\end{proposition}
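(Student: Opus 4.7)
The plan is to prove this by an explicit gluing construction, parallel to the scheme in the proof of Theorem~\ref{t:existence result} but with quantitative estimates that are uniform in the cluster scale $\delta$. First I cover the domain $\dH^\circ\setminus\bigcup_{p=1}^sN_{\epsilon/20}(\mC_p)$ by four overlapping types of region: (i) the far region $\dH^\circ\setminus B_{1/\epsilon}(0)$; (ii) the rod tubes $\dH^\circ\cap\mJ_p^{\epsilon/5}$; (iii) the cluster collars $N_{\epsilon/10}(\mC_p)\setminus N_{\epsilon/20}(\mC_p)$; (iv) thin transition annuli interpolating between the first three. On (i), (ii), (iii) I set $\Psi_\mR^\sharp$ equal to the prescribed model map, so the content of the proposition is to extend smoothly onto (iv) while controlling the Laplacian.

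The essential analytic input is that on every transition annulus the two models to be connected have uniformly bounded hyperbolic distance. Between a rod tube model $\Phi_{\mw_p}$ on an infinite rod and the infinity model $\Phi^\sharp_{\mv_0,\mv_n}$, this follows by a direct computation from the formulas in Section~\ref{ss:model maps and tameness}, using the parameter ranges $\alpha\in[\alpha_0^{-1},\alpha_0]$ for $\sharp=\Ae$, $\alpha\in(0,\alpha_0]$ for $\sharp=\ALF^{-}$, $\alpha\in[\alpha_0^{-1},\infty)$ for $\sharp=\ALF^{+}$, or $|\beta|\leq\alpha_0$ for $\sharp=\AFa$. Between a rod tube model $\Phi_{\mw_{p-1}}$ or $\Phi_{\mw_{p}}$ and the cluster collar model attached to $\mC_p$, the key estimate is Lemma~\ref{l:ALF bound away turning points} (after conjugation by $P_{\mw_{p-1},\mw_p}^{-1}$) in the case $\alpha(\mC_p)<1$, and Lemma~\ref{l:TN+ bound away turning points} (after conjugation by $\overline{P}_{\mw_{p-1},\mw_p}^{-1}$) in the case $\alpha(\mC_p)\geq 1$. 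These two regimes together span all values of $\alpha(\mC_p)\in(0,\infty)$, and both yield constants depending only on the transition radius $\epsilon$, not on $\alpha(\mC_p)$ or the cluster scale $\delta$.

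With bounded hyperbolic distance secured, the interpolation on each transition annulus is carried out in two parts. On the part adjacent to $\Gamma$, I write both models in the $(u,v)$-presentation adapted to the rod vector $\mw_p$ as in Section~\ref{subsubsec:local regularity near rods}; the rod tube model corresponds to $(u,v)=(0,0)$, and the other model satisfies $|u|+\rho^{-1}|v|\leq C$ with controlled derivatives from Proposition~\ref{thm:C11 regularity}. A cutoff $\chi(z)$, exactly as in Lemma~\ref{l:interpolation}, then produces a smooth map whose Laplacian is bounded by $C(\epsilon,\alpha_0)$. On the part away from $\Gamma$, both models extend smoothly into the simply-connected target $\mathcal{H}$ at bounded distance with a priori bounded derivatives, so one can take a geodesic convex combination in $\mathcal{H}$ with a cutoff supported on an $\epsilon$-scale region and obtain a smooth extension whose Laplacian is again bounded by $C(\epsilon,\alpha_0)$. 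The higher-derivative bound $|\nabla^k\Psi_\mR^\sharp|\leq C(k,\epsilon)$ on $B_{1/\epsilon}(0)\setminus\bigcup N_{\epsilon/10}(\mC_p)$ then follows because every model map used on this compact region is smooth with explicit bounds depending only on $\epsilon$ and $\alpha_0$.

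The main obstacle I anticipate is the uniformity of the estimates: the constant $C(\epsilon,\alpha_0)$ must be independent of $\delta$ and of the internal structure of each cluster. This is precisely why one needs the dichotomy between the choice of cluster model $\Phi_{\mw_{p-1},\mw_p;z_{p,1}}$ when $\alpha(\mC_p)<1$ and $\overline{\Phi}_{\mw_{p-1},\mw_p;z_{p,1}}$ when $\alpha(\mC_p)\geq 1$: each choice invokes a TN-type model in the parameter regime where the corresponding lemma gives a hyperbolic bound uniform in $\alpha$. Since neither the cluster scale $\delta$ nor the internal turning-point configuration of $\mC_p$ enters the estimates on the transition annuli---the gluing takes place only on the $\epsilon$-scale collar---the resulting constants depend only on $\epsilon$ and $\alpha_0$, as required.
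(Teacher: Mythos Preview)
Your proposal is correct and follows essentially the same approach as the paper, which explicitly defers to the construction in the proof of Theorem~\ref{t:existence result} together with Lemma~\ref{l:interpolation} and the simple-connectedness of $\mathcal H$. You have in fact filled in more detail than the paper provides: in particular, your identification of Lemmas~\ref{l:ALF bound away turning points} and~\ref{l:TN+ bound away turning points} as the source of the uniform hyperbolic-distance bound governing the dichotomy $\alpha(\mC_p)<1$ versus $\alpha(\mC_p)\geq 1$ is exactly the point that makes the constants independent of $\delta$ and of the internal cluster configuration.
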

\begin{remark}
The constant $\alpha_0$ is imposed to prevent the degeneration of the infinite rods which may destroy the uniform estimates. However we allow $\alpha\rightarrow 0$ in the $\ALFm$ case and $\alpha\rightarrow\infty$ in the $\ALFp$ case, since we have uniform control of the model maps in these settings, as discussed in Section \ref{ss:rescaling of model harmonic maps}.
\end{remark}

\begin{proposition}[Rescaled model maps] \label{p:background map2}
	For all $\epsilon \in(0, \frac{1}{2})$ and $\delta\in (0, \frac{\epsilon}{100})$,  there exits a $C=C(\epsilon)$ such that for all $\lambda>0$ and $\alpha\geq 0$, and  any rod structure $\mR$  with rod vectors $\mv_0$ and $\mv_n$ given by
    $$\begin{cases}
        [\bv_{\alpha\sigma(\alpha,\lambda)^2/2}]\text{ and }[\bv_{-\alpha\sigma(\alpha,\lambda)^2/2}], \quad \text{if }\alpha<1;\\
        [\bv_{\alpha\sigma(4/\alpha,\lambda)^{-2}/2}]\text{ and }[\bv_{-\alpha\sigma(4/\alpha,\lambda)^{-2}/2}], \quad \text{if }\alpha\geq1,
    \end{cases}$$
    there is a smooth map 
	$$\Psi_{\mR}^{\lambda; \alpha}: \mathbb H^\circ\setminus \bigcup_{p=1}^sN_{\frac{\epsilon}{20}}(\mC_p)\rightarrow \mathcal H$$
	such that 
	\begin{itemize}
		\item $\Psi_{\mR}^{\lambda;\alpha}=\begin{cases}\Phi^{\lambda;\alpha}, &\text{if } \alpha<1, \\
        R_1. \Phi^{\lambda; 4/\alpha}, &\text{if } \alpha\geq 1,\end{cases}$ on $\mathbb H^\circ\setminus B_{\frac{1}{\epsilon}}(0)$;
		\item $\Psi_{\mR}^{\lambda;\alpha}=\begin{cases}
		    \Phi_{\mw_{p-1}, \mw_{p}; z_{p,1}},&\text{if }\alpha(\mC_p)<1,\\
            \overline{\Phi}_{\mw_{p-1}, \mw_{p}; z_{p,1}},&\text{if }\alpha(\mC_p)\geq1,
		\end{cases}$ on $N_{\frac{\epsilon}{10}}(\mC_p)\setminus N_{\frac{\epsilon}{20}}(\mC_p)$;
		\item  $\Psi_{\mR}^{\lambda;\alpha}=\Phi_{\mw_p}$ on $\mathbb H^\circ\cap  \mJ_p^{\frac{\epsilon}{5}}$;
		\item  $|\Delta\Psi_{\mR}^{\lambda;\alpha}|\leq C$ on $\mathbb H^\circ\setminus \bigcup_{p=1}^sN_{\frac{\epsilon}{20}}(\mC_p)$;
        \item For any $k\geq 1$, $|\nabla^k\Psi_{\mR}^{\lambda;\alpha}|\leq C(k, \epsilon)$ on $B_{\frac{1}{\epsilon}}(0)\setminus \bigcup_{p=1}^sN_{\frac{\epsilon}{10}}(\mC_p)$.
	\end{itemize}
\end{proposition}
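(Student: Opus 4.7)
The plan is to mirror the construction of Proposition \ref{p:background map}: I will define $\Psi_{\mR}^{\lambda;\alpha}$ piece by piece, equal to the prescribed model maps on four types of regions -- the exterior $\mathbb{H}^\circ\setminus B_{1/\epsilon}(0)$, the cluster collars $N_{\epsilon/10}(\mC_p)\setminus N_{\epsilon/20}(\mC_p)$, the interior rod strips $\mJ_p^{\epsilon/5}$, and the remaining bounded regions -- and then glue them smoothly across the transition annuli using Lemma \ref{l:interpolation}, finally extending into the rest of $\mathbb{H}^\circ$ using the simple-connectedness of $\mathcal{H}$. The only substantive novelty compared to Proposition \ref{p:background map} is that all constants must be uniform in the parameters $\lambda>0$ and $\alpha\geq 0$; this is where the rescaling analysis of Section \ref{ss:rescaling of model harmonic maps} does the real work.

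The first step is to verify that in each transition annulus the two pieces being joined are at uniformly bounded hyperbolic distance. For the transition between the exterior model and the outermost rod strips, when $\alpha<1$ the exterior model is $\Phi^{\lambda;\alpha}=D_{\sigma(\alpha,\lambda)}^{-1}.\varphi_\lambda^*\Phi^{\TNam}$, whose infinite rod vectors \eqref{eq:lambda;alpha rod vector 1} have parameter $\alpha\sigma(\alpha,\lambda)^2/2$ uniformly controlled by \eqref{e:bound on sigma}. Applying Lemma \ref{l:ALF bound away turning points} after the twist by $D_{\sigma(\alpha,\lambda)}$ then yields $d(\Phi^{\lambda;\alpha},\Phi_{\mw_0})\leq C(\epsilon)$ on the relevant annular piece with $r\sim 1/\epsilon$ and $\theta$ bounded away from $\pi$, with a symmetric bound near the other infinite rod. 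The case $\alpha\geq 1$ is handled identically via \eqref{eq:rescaling relation for TN+} and Lemma \ref{l:TN+ bound away turning points}, the conjugation by $R_1$ swapping the roles of the $\TN^\pm$ families.

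For the transitions around each interior cluster $\mC_p$, the cluster model $\Phi_{\mw_{p-1},\mw_p;z_{p,1}}$ (resp.\ $\overline{\Phi}_{\mw_{p-1},\mw_p;z_{p,1}}$ when $\alpha(\mC_p)\geq 1$) is simply a translated, unrescaled $\TN^-$ (resp.\ $\TN^+$) model, which by construction is locally tamed by its two-rod structure with rod vectors $\mw_{p-1}$ and $\mw_p$; so bounded-distance comparison with the neighboring rod models $\Phi_{\mw_{p-1}}$ and $\Phi_{\mw_p}$ on the collar $N_{\epsilon/10}(\mC_p)\setminus N_{\epsilon/20}(\mC_p)$ is immediate from local tameness and the explicit asymptotics of the $\TN^\pm$ models. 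Once these bounded-distance estimates are established on every transition annulus, applying Lemma \ref{l:interpolation} (after conjugating by $P_{\mw_p}$ to bring each rod vector into the standard position $[\bv_0]$) produces a smooth interpolation with Laplacian bounded by $C(\epsilon)$ and higher derivatives bounded by $C(k,\epsilon)$. Any residual piece in the interior of $B_{1/\epsilon}(0)$ away from $\Gamma$ is then filled in by a smooth map into $\mathcal{H}\cong\mathbb{R}^2$ with a priori bounded derivatives.

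The main obstacle is maintaining uniformity of $C$ as $\alpha$ or $\lambda$ degenerates to $0$ or $\infty$. The design of $\sigma(\alpha,\lambda)$ in \eqref{e:definition of sigma}, subject to the constraints \eqref{e:bound on sigma}--\eqref{e:bound on sigma2}, is precisely what makes this work: after the twist $D_{\sigma}$ the infinite rod vectors of the rescaled TN-model stay in a compact subset of $\mathbb{RP}^1$, and the rescaled model itself admits a uniform comparison to a rod model on the relevant annular region (with the $\alpha\lambda$ dichotomy absorbed into the two regimes of $\sigma$ and handled by Lemmas \ref{l:ALF bound away turning points}--\ref{l:TN rescale upsilon}). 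Once this uniformity is secured, the gluing argument is entirely mechanical and structurally identical to the construction in Proposition \ref{p:background map}, which is why the authors omit the explicit details.
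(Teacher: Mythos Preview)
Your proposal is correct and follows essentially the same approach as the paper, which simply refers the reader to the construction in the proof of Theorem~\ref{t:existence result} together with Lemma~\ref{l:interpolation} and the simple-connectedness of $\mathcal H$, explicitly omitting the details. You have additionally spelled out the key point the paper leaves implicit, namely that uniformity in $\lambda$ and $\alpha$ is precisely what the design of $\sigma(\alpha,\lambda)$ and Lemmas~\ref{l:ALF bound away turning points}--\ref{l:TN rescale upsilon} are for.
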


\begin{figure}[ht]
    \begin{tikzpicture}[scale=0.7]
        \draw (-7,0) -- (0,0);
        \node at (0,0) {$\bullet$};
        \draw (0,0) -- (0.4,0);
        \node at (0.4,0) {$\bullet$};
        \draw[dotted] (0.4,0)--(1.1,0);
        \node at (1.1,0) {$\bullet$};
        \draw (1.1,0)--(3,0);

        \draw[dashed] (3.2,0)--(4.8,0);

        \draw (5,0)--(7,0);
        \node at (7,0){$\bullet$};
        \draw (7,0) -- (7.3,0);
        \node at (7.3,0) {$\bullet$};
        \draw[dotted] (7.3,0)--(8.1,0);
        \node at (8.1,0) {$\bullet$};
        \draw (8.1,0)--(15,0);

        \filldraw[color=red!50,fill=red!50,opacity=0.5](15,0)--(11,0) arc (0:180:7) --(-3,0)--(-7,0)--(-7,9)--(15,9)--(15,0);

        \filldraw[color=brown!50,fill=brown!50,opacity=0.5](1.7,0)--(1.2,0) arc (0:180:0.7) --(-0.2,0)--(-0.7,0) arc (180:0:1.2) (1.7,0);

        \filldraw[color=brown!50,fill=brown!50,opacity=0.5](8.7,0)--(8.2,0) arc (0:180:0.7) --(6.8,0)--(6.3,0) arc (180:0:1.2) (8.7,0);

        \filldraw[color=blue!50,fill=blue!50,opacity=0.5](10.7,0)--(9,0) --(9,0.3)to[out=90,in=180](9.2,0.5)--(10.5,0.5)to[out=0,in=90](10.7,0.3)--(10.7,0);

        \filldraw[color=blue!50,fill=blue!50,opacity=0.5](-1,0)--(-2.7,0) --(-2.7,0.3)to[out=90,in=180](-2.5,0.5)--(-1.2,0.5)to[out=0,in=90](-1,0.3)--(-1,0);
        
        \filldraw[color=blue!50,fill=blue!50,opacity=0.5](2,0)--(2.8,0)--(2.8,0.5)--(2.2,0.5)to[out=180,in=90](2,0.3)--(2,0);

        \filldraw[color=blue!50,fill=blue!50,opacity=0.5](6,0)--(5.2,0)--(5.2,0.5)--(5.8,0.5)to[in=90,out=0](6,0.3)--(6,0);

        \draw[dotted] (4.9,0.25)--(3.1,0.25);

        \node at (13,6) {$\mathbb H^\circ\setminus B_{1/\epsilon}(0)$};
        
        \node at (9.8,1.4) {\color{blue} $\mathbb{H}^\circ\cap\mJ^{\frac{\epsilon}{5}}$};
        \node at (-1.8,1.4) {\color{blue} $\mathbb{H}^\circ\cap\mJ^{\frac{\epsilon}{5}}$};
        
        \node at (1.4,2) {\color{brown} $N_{\frac{\epsilon}{10}}(\mC)\setminus N_{\frac{\epsilon}{20}}(\mC)$};

        \node at (6.6,2) {\color{brown} $N_{\frac{\epsilon}{10}}(\mC)\setminus N_{\frac{\epsilon}{20}}(\mC)$};

    \end{tikzpicture}
	\caption{Construction of $\Psi_{\mR}^\sharp$ in Proposition \ref{p:background map} and \ref{p:background map2}.}
	 \label{Figure:rod clusters model map}   
\end{figure}

\subsection{Uniform control of tame harmonic maps} 
\label{ss:uniform control of tame harmonic maps}

Given a sequence of rod structures \( \mathfrak{R}_i \) with r \( \mathfrak{n}(\mathfrak{R}_i) \leq N \). We assume that for all $i$,  the Asymptotic Type $\sharp_i$ of $\mR_i$ is either $\Ae$, or $\ALFp$, or $\ALFm$, or $\AF_{\beta_i}$, and  \( \mathfrak{s}(\mathfrak{R}_i) \) uniformly bounded. Without loss of generality we may assume that for each $i$, $\ms(\mR_i)\leq 1$ and $\mR_i$ has $n$ turning points. We denote  \[\alpha(\mR_i):=\mv_{i,0}\wedge \mv_{i,n}\in [0, \infty).\] In addition, we assume there exists $\alpha_0>1$ such that
\begin{itemize}
    \item  $\alpha(\mR_i)\leq \alpha_0$ if $\sharp_i=\Ae$ or $\sharp_i=\ALF^-$;
    \item  $ \alpha(\mR_i)\geq \alpha_0^{-1}$ if $\sharp_i=\Ae$ or $\sharp_i=\ALF^+$;
     \item  $|\beta_i|\leq \alpha_0$ if $\sharp_i=\AF_{\beta_i}$; 
\end{itemize}
Again these assumptions are imposed in order to gain a uniform control for all $i$. Passing to a subsequence we may assume $\beta_i$ has a finite limit $\beta_\infty$ if $\sharp_i=\AF_{\beta_i}$, and  $\alpha(\mR_i)$ has a limit in $[0, \infty]$ otherwise.

By Theorem \ref{t:existence result}, there is a unique harmonic map $\Phi_i:\dH^\circ\rightarrow\mathcal H$ which is strongly tamed by $\mR_i$. Our goal is to understand the limit and bubbling behavior of $\Phi_i$ by comparing it with an explicitly constructed background map $\Psi_i$.

We can pass to a subsequence and assume that \( \mathfrak{R}_i \) is \( (\epsilon, \delta_i) \)-separate for some fixed \( \epsilon > 0 \) and a sequence \( \delta_i \to 0 \).  
Moreover, there exist exactly \( s \) clusters \( \mathfrak{C}_{i, 1}, \dots, \mathfrak{C}_{i, s} \), with $s$ independent of \( i \), such that as \( i \to \infty \), each \( \mathfrak{C}_{i,p} \) converges to a unique limit point \( z_{\infty, p} \), and each \( \mathfrak{w}_{i,p} \) converges to a limit rod vector \( \mathfrak{v}_{\infty, p} \). We also set \( z_{\infty, 0} = -\infty \) and \( z_{\infty, s+1} = +\infty \). We denote 
\begin{equation}\label{eq:definition of lambda for clusters}
    \lambda_{i, p}:=\lambda(\mC_{i, p})
\end{equation}
and 
\begin{equation}\label{eq:definition of alpha for clusters}
    \alpha_{i, p}:=\alpha(\mC_{i, p}).
\end{equation}  
In this way, we obtain a limit weak rod structure \( \mathfrak{R}_\infty \), with turning points given by \( \{z_{\infty, p}\}_{p=1}^{s} \) and rod vectors given by \( \{\mathfrak{v}_{\infty, p}\}_{p=0}^{s} \).  We assign the Asymptotic Type of $\mR_\infty$ to be
\begin{itemize}
    \item $\Ae$ if $\mR_i$ are of Asymptotic Type $\Ae$;
    \item $\AF_{\beta_\infty}$ if $\mR_i$ are of Asymptotic Type $\AF_{\beta_i}$ with $\beta_i\rightarrow\beta_\infty$;
    \item $\ALF^-$ if $\mR_i$ are of Asymptotic Type $\ALF^-$ and $\lim_{i\rightarrow\infty}\alpha(\mR_i)>0$;
    \item $\ALFp$ if $\mR_i$ are of Asymptotic Type $\ALFp$ and $\lim_{i\rightarrow\infty}\alpha(\mR_i)<\infty$;
    \item $\AF_0$ if  $\mR_i$ are of Asymptotic Type $\ALF^-$ and $\lim_{i\rightarrow\infty}\alpha(\mR_i)=0$;
    \item $\AF_0$ if $\mR_i$ are of Asymptotic Type $\ALFp$ and $\lim_{i\rightarrow\infty}\alpha(\mR_i)=\infty$.
\end{itemize}

The rod structure is \emph{weak} in general, since it is possible that some adjacent rod vectors coincide.  Each limit turning point \( z_{\infty, p} \) is associated with a multiplicity \( m_p \in \mathbb{Z}_{>0} \), namely, the number of turning points of $\mR_i$ converging to $z_{\infty,p}$.  

\begin{definition}
    The weak rod structure $\mR_\infty$ is referred to as a \emph{weak limit} of the rod structures $\mR_i$.
\end{definition}

\

To define the background maps $\Psi_i$ we first set $$\Psi_{i}: \mathbb H^\circ\setminus \bigcup_{p}N_{\frac{\epsilon}{20}}(\mC_{i,p})\rightarrow \mathcal H$$ 
to be  the map 
\begin{equation}\label{eq:Psi_i in trivial scale}
    P_{\mw_{i,0},\mw_{i,s}}^{-1}.\Psi_{P_{\mw_{i,0},\mw_{i,s}}.\mR_i}^\sharp
\end{equation} 
given by Proposition \ref{p:background map}. By definition and Proposition \ref{p:background map}, in each cluster region $N_{\frac{\epsilon}{10}}(\mC_{i,p})\setminus N_{\frac{\epsilon}{20}}(\mC_{i,p})$, the expression \eqref{eq:Psi_i in trivial scale} coincides with
\begin{equation}\label{eq:Psi_i near the cluster region}
    \begin{aligned}
        \Phi_{\mw_{i,p-1},\mw_{i,p};z_{i,p,1}}=P_{\mw_{i,p-1},\mw_{i,p}}^{-1}. T_{z_{i,p,1}}^*\Phi^{\TN^-_{\alpha_{i, p}}},\quad \text{if }\alpha_{i, p}<1,\\
        \overline{\Phi}_{\mw_{i,p-1},\mw_{i,p};z_{i,p,1}}=P_{\mw_{i,p-1},\mw_{i,p}}^{-1}.T_{z_{i,p,1}}^*\Phi^{\TN^+_{4/\alpha_{i, p}}},\quad \text{if }\alpha_{i, p}\geq1.
    \end{aligned}
\end{equation}
In particular, we see that for each $p$ with $m_p=1$, $\Psi_{i}$ obviously extends to  $N_{\frac{\epsilon}{20}}(\mC_{i,p})$ on which it is harmonic and locally tamed by the rod structure $\mR_i$. 

For each $p$ with $m_p>1$,  we consider the sequence of rescaled rod structures (as defined in \eqref{eq:def R mC_p}) $$\mR_{i, p}:=\mR_{\mC_{i, p}}.$$   Again by passing to a further subsequence we may assume this has a limit weak, untyped rod structure $\mR_{\infty, p}$ as above. Furthermore,  \( \mathfrak{R}_{i,p} \) is \( (\epsilon', \delta_i') \)-separate for some fixed \( \epsilon' > 0 \) and a sequence \( \delta_i' \to 0 \).    

Next we extend the domain of definition of $\Psi_i$ to a smaller scale.  Denote by $N'_{i, p}$ the subset of  $N_{\frac{\epsilon}{10}}(\mC_{i, p})$ obtained by removing the union of the $\frac{\epsilon'}{20\lambda_{i, p}}$ neighborhood of the clusters of $\mR_{i, p}$. We now divide into two cases.

If $\alpha_{i,p}<1$, then we define $\Psi_i$ on $N_{i, p}'$ to be
\begin{equation}\label{eq:Psi_i in scale of mC}
    P_{\mw_{i,p-1},\mw_{i,p}}^{-1}D_{\sigma(\alpha_{i, p}, \lambda_{i, p})}. T_{{z}_{i, p,1}}^*\varphi_{1/\lambda_{i, p}}^*\Psi_{\mR_{i, p}}^{\lambda_{i, p}; \alpha_{i, p}},
\end{equation} 
where $\sigma$ is defined in \eqref{e:definition of sigma} and  $\Psi_{\mR_{i, p}}^{\lambda_{i, p}; \alpha_{i, p}}$ is given in  Proposition \ref{p:background map2}.  Note that for $\Psi_{\mR_{i, p}}^{\lambda_{i, p}; \alpha_{i, p}}$, on $\mathbb{H}^\circ\setminus B_{\frac{1}{\epsilon'}}(0)$, by  definition  it satisfies 
$$D_{\sigma(\alpha_{i, p}, \lambda_{i, p})}.\Psi_{\mR_{i, p}}^{\lambda_{i, p}; \alpha_{i, p}}= D_{\sigma(\alpha_{i, p}, \lambda_{i, p})}.\Phi^{\lambda_{i, p}; \alpha_{i, p}}=\varphi_{\lambda_{i, p}}^*\Phi^{\TN^-_{\alpha_{i, p}}}.$$ Hence, \eqref{eq:Psi_i in scale of mC} clearly matches with \eqref{eq:Psi_i near the cluster region} on the overlapping region $N_{\epsilon/10}(\mC_{i,p})\setminus N_{\epsilon/20}(\mC_{i, p})$ (see Figure \ref{Figure:patching clusters}). In other words, we have extended the domain of definition of $\Psi_i$ into a smaller scale around the cluster $\mC_{i, p}$.

If $\alpha_{i,p}\geq1$, then we define $\Psi_i$ on $N_{i, p}'$ to be 
\begin{equation}\label{eq:Psi_i in scale of mC 2}
    P_{\mw_{i,p-1},\mw_{i,p}}^{-1}D_{\sigma(4/\alpha_{i, p}, \lambda_{i, p})}^{-1}. T_{{z}_{i, p,1}}^*\varphi_{1/\lambda_{i, p}}^*\Psi_{\mR_{i, p}}^{\lambda_{i, p}; \alpha_{i, p}}.
\end{equation}
For $\Psi^{\lambda_{i,p};\alpha_{i,p}}_{\mR_{i,p}}$, near infinity it satisfies
$$D_{\sigma(4/\alpha_{i, p}, \lambda_{i, p})}^{-1}.\Psi_{\mR_{i, p}}^{\lambda_{i, p}; \alpha_{i, p}}= D_{\sigma(4/\alpha_{i, p}, \lambda_{i, p})}^{-1}.R_1.\Phi^{\lambda_{i, p}; 4/\alpha_{i, p}}=\varphi_{\lambda_{i, p}}^*\Phi^{\TN^+_{4/\alpha_{i, p}}}.$$
Again, \eqref{eq:Psi_i in scale of mC 2} also matches with \eqref{eq:Psi_i near the cluster region} on the overlapping region $N_{\epsilon/10}(\mC_{i,p})\setminus N_{\epsilon/20}(\mC_{i, p})$.

Note that from the matching above, we moreover have 
\begin{equation}\label{eq:Psii formula}
    \Psi_i=\begin{cases}
		    \Phi_{\mw_{i,p-1}, \mw_{i,p}; z_{i,p,1}},&\text{if }\alpha(\mC_{i,p})<1,\\
            \overline{\Phi}_{\mw_{i,p-1}, \mw_{i,p}; z_{i,p,1}},&\text{if }\alpha(\mC_{i,p})\geq1,
		\end{cases}
\end{equation} 
on $N_{\frac{\epsilon}{10}}(\mC_{i,p})\setminus N_{\frac{1}{\epsilon'\lambda_{i,p}}}(\mC_{i,p})$.

\begin{figure}[ht]
    \begin{tikzpicture}[scale=0.6]
        \draw (0,-4.5) -- (0,-1.5);
        \node at (0,-1.5) {$\bullet$};
        \draw (0,-1.5) -- (0,-1.1);
        \node at (0,-1.1) {$\bullet$};
        \draw[dotted] (0,-1.1)--(0,-0.4);
        \node at (0,-0.4) {$\bullet$};
        \draw (0,-0.4)--(0,2.5);

        \filldraw[color=violet!50,fill=violet!50,opacity=1](0,0.5)--(0,-0.3) arc (90:-90:0.7) --(0,-1.7)--(0,-2.5) arc (-90:90:1.5) (0,0.5);

        \draw[-stealth] (3,-1)--(8,-1);

        \draw (10,-9) -- (10,-3.3);
        \node at (10,-3.3) {$\bullet$};
        \draw (10,-3.3) -- (10,-3);
        \node at (10,-3) {$\bullet$};
        \draw[dotted] (10,-3) -- (10,-2.4);
        \node at (10,-2.4) {$\bullet$};
        
        \draw (10,-2.4)--(10,-2);
        \draw[dashed] (10,-2)--(10,0);

        \draw (10,0)--(10,0.3);
        \node at (10,0.3) {$\bullet$};
        \draw (10,0.3)--(10,0.6);
        \node at (10,0.6) {$\bullet$};
        \draw[dotted] (10,0.6)--(10,1.2);
        \node at (10,1.2) {$\bullet$};

        \draw (10,1.2) -- (10,7);

        \filldraw[color=violet!50,fill=violet!50,opacity=1](10,6.5)--(10,4.5) arc (90:-90:5.5) --(10,-6.5)--(10,-8.5) arc (-90:90:7.5) (10,6.5);

        \filldraw[color=brown!50,fill=brown!50,opacity=0.7](10,1.8)--(10,1.3) arc (90:-90:0.55) --(10,0.2)--(10,-0.3) arc (-90:90:1.05) (10,1.8);

        \filldraw[color=brown!50,fill=brown!50,opacity=0.7](10,-1.8)--(10,-2.3) arc (90:-90:0.55) --(10,-3.4)--(10,-3.9) arc (-90:90:1.05) (10,-1.8);

        \node at (-2,-1.1) {\small cluster $\mC_{i,p}$};
        \node at (0,-9.7) {$\mR_{i}$};
        \node at (2.4,1) {\small \color{violet} $\Phi_{\mw_{i,p},\mw_{i,p+1};z_{i,p,1}}$};
        \node at (2.8,-3) {\small \color{violet} $N_{\frac{\epsilon}{10}}(\mC_{i,p})\setminus N_{\frac{\epsilon}{20}}(\mC_{i,p})$};

        \node at (10.3,-9.7) {$\mR_{i, p}$};

        \node at (7.7,1) {\small deeper cluster};

        \filldraw[color=pink!50,fill=pink!50,opacity=0.5](10,7.3)--(10,2) arc (90:-90:3) --(10,-4)--(10,-9.3) arc (-90:90:8.3) (10,7.3);

        \node at (8.5,3) {{\small \color{pink}$\Phi^{\lambda_{i, p};\alpha_{i, p}}$}};

        \node at (8.4,-5) {{\small \color{pink}$\mathbb{H}^\circ\setminus B_{\frac{1}{\epsilon'}}(0)$}};

        \node at (5.4,-7.6) {{\small \color{violet} $\varphi_{\lambda_{i,p}^{-1}} T_{z_{i,p,1}}(N_{\frac{\epsilon}{10}}(\mC_{i,p})\setminus N_{\frac{\epsilon}{20}}(\mC_{i,p}))$}};

    \end{tikzpicture}
	\caption{\small Patching between scales when $\alpha_{i,p}<1$: on the cluster region $N_{\frac{\epsilon}{10}}(\mC_{i,p})\setminus N_{\frac{\epsilon}{20}}(\mC_{i,p})$ in $\mR_i$, the map ${\color{violet} \Phi_{\mw_{i,p},\mw_{i,p+1};z_{i,p,1}}}$ is rescaled to 
    $${\color{violet} \varphi_{\lambda_{i, p}}^*T^*_{-z_{i,p,1}}D_{\sigma(\alpha_{i, p},\lambda_{i, p})}^{-1}. P_{\mathfrak{w}_{i,p},\mathfrak{w}_{i,p+1}}.\Phi_{\mw_{i,p},\mw_{i,p+1};z_{i,p,1}}}$$
    over the purple region in $\mR_{i, p}$, which is $\color{violet} \varphi_{\lambda_{i,p}^{-1}}T_{z_{i,p,1}}(N_{\frac{\epsilon}{10}}(\mC_{i,p})\setminus N_{\frac{\epsilon}{20}}(\mC_{i,p}))$. The purple region is contained in $\mathbb{H}^\circ\setminus B_{\frac{\epsilon}{30}\lambda_{i,p}}(0)$, which is particularly contained in ${\color{pink}\mathbb{H}^\circ\setminus B_{\frac{1}{\epsilon'}}(0)}$.
    By \eqref{e:definition of Phi mv mv' z} and \eqref{eq:scaling relation for TN} this is 
    $${\color{violet}D_{\sigma(\alpha_{i, p},\lambda_{i, p})}^{-1}.\varphi_{\lambda_{i, p}}^*\Phi^{\TN^-_{\alpha_{i, p}}}},$$
    which exactly matches $\color{pink} \Phi^{\lambda_{i, p};\alpha_{i, p}}$ over the overlap.}
	\label{Figure:patching clusters}   
\end{figure}

We may perform the above construction for each cluster $\mC_{i, p}$. Then we repeat this procedure for deeper scales, after passing to further subsequences.   Notice that this process must terminate after finitely many steps, since the number of turning points of $\mR_i$ is uniformly bounded. In particular, in each scale we have $(\epsilon, \delta_i)$ separation for the rescaled rod structure, for a possibly smaller $\epsilon>0$.
 
The upshot is that after passing to a subsequence we have obtained a good model map $\Psi_i$ for each $i$. By construction we know  $\Delta\Psi_i$ is zero except on the union of controlled number of annuli of the form $A_{r, Nr}$ (centered at different points in $\Gamma$) in $\dH^\circ$ for some uniform constant $N>0$. On each annuli we have a bound $|\Delta \Psi_i|\leq cr^{-2}$ for a uniform constant $c>0$ by scaling. We have a simple lemma.

 \begin{lemma}
 	Suppose $f$ is a smooth function on $\R^3$ which is supported in an annulus of the form $A_{r,  Nr}$  on which we have a bound $|f|\leq cr^{-2}$. Then there is a constant $C>0$ depending only on $c$ and $N$, such that on $\R^3$
 	$$(1+r)\cdot |\mathcal N_f|\leq C, $$
    where  $$
 	\mathcal N_f(x)=\int_{\R^3}\frac{f(y)}{|x-y|}dy.$$
 	 \end{lemma}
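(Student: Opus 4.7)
The plan is a straightforward scaling argument followed by classical Newtonian potential estimates: first reduce to a unit-scale problem by rescaling, then estimate the rescaled potential in two regimes (far-field via total mass, near-field via local integrability of the Newton kernel), and finally scale back.

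Concretely, I would set $\widetilde f(y) := r^2 f(ry)$, so that $\widetilde f$ is supported in the fixed annulus $A_{1, N}$ with $\|\widetilde f\|_{L^\infty} \leq c$. A change of variable $z = ry$ in the defining integral gives the scaling identity $\mathcal N_f(\xi) = \mathcal N_{\widetilde f}(\xi/r)$, reducing everything to bounding $\mathcal N_{\widetilde f}$. For this the total mass satisfies $\|\widetilde f\|_{L^1} \leq c \cdot \mathrm{vol}(A_{1, N}) \leq \tfrac{4\pi}{3} c N^3$. Split into cases: for $|x| \geq 2N$, the triangle inequality gives $|x-y| \geq |x|/2$ for every $y$ in the support, so $|\mathcal N_{\widetilde f}(x)| \leq 2\|\widetilde f\|_{L^1}/|x|$; for $|x| < 2N$, local integrability of $|y|^{-1}$ on $\mathbb R^3$ gives
\[
|\mathcal N_{\widetilde f}(x)| \leq c \int_{|x-y|\leq 3N} \frac{dy}{|x-y|} = 2\pi c (3N)^2.
\]
Combining these bounds yields $(1+|x|)\,|\mathcal N_{\widetilde f}(x)| \leq C(c, N)$ on all of $\mathbb R^3$.

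Rescaling back gives $|\mathcal N_f(\xi)| \leq C(c,N)\cdot r/(r+|\xi|)$. Under the standing normalization of the paper (where $\mathfrak s(\mathfrak R_i) \leq 1$, so the relevant annulus scale $r$ is bounded by a constant), one has $r/(r+|\xi|) \leq 1/(1+|\xi|)$, which is the claimed inequality $(1+|\xi|)|\mathcal N_f(\xi)| \leq C(c,N)$. There is no serious obstacle here; the content of the lemma is just the natural scaling of Newtonian potentials once one observes that the source has bounded $L^\infty$ norm and mass after rescaling to unit size, and the bound is then applied in the subsequent argument to control contributions coming from different annuli around each cluster scale.
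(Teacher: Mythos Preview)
Your proof is correct and follows essentially the same scaling argument as the paper: rescale to a unit annulus via $\bar f(x)=r^2 f(rx)$, observe $\mathcal N_{\bar f}(x)=\mathcal N_f(rx)$, and use standard near-field/far-field Newtonian potential bounds on the compactly supported, bounded $\bar f$. You are in fact more careful than the paper in one respect: you correctly note that scaling back only yields $|\mathcal N_f(\xi)|\leq C r/(r+|\xi|)$, so the stated bound $(1+|\xi|)|\mathcal N_f(\xi)|\leq C$ requires the annulus scale $r$ to be bounded by a fixed constant---an assumption the paper leaves implicit but which is indeed satisfied in the application, since all the transition annuli in the construction of $\Psi_i$ lie inside $B_{1/\epsilon}(0)$.
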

 \begin{proof}
 	Denote $\bar f(x)=r^2f(rx)$. Then $\bar f$ is supported on $A_{1, N}$ and $|\bar f|\leq c$. Also $\mathcal N_{\bar f}(x)=\mathcal N_f(rx)$.  It is then easy to estimate that $(1+r)\mathcal N_{\bar f}$ is uniformly bounded on $\R^3$ by a constant $C>0$ depending only on $c$ and $N$. \end{proof}

\begin{proposition}\label{t:bounded distance to model map}
   We have the following uniform pointwise bound for all $i$
\begin{equation}\label{e:uniform bound}\sup_{\dH^\circ} d(\Phi_i, \Psi_i)\leq C(1+r)^{-1}.	
\end{equation}
\end{proposition}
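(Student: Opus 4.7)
The proof follows the maximum principle comparison scheme already used in the proof of Theorem~\ref{t:existence result}: I compare $f_i := d(\Phi_i, \Psi_i)$ to the Newton potential of $|\Delta \Psi_i|$.

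First I would control the Newton potential uniformly. Define
$$F_i(x) := \int_{\R^3} |x - y|^{-1} |\Delta \Psi_i|(y) \, dy.$$
By the construction of $\Psi_i$ in Section~\ref{ss:background map in a fixed scale}, $\Delta \Psi_i$ is supported on a uniformly bounded number (depending only on $N$, on the separation parameter $\epsilon$, and on the uniform bounds in Propositions~\ref{p:background map} and~\ref{p:background map2}) of annuli of the form $A_{r_a, N' r_a}$ centered at points in a bounded region of $\Gamma$, across the various scales produced by the iterated rescaling procedure. On each such annulus we have $|\Delta \Psi_i| \leq c\, r_a^{-2}$, via the uniform bound on $|\Delta \Psi^{\lambda;\alpha}_{\mR_{i,p}}|$ combined with scaling. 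Applying the Newton potential lemma immediately preceding the proposition to each annulus and summing, one obtains
$$F_i(x) \leq C(1 + |x|)^{-1}, \qquad x \in \R^3,$$
with $C$ independent of $i$.

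Next, since $\mathcal{H}$ is negatively curved and $\Phi_i$ is harmonic, the standard Bochner-type computation yields
$$\Delta f_i \geq -|\Delta \Psi_i|$$
pointwise on the smooth open set $\R^3 \setminus \Gamma$. Using the local regularity in Section~\ref{subsec:local regularity} (Proposition~\ref{thm:C11 regularity} across rod interiors and Proposition~\ref{p:smoothness near turning point} across turning points), applied to both $\Phi_i$ and $\Psi_i$, the distance $f_i$ extends continuously across $\Gamma$, and is bounded near each turning point. The inequality $\Delta(f_i - F_i) \geq 0$ then extends weakly across the codimension-two rod set (relying on the matching $C^{1,\alpha}$ regularity of the $(u,v)$-representations of $\Phi_i$ and $\Psi_i$ near each rod interior), and automatically across the finite set of turning points, which has zero capacity in $\R^3$. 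Hence $g_i := f_i - F_i$ is a bounded continuous subharmonic function on $\R^3$.

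Finally, for each fixed $i$, I claim $g_i \to 0$ at infinity. Indeed, outside $B_{1/\epsilon}(0)$, Proposition~\ref{p:background map} gives $\Psi_i = \Phi^{\sharp_i}_{\mv_{i,0}, \mv_{i,n}}$, and the strong tameness of $\Phi_i$ by $\mR_i$ yields $d(\Phi_i, \Phi^{\sharp_i}_{\mv_{i,0}, \mv_{i,n}}) = O(r^{-1})$, while $F_i$ also decays at infinity. Applying the maximum principle to $g_i$ on large balls $B_R \subset \R^3$ and letting $R \to \infty$ gives $g_i \leq 0$ on $\R^3$, so
$$f_i(x) \leq F_i(x) \leq C(1 + |x|)^{-1},$$
with $C$ independent of $i$. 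The main technical obstacle is the weak extension of the subharmonic inequality across the codimension-two singular set $\Gamma$, which relies on the fine regularity of tame harmonic maps established in Section~\ref{sec:harmonic maps}; once this step is in place, the remainder of the argument is the same comparison + maximum principle used in the proof of Theorem~\ref{t:existence result}, with the qualitative (but $i$-dependent) decay $f_i \to 0$ at infinity being sufficient, because the quantitative uniformity in $i$ is supplied entirely by the Newton potential bound on $F_i$.
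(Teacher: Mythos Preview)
Your proposal is correct and takes essentially the same approach as the paper: the paper's proof is a one-liner invoking ``the same arguments as in the proof of Theorem~\ref{t:existence result}, using the above Lemma,'' and you have unpacked exactly this---the Newton-potential lemma controls $F_i$ uniformly in $i$, subharmonicity of $d(\Phi_i,\Psi_i)-F_i$ plus boundedness gives weak extension across the codimension-two set $\Gamma$, and the ($i$-dependent) decay at infinity from strong tameness closes the maximum principle. Your remark that the quantitative uniformity comes entirely from $F_i$ while only qualitative decay of $f_i$ is needed is precisely the point.
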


\begin{proof}
    This follows from the same arguments as in the proof of Theorem \ref{t:existence result}, using the above Lemma. 
\end{proof}

By construction we have a good understanding on the sequence $\Psi_i$ in all scales. The above  bound then leads to a uniform control of $\Phi_i$ in all scales, which allows us to get a \emph{bubble tree} convergence for the family. 

Recall that in the construction we have obtained a limit weak rod structure $\mR_\infty$. Denote by $\mathcal T$ the set of turning points of $\mR_\infty$. Let $\mR'_\infty$ be the natural rod structure associated to $\mR_\infty$ obtained by merging possible adjacent rods sharing the same rod vector, and let $\Phi'_\infty$ be the unique harmonic map which is strongly tamed by $\mR_\infty'$, which is obtained by Theorem \ref{t:existence result}. 
Away from $\mathcal T$, by construction we know $\Psi_i$ converges in $C^{1, \alpha}$ locally over $\dH\setminus \mathcal T$ to a limit map $\Psi_\infty$, which is locally tamed by $\mR_\infty'$. One can check that even though the convergence is only locally over $\dH\setminus \mathcal T$, due to the the explicit model we use in the construction of $\Psi_i$, \eqref{eq:Psii formula}, and Lemma \ref{l:ALF bound away turning points} and \ref{l:TN+ bound away turning points}, the limit map $\Psi_\infty$ is indeed smooth on $\dH^\circ$ and strongly tamed by $\mR_\infty'$, and it is harmonic outside a compact subset of $\dH^\circ$.

By the bound in Proposition \ref{t:bounded distance to model map}, passing to a subsequence we see that $\Phi_i$ also converges, in $C^{1, \alpha}$ locally over $\dH\setminus \mathcal T$, to a limit harmonic map $\Phi_\infty$, which is locally tamed by $\mR_\infty'$ and by Proposition \ref{t:bounded distance to model map} we have 
$$d(\Phi_\infty, \Phi_\infty')\leq C(1+r)^{-1}.$$
It follows that $\Phi_\infty$ is also strongly tamed by $\mR_\infty'$, so the uniqueness statement in Theorem \ref{t:existence result} implies that $\Phi_\infty=\Phi_\infty'$. 

Near a limit turning point $z_{\infty, p}$ of $\mR_{\infty}'$, as before after passing to a subsequence the rescaled sequence $\mR_{i, p}$ converges to a limit  weak, untyped rod structure $\mR_{\infty, p}$. Denote by $\mathcal T_{p}$ the set of turning points of $\mR_{\infty, p}$. Again we get an untyped rod structure $\mR_{\infty, p}'$ naturally associated to $\mR_{\infty, p}$. Furthermore passing to a further subsequence we may assume the following to hold
\begin{itemize}
\item Either $\alpha_{i, p}\geq 1$ for all $i$, or $\alpha_{i, p}<1$ for all $i$; 
\item There is a limit $\alpha_{i, p}\rightarrow\alpha_{\infty, p}\in [0, \infty]$ as $i\rightarrow\infty$;
\item If $\alpha_{i, p}<1$ for all $i$, then as $i\rightarrow\infty$, there are limits  $$\alpha_{i,p}\lambda_{i, p}\rightarrow \kappa_{\infty, p}\in [0, \infty];$$ 
\begin{equation}\label{eqn:definition of sigma ip}\sigma_{i,p}:=\alpha_{i, p}\sigma(\alpha_{i, p}, \lambda_{i, p})^2\rightarrow\sigma_{\infty, p}\in [0, \infty].
\end{equation}
\item If $\alpha_{i, p}\geq 1$ for all $i$,   then as $i\rightarrow\infty$, there are limits  
$$\alpha_{i, p}^{-1}\lambda_{i, p}\rightarrow \kappa_{\infty, p}\in [0, \infty];
$$
$$\sigma_{i,p}:=\alpha_{i, p}\sigma(4/\alpha_{i, p}, \lambda_{i, p})^{-2}\rightarrow\sigma_{\infty, p}\in [0, \infty].$$ 
\end{itemize}
Denote
\begin{equation}
  \Psi_{i, p}:=\begin{cases}
      \varphi_{\lambda_{i, p}}^*T_{-z_{i,p,1}}^* D_{\sigma(\alpha_{i, p}, \lambda_{i, p})}^{-1}P_{\mw_{i,p-1},\mw_{i,p}}.\Psi_{i}, & \text{if }\alpha_{i,p}<1;\\
      \varphi_{\lambda_{i, p}}^*T_{-z_{i,p,1}}^*D_{\sigma(4/\alpha_{i,p},\lambda_{i,p})} P_{\mw_{i,p-1},\mw_{i,p}}.\Psi_i, & \text{if }\alpha_{i,p}\geq1.
  \end{cases}  
\end{equation}
Then $\Psi_{i, p}$ is locally tamed by $\mR_{i, p}$.
There are five cases:
\begin{enumerate}
    \item $\sigma_{\infty, p}=0$. By \eqref{e:bound on sigma2} in this case we must have $\alpha_{i, p}<1$. By \eqref{e: rod vectors two cases}, the two infinite rod vectors coincide. Then we endow $\mR'_{\infty, p}$ with Asymptotic Type $\AF_0$. In this case $\kappa_{\infty,p}=0$.
    \item $\sigma_{\infty, p}=\infty$. By \eqref{e:bound on sigma} in this case we must have $\alpha_{i, p}\geq 1$. Again by \eqref{e: rod vectors two cases}, the two infinite rod vectors coincide. Then we endow $\mR'_{\infty, p}$ with Asymptotic Type $\AF_0$. In this case $\kappa_{\infty,p}=0$.
    \item $\sigma_{\infty, p}\in (0, \infty)$ and $\kappa_{\infty, p}<\infty$ and $\alpha_{i, p}<1$. By \eqref{e:definition of sigma} in this case we must have $\kappa_{\infty,p}>0$. Notice by construction $\lambda_{i, p}\rightarrow\infty$ so $\alpha_{\infty, p}=0$.  In this case  the two infinite rod vectors are different. Then we endow $\mR'_{\infty, p}$ with Asymptotic Type $\ALF^-$. 
     \item $\sigma_{\infty, p}\in (0, \infty)$ and $\kappa_{\infty, p}<\infty$ and $\alpha_{i, p}\geq 1$. By \eqref{e:definition of sigma} in this case we must have $\kappa_{\infty,p}>0$. Notice by construction $\lambda_{i, p}\rightarrow\infty$ so $\alpha_{\infty, p}=\infty$.  In this case  the two infinite rod vectors are different. Then we endow $\mR'_{\infty, p}$ with Asymptotic Type $\ALF^+$.  
    \item $\sigma_{\infty, p}\in (0, \infty)$,  $\kappa_{\infty, p}=\infty$.  In this case the two infinite rod vectors are different. Then we endow $\mR'_{\infty, p}$ with  Asymptotic Type  $\Ae$.  
       
\end{enumerate}
  In each case, Using Lemma \ref{l:ALF bound away turning points}, Lemma \ref{l:TN rescale upsilon}, and the construction in Proposition \ref{p:background map2}, one can check that $\Psi_{i, p}$ converges in $C^{1, \alpha}$, locally over $\dH\setminus \mathcal T_{p}$, to a smooth limit map $\Psi_{\infty, p}$. As before, an explicit computation show that $\Psi_{\infty, p}$  is globally tamed by $\mR'_{\infty, p}$.
  
By Proposition \ref{t:bounded distance to model map}, we see that after passing to a subsequence 
\begin{equation}\label{eq:harmonic map in the scale of mC}
   \Phi_{i,p}:=\begin{cases}
       \varphi_{\lambda_{i, p}}^*T_{-z_{i,p,1}}^* D_{\sigma(\alpha_{i, p}, \lambda_{i, p})}^{-1}P_{\mw_{i,p-1},\mw_{i,p}}.\Phi_{i}, &\text{if }\alpha_{i,p}<1;\\
       \varphi_{\lambda_{i, p}}^*T_{-z_{i,p,1}}^* D_{\sigma(4/\alpha_{i, p}, \lambda_{i, p})}P_{\mw_{i,p-1},\mw_{i,p}}.\Phi_{i}, &\text{if }\alpha_{i,p}\geq1,
   \end{cases}  
\end{equation}
converges in $C^{1, \alpha}$ locally over $\dH\setminus\mathcal T_p$, to a limit harmonic map $\Phi_{\infty, p}$  with
$$\sup_{\dH^\circ} d(\Phi_{\infty, p}, \Psi_{\infty, p})\leq C.$$
In particular,  $\Phi_{\infty, p}$ is globally tamed by $\mR'_{\infty, p}$. If $\mR'_{\infty, p}$ is of  Asymptotic Type  $\Ae$, then by Proposition \ref{p:AE rigidity}, we know that for some $Q\in SL(2; \R)$, $Q.\Phi_{\infty, p}$ is strongly tamed by $Q.\mR'_{\infty, p}$ (where we assign the Asymptotic Type to be $\Ae$). In the other cases we have similar statements (see Proposition \ref{p:ALF/AF rigidity}), but we do not need them until Section \ref{sec:discussion}.  See also Remark \ref{rem:TN AF rigidity}.

One can continue this process and obtain a bound on all the rescaled limits of $\Phi_i$ in terms of the explicit limits of the model map $\Psi_i$. We call any one of such rescaled limits of $\Phi_i$ a \emph{bubble limit}.
By the above discussion we know that a bubble limit is globally tamed by some rod structure. Passing to a subsequence we then obtain a bubble tree structure for the sequence $\Phi_i$.

\subsection{Limiting behavior of the cone angles}
\label{ss:limiting behavior of the cone angles}

Fix  $n\geq 2$ and an Asymptotic Type $\sharp\in \{\Ae, \ALFm,\ALFp, \AFa(\beta\in \R)\}$. 
Consider the space $\mathcal S(n, \sharp)$ of all rod structures $\mR$ which are of  Asymptotic Type  $\sharp$ and with $n$ turning points. It can be naturally identified with a subset of $\R_{+}^{n}\times (\R\dP^1)^{n+1}$ so has the induced topology.

For each $\mR\in \mathcal S(n, \sharp)$, by Theorem \ref{t:existence result} and Proposition \ref{prop:normalizing augmentation} there is a unique augmented harmonic map $(\Phi_{\mR}, \nu_{\mR})$ which  is strongly tamed by $\mR$ and is  such that $\nu_{\mR}$ is normalized. From the discussion in Section \ref{ss:uniform control of tame harmonic maps} (see also Remark \ref{rem:continuity of cone angles})  we know that for each fixed $j\in\{0, \cdots, n\}$, the normalized rod vector $\overline{\mv}_j$ (as an element in $\R^2/\{\pm 1\}$) depends continuously on $\mR$. Recall that there is a canonical lattice $\Lambda_{\mR}$ given in Definition \ref{def:canonical enhancement}  that ensures the toric Ricci-flat end is smooth without conical singularities.

From now on we further fix $n+1$ elements  $\mv_0, \cdots, \mv_n\in \R\dP^1$ with $\mv_{j}\neq \mv_{j+1}$, $j=0, \cdots, n-1$. Consider the space $\mathcal S(\sharp| \mv_0, \cdots, \mv_n)$  of all enhanced rod structures $(\mR, \Lambda_{\mR})$ such that $\mR\in\mathcal S(n, \sharp)$, with rod vectors given by $\mv_0, \cdots, \mv_n$ and with enhancement  given by the canonical lattice $\Lambda_{\mR}$. Since $\mv_0,\ldots,\mv_n$ are fixed, it follows that  $\Lambda_{\mR}$ is isomorphic to a fixed lattice $\Lambda$. Then $\mathcal S(\sharp| \mv_0, \cdots, \mv_n)$ is parametrized by the positions of the turning points. For $j\in \{0, \cdots, n\}$ we then obtain the cone angle function $$\vartheta_j:=\vartheta_{\Phi_{\mR}, \nu_{\mR}}(\mI_j).$$ It can be viewed as a continuous function of  $z_2, \cdots, z_n$ (recall that $z_1=0$ by definition), or equivalently of the rod lengths $l_j=z_{j+1}-z_{j}$, due to our considerations in Section \ref{ss:uniform control of tame harmonic maps}. By assumption we have $$\vartheta_0=\vartheta_n\equiv 2\pi.$$ For our applications in this paper we are interested in the behavior of $\vartheta_j$ when some of  the turning points come together or diverge to infinity. The main complication lies in the fact when we rescale the rod structure and the harmonic map to obtain the bubble limits, we need to apply various twists (see the $D_{\sigma(\alpha_{i, p}, \lambda_{i, p})}^{-1}P_{\mw_{i,p-1},\mw_{i,p}}$ introduced in \eqref{eq:harmonic map in the scale of mC}). 

\subsubsection{Turning points coming together}\label{sss:limit of cone angles when turning points come together}
Suppose now we have a sequence of enhanced rod structures $(\mR_i, \Lambda)$ in $\mathcal S(\sharp| \mv_0, \cdots, \mv_n)$ with $\ms(\mR_i)$ uniformly bounded and a fixed lattice $\Lambda$. Suppose we have a limit weak untyped rod structure $\mR_\infty$ as $i\rightarrow\infty$. Denote by $\mathcal T$ the set of turning points of $\mR_\infty$, and let $\mR_\infty'$ be the natural untyped rod structure associated to $\mR_\infty$. We also endow $\mR_\infty'$ with the enhancement $\Lambda$ and the Asymptotic Type $\sharp$. Notice that the infinite rod vectors of $\mR_{\infty}'$ are again given by $\mv_0$ and $\mv_n.$

Let $(\Phi_i, \nu_i)$ be the unique augmented harmonic map which is strongly tamed by $\mR_i$ such that $\nu_i$ is normalized, and let $(\Phi_\infty, \nu_\infty)$ be the unique augmented harmonic map which is strongly tamed by $\mR_\infty'$ such that $\nu_\infty$ is normalized. Any rod $\mI$ of $\mR_\infty$ is the limit of rods $\mI^i_j$ of $\mR_i$ for some $j\in\{0, \cdots, n\}$. 
The discussion in Section \ref{ss:uniform control of tame harmonic maps} implies that $\Phi_i$ converges in $C^{1, \alpha}$ locally over $\dH^\circ\setminus \mathcal T$ to $\Phi_\infty$. Since both $\nu_i$ and $\nu_\infty$ are normalized, we also have the local convergence of $\nu_i$ to $\nu_\infty$ on $\dH^\circ$. Moreover, the normalized rod vector $\overline {\mv}_j^i$ for the rod $\mI_j^i$ determined by $(\Phi_i, \nu_i)$  converges to the normalized rod vector for the rod $\mI_\infty$ determined by $(\Phi_\infty, \nu_\infty)$, hence we have 
\begin{equation}\label{e:angle continuity}
    \lim_{i\rightarrow\infty}\vartheta_{\Phi_i, \nu_i}(\mI_j^i)=\vartheta_{\Phi_\infty, \nu_\infty}(\mI).
\end{equation}
Since $\Phi_\infty$ is strongly tamed by $\mR_\infty'$, it follows that if the limits of $\mI_{j}^i$ and $\mI_{k}^i$ get merged into the same rod of $\mR_\infty'$, then we have
\begin{equation}\lim_{i\rightarrow\infty}\vartheta_{\Phi_i, \nu_i}(\mI_j^i)=\lim_{i\rightarrow\infty}\vartheta_{\Phi_i, \nu_i}(\mI_k^i).
\end{equation}

Now consider a turning point $z_{\infty, p}$ of $\mR_{\infty}$ with $m_p\geq 2$. For $i$ large there is a corresponding cluster $\mC_{i,p}=\{z_{i, p, 1}, \cdots,z_{i,p,n_p}\}$. We rescale  $\mR_{i}$ around $z_{i, p,1}$ to obtain rod structures $\mR_{{i, p}}$ (see \eqref{eq:def R mC_p}). The latter converges to a limit weak rod structure $\mR_{\infty, p}$, which then induces an associated rod structure $\mR_{\infty, p}'$. The maps $\Phi_{i, p}$ as given in \eqref{eq:harmonic map in the scale of mC} converge in $C^{1, \alpha}$ locally over $\dH^\circ\setminus \mathcal T_{p}$ to the first bubble  limit $\Phi_{\infty, p}$ at $z_{\infty, p}$. 
We know the latter is globally tamed by the rod structure $\mR_{\infty, p}'$. Notice that for the bubble limits, in general we  do not have naturally associated enhancements, due to the fact that the fixed lattice $\Lambda$ may get distorted in the rescaling process. 

Denote the rescaled augmentation 
\begin{equation}\label{eq:define tilde nu}
    \widetilde\nu_i:=\varphi_{\lambda_i}^*T_{-{z}_{i, p,1}}^*\nu_{i}.
\end{equation} 
There are constants $c_i$ such that $\nu_{i,p}:=\widetilde\nu_i+c_i$ converges smoothly to a limit augmentation $\nu_{\infty, p}$ for $\Phi_{\infty, p}$  locally on $\dH^\circ$, by the convergence of $\Phi_{i,p}$. Suppose we have a sequence of rods $\mI^i_j$ of $\mR_i$ which converges to a limit rod $\mI$ in $\mR_{\infty, p}$. Denote by $\mI^{i'}_j$ the corresponding rod in $\mR_{i, p}$ and $\mv_j'$ the corresponding rod vector. Let $\overline{\mv}_j^i\in \mv_j$ the normalized rod vector with respect to $(\Phi_i, \nu_i)$, and denote by $\overline{\mv}_{j}^{i'}\in \mv_{j}'$ the normalized rod vector with respect to $(\Phi_{i,p},\widetilde{\nu}_i+c_i)$. The above discussion implies that $\overline{\mv}_j^{i'}$ converges to the normalized rod vector $\overline{\mv}_\infty$ along the rod $\mI$ with respect to $(\Phi_{\infty, p}, \nu_{\infty, p})$.

\begin{lemma}
   In the above setting $\mR_{\infty, p}'$ is either of  Asymptotic Type  $\Ae$ or of Asymptotic Type $\AF_0$.
\end{lemma}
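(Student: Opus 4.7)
The plan is to rule out cases (3) and (4) in the five-way classification that appears in Section \ref{ss:uniform control of tame harmonic maps}, leaving only cases (1), (2), and (5), which correspond to Asymptotic Types $\AF_0$ or $\Ae$. The entire argument rests on one rigidity observation: since $\mR_i \in \mathcal{S}(\sharp \mid \mv_0,\ldots,\mv_n)$ has its rod vectors fixed equal to the prescribed tuple $(\mv_0,\ldots,\mv_n)$, the bordering rod vectors $\mw_{i,p-1}, \mw_{i,p}$ of the cluster $\mC_{i,p}$ are independent of $i$, so the wedge
\[
\alpha_{i,p} = \mw_{i,p-1} \wedge \mw_{i,p}
\]
equals a constant $\alpha_p \in [0,\infty)$ depending only on $p$.

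Next I would split into two cases according to whether $\alpha_p = 0$ or $\alpha_p > 0$. If $\alpha_p = 0$ (which occurs exactly when the two bordering rod vectors of $\mC_{i,p}$ coincide), then $\alpha_{i,p}\lambda_{i,p} \equiv 0 < \tfrac14$ for every $i$, so by the piecewise definition \eqref{e:definition of sigma} we have $\sigma(\alpha_{i,p},\lambda_{i,p}) = \sqrt{\lambda_{i,p}}$, and therefore $\sigma_{i,p} = \alpha_p \sigma(\alpha_p,\lambda_{i,p})^2 \equiv 0$. This forces $\sigma_{\infty,p} = 0$, placing us in case (1) and giving Asymptotic Type $\AF_0$.

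If instead $\alpha_p > 0$, then because the cluster $\mC_{i,p}$ collapses we have $\lambda_{i,p} \to \infty$, hence $\alpha_p \lambda_{i,p}\to \infty$ and $(4/\alpha_p)\lambda_{i,p}\to\infty$. A direct computation from \eqref{e:definition of sigma} then shows that for large $i$:
\begin{itemize}
    \item if $\alpha_p < 1$, $\sigma(\alpha_p,\lambda_{i,p}) = 1/\sqrt{\alpha_p}$, giving $\sigma_{i,p} = 1$ and $\kappa_{\infty,p} = \alpha_p \cdot \infty = \infty$;
    \item if $\alpha_p \geq 1$, $\sigma(4/\alpha_p,\lambda_{i,p}) = \sqrt{\alpha_p}/2$, giving $\sigma_{i,p} = 4$ and $\kappa_{\infty,p} = \alpha_p^{-1}\cdot\infty = \infty$.
\end{itemize}
In either sub-case, $\sigma_{\infty,p} \in (0,\infty)$ and $\kappa_{\infty,p} = \infty$, which is exactly case (5) of the classification and yields Asymptotic Type $\Ae$.

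Cases (3) and (4) are excluded automatically: they require $\kappa_{\infty,p} < \infty$ combined with $\alpha_{\infty,p} \in \{0, \infty\}$, but our $\alpha_{i,p} \equiv \alpha_p$ is a fixed number in $[0, \infty)$ and the divergence $\lambda_{i,p}\to\infty$ forces $\kappa_{\infty,p} = \infty$ whenever $\alpha_p > 0$. The argument is pure bookkeeping once the constancy of $\alpha_{i,p}$ is recognized; there is no substantive obstacle.
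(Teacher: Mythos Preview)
Your proof is correct and follows essentially the same approach as the paper's own proof: both hinge on the observation that fixing the rod vectors $\mv_0,\ldots,\mv_n$ forces $\alpha_{i,p}$ to be constant in $i$, after which the dichotomy $\alpha_p=0$ versus $\alpha_p>0$ (together with $\lambda_{i,p}\to\infty$) immediately sorts the bubble into case (1) or case (5). Your version is simply more explicit in verifying the values of $\sigma_{i,p}$ and $\kappa_{\infty,p}$ from \eqref{e:definition of sigma}, whereas the paper states the conclusion directly.
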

\begin{proof}
    By definition by passing to a subsequence we know that in our setting $\alpha_{i, p}$ is independent of $i$, since $\mv_0,\ldots,\mv_n$ are fixed. If $\alpha_{i, p}\equiv0$, then it follows that $\sigma_{\infty, p}=0$ and $\mR_{\infty, p}'$ is of Asymptotic Type $\AF_0$. If $\alpha_{i, p}\equiv \alpha>0$, then since $\lambda_{i, p}\rightarrow\infty$, we must have $\kappa_{\infty ,p}=\infty$ and $\mR_{\infty, p}'$ is then of  Asymptotic Type  $\Ae$.
\end{proof}

Now  we first assume $\mR'_{\infty,p}$ is of  Asymptotic Type  $\Ae$, then by Proposition \ref{p:AE rigidity} we know  that $Q.\Phi_{\infty, p}$ is strongly tamed by $Q.\mR_{\infty, p}'$ for a matrix $Q\in SL(2; \R)$. Moreover, by our analysis in Section \ref{ss:uniform control of tame harmonic maps} in this case we have $\sigma_{\infty, p}\in (0, \infty)$. Indeed, since we have fixed the rod vectors $\mv_0,\ldots,\mv_n$, $\sigma_{i, p}$ is independent of $i$ for $i$ large. 

Let $\mI$  be an infinite rod of $\mR_{\infty, p}'$. We know by construction that before rescaling the normalized rod vector $\overline{\mv}_j^i$ converges to a normalized rod vector in $\mR_\infty$. So we have 
$$C^{-1}\leq |\overline{\mv}_j^i|\leq C.$$
It is straightforward to see that 
\begin{equation}\label{eq:AE normalized rod vector after rescaling}
    \overline{\mv}_j^{i'}=\begin{cases}
    e^{c_i}\sqrt{\lambda_{i,p}}D_{\sigma(\alpha_{i, p}, \lambda_{i, p})}^{-1}P_{\mw_{i,p-1},\mw_{i,p}}\overline{\mv}_j^i, &\text{if } \ \alpha_{i,p}<1;\\
    e^{c_i}\sqrt{\lambda_{i,p}}D_{\sigma(4/\alpha_{i, p}, \lambda_{i, p})}P_{\mw_{i,p-1},\mw_{i,p}}\overline{\mv}_j^i, &\text{if} \ \alpha_{i,p}\geq1.
\end{cases}
\end{equation}
It follows from the uniform bound on $\sigma_{i, p}$ and the convergence of $\overline{\mv}_j^{i'}$ that $e^{c_i}$ is uniformly comparable to $1/\sqrt{\lambda_{i,p}}$, hence can be chosen such that $$e^{c_i}\equiv1/\sqrt{\lambda_{i,p}}.$$ Moreover, we let $\Lambda_{i, p}$ be the enhancement for $\mR_{i, p}$ defined by 
\begin{equation}\label{eq:AE choice of lattice after rescaling}
    \Lambda_{i, p}=\begin{cases}
       D_{\sigma(\alpha_{i, p}, \lambda_{i, p})}^{-1}P_{\mw_{i,p-1},\mw_{i,p}}.\Lambda_{\mR}, &\text{if }\alpha_{i,p}<1;\\
        D_{\sigma(4/\alpha_{i, p}, \lambda_{i, p})}P_{\mw_{i,p-1},\mw_{i,p}}.\Lambda_{\mR}, &\text{if }\alpha_{i,p}\geq1.
       \end{cases}
\end{equation}
Now for any $\mI$ in $\mR_{\infty,p}$, because of \eqref{eq:AE normalized rod vector after rescaling}, $e^{c_i}\equiv1/\sqrt{\lambda_{i,p}}$, and our particular choice of lattice \eqref{eq:AE choice of lattice after rescaling}, we have
\begin{equation}
    \vartheta_{\Phi_i, \nu_i}(\mI^i_j)=\vartheta_{\Phi_{i, p}, \nu_{i, p}}(\mI^{i'}_j).
\end{equation}
Note that since the rod vectors $\mv_0,\ldots,\mv_n$ are fixed, the above $\Lambda_{i,p}$ does not depend on $i$ if $i$ is large. Passing to a subsequence there is a limit enhancement $\Lambda_{\infty, p}$  for $\mR_{\infty, p}'$.  Let $\bbv_j\in \mv_j$ be the primitive rod vector in $\Lambda$, then it naturally induces a primitive rod vector $\bbv_j^{i'}$ along the rod $\mI^{i'}_j$ in $\Lambda_{i,p}$, which then converges to a primitive rod vector $\bbv_\infty$ along the rod $\mI$ in $\Lambda_{\infty,p}$. It follows that
\begin{equation}\label{eqn:AE convergence of cone angles}
    \lim_{i\to\infty}\vartheta_{\Phi_i, \nu_i}(\mI^i_j)=\vartheta_{\Phi_{\infty, p}, \nu_{\infty, p}}(\mI).
\end{equation}

If  $\mR'_{\infty, p}$ has 2 turning points, then  by Proposition \ref{p:AE rigidity} and Lemma \ref{lem:uniquness of EH}, without loss of generality, up to modification by some $Q\in SL(2;\R)$, we may assume $\mR_{\infty, p}'=\mR_{\EH; n_1, n_2}$, and $\Phi_{\infty,p}'=\Phi_{\EH;n_1, n_2}$ for some $n_1, n_2$.    Denote the 3 rods of $\mR'_{\infty, p}$ by $\mI^\infty_0, \mI^\infty_1, \mI^\infty_2$. They come as the limits of 3 rods $\mI_{j_0}^i, \mI_{j_1}^i, \mI_{j_2}^i$ of $\mR_i$. We may find primitive rod vectors $\bbv_{j_0}, \bbv_{j_1}, \bbv_{j_2}$ in $\Lambda$ along these rods respectively, such that 
\begin{equation}\label{e:primitive vector relations}
    \bbv_{j_1}=\gamma_0\bbv_{j_0}+\gamma_2\bbv_{j_2}
\end{equation}
for some $\gamma_0, \gamma_2>0$. Then by \eqref{e:angle equality} we have 
\begin{equation}
\vartheta_{\Phi_{\infty,p}, \nu_{\infty, p}}(\mI^\infty_1)=\gamma_0\vartheta_{\Phi_{\infty,p}, \nu_{\infty, p}}(\mI^\infty_0)+\gamma_2\vartheta_{\Phi_{\infty,p}, \nu_{\infty, p}}(\mI^\infty_2)
\end{equation}
Hence,
\begin{equation}\label{e:angle equality before limit}
    \lim_{i\rightarrow\infty}\frac{\vartheta_{\Phi_i, \nu_i}(\mI_{j_1}^i)}{\gamma_0\vartheta_{\Phi_i, \nu_i}(\mI_{j_0}^i)+\gamma_2\vartheta_{\Phi_i, \nu_i}(\mI_{j_2}^i)}=1.
\end{equation}

If $\mR'_{\infty, p}$ has more than 2 turning points, then we do not have as explicit information on the angle function as above, but we always  have a \emph{comparison property}. Namely, suppose $\mv_{\infty, 0}, \cdots, \mv_{\infty, m}$ are the rod vectors of $\mR_{\infty, p}$ and suppose furthermore that the 
limit lattice $\Lambda_{\infty,p}$ 
\begin{itemize}
    \item is generated by primitive vectors $\bbv_{0}\in\mv_{\infty,0},\bbv_{m}\in\mv_{\infty,m}$;
    \item gives a smooth enhancement for $\mR_{\infty, p}'$.
\end{itemize}
Notice that the first assumption can always be achieved by passing to a coarser lattice before taking limit. 
Under the above assumptions, by the the above discussion and Proposition \ref{p:angle comparison} we get
\begin{proposition}\label{p:angle comparision before limit}
    For $i$ sufficiently large, suppose $\mI_{0}^i$, $\mI_{1}^i, \mI_{m-1}^i, \mI_{m}^i$ are the rods of $\mR_i$ that converge to the limit rods in $\mR_{\infty,p}$ with rod vector $\mv_{\infty,0},\mv_{\infty, 1}, \mv_{\infty, m-1}, \mv_{\infty,m}$ respectively.
   Then 
    \begin{align}\label{e:angle comparision before limit AE}
         \lim_{i\rightarrow\infty}\vartheta_{\Phi_i, \nu_i}(\mI_1^i)\geq \lim_{i\to\infty}\vartheta_{\Phi_i, \nu_i}(\mI_m^i), &&
         \lim_{i\rightarrow\infty}\vartheta_{\Phi_i, \nu_i}(\mI_{m-1}^i)\geq 
         \lim_{i\to\infty}\vartheta_{\Phi_i, \nu_i}(\mI_0^i).
   \end{align}
\end{proposition}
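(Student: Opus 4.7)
The plan is to transfer the Bishop--Gromov-type angle comparison in Proposition \ref{p:angle comparison} from the bubble limit to the original sequence, using the convergence of cone angles \eqref{eqn:AE convergence of cone angles} that was already established just above the statement.

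First, I will invoke the rescaling set up in Section \ref{sss:limit of cone angles when turning points come together}. The cluster $\mathfrak{C}_{i,p}$ yields a rescaled rod structure $\mathfrak{R}_{i,p}$ (equation \eqref{eq:def R mC_p}) and a rescaled augmented harmonic map $(\Phi_{i,p}, \nu_{i,p})$ with $e^{c_i}=1/\sqrt{\lambda_{i,p}}$, together with the enhancement $\Lambda_{i,p}$ of \eqref{eq:AE choice of lattice after rescaling}. With these normalizations, identity \eqref{eq:AE normalized rod vector after rescaling} makes the cone angles match in the two scales, and $\Lambda_{i,p}$ is independent of $i$ for $i$ large and converges to $\Lambda_{\infty,p}$. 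The hypothesis that $\Lambda_{\infty,p}$ is a 2-dimensional lattice generated by primitive vectors along $\mathfrak{v}_{\infty,0}$ and $\mathfrak{v}_{\infty,m}$ forces $\mathfrak{v}_{\infty,0}\neq\mathfrak{v}_{\infty,m}$, so by the dichotomy in Section \ref{sss:limit of cone angles when turning points come together} the rod structure $\mathfrak{R}'_{\infty,p}$ is of Asymptotic Type $\Ae$.

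Next I will apply Proposition \ref{p:AE rigidity} to promote the bubble limit $(\Phi_{\infty,p},\nu_{\infty,p})$, which is a priori only globally tamed, to being strongly tamed by $Q.\mathfrak{R}'_{\infty,p}$ for some $Q\in SL(2;\R)$ (after the corresponding $Q$-action on $\Phi_{\infty,p}$). Since cone angles, by their definition, are invariant under the simultaneous $SL(2;\R)$ action on the harmonic map and on the primitive rod vectors, I can apply Proposition \ref{p:angle comparison} directly to $(Q.\Phi_{\infty,p},\nu_{\infty,p})$ with the smooth enhancement $Q.\Lambda_{\infty,p}$. This yields
\[
\vartheta_{\Phi_{\infty,p},\nu_{\infty,p}}(\mathfrak{I}^\infty_1)\geq \vartheta_{\Phi_{\infty,p},\nu_{\infty,p}}(\mathfrak{I}^\infty_m), \qquad \vartheta_{\Phi_{\infty,p},\nu_{\infty,p}}(\mathfrak{I}^\infty_{m-1})\geq \vartheta_{\Phi_{\infty,p},\nu_{\infty,p}}(\mathfrak{I}^\infty_0),
\]
where $\mathfrak{I}^\infty_j$ denotes the limit rod in $\mathfrak{R}'_{\infty,p}$ with rod vector $\mathfrak{v}_{\infty,j}$. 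Combining this with \eqref{eqn:AE convergence of cone angles}, which says $\lim_i \vartheta_{\Phi_i,\nu_i}(\mathfrak{I}^i_j)=\vartheta_{\Phi_{\infty,p},\nu_{\infty,p}}(\mathfrak{I}^\infty_j)$ for each of the four rods in the statement, gives \eqref{e:angle comparision before limit AE}.

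I do not expect any serious technical obstacle here: essentially all nontrivial work has been front-loaded into the bubble-tree analysis of Section \ref{ss:uniform control of tame harmonic maps}, the rigidity result Proposition \ref{p:AE rigidity}, and the comparison Proposition \ref{p:angle comparison}. The one bookkeeping point requiring care is checking that the specific normalizations $e^{c_i}=1/\sqrt{\lambda_{i,p}}$ and the enhancement choice \eqref{eq:AE choice of lattice after rescaling} --- which were selected precisely to guarantee $\vartheta_{\Phi_i,\nu_i}(\mathfrak{I}^i_j)=\vartheta_{\Phi_{i,p},\nu_{i,p}}(\mathfrak{I}^{i'}_j)$ --- pass to the limit so that $\Lambda_{\infty,p}$ qualifies as a smooth enhancement of $\mathfrak{R}'_{\infty,p}$ of the type required by Proposition \ref{p:angle comparison}. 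This is precisely what the hypotheses of the proposition guarantee.
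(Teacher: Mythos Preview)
Your proposal is correct and follows the same route as the paper: the paper presents this proposition as an immediate consequence of ``the above discussion and Proposition \ref{p:angle comparison}'', and you have correctly unpacked what that means---namely, the cone-angle convergence \eqref{eqn:AE convergence of cone angles} in the $\Ae$ bubble scale (enabled by the rigidity Proposition \ref{p:AE rigidity}) combined with the Bishop--Gromov comparison Proposition \ref{p:angle comparison} applied to $(\Phi_{\infty,p},\nu_{\infty,p})$ with the smooth enhancement $\Lambda_{\infty,p}$.
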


\

Now we assume $\mR_{\infty, p}'$ is of Asymptotic Type $\AF_0$, then we must have 
$$\alpha_{i, p}\equiv0 \quad \text{and}\quad \sigma(\alpha_{i, p}, \lambda_{i, p})=\sqrt{\lambda_{i, p}}.$$   Let $\mI$ be a finite rod of $\mR_{\infty, p}$, then it arises as the rescaled limit of the  corresponding rod $\mI^i_j$ in $\mR_i$ for some $j\in \{0, \cdots, n\}$. The corresponding rod vector $\mv_j$ is independent of $i$.
Let ${\mI}^i_{l}$ be the rod  of $\mR_i$ that converges to an infinite rod $\widetilde{\mI}$ in $\mR_{\infty, p}'$. Denote by $\mv$ the rod vector along  $\widetilde{\mI}$.
\begin{proposition} \label{p:cone angle AF rod length go to zero}
	If $\mv_j\neq\mv$, then $$\lim_{i\rightarrow\infty}\vartheta_{\Phi_i, \nu_i}(\mI_j^i)=\infty.$$
\end{proposition}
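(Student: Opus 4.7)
The plan is to translate the question about the cone angle $\vartheta_{\Phi_i,\nu_i}(\mI_j^i)$ into an analysis of the normalized rod vector $\overline{\mv}_j^i$ using the relation $\vartheta_{\Phi_i,\nu_i}(\mI_j^i)=2\pi|\bbv_j|/|\overline{\mv}_j^i|$, where $\bbv_j$ is the primitive rod vector along $\mv_j$ in the fixed enhancement $\Lambda$. Since the bubble limit $\mR'_{\infty,p}$ is of Asymptotic Type $\AF_0$, we must have $\alpha_{i,p}\equiv 0$, hence $\mw_{i,p-1}=\mw_{i,p}=\mv$ and $\sigma(\alpha_{i,p},\lambda_{i,p})=\sqrt{\lambda_{i,p}}$. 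The rescaling rule \eqref{eq:AE normalized rod vector after rescaling} (whose derivation used nothing specific to the $\Ae$ case) gives
\[
\overline{\mv}_j^{i}=\frac{1}{e^{c_i}\sqrt{\lambda_{i,p}}}\,P_{\mv}^{-1}D_{\sqrt{\lambda_{i,p}}}\,\overline{\mv}_j^{i'}.
\]

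Next I would analyze the asymptotic size of the right-hand side. The assumption $\mv_j\neq \mv$ means that if we write $P_\mv \hat v_j=(a_0,b_0)^{\top}$ for a unit vector $\hat v_j$ along $\mv_j$, then $b_0\neq 0$. Under the rescaled rod structure $\mR_{i,p}$, the rod vector along $\mI_j^{i'}$ becomes $[(a_0:b_0\lambda_{i,p})^\top]$, which converges to $[(0:1)^\top]$ as $\lambda_{i,p}\to\infty$. Writing $\overline{\mv}_j^{i'}=\mu_j^i\hat w_j^i$ for the unit direction $\hat w_j^i$ of this rod, a direct computation gives
\[
|D_{\sqrt{\lambda_{i,p}}}\hat w_j^i|^2=\frac{\lambda_{i,p}(a_0^2+b_0^2)}{a_0^2+b_0^2\lambda_{i,p}^2}\sim \frac{1}{b_0^2\lambda_{i,p}}\qquad \text{as }i\to\infty.
\]
Combined with the orthogonality of $P_\mv$ and the above formula, this will yield $|\overline{\mv}_j^i|\sim C/\lambda_{i,p}$, once we have suitable bounds on the remaining two ingredients below. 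From this the desired divergence $\vartheta_{\Phi_i,\nu_i}(\mI_j^i)\to\infty$ (at rate $\lambda_{i,p}$) would follow immediately.

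The two ingredients I would need to nail down are:
\textbf{(i)} the limit $\mu_j^\infty:=\lim_i\mu_j^i$ is strictly positive, i.e.\ the normalized rod vector does not vanish in the bubble limit; and
\textbf{(ii)} the sequence $e^{c_i}$ is bounded as $i\to\infty$.
For \textbf{(i)}, the bubble limit $\Phi_{\infty,p}$ is strongly tamed by $\mR'_{\infty,p}$ (of Type $\AF_0$, with one finite rod along $\mv^\perp$, corresponding to the merging of all the cluster's finite rods), so along this limit rod the normalized rod vector with respect to the normalized augmentation $\nu_{\infty,p}$ is a well-defined nonzero vector; continuity of normalized rod vectors under the $C^{1,\alpha}$ convergence of $(\Phi_{i,p},\nu_{i,p})\to(\Phi_{\infty,p},\nu_{\infty,p})$ (as established in Section \ref{subsubsec:local regularity near rods}, see \eqref{e:angle convergence}) gives the claim. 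For \textbf{(ii)}, at any fixed interior point $(r',z')$ with $r'>0$ we have $\widetilde\nu_i(r',z')=\nu_i(r'/\lambda_{i,p},z'/\lambda_{i,p}+z_{i,p,1})\to\nu_\infty(0,z_{\infty,p})$ by the $C^{1,\alpha}$ regularity of $\nu_\infty$ up to $\Gamma$ (Section \ref{subsec:local regularity}), while $\nu_{i,p}(r',z')\to\nu_{\infty,p}(r',z')$ by definition of $c_i$; subtracting yields a finite limit $c_\infty$.

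The main subtlety is bookkeeping: one must keep track of the interplay between the rescaling factor $\sqrt{\lambda_{i,p}}$, the $SL(2;\R)$-twist $D_{\sqrt{\lambda_{i,p}}}^{-1}P_\mv$, and the augmentation shift $c_i$. The hypothesis $\mv_j\neq\mv$ enters exactly because it guarantees $b_0\neq 0$, which in turn is what collapses $|D_{\sqrt{\lambda}}\hat w_j^i|$ to order $\lambda^{-1/2}$ instead of growing; this is the geometric reason the cone angle blows up rather than converging to a finite value. Once these ingredients are assembled, the conclusion is a short computation and in fact yields the quantitative estimate $\vartheta_{\Phi_i,\nu_i}(\mI_j^i)\sim \lambda_{i,p}$.
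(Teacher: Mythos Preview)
Your overall strategy matches the paper's: reduce to showing $|\overline{\mv}_j^i|\to 0$ via the transformation law for normalized rod vectors under the twisted rescaling, and use $\mv_j\neq\mv$ (equivalently $b_0\neq 0$) to make the $D_{\sqrt{\lambda_{i,p}}}$-twist collapse the length. Ingredient \textbf{(i)} is fine (only local tameness near the surviving rod is needed, which the bubble limit has). The computation of $|D_{\sqrt{\lambda_{i,p}}}\hat w_j^i|$ is correct.

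The gap is in ingredient \textbf{(ii)}. You argue that $\widetilde\nu_i(\rho',z')=\nu_i(\rho'/\lambda_{i,p},\,z'/\lambda_{i,p}+z_{i,p,1})$ converges to $\nu_\infty(0,z_{\infty,p})$, invoking the $C^{1,\alpha}$ regularity of $\nu_\infty$ up to $\Gamma$. But regularity of the \emph{limit} $\nu_\infty$ does not let you evaluate the \emph{sequence} $\nu_i$ along points tending to $(0,z_{\infty,p})$. The convergence $\nu_i\to\nu_\infty$ established in Section~\ref{ss:uniform control of tame harmonic maps} is only local over $\dH\setminus\mathcal T$, and $(0,z_{\infty,p})\in\mathcal T$; the turning points of $\mR_i$ are accumulating there, so the uniform estimates do not extend across. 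A priori the neck between scales could contribute to $\nu_i$ (indeed, in the $\Ae$ bubble case treated just before this Proposition one has $e^{c_i}\sim\lambda_{i,p}^{-1/2}\to 0$, so $c_i$ is \emph{not} bounded there). That the $\AF_0$ neck happens to contribute $O(1)$ is precisely what needs to be proved, and your argument does not do it.

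The paper's remedy is clean and avoids analyzing the neck directly: take the rod $\mI_l^i$ that converges to an \emph{infinite} rod $\widetilde{\mI}$ of $\mR'_{\infty,p}$. Because we are at the first bubble, $\mI_l^i$ also survives in the original scale as a rod of $\mR_\infty$, giving $C^{-1}\le|\overline{\mv}_l^i|\le C$. Since its rod vector is $\mv=[\bv_0]$ (after WLOG rotation), the transformation law simplifies to $\overline{\mv}_l^{i'}=e^{c_i}\overline{\mv}_l^i$ (the $D_{\sqrt{\lambda_{i,p}}}^{-1}\sqrt{\lambda_{i,p}}$ factors cancel on $(1,0)^\top$). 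Convergence of $\overline{\mv}_l^{i'}$ in the bubble then forces $|c_i|<C$. With this in hand, your computation for $\overline{\mv}_j^i$ goes through and gives $|\overline{\mv}_j^i|\sim C/\lambda_{i,p}\to 0$, exactly as you wrote.

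Two minor remarks: you assert that $\Phi_{\infty,p}$ is \emph{strongly} tamed by $\mR'_{\infty,p}$, but at this point in the paper only global tameness is known (strong tameness is Proposition~\ref{p:ALF/AF rigidity}, proved later); fortunately you only need local tameness for \textbf{(i)}. Also, $\mR'_{\infty,p}$ need not have a single finite rod; the cluster may produce several surviving finite rods. Neither affects the argument.
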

\begin{proof}
Without loss of generality we can assume $\mv=[\bv_0]$, hence for $\mC_{i,p}$, we know $\mw_{i,p-1}=\mw_{i,p}=[\bv_0]$ and $P_{\mw_{i,p-1},\mw_{i,p}}$ is the identity map.
By construction we have the convergence of the harmonic maps
$$\Phi_{i, p}=\varphi_{\lambda_{i, p}}^*T_{-z_{i,p,1}}^* D_{\sqrt{\lambda_{i, p}}}^{-1}.\Phi_{i}$$   
to $\Phi_{\infty,p}$ in $C^{1, \alpha}$ locally on $\dH^\circ\setminus \mathcal T_{\infty, p}$, which is globally tamed by $\mR'_{\infty,p}$.  Let $\overline{\mv}_j^i$ and $\overline{\mv}_{l}^i$ be the normalized rod vectors for the rod $\mI_j^i$ and $\mI_{l}^i$ respectively, which are determined by $(\Phi_i, \nu_i)$. 
Denote by $\overline{\mv}_j^{i'}$ and $\overline{\mv}_{l}^{i'}$ the corresponding normalized rod vectors determined by $(\Phi_{i,p}, \widetilde{\nu}_i+c_i)$, where recall $\widetilde{\nu}_i$ is defined in \eqref{eq:define tilde nu} and $c_i$ is a suitably chosen constant to ensure the convergence of the augmentation $\tilde{\nu}_i+c_i$.
Since $\Phi_{\infty, p}$ is the first bubble limit, in the original scale ${\mI}_{l}^i$ converges to a limit rod in $\mR_\infty$, so we have a uniform bound 
$$C^{-1}\leq |\overline{\mv}_l^i|\leq C. $$
It is easy to check that $$\overline{\mv}_l^{i'}=e^{c_i}D^{-1}_{\sqrt{\lambda_{i,p}}}\sqrt{\lambda_{i,p}}\overline{\mv}_l^i.$$
Since $[{\mv}_l^i]=[\bv_0]$, we see that $$\overline{\mv}_l^{i'}=e^{c_i}\overline{\mv}_l^i$$ and
$$C^{-1} e^{c_i}\leq |\overline{\mv}_{l}^{i'}|\leq C e^{c_i}. $$
Since we have the convergence of normalized rod vectors of $(\Phi_{i,p},\widetilde{\nu}_{i}+c_i)$ in the rescaled limit $\mR_{\infty,p}$, we must have $|c_i|<C$.  Suppose now that $\mv_j=[(a, b)^\top]$, where $b\neq0$ since $\mv_j\neq\mv$. Then we can write $$\overline{\mv}_j^i=\mu_i(a, b)^\top$$ and
$$\overline{\mv}_j^{i'}=\mu_ie^{c_i} (a, \lambda_{i, p}b)^\top.$$
If $b\neq0$, since $\overline{\mv}_j^{i'}$ converges, it is easy to see that $\mu_i$ is uniformly comparable to $\lambda_{i,p}^{-1}$, so it converges to $0$. Hence we conclude that $\vartheta_{\Phi_i, \nu_i}(\mI)\rightarrow\infty$.
\end{proof}

The above discussion provides information on the behavior of  the cone angle function along the rods of $\mR_i$ which survive in the first bubble limit (namely, $(\Phi_{\infty, p}, \mR_{\infty, p}')$). Note that by our assumption $(\mR_i,\Lambda)\in\mathcal S(\sharp|\mv_0,\ldots,\mv_n)$, so all the rod vectors are fixed and do not depend on $i$. However, it does not yield a useful statement if the first bubble limit has a \emph{trivial} rod structure, namely, if $\mR_{\infty, p}'$ has no finite rods. This means that either $\mR_{\infty, p}'$ has only one rod if it is of Asymptotic Type $\AF_0$, or $\mR_{\infty, p}'$ has exactly two rods (which are the two infinite rods) if it is of Asymptotic Type $\Ae$. If these occur, we need to pass to even smaller scales. 
Notice that from our initial hypothesis, it follows that the untyped rod structure $\mR_{\infty, p}$ always has a finite rod. So in this situation we consider a turning point $q$ of $\mR_{\infty, p}$ which is not a turning point of $\mR_{\infty, p}'$. In other words, $q$ lies in the interior of a rod $\mI$ of $\mR_{\infty, p}'$ and adjacent rods of $\mR_{\infty,p}$ (sharing same rod vectors however) merge together at $q$. Denote by $\mv$ the rod vector along $\mI$.  In this case we see in the next scale the bubble limit at $q$, denoted by $(\Phi_{\infty,2}, \mR_{\infty,2}')$, must be of Asymptotic Type $\AF_0$, since both of its infinite rod vectors coincide with $\mv$. It is possible that $\mR_{\infty,2}'$ is again trivial, then  there is another merging of rods in this bubble limit. If this occurs then we pass to a further smaller scale and obtain another bubble limit of Asymptotic Type $\AF_0$ whose infinite rod vectors again coincide with $\mv$. We can continue this process until at some scale we  see a bubble limit $(\Phi_{\infty,k}, \mR_{\infty,k}')$ whose rod structure contains more than one rod, then it has at least one finite rod. In all the intermediate scales we obtain bubble limits which are trivial and with the rod vector given by $\mv$. By construction, $\mR_{\infty,k}'$ is non-trivial so it has a finite rod whose rod vector differs from $\mv$. Suppose we have a rod $\mI_j^i$ of $\mR_i$ which survives in this bubble limit and the limit rod vector differs from $\mv$. We may view such a bubble limit as a first \emph{non-trivial} bubble limit. 

 Figure \ref{Figure:Merging and deeper scale} gives an illustration of this phenomenon. We consider  rod structures $\mR_i$ of Asymptotic Type $\Ae$ with $4$ turning points,  whose rod length $l_4\ll l_2\ll l_3\ll l_1\to0$. The limit rod structure is a trivial rod structure of Asymptotic Type $\Ae$. In the first bubble limit only  $\mI_1$ survives and the rod structure is  of Asymptotic Type $\Ae$. It is again trivial since $\mv_1$ coincides with the infinite rod vector $\mv_5$. In the next scale  $\mI_3$ survives in the bubble limit, and the rod structure is of Asymptotic Type $\AF_0$ but is again trivial, since $\mv_3=\mv_1=\mv_5$. Then in the next scale $\mI_2$ survives in the bubble limit. The corresponding  rod structure that is of Asymptotic Type $\AF_0$ is non-trivial and this bubble limit is the first non-trivial bubble limit. The rod $\mI_4$ only survives after passing to the next scale. 
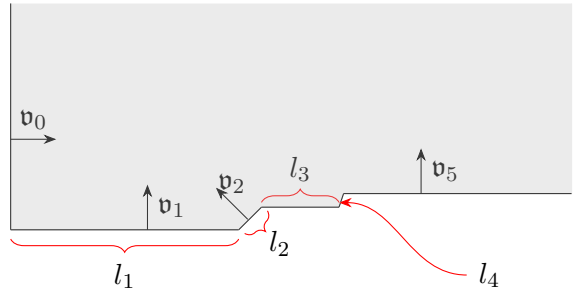
\begin{figure}[ht]
    \begin{tikzpicture}[scale=0.6]
        \draw (-2,0)--(-2,5);
        \draw (-2,0)--(3,0);
        \draw (3,0)--(3.5,0.5);
        \draw (3.5,0.5)--(5.2,0.5);
        \draw (5.2,0.5)--(5.3,0.8);
        \draw (5.3,0.8)--(10.3,0.8);

        \draw[-Stealth] (-2,2)--node [above]{$\mathfrak{v}_0$} (-1,2);
        \draw[-Stealth] (1,0)--node [right]{$\mathfrak{v}_1$} (1,1);
        \draw[-Stealth] (3.2,0.23)--node [above]{$\mathfrak{v}_2$} (2.5,0.93);

        \draw[-Stealth] (7,0.8)--node [right]{$\mathfrak{v}_5$} (7,1.8);

        \draw [draw = red,decorate,decoration={brace,amplitude=5pt,mirror,raise=4ex}]
  (-2,0.9) -- (3,0.9) node[midway,yshift=-3em]{$l_1$};

        \draw [draw = red,decorate,decoration={brace,amplitude=5pt,mirror,raise=4ex}]
  (2.35,0.7) -- (2.85,1.2) node[midway,yshift=-3em]{};
        \node at (3.9,-0.3) {$l_2$};

        \draw [draw = red,decorate,decoration={brace,amplitude=5pt,mirror,raise=4ex}]
  (5.2,-0.5) -- (3.5,-0.5) node[midway,yshift=-3em]{};
        \node at (4.35,1.35) {$l_3$};

        \draw [draw = red,-Stealth] (8,-1) to [out=180,in=0] (5.2,0.6);
        \node at (8.5,-1) {$l_4$};
        
        \filldraw[color=gray!50,fill=gray!50,opacity=0.3](-2,5)--(-2,0) --(3,0)--(3.5,0.5)--(5.2,0.5)--(5.3,0.8)--(10.3,0.8)--(10.3,5);
        
    \end{tikzpicture}
	\caption{A sequence of rod structures with 4 finite rods $\mI_1,\mI_2,\mI_3,\mI_4$ whose lengths are given by $l_4\ll l_2\ll l_3\ll l_1\to0$. The infinite rods are $\mI_0$ and $\mI_5$.}
	 \label{Figure:Merging and deeper scale}   
\end{figure}

For the above situation, we have a similar conclusion as in Proposition \ref{p:cone angle AF rod length go to zero}. 

\begin{proposition}\label{p:deep scale angle goes to infinity}
    We have  $$\lim_{i\rightarrow\infty}\vartheta_{\Phi_i, \nu_i}(\mI_j^i)=\infty.$$
\end{proposition}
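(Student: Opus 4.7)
The plan is to prove the proposition by induction on $k$, the scale depth at which the first non-trivial bubble limit appears. The base case $k=1$ is exactly Proposition \ref{p:cone angle AF rod length go to zero}, so it suffices to carry out the inductive step.

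The key technical ingredient is a cone angle transformation formula for a single rescaling step: with the rescaled sequence $(\Phi_{i,1},\nu_{i,1})$ defined in \eqref{eq:harmonic map in the scale of mC} and the naturally rescaled lattice $\Lambda_{i,1}$ (the analog of \eqref{eq:AE choice of lattice after rescaling} adapted to the setup at hand), one has
\begin{equation*}
\vartheta_{\Phi_i,\nu_i}(\mI_j^i) \;=\; e^{c_i^{(1)}}\sqrt{\lambda_{i,1}}\cdot\vartheta_{\Phi_{i,1},\nu_{i,1}}(\mI_j^{(1,i)}).
\end{equation*}
This can be verified by a direct computation, already present in the proof of Proposition \ref{p:cone angle AF rod length go to zero}: the normalized rod vector transforms as $\overline{\mv}_j^{(1,i)}=e^{c_i^{(1)}}\sqrt{\lambda_{i,1}}\,D_{\sigma_1}^{-1}P_1\overline{\mv}_j^i$ while the primitive lattice vector transforms as $\bbv_j^{(1,i)}=D_{\sigma_1}^{-1}P_1\bbv_j$. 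Since both vectors lie along the same line $\mv_j^{(1,i)}$ in $\R^2$ and are rescaled by $D_{\sigma_1}^{-1}P_1$ by the same factor, the cone angle ratio $|\bbv|/|\overline{\mv}|$ only retains the scalar $e^{c_i^{(1)}}\sqrt{\lambda_{i,1}}$. This formula holds regardless of whether scale $1$ is of trivial $\AF_0$ or trivial $\Ae$ type.

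For the inductive step, I would apply the inductive hypothesis at level $k-1$ to the rescaled sequence $(\Phi_{i,1},\nu_{i,1},\mR_{i,1},\Lambda_{i,1})$: this sequence belongs to a fixed $\mathcal{S}(\sharp'\mid\mv_0',\ldots,\mv_m')$ (since all rod vectors of $\mR_i$ are fixed independently of $i$ and the rescaled lattice depends only on fixed data) and has precisely $k-1$ trivial bubble scales preceding the same non-trivial scale $k$. This yields $\vartheta_{\Phi_{i,1},\nu_{i,1}}(\mI_j^{(1,i)})\to\infty$. Combined with $\lambda_{i,1}\to\infty$ (as the cluster $\mC_{i,p}$ collapses to a point) and the uniform boundedness of $c_i^{(1)}$ in $i$, the transformation formula yields $\vartheta_{\Phi_i,\nu_i}(\mI_j^i)\to\infty$.

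The hard part will be confirming the uniform boundedness of $c_i^{(1)}$ at each induction step, which requires identifying at each rescaling a rod of $\mR_i$ adjacent to $\mC_{i,p}$ whose rod vector matches the ambient infinite rod vector $\mv$ of the trivial bubble limit; for such a rod, the normalized rod vector is fixed under $D_{\sigma_1}^{-1}P_1$ and remains bounded in the bubble limit, which pins down $c_i^{(1)}$. This compatibility is built into the trivial structure of the intermediate bubble scales and can be extracted by the same argument used at the end of the proof of Proposition \ref{p:cone angle AF rod length go to zero}.
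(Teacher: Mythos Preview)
Your transformation formula and the argument for the boundedness of $c_i^{(1)}$ are both correct, and the inductive idea captures the right mechanism. However, the claim that the rescaled sequence $(\Phi_{i,1},\nu_{i,1},\mR_{i,1},\Lambda_{i,1})$ ``belongs to a fixed $\mathcal{S}(\sharp'\mid\mv_0',\ldots,\mv_m')$'' is not right as stated. The rescaled map $\Phi_{i,1}$ is a global harmonic map on $\bH^\circ$ obtained by twisting and rescaling $\Phi_i$; its asymptotic behavior at infinity (in the rescaled coordinates) is inherited from the asymptotic behavior of $\Phi_i$ at the original infinity, not from the local model near the cluster. In particular $\Phi_{i,1}$ is not strongly tamed by a rod structure of standard Asymptotic Type $\AF_0$ (or $\Ae$) with uniform decay constants, and $\nu_{i,1}$ is not the normalized augmentation in the sense of Proposition~\ref{prop:normalizing augmentation}. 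So Proposition~\ref{p:deep scale angle goes to infinity} at depth $k-1$ cannot be invoked directly on this sequence as a black box. You could repair this by reformulating the induction to be about the bubble tree structure already established in Section~\ref{ss:uniform control of tame harmonic maps} (which is inherited under rescaling), rather than about membership in some $\mathcal{S}(\sharp'\mid\cdots)$, but this requires restating what the inductive hypothesis actually is.

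The paper sidesteps this bookkeeping by going directly from scale $0$ to scale $k$ in one shot. The observation is that since all intermediate bubble limits are trivial and share the same infinite rod vector $\mv$, each rotation $P_{\mw_{q-1},\mw_q}$ appearing in the successive twists is the same $P_\mv$, so the accumulated twist collapses to a single $D_{\sqrt{\mu_i}}$ (after one initial rotation), with $\mu_i\to\infty$. One can then literally rerun the computation in the proof of Proposition~\ref{p:cone angle AF rod length go to zero} with $\mu_i$ in place of $\lambda_{i,p}$: the infinite rod of the first non-trivial bubble still has rod vector $\mv$, its normalized rod vector is still fixed by the diagonal twist, and the normalized rod vector along $\mI_j^i$ (with $\mv_j\neq\mv$) still shrinks like $\mu_i^{-1}$. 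The $\Ae$ first-bubble case reduces to this via \eqref{eqn:AE convergence of cone angles}, since that first step uses uniformly bounded twists. This avoids having to recast the rescaled sequence inside the framework of Section~\ref{ss:limiting behavior of the cone angles}.
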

\begin{proof}
     The key point to notice is that all the intermediate scales share the same infinite rod vector so when we pass to a smaller scale the accumulated twists are simply diagonal matrices of the form $D_{\sqrt{\lambda_i}}$ for $\lambda_i>0$. The case when the first bubble limit $\mR_{\infty, p}'$ is of Asymptotic Type $\AF_0$ can be proved by the same arguments as in the proof of Proposition \ref{p:cone angle AF rod length go to zero}. In the case when $\mR_{\infty, p}'$ is of Asymptotic Type $\Ae$, one may reduce to the $\AF_0$ case due to \eqref{eqn:AE convergence of cone angles}. Essentially, in this case the twists we need to use in order to obtain the first bubble limit $(\Phi_{\infty, p}, \mR_{\infty, p}')$ are uniformly bounded. 
\end{proof}

By passing to  smaller scales one may gain more information on the cone angles along other even shorter rods of $\mR_i$.  The analysis in general would involve understanding the iterative effect of the twists from all the intermediate scales. For our purpose  we will not need this so we do not investigate it in detail.

\subsubsection{Turning points diverging to infinity}\label{sss:limit of cone angles with turning points go to infinity}
We first  consider a sequence $(\mR_i, \Lambda)$  in $\mathcal S(\AFa| \mv_0, \cdots, \mv_n)$ for some $\beta\in \R$ such that $\ms_i:=\ms(\mR_i)\rightarrow\infty$. Again notice that the enhancement is a fixed lattice $\Lambda$. Without loss of generality we assume $\mv_0=\mv_n=[\bv_0]$, and it follows that $\Lambda_{\mR}=\mathbb{Z}^2$. Let $(\Phi_i, \nu_i)$ ($i\in \dZ_{\geq 0}$) be the unique augmented harmonic map which is strongly tamed by $\mR_i$ such that $\nu_i$ is normalized.

We define the twisted scaling  down of the form $$\widehat{\mR}_i:=(U_{-\beta}D_{\sqrt{\ms_i}}U_{\beta}).\varphi_{1/\ms_i}^*\mR_i,$$ which satisfies that $\ms(\widehat{\mR}_i)=1$.  The twist by the action of $U_{-\beta} D_{\sqrt{\ms_i}}U_\beta$ is necessary in order to apply the discussion in Section \ref{ss:uniform control of tame harmonic maps}. The point is that while our original harmonic map $\Phi_i$ is asymptotic to a fixed $\AFa$ model, the scaling down $\varphi_{1/\ms_i}^*\Phi_i$ is instead asymptotic to $$\varphi_{1/\ms_i}^*\Phi^{\AFa}=(U_{-\beta}D_{1/\sqrt{\ms_i}}U_{\beta}). \Phi^{\AFa}.$$ However, the twisted scaling down  $$\widehat{\Phi}_i:=(U_{-\beta}D_{\sqrt{\ms_i}}U_{\beta}).\varphi_{1/\ms_i}^*\Phi_i$$ is again asymptotic to $\Phi^{\AFa}$, and it is indeed strongly tamed by $\widehat{\mR}_i$. So we can apply the discussion in Section \ref{ss:uniform control of tame harmonic maps} to the new sequence of rod structures $\widehat{\mR}_i$. We also define $$\widehat{\nu}_i=\varphi_{1/\ms_i}^*\nu_i,$$ then one can check that it gives the normalized augmentation for $\widehat{\Phi}_i$, as now we have 
$$\overline{\widehat{\mv}}_0^i=U_{-\beta}D_{\sqrt{\ms_i}}U_\beta\sqrt{\ms_i^{-1}}\overline{\mv}_0\equiv\bv_0.$$

As before passing to a subsequence there is a weak limit rod structure $\mR_\infty$ of  Asymptotic Type  $\AFa$. Again let $\mR_\infty'$ be the rod structure associated to $\mR_\infty$ and denote by $\mathcal T$ the set of turning points of $\mR_\infty$. We may assume that $\widehat\Phi_i$ converges, in $C^{1, \alpha}$ locally over $\dH^\circ\setminus \mathcal T$, to a limit harmonic map which is strongly tamed by $\mR_\infty'$.

Suppose a rod $\mI$ of $\mR_\infty$ is the limit of the rod $\widehat{\mI}_j^i$ for some fixed $j\in\{0, \cdots, n\}$.  It is easy to see that the limit rod vector $\mv$ along $\mI$ is given by
\begin{equation*}
\mv=\begin{cases}
[\bv_0], &  \text{if} \   \mv_j\neq [(-\beta, 1)^\top];\\
[(-\beta, 1)^\top], & \text{if}\   \mv_j=[(-\beta, 1)^\top].
\end{cases}
\end{equation*}

 The infinite rods of $\mR_\infty'$ clearly have rod vector $[\bv_0]$. It is possible that some finite rods of $\mR_\infty$ are merged into the infinite rods of $\mR_\infty'$; such rods must arise from limits of rods $\mI_j^i$ with $\mv_j\neq [(-\beta, 1)^\top]$.

\begin{proposition} \label{p:cone angle AF rod length go to infinity}
Let $(a,b)^T$ be a primitive rod vector in $\mv_j$ for $(\mR_i, \Lambda)$. If the corresponding limit rod $\mI$ belongs to the infinite rod or $\mv_j= [(-\beta, 1)^\top]$, then 
    $$\lim_{i\rightarrow\infty}\vartheta_{(\Phi_i, \nu_i)}(\mI_j^i)= 2\pi |a+\beta b|.$$
\end{proposition}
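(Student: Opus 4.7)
Plan: The argument reduces the cone angle for $(\Phi_i, \nu_i)$ to an explicit computation in the rescaled picture using scaling and group-action invariance. For $\bbv\in\R^2$ write $\vartheta(\Phi, \nu, \bbv) := 2\pi\lim_{\rho\to 0}(e^\nu\rho)^{-1}\sqrt{\rho\bbv^\top\Phi\bbv}$, so that $\vartheta_{\Phi_i,\nu_i}(\mI_j^i) = \vartheta(\Phi_i, \nu_i, \bbv_j)$ with $\bbv_j = (a, b)^\top$. A direct change of variables gives $\vartheta(\varphi_\lambda^*\Phi, \varphi_\lambda^*\nu, \bbv) = \lambda^{-1/2}\vartheta(\Phi, \nu, \bbv)$, and the identity $(Q\bbv)^\top(Q.\Phi)(Q\bbv) = \bbv^\top\Phi\bbv$ for $Q \in SL(2;\R)$ gives $\vartheta(Q.\Phi, \nu, Q\bbv) = \vartheta(\Phi, \nu, \bbv)$. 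Applied to $Q_i := U_{-\beta}D_{\sqrt{\ms_i}}U_{\beta}$ and $\lambda = \ms_i^{-1}$, these combine to
\begin{equation*}
\vartheta_{\Phi_i, \nu_i}(\mI_j^i) = \ms_i^{-1/2}\vartheta(\widehat{\Phi}_i, \widehat{\nu}_i, Q_i\bbv_j).
\end{equation*}

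A direct matrix computation gives $Q_i(a, b)^\top = \bigl(\sqrt{\ms_i}(a+\beta b) - \beta b/\sqrt{\ms_i},\; b/\sqrt{\ms_i}\bigr)^\top$. In particular, $(-\beta, 1)^\top$ is an eigenvector of $Q_i$ with eigenvalue $1/\sqrt{\ms_i}$, while every other direction is mapped, after rescaling by $1/\sqrt{\ms_i}(a+\beta b)$, to $\bv_0$ as $\ms_i \to \infty$. I will distinguish the two cases using the homogeneity $\vartheta(\Phi, \nu, c\bbv) = |c|\vartheta(\Phi, \nu, \bbv)$, which is immediate from the formula.

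When $\mv_j = [(-\beta, 1)^\top]$, so $a + \beta b = 0$ and $Q_i\bbv_j = (b/\sqrt{\ms_i})(-\beta, 1)^\top$, homogeneity yields $\vartheta(\widehat{\Phi}_i, \widehat{\nu}_i, Q_i\bbv_j) = (|b|/\sqrt{\ms_i})\vartheta(\widehat{\Phi}_i, \widehat{\nu}_i, (-\beta, 1)^\top)$. The limit rod of $\widehat{\mI}_j^i$ is a finite rod of $\mR_\infty'$ with vector $[(-\beta, 1)^\top]$, so by the $C^{1,\alpha}$ local convergence $(\widehat{\Phi}_i, \widehat{\nu}_i) \to (\Phi_\infty, \nu_\infty)$ the second factor converges to the finite quantity $\vartheta(\Phi_\infty, \nu_\infty, (-\beta, 1)^\top)$, and hence $\vartheta_{\Phi_i, \nu_i}(\mI_j^i) = O(\ms_i^{-1}) \to 0 = 2\pi|a + \beta b|$. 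When the limit rod instead belongs to the infinite rod, $\mv_j \neq [(-\beta, 1)^\top]$ and $a + \beta b \neq 0$; factor $Q_i\bbv_j = \sqrt{\ms_i}(a+\beta b)w_i$ with $w_i = \bv_0 + O(\ms_i^{-1}) \to \bv_0$, and homogeneity gives
\begin{equation*}
\vartheta_{\Phi_i, \nu_i}(\mI_j^i) = |a+\beta b|\cdot\vartheta(\widehat{\Phi}_i, \widehat{\nu}_i, w_i).
\end{equation*}
The right-hand side converges to $|a+\beta b|\vartheta(\Phi_\infty, \nu_\infty, \bv_0)$. Since $\Phi_\infty$ is strongly tamed by $\mR_\infty'$ of Asymptotic Type $\AF_\beta$ with normalized $\nu_\infty$, the cone angle along the infinite rod using $\bv_0$ matches the model value, computed directly from $\rho\Phi^{\AFa}(\bv_0, \bv_0) = \rho^2$ and $\nu^{\AFa}=0$ to be $2\pi$. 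This yields $\vartheta_{\Phi_i, \nu_i}(\mI_j^i) \to 2\pi|a+\beta b|$.

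The main obstacle is justifying the interchange of the limits $\rho \to 0$ (inherent in the cone angle formula) and $i \to \infty$ in $\vartheta(\widehat{\Phi}_i, \widehat{\nu}_i, w_i) \to \vartheta(\Phi_\infty, \nu_\infty, \bv_0)$. I will handle this by evaluating the cone angle at a fixed interior point $z_0$ of the limit rod, where Proposition \ref{thm:C11 regularity} expresses it as a simple algebraic expression in the boundary values of $(\hat{u}_i, \widehat{\nu}_i, \partial_\rho \hat{v}_i)$ relative to the $[\bv_0]$-parametrization. The $C^{1,\alpha}$ local convergence of $(\widehat{\Phi}_i, \widehat{\nu}_i)$ on $\dH\setminus\mathcal{T}$ established in Section \ref{ss:uniform control of tame harmonic maps} then yields the required pointwise convergence at $(0, z_0)$, completing the argument.
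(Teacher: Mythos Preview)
Your proposal is correct and takes essentially the same approach as the paper. The only difference is packaging: you introduce the functional $\vartheta(\Phi,\nu,\bbv)$ and track its behavior under the twisted rescaling $Q_i=U_{-\beta}D_{\sqrt{\ms_i}}U_\beta$ and $\varphi_{1/\ms_i}$, while the paper phrases the same computation via normalized rod vectors, using the relation $\overline{\widehat{\mv}}_j^i=\ms_i^{-1/2}Q_i\overline{\mv}_j^i$ and the convergence $\overline{\widehat{\mv}}_j^i\to\bv_0$ (respectively boundedness of $|\overline{\widehat{\mv}}_j^i|$) established in \eqref{e:angle convergence}; both reduce to the same matrix identity $Q_i(a,b)^\top=(\sqrt{\ms_i}(a+\beta b)-\beta b/\sqrt{\ms_i},\,b/\sqrt{\ms_i})^\top$ and the same $C^{1,\alpha}$ convergence input.
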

\begin{proof}
Since $\widehat{\nu}_i$ is normalized,  the normalized rod vector along the infinite rods $\widehat{\mI}_0^i$ and $\widehat{\mI}_n^i$ of $\widehat{\mR}_i$, with respect to $(\widehat{\Phi}_i, \widehat{\nu}_i)$, is given by $\bv_0$. It follows that the normalized rod vector along other $\widehat{\mI}_j^i$ satisfies that 
$$ C^{-1}\leq |\overline{\widehat{\mv}}_{j}^i|\leq C$$
due to the convergence of $(\widehat{\Phi}_i,\widehat{\nu}_i)$. Note that there is an explicit relation between $\overline{\mv}_j^i$ and $\overline{\widehat{\mv}}_j^i$
$$\overline{\widehat{\mv}}_j^i=\frac{1}{\sqrt{\ms_i}}U_{-\beta}D_{\sqrt{\ms_i}}U_{\beta}\overline{\mv}_j^i.$$
Now let $(a, b)^\top$ be a primitive rod vector in $\mv_j$, where $\overline{\mv}_j^i=\mu_i(a,b)^\top$. It follows that
$$\overline{\widehat{\mv}}_j^i=\mu_i(a+b\beta-\frac{1}{\ms_i}b\beta,\frac{1}{\ms_i}b)^\top.$$
If $\mI$ belongs to the infinite rod in $\mR_\infty$ then we have $\overline{\widehat\mv}_j^i\to\bv_0$, which implies that 
$$\lim_{i\rightarrow\infty}\mu_i=\frac{1}{a+b\beta}.$$ 
So we have 
$$\lim_{i\to\infty}\vartheta_{(\Phi_i,\nu_i)}(\mI_j^i)=2\pi|a+\beta b|.$$
If $\mv_j=[(-\beta,1)^\top]$, then as $|\overline{\widehat{\mv}}_j^i|$ remains bounded, we have $\mu_i$ is uniformly comparable to $\ms_i$.  In this case we then have $$\lim_{i\to\infty}\vartheta_{\Phi_i,\nu_i}(\mI_j^i)=0.$$
\end{proof}
\begin{remark}
    If $\mI$ does not belong to the infinite rod and $\mv_j\neq [(-\beta, 1)^\top]$, then from the above argument we can only say that the limit of the angle function is finite. 
\end{remark}

Finally we consider a sequence $(\mR_i, \Lambda)$  in $\mathcal S(\ALFm| \mv_0, \cdots, \mv_n)$ such that $\ms_i:=\ms(\mR_i)\rightarrow\infty$. Without loss of generality we assume in addition that $\mv_0=[\bv_{\alpha/2}]$ and $\mv_n=[\bv_{-\alpha/2}]$ for some fixed $\alpha>0$, where then it follows $\Lambda_{\mR}$ is generated by $\bv_{\alpha/2}$ and $\bv_{-\alpha/2}$. Let $(\Phi_i, \nu_i)$ ($i\in \dZ_{\geq 0}$) be the unique augmented harmonic map which is strongly tamed by $\mR_i$ such that $\nu_i$ is normalized. 

Again we need to consider the scaling down. For the model map, we have by \eqref{e:scaling ALF model} that 
$$\varphi_{1/\ms_i}^*\Phi^{\TNam}=D_{1/\sqrt{\ms_i}}. \Phi^{\TN^-_{\alpha/\sqrt{\ms_i}}}.$$
So we should consider the twisted scaling down $$\widehat{\mR}_{i}:=D_{\sqrt{\ms_i}}. \varphi_{1/\ms_i}^*\mR_i$$ and $$\widehat{\Phi}_i:=D_{\sqrt{\ms_i}}. \varphi_{1/\ms_i}^*\Phi_i. $$
One can see that $\widehat{\Phi}_i$ is strongly tamed by $\widehat{\mR}_i$ which is of Asymptotic Type $\ALF^-$.  Then we can apply the discussion in Section \ref{ss:uniform control of tame harmonic maps}.

As before passing to a subsequence there is a weak limit rod structure $\mR_\infty$. Again let $\mR_\infty'$ be the rod structure associated to $\mR_\infty$ and denote by $\mathcal T$ the set of turning points of $\mR_\infty$. Since $\alpha/\sqrt{\ms_i}\rightarrow 0$ one sees that $\mR_\infty'$ should be equipped with Asymptotic Type $\AF_0$. We may assume that $\widehat\Phi_i$ converges, in $C^{1, \alpha}$ locally over $\dH^\circ\setminus \mathcal T$, to a limit harmonic map which is strongly tamed by $\mR_\infty'$ by passing to a subsequence. 

Suppose a rod $\mI$ of $\mR_\infty$ is the limit of the rod $\widehat{\mI}_j^i$ for some fixed $j\in \{0, \cdots, n\}$. One can see that the limit rod vector $\mv$ along $\mI$ is given by $[\bv_0]$ if $\mv\neq [(0, 1)^\top]$, and agrees with $\mv$ if $\mv=[(0, 1)^\top]$. Similar to the proof in the $\AFa$ case, we have 

\begin{proposition} \label{p:cone angle TN rod length go to infinity}
Let $(a, b)^\top$ be the primitive rod vector in $\mv_j$ for $(\mR_j, \Lambda)$.  If $\mI$ belongs to the infinite rod or $a=0$, then
    $$\lim_{i\rightarrow\infty}\vartheta_{\Phi_i, \nu_i}(\mI_j^i)= 2\pi |a|.$$
\end{proposition}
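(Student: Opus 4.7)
The approach parallels the proof of Proposition \ref{p:cone angle AF rod length go to infinity}. The plan is to pass to the twisted rescaling $(\widehat{\Phi}_i, \widehat{\nu}_i)$, apply the bubbling compactness of Section \ref{ss:uniform control of tame harmonic maps} to identify a limit, and then recover the original cone angle from the transformation law of normalized rod vectors.

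First I would verify that $\widehat{\nu}_i := \varphi_{1/\ms_i}^*\nu_i$ is the normalized augmentation for $(\widehat{\Phi}_i, \widehat{\mR}_i)$: the infinite rod vectors of $\widehat{\mR}_i$ are $[\bv_{\pm\widehat{\alpha}_i/2}]$ with $\widehat{\alpha}_i := \alpha/\ms_i$, and a direct computation gives $\varphi_{1/\ms_i}^*\nu^{\TN^-_\alpha} = \tfrac{1}{2}\log(1+\widehat{\alpha}_i/(2r))$, matching the $\TN^-_{\widehat{\alpha}_i}$ normalization. Invoking the bubbling compactness of Section \ref{ss:uniform control of tame harmonic maps} on the size-$1$ sequence $\widehat{\mR}_i$, and noting $\widehat{\alpha}_i\to 0$, one obtains a weak limit rod structure $\mR_\infty'$ of Asymptotic Type $\AF_0$, with $(\widehat{\Phi}_i,\widehat{\nu}_i)$ converging $C^{1,\alpha}$-locally on $\dH^\circ\setminus\mathcal{T}$ to the unique strongly tamed augmented harmonic map for $\mR_\infty'$; the $\AF_0$-normalization $\widehat{\nu}_\infty = 0+O(r^{-1})$ at infinity is consistent with the limit of $\tfrac{1}{2}\log(1+\widehat{\alpha}_i/(2r))$.

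A short calculation from the defining relation $\lim_{\rho\to 0}\tfrac{1}{e^\nu\rho}\sqrt{\rho\Phi(\overline{\mv},\overline{\mv})}=1$ shows that $\varphi_\lambda^*$ multiplies the normalized vector by $\sqrt{\lambda}$ while the action of $P.$ applies $P$; composing,
$$\overline{\widehat{\mv}}_j^i \;=\; \tfrac{1}{\sqrt{\ms_i}}\, D_{\sqrt{\ms_i}}\, \overline{\mv}_j^i.$$
Writing $\overline{\mv}_j^i = \mu_i\bbv_j$ with $\bbv_j=(a,b)^\top$ the primitive rod vector, this gives $\overline{\widehat{\mv}}_j^i = \mu_i(a,\, b/\ms_i)^\top$, and the relation $2\pi\bbv_j = \pm\vartheta_{\Phi_i,\nu_i}(\mI_j^i)\,\overline{\mv}_j^i$ yields $\vartheta_{\Phi_i,\nu_i}(\mI_j^i) = 2\pi/|\mu_i|$. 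If $\mI$ lies in the infinite rod of $\mR_\infty'$, the limit rod vector is $[\bv_0]$ and the $\AF_0$-normalization forces $\overline{\widehat{\mv}}_j^i \to (1,0)^\top$; since any $\mv_j=[(0,1)^\top]$ is preserved by $D_{\sqrt{\ms_i}}$ and hence cannot limit to $[\bv_0]$, necessarily $a\neq 0$, and $\mu_i\to 1/|a|$, giving $\vartheta_{\Phi_i,\nu_i}(\mI_j^i)\to 2\pi|a|$. If instead $a=0$, then $\mv_j=[(0,1)^\top]$ and $\overline{\widehat{\mv}}_j^i = (\mu_i/\ms_i)(0,1)^\top$; the adjacent rod vectors scale to $[\bv_0]\neq[(0,1)^\top]$, so the rod persists nondegenerately in $\mR_\infty'$ (or in a further bubble scale if the rod collapses), forcing $\mu_i/\ms_i$ to have a positive limit and hence $\mu_i\to\infty$, giving $\vartheta_{\Phi_i,\nu_i}(\mI_j^i)\to 0 = 2\pi\cdot 0$.

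The main technical point is verifying the normalization consistency in the $\ALF^-\!\to\!\AF_0$ limiting regime and handling potential rod collapse in the $a=0$ case by passing to a deeper bubble scale as in Section \ref{sss:limit of cone angles when turning points come together}; both steps parallel the analysis in the $\AFa$ case, after which the conclusion reduces to the bookkeeping above.
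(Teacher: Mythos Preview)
Your argument is correct and follows the same route the paper indicates (which simply refers back to the $\AFa$ case): pass to the twisted rescaling $\widehat{\Phi}_i = D_{\sqrt{\ms_i}}.\varphi_{1/\ms_i}^*\Phi_i$ with $\widehat{\nu}_i = \varphi_{1/\ms_i}^*\nu_i$, check that $\widehat{\nu}_i$ is normalized, use the transformation law $\overline{\widehat{\mv}}_j^i = \ms_i^{-1/2}D_{\sqrt{\ms_i}}\overline{\mv}_j^i = \mu_i(a,\,b/\ms_i)^\top$, and read off $|\mu_i|$ from the limit normalized rod vector. One small point: your parenthetical ``or in a further bubble scale if the rod collapses'' in the $a=0$ case is unnecessary, since the standing hypothesis is that $\mI$ is a rod of $\mR_\infty$, i.e.\ $\widehat{\mI}_j^i$ survives in the limit; the boundedness of $|\overline{\widehat{\mv}}_j^i|$ then follows directly from the $C^{1,\alpha}$ convergence near $\mI$, and no deeper bubble is needed.
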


\section{Applications}\label{sec:applications}
In this section we prove Theorem \ref{t:main}. Our strategy is to design a smooth enhanced rod structure \((\mR, \Lambda)\) and demonstrate that the rod lengths can be adjusted such that the augmented harmonic map which is strongly tamed by \(\mR\) and of a given Type, has  cone angle  \(2\pi\) along every rod. This will then generate a toric gravitational instanton.

\subsection{The case of two turning points}
As a warm-up, we consider rod structures with  exactly two turning points. It is easy to see that in this case the degree of the rod structure is at most one.  According to Theorem \ref{thm:equivalence of degree and type} later the gravitational instantons we obtain below must be of Type $\I$ or Type $\II$ (for their definitions, see Section \ref{sec:7.1}), thus belong to the known class of examples, but the arguments below illustrates the key idea in proving Theorem \ref{t:main}. It also explains the fact that the Taub-NUT metric serves as building blocks for obtaining the other Type $\II$ gravitational instantons, including the Schwarzschild, Kerr, Taub-Bolt spaces, albeit in a non-explicit way.

\subsubsection{The \texorpdfstring{$\AF$}{} case}
Let $(\mR_l, \Lambda)$ be the smooth enhanced rod structure of  Asymptotic Type  $\AFa$ with exactly two turning points $z_0=0$ and $z_1=l>0$ and with the enhancement $\Lambda$ generated by 
$$\mv_0=\mv_2=[\bv_{0}],  \quad \mv_1=[(0, 1)^{\top}].$$
It is easy to see that $d(\mR_l)=1$.
Fix $\beta\in (-1, 1)$. By Theorem \ref{t:existence result} and Proposition \ref{prop:normalizing augmentation}, we have a unique augmented harmonic map $(\Phi_l, \nu_l)$ which is strongly tamed by $\mR$, such that $\nu_l$ is normalized. By definition, we have  
$$\vartheta_{\Phi_l, \nu_l}(\mI_0)=\vartheta_{\Phi_l, \nu_l}(\mI_2)=2\pi.$$

Denote 
$$\Theta(l)=\vartheta_{\Phi_l, \nu_l}(\mI_1).$$ 
By \eqref{e:angle continuity}, $\Theta$ is a continuous function of $l\in (0,\infty)$. 

\begin{proposition}\label{p:Kerr family}
    We have $$\lim_{l\rightarrow0}\Theta(l)= \infty,$$ and $$\lim_{l\rightarrow\infty}\Theta(l)=2\pi|\beta|.$$
\end{proposition}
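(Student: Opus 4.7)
The plan is to derive both limits as direct applications of the degeneration results of Section~\ref{sec:degeneration of rod structures}, specializing the general bubble-tree analysis to the simple two-turning-point rod structure at hand. Since the enhancement $\Lambda$ and the rod vectors $\mv_0,\mv_1,\mv_2$ are fixed independently of $l$, with only the rod length $l = z_2 - z_1$ varying, the family $(\mR_l,\Lambda)$ sits inside $\mathcal S(\AFa\mid \mv_0,\mv_1,\mv_2)$ and we are exactly in the framework of Section~\ref{ss:limiting behavior of the cone angles}.

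For the limit $l\to 0$, I would apply Proposition~\ref{p:cone angle AF rod length go to zero}. The two turning points collapse to a single cluster $\mC$ with multiplicity $m=2$; because $\mv_0=\mv_2=[\bv_0]$, the weak limit $\mR_\infty$ has all rod vectors equal to $[\bv_0]$ so $\mR'_\infty$ is the trivial $\AFa$ rod structure. We then zoom into the cluster by the factor $\lambda_{i,p}=1/l$. Here $\alpha_{i,p}=\mv_0\wedge\mv_2=0$, so $\sigma(\alpha_{i,p},\lambda_{i,p})=\sqrt{\lambda_{i,p}}$ and $P_{\mv_0,\mv_2}=\mathrm{Id}$; the rescaled rod structure $\mR_{i,p}$ is just $D_{\sqrt{l}}.\widetilde\mR_{\mC}$, which leaves the rod vectors unchanged and gives turning points $\{0,1\}$ with rod vectors $[\bv_0],[(0,1)^\top],[\bv_0]$. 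This produces a non-trivial first bubble limit of Asymptotic Type $\AF_0$ in which the finite rod vector $\mv_1=[(0,1)^\top]$ differs from the infinite rod vector $\mv=[\bv_0]$. Proposition~\ref{p:cone angle AF rod length go to zero} then immediately yields $\Theta(l)\to\infty$.

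For the limit $l\to\infty$, I would apply Proposition~\ref{p:cone angle AF rod length go to infinity} with $\ms_i=l\to\infty$. The primitive rod vector in $\Lambda$ along $\mv_1$ is $(a,b)^\top=(0,1)^\top$, so the proposition would predict $\Theta(l)\to 2\pi|a+\beta b|=2\pi|\beta|$, provided one of its two hypotheses holds. If $\beta=0$ this is immediate since $\mv_1=[(0,1)^\top]=[(-\beta,1)^\top]$. For $\beta\neq 0$ one computes
\begin{equation*}
U_{-\beta}\,D_{\sqrt{\ms_i}}\,U_{\beta}\,(0,1)^\top = \bigl(\sqrt{\ms_i}\beta-\beta/\sqrt{\ms_i},\; 1/\sqrt{\ms_i}\bigr)^\top,
\end{equation*}
which projectivizes to $[\bv_0]$ in the limit. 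Hence the finite rod of the twisted rescaling $\widehat{\mR}_i$ acquires the same rod vector as the infinite rods, so the corresponding limit rod in $\mR'_\infty$ is absorbed into the infinite rod, as required by the hypothesis of Proposition~\ref{p:cone angle AF rod length go to infinity}. The conclusion $\Theta(l)\to 2\pi|\beta|$ follows.

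There is no real obstacle to carrying this out: the hard analytic work is already contained in the bubbling analysis of Section~\ref{ss:uniform control of tame harmonic maps} and the cone-angle computations of Section~\ref{ss:limiting behavior of the cone angles}. The only thing to verify is that the enhancement, the asymptotic type, and the particular rod vectors $[\bv_0]$ and $[(0,1)^\top]$ satisfy the hypotheses of the two cited propositions, which is a direct check. The same scheme (a rod-length degeneration producing an $\AF_0$ bubble with a mismatched finite rod on the one side, and a twisted-scaling limit producing a prescribed value on the other) will drive the more elaborate fixed-point argument needed for Theorem~\ref{t:main} in the general case.
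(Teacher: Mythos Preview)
Your proposal is correct and follows exactly the paper's approach: the paper's proof is simply the two-line invocation of Proposition~\ref{p:cone angle AF rod length go to zero} for $l\to 0$ and Proposition~\ref{p:cone angle AF rod length go to infinity} for $l\to\infty$, and you have supplied the routine verifications that their hypotheses hold (in particular, that $\alpha_{i,p}=\mv_0\wedge\mv_2=0$ forces the first bubble to be of Asymptotic Type $\AF_0$ with mismatched finite rod vector, and that for $\beta\neq 0$ the twisted rod vector of $\mI_1$ collapses to $[\bv_0]$ so its limit is absorbed into the infinite rod).
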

\begin{proof}

The first claim follows from Proposition \ref{p:cone angle AF rod length go to zero} and the second claim      follows from Proposition \ref{p:cone angle AF rod length go to infinity}.
\end{proof}

It follows that for each $\beta\in (-1,1)$ there is an $l\in (0,\infty)$ such that $\Theta(l)=2\pi$. Then the corresponding  $(\Phi_l, \nu_l)$ defines a smooth AF gravitational instanton. These are exactly the Kerr spaces by Corollary \ref{c:classification of toric GI}.

Using the fact that Kerr spaces are of Type $\II$ and the result of Biquard-Gauduchon \cite{BG}, we have an explicit formula for $\Theta(l)$. In particular, we know that for each $\beta\in (-1, 1)$, there is a unique $l=l(\beta)>0$ such that $\Theta(l)=2\pi$. Moreover, when $\beta=0$ we get the Schwarzschild space and when $\beta\rightarrow 1$ we have $l(\beta)\rightarrow\infty$. As discussed in Example \ref{ex:Kerr}, as $\beta\rightarrow 1$, in the pointed Gromov-Hausdorff sense the Kerr space splits into the union of a Taub-NUT space and an anti-Taub-NUT space.  The case of $\beta\to-1$ is similar.

\subsubsection{The \texorpdfstring{$\ALFm$}{} case}\label{sss:negative taub bolt}

Denote by  $\mR_{\dTN^{-}, l}$ the smooth enhanced rod structure of Asymptotic Type $\ALF^-$, with exactly two turning points $z_1=0$ and $z_2=l>0$ and with the enhancement $\Lambda$ generated by primitive rod vectors 
$$\bbv_0=\bv_{\frac12}, \quad \bbv_1=(2, 0)^{\top}, \quad \bbv_2=\bv_{-\frac12}.$$ 

Denote by  $\mR_{\TB^-, l}$ the smooth enhanced rod structure of Asymptotic Type $\ALF^-$, with exactly two turning points $z_1=0$ and $z_2=l>0$, with  the enhancement $\Lambda$ generated by primitive rod vectors 
$$\bbv_0=\bv_{\frac12}, \quad \bbv_1= (0, 1)^{\top}, \quad  \bbv_2=\bv_{-\frac12}.$$ 
See Figure \ref{Figure:rod structure for dTN and TB} for an illustration of  the rod structures $\mR_{\dTN^-,l}$ and $\mR_{\TB^-,l}$. It is easy to see that $d(\mR_{\dTN^-, l})=2$ and $d(\mR_{\TB^{-}, l})=1$.

\begin{figure}[ht]
    \begin{tikzpicture}[scale=0.6]
        \draw (-4,4)--(-6,0)--(-6,-4)--(-10,4);

        \draw[-Stealth] (-5,2)--node [above right]{$\mathfrak{v}_2=[(1,-\frac{1}{2})^\top]$} (-4,1.5);
        \draw[-Stealth] (-6,-2)--node [above right] {$\mv_1=[(2,0)^\top]$} (-5,-2);
        \draw[-Stealth] (-9,2)--node [above right]{$\mathfrak{v}_0=[(1,\frac{1}{2})^\top]$} (-8,2.5);

        \filldraw[color=gray!50,fill=gray!50,opacity=0.3](-4,4)--(-6,0)--(-6,-4)--(-10,4);

        \node at (-9.5,-2) {$\mR_{\dTN^-,l}$};

        \node at (10.5,-2) {$\mR_{\TB^-,l}$};
        
        \draw [draw = red,decorate,decoration={brace,amplitude=5pt,mirror,raise=4ex}]
        (-5.1,0) -- (-5.1,-4) node[midway,yshift=-3em]{};
        \node at (-7,-2) {$l$};

        \draw [draw = red,decorate,decoration={brace,amplitude=5pt,mirror,raise=4ex}]
        (3,-1.1) -- (7,-1.1) node[midway,yshift=-3em]{};
        \node at (5,-3) {$l$};

        \draw (1,2)--(3,-2) --(7,-2)--(9,2);

        \draw[-Stealth] (8,0)--node [above right]{$\mathfrak{v}_2=[(1,-\frac{1}{2})^\top]$} (9,-0.5);
        \draw[-Stealth] (5,-2)--node [right] {$\mv_1=[(0,1)^\top]$} (5,-1);
        \draw[-Stealth] (1.5,1)--node [below right]{$\mathfrak{v}_0=[(1,\frac{1}{2})^\top]$} (2.5,1.5);

        \filldraw[color=gray!50,fill=gray!50,opacity=0.3](1,2)--(3,-2) --(7,-2)--(9,2);
    \end{tikzpicture}
	\caption{The rod structures $\mR_{\dTN^-,l}$ and $\mR_{\TB^-,l}$.}
	 \label{Figure:rod structure for dTN and TB}   
    \end{figure}

Denote by $(\Phi_{\dTN^{-}, l}, \nu_{\dTN^{-}})$ and $(\Phi_{\TB^-, l}, \nu_{\TB^-})$ the corresponding augmented harmonic maps tamed by $\mR_{\dTN^{-}, l}$ and $\mR_{\TB^-, l}$ respectively. Then in each case the cone angles along the rods $\mv_0$ and $\mv_2$ are fixed to be $2\pi$. We denote  
$$\Theta_{\TB^-}(l)=\vartheta_{\Phi_{\TB^-, l}, \nu_{\TB^-, l}}(\mI_1)$$
and 
$$\Theta_{\dTN^{-}}(l)=\vartheta_{\Phi_{\dTN^-, l}, \nu_{\dTN^-, l}}(\mI_1).$$
By \eqref{e:angle continuity}, $\Theta_{\TB^-}$ and $\Theta_{\dTN^{-}}$ are  continuous functions of  $l\in (0, \infty)$. 

\begin{proposition}
	We have $$\lim_{l\rightarrow 0}\Theta_{\TB^-}(l)=\lim_{l\rightarrow 0}\Theta_{\dTN^{-}}(l)=4\pi.$$
\end{proposition}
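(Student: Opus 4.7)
My plan is to treat the limit $l \to 0$ as a degeneration where the two turning points $z_1 = 0$ and $z_2 = l$ merge into a single cluster, and then apply the bubble analysis from Section \ref{sss:limit of cone angles when turning points come together}. In both cases the enhanced rod structure has $\mv_0 = [\bv_{1/2}]$ and $\mv_2 = [\bv_{-1/2}]$, so that $\mv_0 \wedge \mv_2 = 1$ and $P_{\mv_0,\mv_2}$ is the identity. With $\alpha(\mC) = 1$ and $\lambda(\mC) = l^{-1} \to \infty$ we have $\alpha\lambda \to \infty$, so $\sigma(4/\alpha,\lambda) = 1/2$ and hence $\sigma_{\infty,p} = 4 \in (0,\infty)$, $\kappa_{\infty,p} = \infty$. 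By the case analysis in Section \ref{ss:uniform control of tame harmonic maps} the bubble limit is of Asymptotic Type $\Ae$, and its rescaled rod structure $\mR_{\infty,p}'$ (obtained from applying the twist $D_{1/2}$ to the cluster) has infinite rod vectors $[D_{1/2}.\bv_{1/2}] = [\bv_2]$ and $[D_{1/2}.\bv_{-1/2}] = [\bv_{-2}]$, turning points at $0$ and $1$, and middle rod vector $[(0,1)^\top]$ in the $\TB^-$ case, $[(1,0)^\top]$ in the $\dTN^-$ case.

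Next, I would identify the rescaled lattice $\Lambda_{\infty,p} = D_{1/2}.\Lambda$ as the lattice generated by $(1/2, 1)^\top$ and $(1/2, -1)^\top$, which gives a smooth enhancement of $\mR_{\infty,p}'$ (this can be checked directly by verifying that $(1/2,1)^\top$ together with the middle primitive vector generates $\Lambda_{\infty,p}$, and similarly on the other side). The primitive vectors in $\Lambda_{\infty,p}$ along the rod vectors are thus
$\bbv_0^\infty = (1/2, 1)^\top$, $\bbv_2^\infty = -(1/2, -1)^\top = (-1/2, 1)^\top$ for both cases, while $\bbv_1^\infty = (0, 2)^\top$ in the $\TB^-$ case and $\bbv_1^\infty = (1, 0)^\top$ in the $\dTN^-$ case. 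A direct computation gives $\bbv_1^\infty = \bbv_0^\infty + \bbv_2^\infty$ in the $\TB^-$ case (and with the sign choice $\bbv_2^\infty = (1/2,-1)^\top$ one gets $\bbv_1^\infty = \bbv_0^\infty + \bbv_2^\infty = (1,0)^\top$ in the $\dTN^-$ case). Hence in both cases $\gamma_0 = \gamma_2 = 1$.

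To conclude, since $\mR_{\infty,p}'$ has exactly two turning points and is of Asymptotic Type $\Ae$, Lemma \ref{lem:uniquness of EH} realizes the bubble harmonic map (up to the $SL(2;\R)$ action preserving the infinite rod vectors) as an Eguchi-Hanson harmonic map, and formula \eqref{e:angle equality} therefore applies:
\begin{equation*}
\vartheta_1^\infty = \gamma_0 \vartheta_0^\infty + \gamma_2 \vartheta_2^\infty.
\end{equation*}
Because the cone angles $\vartheta_0 = \vartheta_2 \equiv 2\pi$ are constant along the sequence, the convergence of cone angles in the $\Ae$ bubble, namely \eqref{eqn:AE convergence of cone angles}, yields $\vartheta_0^\infty = \vartheta_2^\infty = 2\pi$. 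Substituting $\gamma_0 = \gamma_2 = 1$ gives $\vartheta_1^\infty = 4\pi$, and the same convergence applied to the middle rod gives $\lim_{l \to 0} \Theta_{\TB^-}(l) = \lim_{l\to 0}\Theta_{\dTN^-}(l) = \vartheta_1^\infty = 4\pi$.

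The only real obstacle is bookkeeping: correctly tracking the twist $D_{\sigma}$ through the rod vectors, the lattice, and the sign conventions on primitive vectors so that the relation $\bbv_1^\infty = \gamma_0 \bbv_0^\infty + \gamma_2 \bbv_2^\infty$ holds with the required positive $\gamma_0, \gamma_2$. Once that is done, the conclusion follows mechanically from the general machinery of Section \ref{sec:degeneration of rod structures} and the Eguchi-Hanson angle identity \eqref{e:angle equality}.
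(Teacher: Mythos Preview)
Your argument is correct and follows essentially the same approach as the paper: identify the single cluster as $l\to 0$, recognize the bubble limit as an $\Ae$ rod structure with two turning points, and invoke the Eguchi--Hanson angle identity \eqref{e:angle equality} (equivalently \eqref{e:angle equality before limit}) together with the convergence of cone angles \eqref{eqn:AE convergence of cone angles} to conclude $\vartheta_1^\infty=\vartheta_0^\infty+\vartheta_2^\infty=4\pi$. The paper's proof simply invokes Proposition~\ref{t:bounded distance to model map}, Proposition~\ref{p:AE rigidity}, and \eqref{e:angle equality before limit} without spelling out the twist $D_{1/2}$, the rescaled lattice, or the explicit verification that $\gamma_0=\gamma_2=1$; your version makes this bookkeeping explicit, which is fine.
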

\begin{proof}
	As $l\rightarrow 0$, both family of rod structures converges to the same limit rod structure $\mR_\infty=\mR^{\TNm}$.  By Proposition \ref{t:bounded distance to model map} in both cases we know there is a unique  bubble limit $\Phi_{\infty, 1}$ which is tamed by a rod structure of  Asymptotic Type  $\Ae$. By Proposition \ref{p:AE rigidity} and \eqref{e:angle equality before limit} we get that 
    $$\lim_{l\rightarrow 0}\Theta_{\TB^-}(l)=\lim_{l\rightarrow 0}\Theta_{\dTN^{-}}(l)=4\pi.$$
\end{proof}

\begin{proposition}
  We have $$\lim_{l\rightarrow \infty}\Theta_{\TB^-}(l)=0,$$
  and $$\lim_{l\rightarrow \infty}\Theta_{\dTN^{-}}(l)= 4\pi.$$
\end{proposition}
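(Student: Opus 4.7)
The plan is to apply Proposition \ref{p:cone angle TN rod length go to infinity} directly in both cases. Since the rod vectors $\mv_0 = [\bv_{1/2}]$, $\mv_2 = [\bv_{-1/2}]$, and the middle rod vector $\mv_1$ are all fixed while only the length $l$ varies, the families $(\mR_{\dTN^{-}, l}, \Lambda)$ and $(\mR_{\TB^{-}, l}, \Lambda)$ both lie in $\mathcal{S}(\ALFm \mid \mv_0, \mv_1, \mv_2)$ with $\ms(\mR_l) = l \to \infty$. Thus the hypotheses of the proposition are satisfied; all that remains is to identify the primitive rod vector along $\mI_1$ and determine whether its limit is contained in the infinite rod of the limit rod structure $\mR_\infty'$ constructed in Section \ref{sss:limit of cone angles with turning points go to infinity}.

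For $\mR_{\dTN^{-}, l}$, the key observation is that the middle rod vector $\mv_1 = [(2, 0)^{\top}] = [\bv_0]$ already coincides with the asymptotic rod vector. Tracing through the twisted rescaling $\widehat{\mR}_i = D_{\sqrt{l_i}}.\varphi_{1/l_i}^{*}\mR_{\dTN^{-}, l_i}$, the vectors $\mv_0$ and $\mv_2$ converge to $[\bv_0]$ while $\mv_1$ stays fixed at $[\bv_0]$. Hence every rod of the limit weak rod structure $\mR_\infty$ carries vector $[\bv_0]$, so after merging, $\mR_\infty'$ has no turning points and consists of a single infinite rod; in particular the limit rod of $\mI_1$ is contained in it. Taking the primitive rod vector $\bbv_1 = (2, 0)^{\top}$, Proposition \ref{p:cone angle TN rod length go to infinity} gives
$$\lim_{l \to \infty}\Theta_{\TB^{-}}(l) \text{ analog: } \lim_{l \to \infty}\Theta_{\dTN^{-}}(l) = 2\pi \cdot 2 = 4\pi.$$

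For $\mR_{\TB^{-}, l}$ the primitive rod vector is $\bbv_1 = (0, 1)^{\top}$. Since $\mv_1 = [(0, 1)^{\top}]$ is fixed by the projective action of $D_{\sqrt{l_i}}$ while $\mv_0$ and $\mv_2$ again collapse to $[\bv_0]$, the limit $\mR_\infty'$ consists of three distinct rods with vectors $[\bv_0]$, $[(0, 1)^{\top}]$, $[\bv_0]$, and the limit of $\mI_1$ is the finite middle rod of $\mR_\infty'$, so it does not belong to the infinite rod. However the first coordinate of $(a, b) = (0, 1)$ vanishes, placing us in the second case of the proposition and yielding
$$\lim_{l \to \infty}\Theta_{\TB^{-}}(l) = 2\pi \cdot 0 = 0.$$

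The bubbling analysis that underlies both limits is already encoded in Proposition \ref{p:cone angle TN rod length go to infinity}; the only remaining work is a direct computation of the degeneration of the three rod vectors under the twisted scaling by $D_{\sqrt{l}}$, which is immediate from their explicit form. Consequently I do not anticipate any substantial obstacle.
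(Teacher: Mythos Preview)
Your approach is correct and is exactly what the paper does: both limits follow immediately from Proposition \ref{p:cone angle TN rod length go to infinity}, with $(a,b)=(2,0)$ in the $\dTN^-$ case (the middle rod merges into the infinite rod, giving $2\pi|a|=4\pi$) and $(a,b)=(0,1)$ in the $\TB^-$ case (the $a=0$ clause applies, giving $0$). The extra detail you supply about the twisted rescaling is fine but unnecessary for the formal proof.
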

\begin{proof}
  This is an immediate consequence of Proposition \ref{p:cone angle TN rod length go to infinity}.
\end{proof}

As a consequence there must exist an $l>0$ such that $(\Phi_{\TB^-, l}, \nu_{\TB^-, l})$ defines a gravitational instanton of Asymptotic Type $\ALF^-$. Indeed, it is of Type $\II$ and by Proposition \ref{c:classification of toric GI}, we know there is exactly one $l$ such that $\Theta_{\TB^-}(l)=2\pi$, which isometrically is the \emph{Taub-Bolt metric}. Indeed, the Taub-Bolt metric (up to scaling) can be explicitly written as 
\begin{equation}\label{eq:explicit Taub-bolt}
    f(r)(d\phi_2+\frac{1}{2}\cos\theta d\phi_1)^2+\frac{dr^2}{f(r)}+(r^2-\frac{1}{16})(d\theta^2+\sin^2\theta d\phi_1^2)
\end{equation}
with 
$$f(r)=\frac{r^2+\frac{1}{16}-\frac{5}{8}r}{r^2-\frac{1}{16}},\quad r\geq \frac{1}{2}\text{ and }\theta\in[0,\pi].$$
The enhancement is generated by $(1,\frac{1}{2})^\top,(1,-\frac{1}{2})^\top$, with
$$\rho=\sqrt{r^2-\frac{5}{8}r+\frac{1}{16}}\sin\theta,\quad z=(r-\frac{5}{16})\cos\theta+\frac{3}{16}.$$
One can check that the rod structure is exactly given by the $\mR_{\TB^-,l}$ in Figure \ref{Figure:rod structure for dTN and TB} with here $l=\frac{3}{8}$. 
We shall refer to the Taub-Bolt metric with the orientation $d\rho\wedge dz \wedge d\phi_1\wedge d\phi_2$ as the \emph{anti-Taub-Bolt space} (so with the opposite orientation there is the \emph{Taub-Bolt space}). Note that the natural $S^2$  has self-intersection $-1$.

Note that similar discussion in the $\dTN^-$ case is not sufficient to yield a gravitational instanton. This is not surprising, since if we consider the opposite rod structure which we denote by $\mR_{\dTN^+_l}$, then one can see that $d(\mR_{\dTN^+_l})=0$ and the associated harmonic map must be of Type $\I$ in the sense of Definition \ref{def:Type IIIIII} by Theorem \ref{thm:equivalence of degree and type}. 
As discussed in Remark \ref{rem:angle invariant in Type I}, in the Type $\I$ case the cone angle functions do not depend on the lengths of the rods. In particular, going back to $\mR_{\dTN^-}$, this implies that for all $l>0$, $\Theta_{\dTN^{-}}=4\pi$. One can take an obvious $\dZ_2$ quotient to obtain a gravitational instanton of Asymptotic Type $\ALF^-$. The latter is exactly a negatively oriented double Taub-NUT space (by Proposition  \ref{c:classification of toric GI}).

\subsubsection{The \texorpdfstring{$\ALFp$}{} case}
We can also recover the \emph{Taub-Bolt space}. Recall the metric is explicitly given by \eqref{eq:explicit Taub-bolt}. This time we consider the rod structure $\mR_{\TB^+,l}$ with turning points $z_1=0,z_2=l>0$ and with the enhancement $\Lambda$ generated by primitive rod vectors
$$\bbv_0=(\frac{1}{2},1)^\top,\quad \bbv_1=(1,0)^\top,\quad \bbv_2=(\frac{1}{2},-1)^\top.$$
The Asymptotic Type is taken to be $\ALFp$. See Figure \ref{Figure:positive taub-bolt}.
\begin{figure}[ht]
    \begin{tikzpicture}[scale=0.6]
        \draw (-8,4)--(0,0)--(0,2)--(4,4);

        \draw[-Stealth] (-4,2)--node [above right]{$\mathfrak{v}_0=[(\frac{1}{2},1)^\top]$} (-3.5,3);
        \draw[-Stealth] (3,3.5)--node [right] {$\mv_2=[(\frac{1}{2},-1)^\top]$} (3.5,2.5);
        \draw[-Stealth] (0,0.5)--node [below right]{$\mathfrak{v}_1=[(1,0)^\top]$} (1,0.5);

        \filldraw[color=gray!50,fill=gray!50,opacity=0.3](-8,4)--(0,0)--(0,2)--(4,4);

        \draw [draw = red,decorate,decoration={brace,amplitude=5pt,mirror,raise=4ex}]
        (-1,0) -- (-1,2) node[midway,yshift=-3em]{};
        \node at (1,1) {$l$};
    \end{tikzpicture}
    \caption{The rod structure $\mR_{\TB^+,l}$.}
    \label{Figure:positive taub-bolt}
\end{figure}
As in Section \ref{sss:negative taub bolt}, one can argue that the angle along the rod $\mI_1$ satisfies that $\Theta(l)\to0$ as $l\to\infty$ and $\Theta(l)\to4\pi$ as $l\to0$, hence there is an $l$ such that the harmonic map strongly tamed by $\mR_{\TB^+,l}$ with normalized augmentation and the enhancement $\Lambda$ gives rise to a  toric gravitational instanton of Asymptotic Type $\ALFp$. Notice that $d(\mR_{\TB^+, l})=1$ so by Theorem \ref{thm:equivalence of degree and type} the gravitational instanton is of Type $\II$ in the sense of Definition \ref{def:Type IIIIII}. Again  by Proposition \ref{c:classification of toric GI} this is isometric to the Taub-Bolt metric, explicitly given by \eqref{eq:explicit Taub-bolt}. We shall refer to the Taub-Bolt metric with this orientation as the \emph{Taub-Bolt space}.

\subsection{The general case}\label{ss:general case}

Fix $\beta\in (0, 1)$. Given an integer $n\geq 1$, and $\bl=(l_1, \cdots, l_n)\in \R^n_+$,  we consider the rod structure  $\mathfrak{R}_{\bl}=(\{z_j\}_{j=1}^{n+1},\{\mathfrak{v}_j\}_{j=0}^{n+1}, \AFa)$, where
\begin{itemize}
\item $z_1=0$, $z_{j+1}-z_{j}=l_j$ for $j=1, \cdots, n$;
    \item $\mathfrak{v}_0=\mathfrak{v}_{n+1}=[(1,0)^{\top}]$;
    \item for $j\neq0,n+1$, $\mathfrak{v}_{j}=[(0,1)^{\top}]$ when $j$ is odd, and $\mathfrak{v}_{j}=[(-1,1)^{\top}]$ when $j$ is even.
\end{itemize}
See Figure \ref{Figure:AF rod structure} for an illustration of $\mathfrak{R}_{\bl}$ with  $5$ turning points.
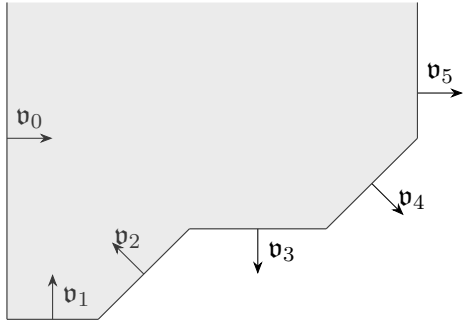
\begin{figure}[ht]
    \begin{tikzpicture}[scale=0.6]
        \draw (0,7)--(0,0);
        \draw (0,0)--(2,0);
        \draw (2,0)--(4,2);
        \draw (4,2)--(7,2);
        \draw (7,2)--(9,4);
        \draw (9,4)--(9,7);

        \draw[-Stealth] (0,4)--node [above]{$\mathfrak{v}_0$} (1,4);
        \draw[-Stealth] (1,0)--node [right]{$\mathfrak{v}_1$} (1,1);
                \draw[-Stealth] (3,1)--node [above]{$\mathfrak{v}_2$} (2.3,1.7);
        \draw[-Stealth] (5.5,2)--node [right]{$\mathfrak{v}_3$} (5.5,1);
                \draw[-Stealth] (8,3)--node [right]{$\mathfrak{v}_4$} (8.7, 2.3);
        \draw[-Stealth] (9,5)--node [above]{$\mathfrak{v}_5$} (10, 5);

        \filldraw[color=gray!50,fill=gray!50,opacity=0.3](0,7)--(0,0) --(2,0)--(4,2)--(7,2)--(9,4)--(9,7);
        
    \end{tikzpicture}
	\caption{A rod structure with 5 turning points.}
	 \label{Figure:AF rod structure}   
\end{figure}

One can compute that 
\begin{equation}\label{eqn:degree formula}
    d(\mathfrak{R}_{\bl})=\lfloor\frac{n+1}{2}\rfloor.
\end{equation}
We will always take the smooth enhancement $\Lambda$ to be the standard lattice $\dZ^2$. One can check that it gives rise to the  toric 4-manifold diffeomorphic to $X_{n}$ (see \eqref{eqn:definition of Xn}).  We let $(\Phi_{\bl, \beta}, \nu_{\bl, \beta})$ be the augmented harmonic map which is strongly tamed by the enhanced rod structure $(\mR_{\bl}, \Lambda)$ and such that $\nu_{\bl, \beta}$ is normalized.

Theorem \ref{t:main} is a consequence of the following 
\begin{theorem}\label{t:adjusting angle}
There exists an $\bl\in \R^n_+$ such that $(\mR_{\bl}, \Lambda, \Phi_{\bl, \beta},\nu_{\bl, \beta})$ defines a toric $\AF_\beta$ gravitational instanton.
\end{theorem}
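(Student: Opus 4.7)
The plan is to apply the Poincaré–Miranda fixed-point theorem to the continuous cone-angle map
$$
\Theta(\mathbf{l}) := \bigl(\Theta_1(\mathbf{l}),\ldots,\Theta_n(\mathbf{l})\bigr), \qquad \Theta_j(\mathbf{l}) := \vartheta_{\Phi_{\mathbf{l},\beta},\nu_{\mathbf{l},\beta}}(\mathfrak{I}_j),
$$
whose continuity on $\mathbb{R}^n_+$ follows from \eqref{e:angle continuity} and Remark \ref{rem:continuity of cone angles}. Note that because $\mathfrak{v}_0 = \mathfrak{v}_{n+1} = [\bv_0]$, the standard enhancement $\Lambda = \mathbb{Z}^2$ coincides with the canonical lattice $\Lambda_{\mathfrak{R}_{\mathbf{l}}}$ of Definition \ref{def:canonical enhancement}, so the cone angles on the two infinite rods are automatically $2\pi$ by Proposition \ref{prop:smooth end lattice}. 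On a box $B = [\epsilon, K]^n$ with $\epsilon$ small and $K$ large, it suffices to establish $\Theta_j > 2\pi$ on $\{l_j = \epsilon\}\cap B$ and $\Theta_j < 2\pi$ on $\{l_j = K\}\cap B$ for each $j$; Poincaré–Miranda then yields $\mathbf{l}^* \in B^\circ$ with $\Theta(\mathbf{l}^*) = (2\pi,\ldots,2\pi)$, and Theorem \ref{t:from tame harmonic to conical metric} produces a smooth $\AF_\beta$ gravitational instanton on $X_n$.

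The large-face bound $\Theta_j < 2\pi$ is the more transparent task. Any sequence $\mathbf{l}^{(i)}$ with $l_j^{(i)} = K_i \to \infty$ and $l_k^{(i)} \in [\epsilon, K_i]$ for $k \neq j$ has $\mathfrak{s}(\mathfrak{R}_{\mathbf{l}^{(i)}}) \to \infty$, so Section \ref{sss:limit of cone angles with turning points go to infinity} applies. Under the twisted rescaling, a rod vector $[(a,b)^\top]$ converges to $[\bv_0]$ unless $a + \beta b = 0$; since $\beta \in (0,1)$, none of our rod vectors $[(1,0)^\top], [(0,1)^\top], [(-1,1)^\top]$ satisfies this, so every scaled rod vector converges to $[\bv_0]$ and the associated limit $\widehat{\mathfrak{R}}'_\infty$ is trivial of Asymptotic Type $\AF_0$. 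Thus every finite rod of $\mathfrak{R}_{\mathbf{l}^{(i)}}$ is absorbed into the single infinite rod of $\widehat{\mathfrak{R}}'_\infty$, and Proposition \ref{p:cone angle AF rod length go to infinity} gives
$$
\Theta_j(\mathbf{l}^{(i)}) \to 2\pi|a_j + \beta b_j| \in \{2\pi\beta,\; 2\pi(1-\beta)\} \subset (0, 2\pi).
$$
A compactness argument then produces the required uniform bound $\Theta_j < 2\pi$ on $\{l_j = K\}\cap B$ once $K$ is sufficiently large.

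The small-face bound $\Theta_j > 2\pi$ is more delicate, since on $\{l_j=\epsilon\}\cap B$ several neighboring $l_k$ may simultaneously be near $\epsilon$. Group the collapsing turning points into clusters and let $\mathfrak{C}$ denote the cluster containing $z_j, z_{j+1}$. If $\mathfrak{C} = \{z_j, z_{j+1}\}$ only, the alternating property of our construction gives $\mathfrak{v}_{j-1} = \mathfrak{v}_{j+1} \neq \mathfrak{v}_j$, so the first bubble at $\mathfrak{C}$ is of $\AF_0$-type with $\mathfrak{v}_j$ a finite rod vector different from the infinite one; Proposition \ref{p:cone angle AF rod length go to zero} then forces $\Theta_j \to \infty$. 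If $\mathfrak{C}$ contains additional turning points, the first non-trivial bubble limit is of $\AF_0$- or $\Ae$-type depending on the parity of the spanned rod count. Iterating the bubble-tree analysis of Section \ref{sec:degeneration of rod structures} through all scales—using Propositions \ref{p:cone angle AF rod length go to zero} and \ref{p:deep scale angle goes to infinity} at $\AF_0$-type bubbles, and the angle comparison Proposition \ref{p:angle comparision before limit} at $\Ae$-type bubbles to propagate the $2\pi$ outer cone angles (fixed by $\Lambda = \mathbb{Z}^2$) inward—forces some $\Theta_k$ for an index $k\in\mathfrak{C}$ to diverge. The alternating combinatorics together with the primitive-vector relations \eqref{e:primitive vector relations} over the standard lattice pin down $k = j$.

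The hard part will be this last step: verifying in an $\Ae$-type bubble cluster with many turning points that the divergent angle is precisely the one attached to $\mathfrak{I}_j$ rather than a collapsing neighbor. The angle comparison supplies only one-sided inequalities, so one must combine it with the exact Eguchi–Hanson angle identity \eqref{e:angle equality} applied to deeper $2$-turning-point sub-bubbles, together with the integrality of the primitive vectors $(a_j,b_j) \in \{(0,1), (-1,1)\}$ in $\mathbb{Z}^2$, to close the argument and rule out ``leakage'' of the divergence onto neighboring rods. Once both boundary estimates are in hand, Poincaré–Miranda yields the desired $\mathbf{l}^* \in B^\circ$, completing the proof of Theorem \ref{t:adjusting angle}.
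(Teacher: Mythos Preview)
Your overall plan is sound and close in spirit to the paper, but there is a genuine gap on the small faces that Poincaré--Miranda does not survive. You need $\Theta_j>2\pi$ on the \emph{entire} face $\{l_j=\epsilon\}\cap B$, including where several neighboring $l_k$ are also near $\epsilon$. Consider the cluster $\{z_{j-1},z_j,z_{j+1},z_{j+2}\}$ with $l_{j-1},l_j,l_{j+1}$ all collapsing at comparable rates: by the alternating pattern $\mathfrak v_{j-2}=\mathfrak v_j=\mathfrak v_{j+2}$, so the first bubble is of $\AF_0$ type with infinite rod vector $\mathfrak v_j$. Proposition~\ref{p:cone angle AF rod length go to zero} then forces $\Theta_{j-1},\Theta_{j+1}\to\infty$, but says \emph{nothing} about $\Theta_j$, since $\mathfrak v_j$ coincides with the bubble's infinite rod vector. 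In the $\Ae$ case the situation is worse: Proposition~\ref{p:angle comparision before limit} gives only $\liminf\Theta_{j_a+1}\ge\limsup\Theta_{j_b}$, which is an inequality between two coordinates rather than a lower bound on any one of them. The ``pinning down $k=j$'' step you flag as the hard part is not just hard---there is no reason for it to be true in general.

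The paper sidesteps this by abandoning the coordinate-wise Miranda sign conditions in favor of a degree argument relative to an auxiliary product map $\mathcal A_0(\mathbf l)=\bigl(e^{-l_1}(2\pi+1),\dots,e^{-l_n}(2\pi+1)\bigr)$. One only needs the straight-line homotopy $t\mathcal A+(1-t)\mathcal A_0$ to avoid $\boldsymbol\vartheta_0$ on $\partial\mathcal Q_{\epsilon,N}$. This is much weaker: it suffices that \emph{some} coordinate $k$ has $\Theta_k$ and $e^{-l_k}(2\pi+1)$ on the same side of $2\pi$, or that two coordinates satisfy a compatible strict inequality. The bubble information (one angle diverging at an $\AF_0$ bubble, or the comparison $\Theta_{j_a+1}\ge\Theta_{j_b}$ together with $e^{-l_{j_a+1}}>e^{-l_{j_b}}$ at an $\Ae$ bubble) is exactly of this form, and never requires identifying the divergent index with the face index $j$. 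Your large-face argument is essentially correct and matches the paper's Lemma~\ref{lem:AF example interpolatable 1}; the new idea you are missing is the homotopy-to-$\mathcal A_0$ trick that decouples the bubble index from the face index.
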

\begin{remark}
    From our proof it is not clear whether $\bl$ depends uniquely or continuously on $\beta$. See Section \ref{sec:discussion} for more discussion.
\end{remark}
\begin{remark}
When $n=1$ or $n=2$, we have $d(\mR_l)=1$, and the gravitational instantons constructed above coincide with the Kerr spaces and the Chen-Teo spaces; when $n\geq 3$ we have $d(\mR_l)>1$, and the gravitational instantons constructed above  is not locally Hermitian. These follow from the general result in Theorem \ref{thm:degree equivalence Type II}, using \eqref{eqn:degree formula} and the classification of Hermitian toric AF gravitational instantons by Biquard-Gauduchon \cite{BG}. 
\end{remark}
Denote by $\vartheta_j(\bl)=\vartheta_{\Phi_{\bl, \beta}, \nu_{\bl, \beta}}(\mI_j)$ the cone angle along the rod $\mI_j$. Since $\nu_{\bl, \beta}$ is normalized we know $\vartheta_0=\vartheta_{n+1}=2\pi$.  We define a map 
\begin{equation}
    \mathcal{A}:\mathbb{R}^n_+\to\mathbb{R}^n_+;(l_1,\ldots,l_n)\mapsto (\vartheta_1(\bl),\ldots,\vartheta_n(\bl)).
    \end{equation}
It suffices to show that the point ${\btheta}_0:=(2\pi,\ldots,2\pi)$ is contained in the image of $\mathcal{A}$. We introduce an auxiliary map 
    $$\mathcal{A}_{0}:\mathbb{R}^n_+\to\mathbb{R}^n_+; (l_1,\ldots,l_n)\mapsto(e^{-l_1}(2\pi+1),\ldots,e^{-l_n}(2\pi+1)).$$
    Define a subset of $\R^n_+$ by
    $$\mathcal S:=\{\bl=(l_1, \cdots, l_n)\in \R_n^+\mid t\mathcal{A}(\bl)+(1-t)\mathcal{A}_0(\bl)\neq \btheta_0, \ \ \forall \ \  t\in [0, 1]\}.$$
For $N>\epsilon>0$, we denote the cube $$\mathcal{Q}_{\epsilon,N} =[\epsilon,N]^n\subset \R_+^n.$$

\begin{proposition} \label{p:boundary avoid}There exist $\epsilon$ and $N$ such that  $$\p\mathcal Q_{\epsilon, N}\subset \mathcal S.$$
\end{proposition}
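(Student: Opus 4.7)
I plan to prove this by contradiction. Suppose no such $\epsilon,N$ exist. Then I can extract sequences $\epsilon_k\searrow 0$, $N_k\nearrow\infty$, $\bl^k\in\partial\mathcal{Q}_{\epsilon_k,N_k}$, and $t_k\in[0,1]$ for which, for every $j=1,\ldots,n$,
\begin{equation*}
t_k\,\vartheta_j(\bl^k)+(1-t_k)\,e^{-l_j^k}(2\pi+1)=2\pi. \tag{$\ast$}
\end{equation*}
After passing to a subsequence I can arrange $l_j^k\to L_j\in[0,\infty]$, $\vartheta_j(\bl^k)\to\vartheta_j^\infty\in[0,\infty]$, and $t_k\to t_\infty\in[0,1]$. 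The boundary condition forces some index $j_0$ with $L_{j_0}\in\{0,\infty\}$. My strategy is to pin down $\vartheta_{j_0}^\infty$ via the bubble-tree analysis of Section~\ref{sec:degeneration of rod structures} and use ($\ast$) to rule out the corresponding value of $t_\infty$.

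In the case where some $L_{j_0}=\infty$ and no $L_j=0$, all $l_i^k$ are bounded below, so $\ms_k=\sum_i l_i^k\to\infty$ and $l_{j_0}^k/\ms_k\ge 1/n$ whenever $l_{j_0}^k=N_k$. I will apply the twisted rescaling of Section~\ref{sss:limit of cone angles with turning points go to infinity}. Since $\beta\in(0,1)$, each interior rod vector $[(0,1)^\top]$ or $[(-1,1)^\top]$ of $\mR_{\bl}$ differs from $[(-\beta,1)^\top]$, and the twist $U_{-\beta}D_{\sqrt{\ms_k}}U_{\beta}$ sends every rod vector of $\mR_{\bl}$ to one converging to $[(1,0)^\top]$. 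Hence the merged limit $\widehat{\mR}_\infty'$ consists of a single infinite rod, and the non-degenerate rescaled rod $\widehat{\mI}_{j_0}^\infty$ lies inside it. Proposition~\ref{p:cone angle AF rod length go to infinity} then gives $\vartheta_{j_0}^\infty=2\pi|a+\beta b|$, which equals $2\pi|\beta|$ or $2\pi(1-\beta)$—strictly less than $2\pi$. Using $e^{-l_{j_0}^k}(2\pi+1)\to 0$, ($\ast$) at $j_0$ forces $t_\infty=2\pi/\vartheta_{j_0}^\infty>1$, a contradiction.

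In the case some $L_{j_0}=0$, the alternating structure of $\mR_{\bl}$ does most of the work. For an interior index $2\le j_0\le n-1$ with only $l_{j_0}^k\to 0$, the outer vectors of the cluster $\{z_{j_0},z_{j_0+1}\}$ satisfy $\mv_{j_0-1}=\mv_{j_0+1}$ by alternation, so the first non-trivial bubble is of Asymptotic Type $\AF_0$ with $\mv_{j_0}$ distinct from the common outer vector; Proposition~\ref{p:cone angle AF rod length go to zero} yields $\vartheta_{j_0}^\infty=\infty$. For larger interior clusters the same conclusion follows, possibly after one or more descents in the bubble tree via Proposition~\ref{p:deep scale angle goes to infinity}. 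Once $\vartheta_{j}^\infty=\infty$ is known for some $j$ in the cluster, ($\ast$) at $j$ requires $t_\infty=0$, but then $(1-t_\infty)\,e^{-L_j}(2\pi+1)=2\pi+1\ne 2\pi$, a contradiction. For the boundary indices $j_0=1$ (symmetrically $j_0=n$), the first non-trivial bubble is of Type $\Ae$ with three rods, and the explicit formula \eqref{e:angle equality before limit}—with $\gamma_0=\gamma_2=1$ coming from the primitive-vector relation $(0,1)^\top=(1,0)^\top+(-1,1)^\top$—gives
\[
\vartheta_1^\infty=\vartheta_0^\infty+\vartheta_2^\infty=2\pi+\vartheta_2^\infty,
\]
where $\vartheta_0^\infty=2\pi$ by normalization. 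Cross-referencing ($\ast$) at $j=1$ and $j=2$ rules out the degenerate scenario $\vartheta_2^\infty=0$ (which would force $t_\infty=1$ and then an inconsistent value for $\vartheta_2^\infty$), so $\vartheta_1^\infty>2\pi$ strictly; then ($\ast$) at $j=1$ forces $t_\infty=1/(2\pi+1-\vartheta_1^\infty)>1$, again a contradiction.

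The main obstacle is the bookkeeping for strictly interior clusters whose first non-trivial bubble limit is of Type $\Ae$, where only the angle-comparison inequalities of Proposition~\ref{p:angle comparision before limit AE} are directly available rather than the strict equality \eqref{e:angle equality before limit}. The plan there is to iterate the bubble-tree descent one scale deeper: the alternating rod-vector design of $\mR_{\bl}$ is chosen precisely so that every sufficiently narrow interior sub-cluster has matching outer rod vectors, guaranteeing the eventual appearance of a Type $\AF_0$ bubble in which Proposition~\ref{p:deep scale angle goes to infinity} reinstates the divergence $\vartheta_{j}^\infty=\infty$ for some $j$ in the collapsing cluster, contradicting ($\ast$) as above.
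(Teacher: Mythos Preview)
Your overall contradiction strategy and your treatment of the case ``some $L_{j_0}=\infty$'' and of the $\AF_0$ bubbles are essentially the same as the paper's. The gap is in your handling of strictly interior clusters whose first non-trivial bubble limit is of Asymptotic Type $\Ae$.

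Your plan there is to ``iterate the bubble-tree descent one scale deeper'' until an $\AF_0$ bubble appears, relying on the claim that ``every sufficiently narrow interior sub-cluster has matching outer rod vectors.'' This claim is false. With the alternating rod vectors of $\mR_{\bl}$, an interior cluster consisting of turning points $z_{j_a+1},\ldots,z_{j_b}$ has outer rod vectors $\mv_{j_a}$ and $\mv_{j_b}$, which coincide if and only if $j_a\equiv j_b\pmod 2$, i.e.\ if and only if the cluster contains an \emph{even} number of turning points. More importantly, ``going deeper'' is only possible when there is further sub-clustering; if, say, $l_2^k=l_3^k=\epsilon_k\to0$ at exactly the same rate (with $n\ge4$ and the remaining $l_j$ bounded), the rescaled cluster $\{z_2,z_3,z_4\}$ has turning points at distances comparable to $1$, so the bubble tree terminates at this $\Ae$ bubble and there is no deeper scale to descend to. No $\AF_0$ bubble ever appears, and Proposition~\ref{p:deep scale angle goes to infinity} is inapplicable.

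The paper handles this case with the angle comparison Proposition~\ref{p:angle comparision before limit}: in the $\Ae$ bubble one has $\lim\vartheta_{j_a+1}\ge\lim\vartheta_{j_b}$, while $l_{j_a+1}^k\to0$ and $l_{j_b}^k$ is bounded away from $0$ (and, crucially, bounded above because the paper first disposes of the large-length case), so $e^{-l_{j_a+1}^k}(2\pi+1)>e^{-l_{j_b}^k}(2\pi+1)$. These two inequalities together force the $(j_a+1)$-component of the interpolation to strictly exceed the $j_b$-component, so both cannot equal $2\pi$ simultaneously. You acknowledged this inequality is available but tried to avoid it; it is in fact the key ingredient for this case. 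Your treatment of the boundary case $j_0=1$ via \eqref{e:angle equality before limit} likewise assumes a two-turning-point bubble and does not cover, e.g., $l_1^k=l_2^k\to0$ at the same rate; the angle comparison handles the boundary clusters uniformly.
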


\begin{proof}[Proof of Theorem \ref{t:adjusting angle}]
   We prove by contradiction. Suppose $\btheta_0$ is not in the image of $\mathcal A$. Denote by $\pi$ the radial projection map from $\R^n_+\setminus \{\btheta_0\}$ to the sphere $\mathbb S^{n-1}$.  Let $\epsilon$ and $N$ be given in Proposition \ref{p:boundary avoid}. Then the map $$\pi\circ \mathcal A|_{\p \mathcal Q_{\epsilon, N}}: {\p \mathcal Q_{\epsilon, N}}\rightarrow \mathbb S^{n-1}$$ is homotopic to $\pi\circ \mathcal A_0|_{\p\mathcal Q_{\epsilon, N}}$. It is clear that $\pi\circ \mathcal A_0|_{\p\mathcal Q_{\epsilon, N}}$ has a nonzero mod-2 degree but $\pi\circ \mathcal A|_{\p\mathcal Q_{\epsilon, N}}$ is contractible. Contradiction. 
\end{proof}   

\

Now we prove Proposition \ref{p:boundary avoid}.

\begin{lemma}\label{lem:AF example interpolatable 1}
    There exists an $N_0>0$, such that 
    $$\mathcal{C}_{N_0}:=\{(l_1,\ldots,l_n)\mid l_j>N_0 \ \text{for some $j$}\}\subset \mathcal S.$$ 

\end{lemma}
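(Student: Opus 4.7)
The plan is to argue by contradiction. If no such $N_0$ suffices, then we can extract a sequence $\bl^{(i)}\in\mathbb{R}^n_+$ with $\max_k l_k^{(i)}\to\infty$ and $\bl^{(i)}\notin\mathcal{S}$, so there exist $t_i\in[0,1]$ with
\begin{equation*}
t_i\mathcal{A}(\bl^{(i)})+(1-t_i)\mathcal{A}_0(\bl^{(i)})=\btheta_0.
\end{equation*}
Passing to subsequences, we may fix a single index $j$ for which $l_j^{(i)}\to\infty$, and assume $t_i\to t_\infty\in[0,1]$. Since $\ms_i:=\ms(\mathfrak{R}_{\bl^{(i)}})\geq l_j^{(i)}\to\infty$, the setup of Section \ref{sss:limit of cone angles with turning points go to infinity} applies. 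Looking at the $j$-th coordinate of the interpolation identity,
\begin{equation*}
t_i\vartheta_j(\bl^{(i)})+(1-t_i)e^{-l_j^{(i)}}(2\pi+1)=2\pi,
\end{equation*}
the second summand vanishes in the limit, giving $t_\infty\cdot\lim_i\vartheta_j(\bl^{(i)})=2\pi$. The goal is to force $\lim_i\vartheta_j(\bl^{(i)})<2\pi$, so that $t_\infty>1$, producing the desired contradiction.

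To identify $\lim_i\vartheta_j(\bl^{(i)})$ we will invoke Proposition \ref{p:cone angle AF rod length go to infinity} applied to the twisted rescaled family $\widehat{\mathfrak{R}}_i=(U_{-\beta}D_{\sqrt{\ms_i}}U_\beta).\varphi_{1/\ms_i}^*\mathfrak{R}_{\bl^{(i)}}$. Under this rescaling each rod vector $\mathfrak{v}_k$ is transformed to $(U_{-\beta}D_{\sqrt{\ms_i}}U_\beta).\mathfrak{v}_k$, and a direct matrix calculation shows that this converges to $[(1,0)^\top]$ as $\ms_i\to\infty$ whenever $\mathfrak{v}_k\neq[(-\beta,1)^\top]$. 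In the construction of Section \ref{ss:general case} the rod vectors lie in $\{[(1,0)^\top],[(0,1)^\top],[(-1,1)^\top]\}$, and none of these coincide with $[(-\beta,1)^\top]$ for $\beta\in(0,1)$. Consequently every rod vector of the weak limit $\widehat{\mathfrak{R}}_\infty$ equals $[(1,0)^\top]$, the associated natural rod structure $\widehat{\mathfrak{R}}_\infty'$ is the trivial $\AF_\beta$ rod structure with no turning points, and every $\mathfrak{I}_k^i$ has its limit rod contained in the (unique) infinite rod of $\widehat{\mathfrak{R}}_\infty'$.

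With this verification, Proposition \ref{p:cone angle AF rod length go to infinity} yields for every interior $k$
\begin{equation*}
\lim_{i\to\infty}\vartheta_k(\bl^{(i)})=2\pi|a_k+\beta b_k|,
\end{equation*}
where $(a_k,b_k)^\top$ is a primitive representative of $\mathfrak{v}_k$. This evaluates to $2\pi\beta$ for odd $k$ and $2\pi(1-\beta)$ for even $k$, both of which lie strictly in $(0,2\pi)$ because $\beta\in(0,1)$. Specializing to $k=j$ in the limit equation $t_\infty\cdot\lim_i\vartheta_j(\bl^{(i)})=2\pi$ produces $t_\infty>1$, contradicting $t_\infty\in[0,1]$. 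The main obstacle in the argument is precisely the verification that the hypothesis of Proposition \ref{p:cone angle AF rod length go to infinity} is met, i.e.\ that $\widehat{\mathfrak{R}}_\infty'$ is trivial so that every rod's limit lies in its infinite rod; this relies crucially on the design of $\mathfrak{R}_{\bl}$, whose rod vectors all avoid the exceptional direction $[(-\beta,1)^\top]$ and therefore all collapse to $[(1,0)^\top]$ under the twisted rescaling.
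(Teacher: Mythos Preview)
Your proof follows the same approach as the paper's, but there is a small technical gap. You fix ``a single index $j$ for which $l_j^{(i)}\to\infty$'', yet Proposition~\ref{p:cone angle AF rod length go to infinity} applies only to rods that survive as rods of the weak limit $\widehat{\mathfrak{R}}_\infty$ after rescaling by $\ms_i^{-1}$: the phrase ``the corresponding limit rod $\mathfrak{I}$'' in its statement presupposes that $l_j^{(i)}/\ms_i$ stays bounded below, so that $\mathfrak{I}_j^i$ does not collapse to a point in $\mathcal{T}$. An arbitrary index with $l_j^{(i)}\to\infty$ need not satisfy this (e.g.\ $l_1^{(i)}=i$, $l_2^{(i)}=i^2$), so your invocation of the proposition is not justified as written. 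The paper avoids this by choosing $j_0$ with $l_{j_0}^{(i)}=\max_k l_k^{(i)}$, which forces $l_{j_0}^{(i)}/\ms_i\geq 1/n$; your argument becomes correct with this one-word change.

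Your verification that $\widehat{\mathfrak{R}}_\infty'$ is the trivial $\AF_\beta$ rod structure (because no $\mathfrak{v}_k$ equals $[(-\beta,1)^\top]$ for $\beta\in(0,1)$) is correct and is the key reason Proposition~\ref{p:cone angle AF rod length go to infinity} applies; the paper's proof states this more tersely as $\lim\vartheta_{j_0}\leq 2\pi\max(\beta,1-\beta)$.
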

\begin{proof}
    Otherwise, there is a sequence $\bl_k=(l_{k, 1}, \cdots, l_{k, n})\notin \mathcal S$ and $j_0\in \{1, \cdots, n\}$ with $$l_{k, j_0}=\max_{j=1,\ldots,n}l_{k,j}\to\infty$$ as $k\rightarrow\infty$. Since $$\lim_{k\rightarrow\infty}e^{-l_{k, j_0}}(2\pi+1)= 0,$$ it follows that
    $\vartheta_{k, j_0}(\bl_k)\geq2\pi$ for $k$ large. However, according to Proposition \ref{p:cone angle AF rod length go to infinity},  we must have $$\lim_{k\rightarrow\infty}\vartheta_{j_0}(\bl_k)\leq 2\pi\max(\beta, 1-\beta)<2\pi,$$which is a contradiction.
\end{proof}

\begin{lemma}
   There exits an $\epsilon_0>0$ small such that 
    $$\mathcal{C}_{\epsilon_0, N_0}:=\{(l_1,\ldots,l_n)\mid\text{$l_j<2N_0$ for all $j$, and  $l_j<\epsilon_0$ for some $j$}\}\subset\mathcal S.$$
\end{lemma}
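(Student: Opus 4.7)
The plan is to argue by contradiction, following the template of the preceding lemma. Suppose no such $\epsilon_0$ exists. Then there is a sequence $\bl_k=(l_{k,1},\ldots,l_{k,n})$ with $\bl_k\notin\mathcal{S}$, $l_{k,i}<2N_0$ for all $i$, and $\min_i l_{k,i}\to 0$, together with $t_k\in[0,1]$ realizing $t_k\mathcal{A}(\bl_k)+(1-t_k)\mathcal{A}_0(\bl_k)=\btheta_0$. After passing to a subsequence, I would assume each $l_{k,i}\to l_{\infty,i}\in[0,2N_0]$, each $\vartheta_j(\bl_k)\to c_j\in[0,\infty]$, and $t_k\to t_\infty$. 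Set $I_0:=\{i:l_{\infty,i}=0\}\neq\emptyset$. For each $j\in I_0$, the $j$-th limit equation reads $t_\infty c_j+(1-t_\infty)(2\pi+1)=2\pi$, which forces $t_\infty>0$ (else the left-hand side converges to $2\pi+1>2\pi$) and $c_j=2\pi+1-t_\infty^{-1}=:c^*\leq 2\pi$, a value independent of $j\in I_0$. The argument then reduces to exhibiting some $j\in I_0$ with $c_j>2\pi$ (or $c_j=\infty$), which will contradict $c_j=c^*\leq 2\pi$.

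Next I would apply the bubble analysis of Section \ref{sec:degeneration of rod structures} to the sequence $\mR_{\bl_k}$. The collapsed turning points form clusters in the weak limit $\mR_\infty$. Pick any cluster $\mC$ and let $\mv_L,\mv_R$ denote the rod vectors bordering $\mC$. By the construction of $\mR_{\bl}$ in Section \ref{ss:general case}, rod vectors lie in $\{(1,0)^\top,(0,1)^\top,(-1,1)^\top\}$, any two of which are distinct, and adjacent rod vectors in $\mR_{\bl_k}$ always differ. If $\mv_L=\mv_R$, the first bubble limit at $\mC$ has Asymptotic Type $\AF_0$; then the first finite rod $\mI_j$ inside $\mC$ has rod vector different from $\mv_L$ (by adjacent-distinctness), so Proposition \ref{p:cone angle AF rod length go to zero} gives $c_j=\infty$, a contradiction. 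If $\mv_L\neq\mv_R$ and $\mC$ touches one of the infinite rods of $\mR_{\bl_k}$ (so $\mv_L$ or $\mv_R$ equals $(1,0)^\top$), the first bubble limit has Asymptotic Type $\Ae$ and the corresponding infinite rod of the bubble has cone angle $\vartheta_0(\bl_k)\equiv 2\pi$. Applying Proposition \ref{p:angle comparison}, upgraded to a strict inequality via the Eguchi--Hanson identity \eqref{e:angle equality} (for our rod vectors the coefficients satisfy $\gamma_0=\gamma_2=1$, giving $\vartheta_1^{\mathrm{bub}}=2\pi+\vartheta_2^{\mathrm{bub}}>2\pi$ whenever the opposite cone angle is positive) or via the rigidity case of Bishop--Gromov when the bubble has multiple finite rods (since such a bubble cannot be a flat Ricci-flat cone), then forces $c_j>2\pi$ for the adjacent finite rod, yielding the contradiction.

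The hard part will be handling interior clusters of Case B --- those with $\mv_L\neq\mv_R$ but neither equal to the boundary vector $(1,0)^\top$ --- for which the direct input $\vartheta_0^{\mathrm{bub}}=2\pi$ on some infinite rod of the bubble is unavailable. For these I would argue iteratively: the rods of $\mR_{\bl_k}$ immediately bordering such an interior cluster are non-collapsed, and their limiting cone angles satisfy their own limit equations of the form $(*)$; combining these with the Eguchi--Hanson-type relations in the $\Ae$ bubble (and the angle comparison of Proposition \ref{p:angle comparison} applied to larger bubbles), and exploiting the alternating pattern of rod vectors $(0,1)^\top, (-1,1)^\top, (0,1)^\top, \ldots$ to propagate the constraint $c^*\leq 2\pi$ across a chain of non-collapsed rods, one eventually reaches the outermost finite rod $\mI_1$ or $\mI_n$, which borders an infinite rod of $\mR_{\bl_k}$ where the cone angle equals $2\pi$ exactly; at that step strict angle comparison closes the argument. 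In particular, this iterative step requires that in our construction no two distinct rod vectors in $\{(1,0)^\top,(0,1)^\top,(-1,1)^\top\}$ coincide, which ensures strictness at each link of the chain.
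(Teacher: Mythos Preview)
Your setup and reduction to the claim $c_j=c^*\le 2\pi$ for every collapsed index $j\in I_0$ is clean and correct. There are two genuine issues, however.

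First, in your $\AF_0$ case you assume the first rod inside the cluster survives in the first bubble limit. In general it need not: the first bubble $\mR_{\infty,p}'$ can be trivial (a single rod), in which case Proposition~\ref{p:cone angle AF rod length go to zero} does not apply at this scale. The paper handles this by passing to the first \emph{non-trivial} deeper bubble and invoking Proposition~\ref{p:deep scale angle goes to infinity}; you omit this step. The same omission occurs for a trivial $\Ae$ first bubble, which the paper also reduces to the $\AF_0$ situation via Proposition~\ref{p:deep scale angle goes to infinity}.

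Second, and more importantly, your division of the $\Ae$ case into ``touching an original infinite rod'' versus ``interior cluster'' is unnecessary, and your iterative propagation scheme for the latter is both vague and avoidable. The paper's argument treats all non-trivial $\Ae$ bubbles uniformly by a single comparison. The two rods $\mI_{j_a}$, $\mI_{j_b}$ bordering the cluster are non-collapsed; since $\mv_0=\mv_{n+1}$, at most one of $j_a,j_b$ lies in $\{0,n+1\}$, so at least one of them, say $j_b$, lies in $\{1,\dots,n\}$. The angle comparison (Proposition~\ref{p:angle comparision before limit}) applied to the bubble gives $c_{j'}\ge c_{j_b}$ for a suitable collapsed index $j'\in I_0$. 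Now compare the two limit equations: from $j'$ you get $t_\infty c^*+(1-t_\infty)(2\pi+1)=2\pi$, while from $j_b$ (with $l_{\infty,j_b}\ge\delta>0$) you get $t_\infty c_{j_b}+(1-t_\infty)e^{-l_{\infty,j_b}}(2\pi+1)=2\pi$. Subtracting yields $c_{j_b}>c^*$ whenever $t_\infty<1$, contradicting $c^*=c_{j'}\ge c_{j_b}$; the borderline $t_\infty=1$ can be handled via strictness in Bishop--Gromov (a non-trivial $\Ae$ bubble is never a flat cone). The point is that you never need to show $c_{j'}>2\pi$, nor to reach the original infinite rods: the contradiction comes from comparing a collapsed rod with a \emph{neighboring non-collapsed finite rod}, and this works for interior clusters without any iteration.
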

\begin{proof}
    Otherwise, there exists a sequence $\bl_k\notin \mathcal S$ and $j_0$ with $\sup_{j}l_{k, j}<2N_0$ and $l_{k,j_0}\rightarrow 0$ as $k\rightarrow\infty$. Passing to a suitable subsequence we may obtain a bubble tree convergence of the rod structures. We pick a first  bubble limit $(\Phi_{\infty, p},\mR_{\infty, p}')$ as shown in Section \ref{ss:uniform control of tame harmonic maps}. There are two cases.

    \
    
    {\bf Case (1)}: $\mR_{\infty, p}'$ is of  Asymptotic Type  $\AF_0$. If $\mR_{\infty, p}'$ has more than one rod, then we must see a finite rod  $\mI$ of $\mR_{\infty, p}'$ whose rod vector is different from the infinite rod vector. Suppose $\mI$ contains the rescaled  limit of the rod $\mI_{j_{\flat}}$. 
     By Proposition \ref{p:cone angle AF rod length go to zero} we know the cone angle $\vartheta_{j_\flat}(\bl_k)\rightarrow\infty$. Since $l_{k,j_\flat}\to0$,  for $k$ large we have  $$e^{-l_{k, j_\flat}}(2\pi+1)>2\pi.$$ So $\bl_k\in \mathcal S$. This is a contradiction. 

If  $\mR_{\infty, p}'$ has only one rod, i.e., $\mR_{\infty, p}'$ is trivial, then we can pass to smaller scales and go to a first non-trivial bubble limit, which must be of Asymptotic Type $\AF_0$. It follows from Proposition \ref{p:deep scale angle goes to infinity} that  there is a $j_\flat$ such that $\vartheta_{j_\flat}(\bl_k)\rightarrow\infty$, so as  before we obtain a contradiction.  

\

{\bf Case (2)}: $\mR_{\infty, p}'$ is of  Asymptotic Type  $\Ae$.  If $\mR_{\infty, p}'$ has no finite rods, i.e., $\mR_{\infty, p}'$ is trivial, then we can pass to smaller scales and go to a first non-trivial bubble limit that is of Asymptotic Type $\AF_0$. Then as in {\bf Case (1)}  we obtain a contradiction.  

Now we assume $\mR_{\infty, p}'$ has a finite rod.  Suppose the $j_a$-th rod $\mathfrak{I}_{j_a}$ and the $j_b$-th rod $\mathfrak{I}_{j_b}$ converge to the infinite rods of $\mR$, where $j_a<j_b$. Then $\mathfrak{I}_{j_a}$ and $\mathfrak{I}_{j_b}$ do not vanish in the original scale. So passing to the $\Ae$ bubble limit and applying the angle  comparison (Proposition \ref{p:angle comparision before limit}) at $z_{j_a+1}$, it follows that we have
    $$\lim_{k\rightarrow\infty}\vartheta_{j_a+1}(\bl_k)\geq \lim_{k\rightarrow\infty}\vartheta_{j_b}(\bl_k).$$
However, since both $\mathfrak{I}_{j_a}$ and $\mathfrak{I}_{j_b}$ survive in the original scale, we have $$2N_0>l_{k, j_a},l_{k, j_b}>\delta$$ for a small constant $\delta$. As $l_{k, j_a+1}\to0$,  for $k$ large we have 
    $$e^{-l_{k,j_a+1}}(2\pi+1)>e^{-l_{k, j_b}}(2\pi+1),$$ 
Hence for $t\in [0, 1]$ and $k$ large enough,  we have
    $$(1-t)\vartheta_{j_a+1}(\bl_k)+te^{-l_{k,j_a+1}}(2\pi+1)>(1-t)\vartheta_{j_b}(\bl_k)+te^{-l_{k,j_b}}(2\pi+1),$$
  
    These imply that $\bl_k\in \mathcal S$. Contradiction.
\end{proof}

Now choose $\epsilon<\epsilon_0$ and $N\in (N_0, 2N_0)$, then we have  $$\partial\mathcal{Q}_{\epsilon,N}\subset \mathcal{C}_{N_0}\cup \mathcal{C}_{\epsilon_0, N_0}\subset \mathcal S.$$ This completes the proof of Proposition \ref{p:boundary avoid}.

\section{Rod structures of low degrees}\label{sec:degree 0 and 1}
\subsection{Ricci-flat metrics with special geometry}\label{sec:7.1}

As in \cite{Li1,Li2}, for convenience of discussion we divide 4-dimensional oriented Ricci-flat metrics into three classes.

\begin{enumerate}
	\item Type $\I$: $W^+\equiv0$. This is equivalent to saying that $g$ is locally K\"ahler (hence locally hyperk\"ahler). 
	\item Type $\II$: $W^+$ has exactly one eigenvalue of multiplicity one everywhere. This is equivalent to saying that $g$ is locally conformally K\"ahler but non-K\"ahler.  
	\item Type $\III$: $W^+$ has three distinct eigenvalues generically.
\end{enumerate}
In the first two cases the Ricci-flat metrics have special geometry. In particular, they are closely related to complex geometry. The latter fact leads to many deep results for Type $\I$ and $\II$ gravitational instantons.

Given a harmonic map $\Phi$ in our setting, we can similarly define its Type. For this purpose, we need to pick an augmentation $\nu$ and consider  the Ricci-flat metric $$g=e^{2\nu}(d\rho^2+dz^2)+\rho\Phi,$$ endowed  with the canonical orientation  given by $d\rho\wedge dz\wedge d\phi_1\wedge d\phi_2$.

\begin{definition}\label{def:Type IIIIII}
    We say a harmonic map $\Phi$ is Type $\I/\II/\III$ if the metric $g$ is Type $\I/\II/\III$ with respect to the canonical orientation. 
\end{definition}
It is easy to see that the Type is independent of the choice of the augmentation $\nu$.
Notice that for $P\in SL_+(2;\mathbb R)$, we know $P. \Phi$ is of the same Type as $\Phi$. But if $P\in SL_-(2;\mathbb R)$, then in general $P. \Phi$ may not be of the same Type as $\Phi$. Given a sequence of harmonic maps $\Phi^k$ that converge to a limit harmonic map $\Phi^\infty$, it is easy to see that if $\Phi^k$ are all Type $\I$ then $\Phi^\infty$ is also Type $\I$, and if $\Phi^k$ are all Type $\II$ then $\Phi^\infty$ can only be Type $\II$ or Type $\I$.

In the case of Type $\I$ and Type $\II$, for integrable toric Ricci-flat metrics, the axisymmetric harmonic maps further simplify to axisymmetric harmonic functions and  one can carry out explicit studies. We will explain this below.

\subsubsection{Type $\I$ case} 
\label{subsubsec:Type I case}

In this case, we will explain that the axisymmetric harmonic map reduces to an axisymmetric harmonic function, using the \emph{Gibbons-Hawking ansatz}.

Consider the local situation where we have a hyperk\"ahler metric $g$. Denote by $\mathbb S$ the 2-sphere of compatible parallel orthogonal complex structures. Notice that $\mathbb S$ is endowed with a natural round metric (it naturally sits inside $\R^3$ as the unit sphere). An isometry $\psi$ of $g$ induces an action on $\mathbb S$ via $\psi. I=\psi^*I$. One can see the latter is an isometric action.  In this way a Killing field $X$ for $g$ generates a Killing vector field $V_X$ on $\mathbb S$.  So, if nonzero, then $V_X$ is an infinitesimal rotation. We may assume it vanishes at one complex structure (say $I$) and rotates the plane generated by $J$ and $K$. 

Next we assume further that $g$ is toric and integrable.  Then we have two commuting Killing fields $X_1$ and $X_2$, so $V_{X_1}$ and $V_{X_2}$ also commute, thus they must be proportional. Up to a linear combination we may assume that $V_{X_1}=0$. This implies that $X_1$ is tri-holomorphic, i.e., its flow preserves all the hyperk\"ahler complex structures. Hence $g$ is locally given by the Gibbons-Hawking construction applied to a positive harmonic function $V$ on some domain in $\R^3$. It is easy to see that, up to an additive constant, $V^{-1}$ is exactly the Ernst potential $\mathcal{E}_1$. By the integrability assumption we have $\Theta_1=\Theta_2=0$, it follows that $V$ is invariant under $X_2$. As we are interested in gravitational instantons where the toric action has a fixed point, the Killing field $X_2$ globally has zero, and it follows that $X_2$ must be a rotation in $\mathbb R^3$ in the Gibbons-Hawking ansatz. Hence $V$ is locally an axisymmetric harmonic function.

To connect to  axisymmetric harmonic maps, we write the flat metric on $\R^3$ as $d\rho^2+dz^2+\rho^2d\phi_2^2$. For an axisymmetric harmonic function $V$ such that $\frac{1}{2\pi}[*dV]$ defines an integral cohomology class,  we can write  down the Gibbons-Hawking ansatz as 
\begin{equation}\label{eq:Type I metric}
    g=V(d\rho^2+dz^2+\rho^2d\phi_2^2)+V^{-1}(d\phi_1+Fd\phi_2)^2
\end{equation}
with $$dF=\rho V_\rho dz-\rho V_zd\rho.$$ The latter admits a solution locally since $V$ is axisymmetric harmonic. Here \eqref{eq:Type I metric} is already in the Weyl-Papapetrou coordinates. The corresponding augmented harmonic map is  explicitly given by
\begin{equation}\label{eqn:GH harmonic map}
    \Phi=\frac{1}{\rho}\left(\begin{matrix}
        V^{-1} & V^{-1}F \\ V^{-1}F & \rho^2 V+V^{-1}F^2
    \end{matrix}\right),
\end{equation}
\begin{equation}\label{eqn:GH augmentation}
\nu=\frac{1}{2}\log V.
\end{equation}
Particularly, when we take $V=1+\frac{1}{2r}$ and $F=\frac{z}{2r}$ we get the Taub-NUT space, discussed in Example \ref{example:positive Taub-NUT}.

More generally,  given $k+1$ distinct points $p_1, \cdots, p_{k+1}\in \Gamma\subset \mathbb R^3$, if we take 
$$V=\sum_{\alpha=1}^{k+1} \frac{1}{2|x-p_\alpha|},$$
then we obtain a  \emph{multi-Eguchi-Hanson space}, which is an $\ALE$-$A_k$ hyperk\"ahler gravitational instanton. If we instead take for $T>0$
$$V=T+\sum_{\alpha=1}^{k+1} \frac{1}{2|x-p_\alpha|},$$
then we obtain a \emph{multi-Taub-NUT space}, which is (up to scaling according to our Definition \ref{def:Type of gravitational instantons}) an $\ALF$-$A_k$ hyperk\"ahler gravitational instanton. In both cases, the underlying 4-manifold is diffeomorphic to the minimal resolution of $\C^2/\mathbb Z_k$.

\subsubsection{Type $\II$ case}
\label{subsubsec:Type II case}
In this case we again reduce the axisymmetric harmonic map  to an axisymmetric harmonic function, using instead the \emph{LeBrun-Tod ansatz}. 

Given a Type $\II$ Ricci-flat metric $g$,  we define a conformal metric 
$$g_K=(2\sqrt{6}|W^+|_g)^{2/3}g.$$
Then by \cite{Der} $g_K$ is a locally K\"ahler metric, which is moreover \emph{extremal} in the sense of Calabi. That is, the scalar curvature function $|s_{g_K}|=(2\sqrt{6}|W^+|_g)^{1/3}$ is not a constant, and  
    $$X_1:=J\nabla_{g_K}s_{g_K}$$ 
is a Hamiltonian Killing field. The LeBrun-Tod ansatz for K\"ahler metrics with a Hamiltonian Killing field now gives that locally,
\begin{equation}\label{eq:Type II metric}
    g=V(d\xi^2+e^u(dx_2^2+dx_3^2))+V^{-1}\eta^2,
\end{equation}
where $(u, V, \eta)$ satisfy the following system of equations
\begin{equation}\label{eq:Toda}
    e^u_{\xi\xi}+u_{x_2x_2}+u_{x_3x_3}=0,
\end{equation}
\begin{equation}\label{eq:Type II V}
    V=-12\xi+6\xi^2u_{\xi},
\end{equation}
\begin{equation}\label{eq:Type II deta}
    d\eta=-\xi^2(\xi^{-2}e^uV)_\xi dx_2dx_3-V_{x_2}dx_3d\xi-V_{x_3}d\xi dx_2.
\end{equation}
Here $x_2+ix_3$ is a local holomorphic coordinate over the K\"ahler quotient and $\xi$ is the moment map for $X_1$. See Equations (28)-(31) in \cite{BG}. To be consistent with \cite{BG}, one can instead work with the scaled Killing field $\sqrt{6k}X_1$, where the only difference for \eqref{eq:Type II metric}-\eqref{eq:Type II deta} is \eqref{eq:Type II V} becomes 
\begin{equation}\label{eq:Type II scaled V}
    V=\frac{1}{k}(-2\xi+6\xi^2u_\xi).
\end{equation}
Later we will see that $k$ has the significance of ensuring $V$ tends to $1$ at infinity in the global setting.

When we have an additional Killing field $X_2$, it preserves $|W^+|_g$ hence the conformal factor $s_{g_K}$, so $X_2$ is also a Killing field for $g_K$. The complex structure $J$ is determined by the non-repeated eigen-two-form of $W^+$ and $g_K$, hence is also preserved by $X_2$, which implies that $X_2$ is holomorphic. Since $X_1=J\nabla_{g_K}s_{g_{K}}$, $X_1$ and $X_2$ commute automatically. Therefore, by rotating  the complex coordinate $x_2+ix_3$ by some $e^{i\theta}$ one can arrange that 
$$X_1=\partial_t, X_2=\partial_{x_3},  \eta=dt-Fdx_3$$ 
for functions $u, V, F$ independent of $x_3$. The Equation \eqref{eq:Toda} becomes
\begin{equation}\label{eqn:reduced Toda}
e^u_{\xi\xi}+u_{x_2x_2}=0.
\end{equation}Performing Ward's B\"acklund transformation \cite{Ward} one can show that the reduced equation is equivalent to the axisymmetric Laplace's equation. More precisely, given an axisymmetric harmonic function $U$, we define $$\xi=\frac12\rho U_{\rho}$$ and $$x_2=-\frac12U_z. $$Restricting to the domain where $\frac{\rho}{4}(U_{zz}^2+U_{\rho z}^2)>0$,  $(\xi,x_2)$ gives rise to local coordinates and $u=\log\rho^2$ is a solution to \eqref{eqn:reduced Toda}. Conversely, any solution to \eqref{eqn:reduced Toda} locally arises in this way. See Proposition 3.5 in \cite{BG} for details. 

Combining the previous Ward ansatz, using the second Killing field $X_2$, we can further reduce \eqref{eq:Type II metric}, \eqref{eq:Toda}, \eqref{eq:Type II scaled V}, and \eqref{eq:Type II deta}  to the following
\begin{equation}\label{eq:Type II metric harmonic map}
    g=e^{2\nu}(d\rho^2+dz^2)+V\rho^2dx_3^2+V^{-1}(dt-Fdx_3)^2,
\end{equation}
\begin{equation}\label{eq:Type II nu harmonic}
    e^{2\nu}=\frac14V\rho^2(U_{\rho z}^2+U_{zz}^2),
\end{equation}
\begin{equation}\label{eq:Type II V harmonic}
    V=-\frac{1}{k}\left(\rho U_\rho+\frac{U_\rho^2U_{zz}}{U_{\rho z}^2+U_{zz}^2}\right),
\end{equation}
\begin{equation}\label{eq:Type II F harmonic}
    F=-\frac{1}{k}\left(-\frac{\rho U_\rho^2 U_{\rho z}}{U_{\rho z}^2+U_{zz}^2}+\rho^2U_z+2H\right).
\end{equation}
Here the function $H$ is the harmonic conjugate of $U$ and the formulae make sense over the domain where $V>0$ and $U_{\rho z}^2+U_{zz}^2>0$. This is Corollary 3.6 in \cite{BG}.  The metric $g$ in \eqref{eq:Type II metric harmonic map} is already in the Weyl-Papapetrou coordinates and one can easily write down the corresponding harmonic map. With $\phi_1=x_3$ and $\phi_2=-t$, the harmonic map is explicitly given by
\begin{equation}\label{eq:Type II harmonic map}
    \Phi=\frac{1}{\rho}\left(\begin{matrix}
        V\rho^2+V^{-1}F^2 & V^{-1}F\\ V^{-1}F & V^{-1}
    \end{matrix}\right).
\end{equation}
Taking $U=\frac12\log\rho^2+2r-z\log\frac{r+z}{r-z}$ with $k=1$, one gets $V=1+\frac{1}{2r}$ and $F=\frac{z}{2r}$, and we obtain the anti-Taub-NUT space, discussed in Example \ref{example:negative Taub-NUT}. As explained in \cite{BG}, such construction can give rise to the Kerr, Taub-Bolt, anti-Taub-Bolt, and Chen-Teo spaces.

In the next subsections  we will deal with the global situation. 

\begin{definition}
    We define the Type of a rod structure $\mR$ to be the Type of the unique harmonic map which is strongly tamed by $\mR$.
\end{definition}
\begin{remark}This notion of \emph{Type} should not be confused with the notion of \emph{Asymptotic Type} introduced in Definition \ref{def:rod structures}.
\end{remark}Clearly from the definition the Type of $\mR$
 is only determined after solving the global harmonic map equation, which is in general difficult to determine. Nevertheless, we will prove the following  theorem in the next two subsections. It gives a complete characterization of the Type in terms of the degree (see Definition \ref{def:degrees}).
\begin{theorem}\label{thm:equivalence of degree and type}
Given a rod structure $\mR$, we have
    \begin{itemize}
        \item $\mR$ is Type $\I$ if and only if it has degree $d(\mathfrak{R})=0$.
        \item $\mR$ is Type $\II$ if and only if it has degree $d(\mathfrak{R})=1$. 
         \item $\mR$ is Type $\III$ if and only if it has degree $d(\mathfrak{R})\geq 2$.
    \end{itemize}
\end{theorem}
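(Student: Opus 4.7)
The plan is to establish three forward implications: degree $0$ $\Rightarrow$ Type $\I$, degree $1$ $\Rightarrow$ Type $\II$, and degree $\geq 2$ $\Rightarrow$ Type $\III$. Since each axisymmetric harmonic map has a unique Type and each rod structure a unique degree, the converse implications (and hence the biconditionals asserted in the theorem) follow automatically by exhaustion.

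\textbf{Degree $0$ via Gibbons--Hawking.} For this direction I would realize every degree $0$ rod structure through the Gibbons--Hawking ansatz. Given $\mR$ with $d(\mR)=0$, after applying a suitable element of $SL_\pm(2;\R)$ to normalize the rod vectors so the tri-holomorphic direction is placed in standard position, I would construct a positive axisymmetric harmonic function
\[
V \;=\; T + \sum_{\alpha=1}^{n} \frac{n_\alpha}{2|x-p_\alpha|}
\]
on $\R^3$, where $p_\alpha$ are the prescribed turning points, the charges $n_\alpha>0$ are dictated by consecutive rod vectors, and $T\geq 0$ is fixed by the Asymptotic Type ($T=0$ for $\Ae$, $T>0$ for $\ALFp$, $V$ constant for $\AF_0$; the remaining types do not admit $d(\mR)=0$). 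A direct computation from \eqref{eqn:GH harmonic map} shows that the rod vectors of the resulting GH metric take the form $[(c_k,-1)^\top]$ where $c_k$ is the value of the connection potential $F$ on the $k$-th rod. Positivity of the charges makes $c_k$ strictly monotone along $\Gamma$, yielding exactly $e(\mR)=1$ (or $0$ for $\AF_0$) and hence degree $0$. Matching the GH augmented harmonic map with $\Phi_\mR$ via the uniqueness in Theorem \ref{t:existence result}, I conclude $\Phi_\mR$ is hyperk\"ahler, i.e.\ Type $\I$.

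\textbf{Degree $1$ via openness.} I would fix turning points $z_1<\ldots<z_n$ and an Asymptotic Type $\sharp$, and let $\mathcal S^{(1)}$ denote the degree $1$ component of $\mathcal S(\sharp\mid z_1,\ldots,z_n)$, which is connected by Proposition \ref{p:degreee topological invariant}. I would show that
\[
\mathcal S^{\II} \;:=\; \{\mR\in\mathcal S^{(1)}\,:\,\mR \text{ is Type } \II\}
\]
is non-empty, open, and closed in $\mathcal S^{(1)}$, and hence coincides with it. Non-emptiness is supplied by explicit Hermitian non-K\"ahler examples (Kerr for $\AFa$, Taub-Bolt for $\ALFp$, anti-Taub-NUT or anti-Taub-Bolt for $\ALFm$, and the $\Ae$ analogs where applicable). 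Openness is the main analytic step: the LeBrun--Tod ansatz \eqref{eq:Type II metric}--\eqref{eq:Type II F harmonic} parametrizes Type $\II$ toric Ricci-flat metrics locally by an axisymmetric harmonic function $U$, and the deformation theory of Biquard--Gauduchon--LeBrun \cite{BGL} ensures that small perturbations of a Type $\II$ rod structure remain locally Hermitian non-K\"ahler. Closedness is immediate from the degree $0$ case already proved: a limit of Type $\II$ harmonic maps is Type $\I$ or Type $\II$, but Type $\I$ would force degree $0$, which is excluded in $\mathcal S^{(1)}$.

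\textbf{Degree $\geq 2$ case and main obstacle.} For a connected component of degree $d\geq 2$, the same clopen reasoning shows $\mathcal S^{\II}$ is either the whole component or empty. It is empty because each such component contains a non-Hermitian example: the $\AFa$ gravitational instantons of Theorem \ref{t:main} with $n\geq 3$ turning points have degree $\lfloor(n+1)/2\rfloor\geq 2$ and are non-Hermitian by \cite{Li2}, and analogous constructions populate the other Asymptotic Types. Hence every degree $\geq 2$ rod structure is Type $\III$. The hardest step of the whole argument is the openness of Type $\II$ in the degree $1$ case. Unlike Gibbons--Hawking, which provides an explicit parametrization of Type $\I$ harmonic maps by positive axisymmetric harmonic functions on $\R^3$, the LeBrun--Tod ansatz is governed by the nonlinear SU($\infty$)-Toda equation, and its connection to the rod structure passes through Ward's B\"acklund transformation, a non-trivial change of coordinates. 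Propagating an infinitesimal deformation of the rod structure through this transformation, while preserving the positivity of $V$ in \eqref{eq:Type II V harmonic} and the global tameness of the resulting axisymmetric harmonic map, is precisely what the rigidity result of \cite{BGL} delivers.
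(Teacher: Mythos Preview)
Your degree $0$ treatment matches the paper's, and your degree $1$ framework (non-empty, open, closed inside the connected degree $1$ component) is also the paper's. Two points need repair, though. First, your non-emptiness step for degree $1$ cites only Kerr, Taub-Bolt, anti-Taub-Bolt, which have at most two turning points; for $\mathcal S(\sharp\mid z_1,\ldots,z_n)$ with $n\geq 3$ you need the full Biquard--Gauduchon construction from \cite{BG} (a Type $\II$ harmonic map for every convex piecewise-affine $f$ with $n$ breakpoints), and you must then compute that the resulting rod structure has degree exactly $1$. The paper does this computation explicitly; without it you have no seed for the clopen argument when $n$ is large. Second, the paper treats the $\Ae$ case separately: degree $1$ $\Rightarrow$ Type $\II$ comes from a limit of shrinking $\ALFp$ rod structures, while Type $\II$ $\Rightarrow$ degree $1$ uses the moment polytope of the compactified conformal K\"ahler metric. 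Your sketch does not distinguish this case.

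The serious gap is your degree $\geq 2$ argument. You propose to run the same clopen reasoning and then exhibit a single non-Hermitian example in each component. But the examples you invoke from Theorem~\ref{t:main} live only in $\AFa$ rod structures with the specific rod vectors of Section~\ref{ss:general case}, and only in degree $\lfloor (n+1)/2\rfloor$; they do not touch the other connected components of $\mathcal S(\sharp\mid z_1,\ldots,z_n)$ nor any $\ALF^\pm$ or $\Ae$ type. The phrase ``analogous constructions populate the other Asymptotic Types'' is not available: the paper lists this as an open question in Section~\ref{sec:discussion}. Worse, the non-Hermitianness of the Theorem~\ref{t:main} metrics is stated in the paper as a \emph{consequence} of Theorem~\ref{thm:degree equivalence Type II} (see the Remark after Theorem~\ref{t:adjusting angle}), so in the toric category this route is circular. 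The paper's fix is clean and avoids all of this: it proves Type $\II$ $\Rightarrow$ degree $1$ directly (via the explicit Biquard--Gauduchon normal form for $\ALF^\pm/\AFa$, and via the moment polytope for $\Ae$). The contrapositive then gives degree $\geq 2$ $\Rightarrow$ not Type $\II$, and combined with your degree $0$ result this forces Type $\III$. You should replace your degree $\geq 2$ paragraph with this direct argument.
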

Together with the discussion at the end of Section \ref{ss:defintion of rod structures} we see that when the number of turning of points is at most 2, either $\mR$ or $\overline {\mR}$ is of Type $\I$ or Type $\II$, i.e., the associated harmonic map has special geometry.

Since Type $\I$ and Type $\II$ harmonic maps are explicitly understood using axisymmetric harmonic functions by the discussion above, Theorem \ref{thm:equivalence of degree and type} gives an interesting PDE result, i.e., an explicit classification of axisymmetric harmonic maps when the rod structure has degree at most $1$, in terms of axisymmetric harmonic functions.

Together with the classification of hyperk\"ahler and Hermitian gravitational instantons, we have 
\begin{proposition}
    \label{c:classification of toric GI}
    Let $(M, g)$ be a simply-connected toric gravitational instanton of Asymptotic Type $\ALF/\AFa$, such that the associated rod structure has degree at most $1$.  Then it is isometric to one in the following list
    \begin{enumerate}
    \item (Hyperk\"ahler case):
   \begin{itemize}

       \item A multi-Taub-NUT space diffeomorphic to  the crepant resolution of $\C^2/\mathbb Z_k$.
   \end{itemize}
    \item (Hermitian, non-K\"ahler case):
    \begin{itemize}
             \item An anti-Taub-NUT space diffeomorphic to $\R^4$;
        \item A Kerr space diffeomorphic to $S^2\times \R^2$ (including the Schwarzschild space);
   
        \item A Taub-Bolt space diffeomorphic to $\mathbb R^4\sharp \C\mathbb P^2$;
   \item An anti-Taub-Bolt space diffeomorphic to $\mathbb R^4\sharp \overline{\C\mathbb P^2}$
        \item A Chen-Teo space diffeomorphic to $S^2\times \R^2\sharp \C\mathbb P^2$.
    \end{itemize}
    \end{enumerate}
\end{proposition}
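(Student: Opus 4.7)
The plan is to split on degree and, in each case, reduce the classification to a known one via the explicit ansatz that Theorem \ref{thm:equivalence of degree and type} makes available.

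First, by Theorem \ref{thm:equivalence of degree and type}, a degree $0$ rod structure forces $(M,g)$ to be Type $\I$ (locally hyperk\"ahler), while a degree $1$ rod structure forces Type $\II$ (locally Hermitian non-K\"ahler). Since $M$ is simply-connected, the local hyperk\"ahler/Hermitian structures extend globally. This dichotomy lets me treat the two cases separately.

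For the degree $0$ case: I first rule out the $\AF_\beta$ asymptotics. A hyperk\"ahler gravitational instanton has cubic or faster volume growth only in the $\ALE/\ALF$ family (the asymptotic cone has to be a metric cone on a spherical space form for $\ALE$ or a half-line for $\ALF$), so the $\AF_\beta$ case with its $2$-dimensional half-plane/orbifold cone for irrational/rational $\beta$ cannot support a hyperk\"ahler metric. This leaves the $\ALF^\pm$ case. Here I invoke Section \ref{subsubsec:Type I case}: by the discussion of the Gibbons-Hawking reduction, one of the toric Killing fields can be chosen triholomorphic, and $(M,g)$ is globally described by a positive axisymmetric harmonic function $V$ on $\mathbb R^3$ with $\tfrac{1}{2\pi}[*dV]$ integral. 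The strong tameness of $\Phi$ by the rod structure $\mathfrak R$ forces $V=T+\sum_{\alpha=1}^{k+1}\tfrac{1}{2|x-p_\alpha|}$ for turning points $p_\alpha\in\Gamma$ and $T>0$ (the $\ALF$ condition gives $T>0$; the positions $p_\alpha$ are exactly the turning points). This is a multi-Taub-NUT space on the minimal resolution of $\mathbb C^2/\mathbb Z_{k+1}$, as listed.

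For the degree $1$ case: Applying the LeBrun-Tod ansatz in Section \ref{subsubsec:Type II case}, in combination with the fact that the Type $\II$ Killing field $X_1=J\nabla_{g_K}s_{g_K}$ is canonical and must commute with the toric action, I can reduce $(M,g)$ to the toric K\"ahler data considered by Biquard-Gauduchon \cite{BG}. In that framework the metric is encoded by an axisymmetric harmonic function $U$ on $\mathbb R^3$ (equivalently, a piecewise linear convex function of one variable in the K\"ahler quotient), and the rod structure $\mathfrak R$ determines $U$ up to the freedom in the ansatz. Using $d(\mathfrak R)\le 1$ and Proposition \ref{p:degreee topological invariant} to enumerate the possible rod structures in each Asymptotic Type, and matching them with the $\ALF^\pm$ and $\AF_\beta$ toric Hermitian gravitational instantons classified in \cite{BG} (together with the rigidity result Proposition \ref{p:ALF/AF rigidity}, see also Remark \ref{rem:TN AF rigidity}), I read off exactly the five families in the list: the anti-Taub-NUT space (one turning point, $\ALF^-$), the Kerr family (two turning points, $\AF_\beta$, including Schwarzschild at $\beta=0$), the Taub-Bolt/anti-Taub-Bolt (two turning points, $\ALF^\pm$) and the Chen-Teo family (three turning points, $\AF_\beta$). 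The underlying diffeomorphism types follow from the standard toric manifold construction associated to each smooth enhanced rod structure as described in Section \ref{ss:defintion of rod structures}.

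The main obstacle is the degree $1$ case: although the LeBrun-Tod reduction is explicit, one still needs the full force of Biquard-Gauduchon's classification of toric Hermitian Ricci-flat ends to conclude that the list above is exhaustive, and one must carefully check that each connected component of the parameter space of degree $1$ rod structures (as in Proposition \ref{p:degreee topological invariant}) is realized by exactly one family on the list, using the uniqueness in Theorem \ref{t:existence result}. The hyperk\"ahler case is more straightforward because the Gibbons-Hawking ansatz gives an explicit bijection between $\ALF$ hyperk\"ahler data and configurations of points on $\Gamma$.
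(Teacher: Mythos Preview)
Your proposal is correct and follows essentially the same approach as the paper, which does not give a detailed proof but simply states the proposition as a consequence of Theorem~\ref{thm:equivalence of degree and type} together with the existing classifications of hyperk\"ahler and Hermitian gravitational instantons. Your elaboration of how those classifications apply in each case is in line with what the paper intends; one small simplification is that in the degree~$0$ case you can bypass the volume-growth argument entirely, since Theorem~\ref{thm:Type I equivalent to degree 0} already forces the Asymptotic Type to be $\Ae$ or $\ALF^+$ (the residual $\AF_\beta$ possibility has no turning points and is ruled out by simple-connectedness).
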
 

 In particular, 
\begin{corollary}
    Suppose $(M, g)$ is a simply-connected toric gravitational instanton of Type $\ALE/\ALF/\AFa$ and let $\mR$ be the associated rod structure. If $\mR$ has more than 2 turning points, then $d(\mR)>1$.
\end{corollary}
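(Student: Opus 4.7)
The plan is to treat this corollary as a direct consequence of the explicit classification in Proposition~\ref{c:classification of toric GI}, mediated by the dictionary between degree and Type supplied by Theorem~\ref{thm:equivalence of degree and type}. I would argue by contraposition: assume $d(\mR)\leq 1$ and deduce that $\mR$ has at most the expected number of turning points by running through the enumerated list.

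First, by Theorem~\ref{thm:equivalence of degree and type}, the hypothesis $d(\mR)\leq 1$ forces $(M,g)$ to carry special geometry, i.e., it is of Type $\I$ (locally hyperk\"ahler) when $d(\mR)=0$, and of Type $\II$ (locally conformally K\"ahler, non-K\"ahler) when $d(\mR)=1$. Next, I would invoke the appropriate classification: in the $\ALF/\AFa$ setting, Proposition~\ref{c:classification of toric GI} identifies $(M,g)$ with one of the listed model spaces (multi-Taub-NUT in the hyperk\"ahler case; anti-Taub-NUT, Kerr, Taub-Bolt, anti-Taub-Bolt, or Chen-Teo in the Hermitian non-K\"ahler case). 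In the $\ALE$ setting I would instead cite Kronheimer's classification of toric hyperk\"ahler ALE spaces (forcing a multi-Eguchi-Hanson $A_k$ model in the Type $\I$ case), together with the first author's classification \cite{Li2} of Hermitian ALE gravitational instantons in the Type $\II$ case.

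Having reduced to a finite explicit list of model spaces, the remainder of the proof is purely bookkeeping: I would read off the rod structure of each such space, all of which have already been computed in the paper (Examples~\ref{example:positive Taub-NUT}--\ref{ex:AF}, Example~\ref{ex:Kerr}, and Sections~6.1.2, 6.1.3, 6.2), count turning points, and compare with the stated bound. The only real work was done earlier, namely Theorem~\ref{thm:equivalence of degree and type} (degree classifies Type) and the classification results packaged in Proposition~\ref{c:classification of toric GI}; there is no new harmonic-map analysis or geometric input needed here. Consequently I do not foresee a substantive obstacle in carrying out the plan—the corollary is essentially a direct enumeration once the earlier machinery is in place.
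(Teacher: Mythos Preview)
There is a genuine gap in your plan—not in the overall strategy, but in the bookkeeping step itself, which you assert will go through without obstacle. If you actually carry out the enumeration you describe, you will find that the claimed bound on turning points fails for at least two entries on the list:

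\begin{itemize}
\item The multi-Taub-NUT spaces (and the multi-Eguchi-Hanson spaces in the $\ALE$ setting) sit in the hyperk\"ahler column of Proposition~\ref{c:classification of toric GI}. They have $k+1$ turning points for arbitrary $k\geq 0$ and, by Theorem~\ref{thm:Type I equivalent to degree 0}, degree $d(\mR)=0$. Hence for $k\geq 2$ they furnish simply-connected toric gravitational instantons with more than two turning points and $d(\mR)\leq 1$.
\item The Chen-Teo space in the Hermitian column has three turning points: in Section~\ref{ss:general case} the paper itself identifies Chen-Teo with the $n=2$ case of $\mR_{\bl}$, which has $n+1=3$ turning points and, by~\eqref{eqn:degree formula}, degree $d(\mR_{\bl})=\lfloor 3/2\rfloor=1$.
\end{itemize}

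Thus your contrapositive, once executed, does not establish the corollary; it instead exhibits counterexamples to the statement as written. The paper offers no argument beyond the phrase ``In particular,'' so the discrepancy is with the stated corollary rather than with your method. A corrected assertion that \emph{does} follow from the classification (and from your enumeration) would be, for instance: if $d(\mR)=1$ then $\mR$ has at most three turning points.
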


\subsection{Degree 0 = Type \texorpdfstring{$\I$}{I}}

In this subsection we prove the first part of Theorem \ref{thm:equivalence of degree and type}. 
\begin{theorem} \label{thm:Type I equivalent to degree 0}
A rod structure $\mR$ is Type $\I$ if and only if $d(\mathfrak R)=0$. In this case $\mR$ must have Asymptotic Type  $\Ae$ or $\ALFp$, unless $\mR$ has no turning points and is of  Asymptotic Type  $\AFa$ for some  $\beta\in \R$.
\end{theorem}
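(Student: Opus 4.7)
The plan is to identify the degree-zero rod structures as exactly those arising from the Gibbons--Hawking ansatz, and then to invoke the uniqueness statement in Theorem~\ref{t:existence result}.

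For $(\Leftarrow)$, suppose $d(\mR)=0$. The inequalities recorded after Definition~\ref{def:degrees} rule out $\sharp=\ALF^{-}$ and also $\sharp=\AFa$ with at least one turning point; the remaining case $\sharp=\AFa$ with $n=0$ is immediate, since the unique strongly tamed harmonic map is an $SL(2;\R)$-transform of $\Phi^{\AFa}$ for some $\beta$, defining the (locally hyperk\"ahler) flat quotient $\R^3\times\R/\langle q_\beta\rangle$. In the remaining cases $\sharp\in\{\Ae,\ALF^{+}\}$, the condition $d(\mR)=0$ is equivalent to $e(\mR)=1$, which means that the rod vectors $\mv_0,\ldots,\mv_n$ sweep counterclockwise through an arc of length strictly less than $\pi$ in $\R\dP^1$. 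Using the transitivity of $SL(2;\R)$ on suitably oriented triples, I would reduce via an $SL(2;\R)$ action to the case where $\mv_j=[(-f_j,1)^{\top}]$ with $f_0<f_1<\cdots<f_n$. Setting $c_\alpha:=f_\alpha-f_{\alpha-1}>0$ and $p_\alpha:=(0,0,z_\alpha)$, I then define
\begin{equation*}
    V(x)=T+\sum_{\alpha=1}^{n}\frac{c_\alpha}{2|x-p_\alpha|},
\end{equation*}
with $T=0$ for $\sharp=\Ae$ and $T>0$ (chosen so that the leading term at infinity matches $\Phi^{\ALF^{+}}_{\mv_0,\mv_n}$) for $\sharp=\ALF^{+}$. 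The associated Gibbons--Hawking augmented harmonic map given by \eqref{eqn:GH harmonic map}--\eqref{eqn:GH augmentation}, with harmonic conjugate $F$ normalized so that $F|_{\mI_\alpha}=f_\alpha$, is by explicit computation strongly tamed by $\mR$ with the correct Asymptotic Type. Theorem~\ref{t:existence result} then forces it to coincide with the strongly tamed harmonic map of $\mR$, so $\mR$ is Type~$\I$.

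For $(\Rightarrow)$, assume $\mR$ is Type~$\I$, with $(\Phi,\nu)$ the associated normalized augmented harmonic map. The metric $g=e^{2\nu}(d\rho^2+dz^2)+\rho\Phi$ on $\dH^\circ\times\R^2$ is toric, integrable, and locally hyperk\"ahler. As reviewed in Section~\ref{subsubsec:Type I case}, the two toric Killing fields induce commuting isometries of the twistor sphere $\dS$, and commuting Killing fields on $\dS$ must be proportional; this singles out a well-defined direction $[X_1]\in\R\dP^1$ in the span of $\p_{\phi_1},\p_{\phi_2}$ along which the flow is tri-holomorphic. An $SL(2;\R)$ change of basis reducing to $X_1=\p_{\phi_1}$ exhibits $g$ globally in Gibbons--Hawking form for a positive axisymmetric harmonic function $V$ on $\R^3$, whose singularities occur exactly at the projections of the fixed set of $X_1$, i.e., at the turning points $p_\alpha=(0,0,z_\alpha)$. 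Strong tameness and the prescribed Asymptotic Type then force $V=T+\sum_\alpha c_\alpha/(2|x-p_\alpha|)$ with $c_\alpha>0$ and $T\geq 0$ determined by $\sharp$ (with $T=0$ iff $\sharp=\Ae$, and $n=0$ iff $\sharp=\AFa$). The rod vectors computed from this $V$ are $[(-f_\alpha,1)^{\top}]$ with strictly increasing $f_\alpha$, which sweep out an arc of length less than $\pi$ in $\R\dP^1$; a direct check in each Asymptotic Type yields $d(\mR)=0$ and simultaneously constrains $\sharp$ to the three cases listed in the statement.

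The main obstacle is the globalization argument in the $(\Rightarrow)$ direction. One must verify (i) that the tri-holomorphic direction identified locally via commuting Killing fields on $\dS$ gives a single well-defined element of $\R\dP^1$ over all of $\dH^\circ$, and (ii) that the resulting axisymmetric $V$ has only simple poles with strictly positive residues at every turning point, and no extraneous singularities. Item (i) follows from the uniqueness of the tri-holomorphic direction as the one characterized by commutation with the second toric Killing field. Item (ii) reduces to the local smoothness model at a turning point from Section~\ref{subsubsec:local regularity near a turning point}: the smooth resolution of the toric Ricci-flat metric at each $z_j$ forces the tri-holomorphic $S^1$ to have an isolated fixed point there, which in the Gibbons--Hawking presentation corresponds to a simple pole of $V$ with positive residue. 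All remaining ingredients are direct manipulations of the explicit formula~\eqref{eqn:GH harmonic map}.
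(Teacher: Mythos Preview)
Your proof plan is correct and follows essentially the same approach as the paper: both directions identify degree-zero rod structures with those arising from the Gibbons--Hawking ansatz and then invoke the uniqueness of Theorem~\ref{t:existence result}. The paper is terser about the globalization issues you flag in the $(\Rightarrow)$ direction, and it normalizes the rod vectors via an element of $SO(2)$ rather than a general $SL(2;\R)$ transformation (which suffices since the rod vectors sweep an arc of length less than $\pi$, and which sidesteps the question of whether strong tameness is preserved under non-orthogonal $SL(2;\R)$ actions), but the logical skeleton is identical.
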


\begin{proof}
 Let $\Phi$ be a harmonic map  which is  strongly tamed by a rod structure $\mR$. 
If $\mR$ has no finite rods, then by definition it has to be of Asymptotic Type  $\AFa$ for some $\beta\geq 0$. By the uniqueness result in Theorem \ref{t:existence result} we know $\Phi$ must be equal to $\Phi^{\AFa}_{\mv, \mv}$ for some $\beta$. The associated toric Ricci flat metric is flat, so by definition it is of Type $\I$. It is also obvious that $d(\mR)=0$.  In the following we assume that there is at least 1 finite rod.
    
    $(\Rightarrow)$: Suppose $\Phi$ is Type $\I$.   Recall that from our discussion in Section \ref{sec:7.1} we know $\Phi$ is locally  given by \eqref{eq:Type I metric}  for an axisymmetric harmonic function $V$ on $\dH^\circ$. 
    Due to the strong tameness we know that $V$ needs to stay bounded in a neighborhood of a point in the interior of a rod and blow up at the rate $r^{-1}$ near each turning point, and $V$ is uniformly bounded at infinity. It is then easy to see that $V$ must be of the form
        $$V=T+\sum_j\frac{\lambda_j}{(\rho^2+(z-z_j)^2)^{1/2}}$$
 for some $T\geq 0$, where $\lambda_j>0$ and $z_1<\ldots<z_n$ are the turning points of the rod structure. Since there is at least 1 finite rod, $V$ can not be a constant. If $T=0$, then $\mR$ is of Asymptotic Type  $\Ae$; if $T>0$, then we must take $T=1$ (since it has decaying distance to $\Phi^{\TNap}$ by definition) and $\mR$ is of Asymptotic Type  $\ALFp$. It is easy to see that in both cases $d(\mR)=0$.

    $(\Leftarrow)$: Suppose $d(\mR)=0$. By definition, $\mR$ has to be of Asymptotic Type  $\Ae$ or $\ALFp$ (since in all the other cases we have $d(\mR)\geq 1$).

   We focus on the case $\sharp=\Ae$; the other case can be argued similarly. Since $d(\mathfrak R)=0$, by applying a suitable element in $SO(2;\mathbb R)$, one can always assume the rod vectors are of the form
   \begin{equation}\label{eqn:normalized rod vectors Gibbons-Hawking}\mathfrak{v}_j=[(-\sum_{l\leq j}\lambda_j+\sum_{l>j}\lambda_j,1)^{\top}]
   \end{equation}
    for some $\lambda_j>0$.
    Then there is a harmonic map $\Phi'$  strongly tamed by this rod structure, which is   explicitly given by the $(\Rightarrow)$ part of the proof, which is Type $\I$. From the uniqueness result in Theorem \ref{t:existence result}, it follows that $\Phi=\Phi'$, hence $\Phi$ is Type $\I$.
\end{proof}

\begin{remark}\label{rem:angle invariant in Type I}
It is easy to see, using the explicit formulae \eqref{eqn:GH harmonic map}, \eqref{eqn:GH augmentation}, that the  rod vectors $\mv_j$ in \eqref{eqn:normalized rod vectors Gibbons-Hawking} are indeed normalized. Notice that they do not depend on the rod lengths. In particular, if we are given an enhancement and an augmentation, then the cone angle along each rod can be seen explicitly, and it does not depend on the rod length. This gives a complication when one applies the strategy of tuning the cone angles by varying the rod lengths -- the proof of Theorem \ref{t:main2} would not work if one instead tries to construct Type I gravitational instantons (which we already know do not exist with the $\AFa$ Asymptotic Types anyway).
\end{remark}

\subsection{Degree 1 = Type \texorpdfstring{$\II$}{II}}
\label{sec:7.3}

The main result of this subsection is
\begin{theorem}\label{thm:degree equivalence Type II}
A rod structure $\mR$ is Type $\II$ if and only if $d(\mathfrak R)=1$.
\end{theorem}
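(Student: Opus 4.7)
The plan is to prove the two implications separately, with the harder direction being $(\Leftarrow)$.

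For the forward direction $(\Rightarrow)$, suppose $\Phi$ is the unique harmonic map strongly tamed by $\mR$ and is Type $\II$. By the LeBrun--Tod ansatz recalled in Section \ref{subsubsec:Type II case}, on the open set where $V>0$ and $U_{\rho z}^2+U_{zz}^2>0$, the map $\Phi$ is encoded by a single axisymmetric harmonic function $U$ via \eqref{eq:Type II V harmonic}--\eqref{eq:Type II harmonic map}. The strong tameness of $\Phi$, together with the prescribed Asymptotic Type, translates into precise local behavior of $U$ at each turning point and explicit asymptotics at infinity, following the analysis of \cite{BG}. From the explicit formulas one then reads off each rod vector $\mv_j$ directly, and a direct computation shows that, as $j$ runs through $0,\ldots,n$, the lifts of $\mv_j$ to the universal cover $\R$ of $\R\dP^1$ sweep out a total angle of exactly $\pi$ when $\sharp\in\{\Ae,\ALFp\}$ and exactly $2\pi$ otherwise. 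This yields $e(\mR)=2$ or $1$ respectively, and hence $d(\mR)=1$ in both cases.

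For the reverse direction $(\Leftarrow)$, fix the Asymptotic Type $\sharp$ and turning points $z_1<\ldots<z_n$, and let $\mathcal{S}_1(\sharp\mid z_1,\ldots,z_n)\subset \mathcal S(\sharp\mid z_1,\ldots,z_n)$ denote the degree-$1$ subset, which by Proposition \ref{p:degreee topological invariant} is a union of connected components. Let $\mathcal T_{\II}\subset \mathcal S_1(\sharp\mid z_1,\ldots,z_n)$ consist of those rod structures whose strongly tamed harmonic map is Type $\II$. The goal is to show $\mathcal T_{\II}=\mathcal S_1(\sharp\mid z_1,\ldots,z_n)$, which we achieve by proving that $\mathcal T_{\II}$ is open, closed, and meets every connected component.

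Closedness follows from the continuous dependence of the harmonic map on the rod structure (Section \ref{ss:uniform control of tame harmonic maps}) together with the fact that the Type $\II$ condition on the self-dual Weyl tensor is closed under $C^2_{\mathrm{loc}}$ convergence, provided the limit is not Type $\I$; the latter is ruled out since the limit rod structure still has degree $1\neq 0$, so cannot be Type $\I$ by Theorem \ref{thm:Type I equivalent to degree 0}. The main obstacle is openness, and the key input is the deformation result of Biquard--Gauduchon--LeBrun \cite{BGL}. Given $\mR\in\mathcal T_{\II}$, choose an enhancement $\Lambda$ so that the induced toric Ricci-flat space $(M,g)$ is Hermitian non-K\"ahler, possibly with mild codimension-$2$ conical singularities. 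A nearby rod structure $\mR'$ in the same component of $\mathcal S_1(\sharp\mid z_1,\ldots,z_n)$ gives rise to a nearby toric Ricci-flat metric $g'$ on the same underlying $M$, and the openness of the Hermitian condition under Ricci-flat deformations supplied by \cite{BGL} forces $g'$ to remain Hermitian. Since $g'$ cannot become Kähler under an arbitrarily small deformation (the degree is locally constant and equal to $1\neq 0$), it must be Hermitian non-K\"ahler, i.e., Type $\II$. Some technical care is required to set up the perturbation in the presence of conical singularities and the appropriate spaces of axisymmetric harmonic maps, but the essential analytic input is the local rigidity from \cite{BGL}.

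Finally, to seed each connected component of $\mathcal S_1(\sharp\mid z_1,\ldots,z_n)$ with a Type $\II$ example, we invoke the explicit Hermitian gravitational instantons listed in Proposition \ref{c:classification of toric GI}: the Kerr and Chen--Teo spaces in the $\AFa$ case, the Taub-Bolt space in the $\ALFp$ case, and the anti-Taub-NUT together with the anti-Taub-Bolt space in the $\ALFm$ case, with analogous constructions in the $\Ae$ setting. Combined with the continuous deformations within each component (along which Type $\II$ is preserved by openness), these realize every degree-$1$ connected component. This completes the proof.
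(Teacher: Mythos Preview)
Your overall open--closed--nonempty strategy for $(\Leftarrow)$ matches the paper's, and your closedness argument is correct. But there is a genuine gap in the \emph{seeding} step. You propose to seed each degree-$1$ component of $\mathcal S_1(\sharp\mid z_1,\ldots,z_n)$ using the gravitational instantons listed in Proposition~\ref{c:classification of toric GI}. Those examples have at most three turning points (Kerr and Taub-Bolt have two, Chen--Teo has three), so for $n\geq 4$ you have no Type~$\II$ representative with the prescribed turning points $z_1,\ldots,z_n$, and your seeding fails. The paper resolves this by invoking the full Biquard--Gauduchon construction \cite{BG}, which associates to any convex piecewise-affine function $f$ with $n$ break points a Type~$\II$ harmonic map strongly tamed by an $n$-turning-point rod structure. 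This is what produces a seed for every $n$; the subsequent Proposition in the paper checks directly from the formulas \eqref{eq:BG normailzed rod vector} that these seeds have degree $1$, which also supplies the $(\Rightarrow)$ direction in the $\ALF^\pm/\AFa$ cases.

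Two further issues. First, the Biquard--Gauduchon ansatz and your LeBrun--Tod argument for $(\Rightarrow)$ apply only to $\ALF^\pm/\AFa$; the $\Ae$ case needs a separate treatment. The paper handles $(\Leftarrow)$ for $\Ae$ by scaling down $\ALFp$ solutions and taking a bubble limit, and $(\Rightarrow)$ by compactifying the conformal K\"ahler metric at infinity and reading the degree from convexity of the moment polygon. Second, your openness step invokes \cite{BGL} on a space with codimension-$2$ conical singularities, which is not what \cite{BGL} proves. The paper avoids this by working directly on $\mathbb H^\circ\times\mathbb R^2$ (no quotient, no cones) for the key integral inequality, and only passes to the canonical lattice $\Lambda_{\mR}$ on the smooth end to control the boundary term at infinity via Lemma~\ref{lem:finer control on Phi epsilon} and Proposition~\ref{prop:appendix 2}.
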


In the following we will first prove Theorem \ref{thm:degree equivalence Type II} for the $\ALF^\pm/\AFa$ case and then deal with the $\Ae$ case in Section \ref{sss:case Ae}. Harmonic maps strongly tamed by a rod structure of  Asymptotic Type  $\ALF^\pm/\AFa$ have already been classified by Biquard-Gauduchon \cite{BG}, which we now recall.

For each positive convex piece-wise affine function
\begin{equation}
    f(z)=A+\sum_{j=1}^na_j|z-z_j|
\end{equation}
with  $A>0$, $z_1<\cdots <z_n$ and $-1=a_1<\cdots< a_n=1$, one can assign a canonical axisymmetric harmonic function
\begin{equation}
    U(\rho,z)=A\log\rho^2+\sum_{j=1}^na_j U_0(\rho,z-z_j),
\end{equation}
where
$$U_0(\rho,z)=2r-z\log\frac{r+z}{r-z}.$$
We write $f_j=f(z_j)$ and denote by $f_j'$  the slope of $f$ over $(z_j,z_{j+1})$. In particular, we have $$a_j=\frac12(f_j'-f_{j-1}').$$
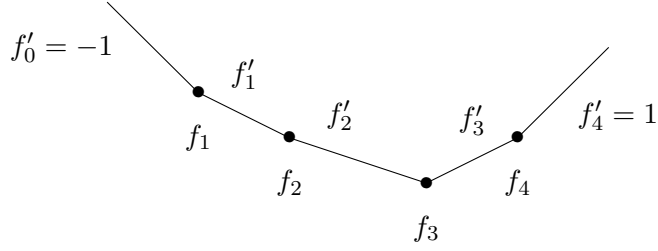
\begin{figure}[ht]
    \begin{tikzpicture}[scale=0.6]
        \draw (0,4) -- (2,2);
        \draw (2,2) -- (4,1);
        \draw (4,1) -- (7,0);
        \draw (7,0) -- (9,1);
        \draw (9,1) -- (11,3);
        \node at (2,2) {$\bullet$};
        \node at (2,1) {$f_1$};
        \node at (4,1) {$\bullet$};
        \node at (4,0) {$f_2$};
        \node at (7,0) {$\bullet$};
        \node at (7,-1) {$f_3$};
        \node at (9,1) {$\bullet$};
        \node at (9,0) {$f_4$};

        \node at (-1,3) {$f_0'=-1$};
        \node at (3,2.4) {$f_1'$};
        \node at  (5.1,1.5) {$f_2'$};
        \node at (8,1.4) {$f_3'$};
        \node at (11.2,1.5) {$f_4'=1$};
    \end{tikzpicture}
	\caption{A piece-wise affine function $f$ with four turning points.}
	 \label{Figure:piece-wise affine function}   
\end{figure}
Then with the ansatz in \eqref{eq:Type II metric harmonic map}-\eqref{eq:Type II F harmonic} and choosing $k=2A$ (this ensures $V\to1$ at infinity), we get an augmented harmonic map $(\Phi,\nu)$ which is strongly tamed by a rod structure $\mR$ with $n$ turning points.  The rods are given by $\mathfrak{I}_j=(z_j,z_{j+1})$ and the normalized rod vector along $\mI_j$ is
\begin{equation}\label{eq:BG normailzed rod vector}
    \overline{\mv}_j=
    \begin{cases}
         \overline{\mathfrak{v}}_j=f_j'(\partial_{x_3}+F_j\partial_t), & \text{if} \ \ \ \  f_j'\neq 0; \\
         \overline{\mathfrak v}_j=\frac{1}{A}f_j^2\partial_t,  & \text{if} \ \ \ \  f_j'=0.
         \end{cases}
\end{equation}
Here the constants $F_j$ are determined by the following conditions 
\begin{itemize}
\item We have $$F_0=-\frac{1}{A}(A+\sum_{j=1}^na_jz_j)^2+\frac{1}{A}\sum_{j=1}^na_jz_j^2;$$
    \item if $f_j'\neq0$ then \begin{equation}
        F_j-F_{j-1}=\frac{1}{A}f_j^2(\frac{1}{f_j'}-\frac{1}{f_{j-1}'});
    \end{equation}
    \item if $f_j'=0$ then \begin{equation}
        F_{j+1}-F_{j-1}=\frac{1}{A}(f_j^2(\frac{1}{f_{j+1}'}-\frac{1}{f_{j-1}'})-2(z_{j+1}-z_j)f_j).
    \end{equation}
\end{itemize}
Notice that these imply  $$F_n=\frac{1}{A}(A-\sum_{j=1}^na_jz_j)^2-\frac{1}{A}\sum_{j=1}^na_jz_j^2.$$
For the convenience of discussion, in the ansatz \eqref{eq:Type II metric harmonic map}-\eqref{eq:Type II F harmonic}, we take $\phi_1$ and $\phi_2$ such that $$\partial_{\phi_1}=\partial_{x_3}+\frac{F_0+F_n}{2}\partial_t, \quad \partial_{\phi_2}=-\partial_t.$$ Then  the harmonic map $\Phi$ is given by
\begin{equation}\label{eq:Type II harmonic map under correct basis}
    \Phi=\frac{1}{\rho}\left(\begin{matrix}
        V\rho^2+V^{-1}(F-\frac{F_0+F_n}{2})^2 & V^{-1}(F-\frac{F_0+F_n}{2}) \\ V^{-1}(F-\frac{F_0+F_n}{2}) & V^{-1}
    \end{matrix}\right).
\end{equation}
The  Asymptotic Type  of $\mR$ is $\AF_0$ when $F_n-F_0=0$, is $\ALFm$ if $F_n-F_0>0$ and is $\ALFp$ when  $F_n-F_0<0$.

By construction we know that the harmonic map given by \eqref{eq:Type II harmonic map under correct basis} is of Type $\II$.  Conversely, for any Type $\II$ harmonic map strongly tamed by an $\ALF^\pm/\AFa$ rod structure $\mR$, by \cite{BG}, up to conjugation by a suitable element in $SL(2;\mathbb{R})$ (in particular by $U_\beta$ we can make the Asymptotic Type change from $\AFa$ to $\AF_0$), it always takes the form \eqref{eq:Type II harmonic map under correct basis}. 

\begin{remark}\label{rem: angle in Type II}
    In this setting, the normalized rod vectors are explicitly given by \eqref{eq:BG normailzed rod vector}, so as in the Type $\I$ case if we are given an enhancement and an augmentation, then the cone angle along each rod can be seen explicitly.  But unlike the Type $\I$ case they do depend on the rod lengths. This is the philosophy which makes it possible to apply the strategy of tuning the cone angles by varying rod lengths in our proof of Theorem \ref{t:main2}.
\end{remark}

\begin{proposition}
    The above rod structure $\mR$ has degree $1$.
    \end{proposition}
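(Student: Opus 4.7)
My plan is to compute $\int_{l_{\mR}}d\theta$ directly from the explicit formulas \eqref{eq:BG normailzed rod vector} for the normalized rod vectors, and then apply Definition \ref{def:degrees}. The first step is to translate the rod vectors into the integral basis $(\partial_{\phi_1},\partial_{\phi_2})$ used to define $\mR$. Since $\partial_{x_3}=\partial_{\phi_1}+\tfrac{F_0+F_n}{2}\partial_{\phi_2}$ and $\partial_t=-\partial_{\phi_2}$, the vector \eqref{eq:BG normailzed rod vector} becomes $\mv_j=[(1,c_j)^{\top}]$ with $c_j:=\tfrac{F_0+F_n}{2}-F_j$ whenever $f_j'\neq 0$, and $\mv_k=[(0,1)^{\top}]$ whenever $f_k'=0$. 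In particular $c_0=\tfrac{F_n-F_0}{2}$ and $c_n=-c_0$, so the sign of $c_0$ (equivalently of $F_n-F_0$) distinguishes the three Asymptotic Types: $c_0>0$ corresponds to $\ALFm$, $c_0=0$ to $\AF_0$, and $c_0<0$ to $\ALFp$.

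Let $k\in\{1,\dots,n\}$ be the unique index with $f_{k-1}'<0\leq f_k'$. Since $-1=f_0'<f_1'<\cdots<f_n'=1$, the formula $F_j-F_{j-1}=\tfrac{f_j^2}{A}\bigl(\tfrac{1}{f_j'}-\tfrac{1}{f_{j-1}'}\bigr)$ (valid when both slopes are nonzero) is negative whenever $f_{j-1}',f_j'$ are nonzero of the same sign, so $(c_j)$ is strictly increasing on each of the blocks $\{j<k\}$ and $\{j>k\}$ (with $j=k$ adjoined to the second block when $f_k'>0$). When $f_k'>0$ the two blocks are joined by a single direct downward step $c_{k-1}\mapsto c_k$; when $f_k'=0$ they are separated by the vertical rod $\mv_k=[(0,1)^{\top}]$.

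Now I would telescope $\int_{l_{\mR}}d\theta$ along the polygonal path $\mv_0\to\mv_1\to\cdots\to\mv_n$. A step within a monotone block contributes $\arctan c_j-\arctan c_{j-1}$ (no crossing of the vertical direction). A direct downward step in the case $f_k'>0$ contributes $\arctan c_k-\arctan c_{k-1}+\pi$, while the two segments adjacent to a vertical vertex $\mv_k$ contribute together $(\pi/2-\arctan c_{k-1})+(\arctan c_{k+1}+\pi/2)=\pi+\arctan c_{k+1}-\arctan c_{k-1}$. In either case the total along $\mv_0\to\mv_n$ telescopes to $\pi+\arctan c_n-\arctan c_0$. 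When $\mv_0\neq\mv_n$ the closing rotation from $\mv_n$ to $\pm\mv_0$ adds $\arctan c_0-\arctan c_n$ if $c_n<c_0$ and $\arctan c_0-\arctan c_n+\pi$ if $c_n>c_0$. Substituting $c_n=-c_0$ yields $\int_{l_{\mR}}d\theta=\pi$ (so $e(\mR)=1$) in the $\ALFm$ and $\AF_0$ cases, and $\int_{l_{\mR}}d\theta=2\pi$ (so $e(\mR)=2$) in the $\ALFp$ case. Applying Definition \ref{def:degrees} then gives $d(\mR)=1$ uniformly.

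The main subtlety is the case $f_k'=0$: there $F_k$ is undefined and the sign of the compound quantity $F_{k+1}-F_{k-1}$ (governed by the separate jump formula) is not controlled by an elementary sign analysis, so $c_{k-1}$ and $c_{k+1}$ are a priori incomparable. Nevertheless the two individual CCW rotations based at the vertical vertex $\mv_k$ always sum to $\pi+\arctan c_{k+1}-\arctan c_{k-1}$, matching the contribution that would come from a fictitious single downward jump between finite $c$-values. This calibration is what makes the telescoping argument uniform across both sub-cases.
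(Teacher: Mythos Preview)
Your proof is correct and follows essentially the same route as the paper. Both arguments rest on the same observation: since $F_j-F_{j-1}=\tfrac{f_j^2}{A}\bigl(\tfrac{1}{f_j'}-\tfrac{1}{f_{j-1}'}\bigr)$ is negative whenever $f_{j-1}',f_j'$ have the same sign, the sequence $(c_j)$ is monotone on each sign-block with a single reversal at the index $k$ where the slope changes sign. The paper packages this by choosing representatives $v_j=\partial_{x_3}+F_j\partial_t$ for $f_j'<0$ and $v_j=-(\partial_{x_3}+F_j\partial_t)$ for $f_j'>0$ (formally $F_k=-\infty$ when $f_k'=0$), which makes the counterclockwise rotation manifest; you instead telescope the angle increments directly via $\arctan$. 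The handling of the $f_k'=0$ case is equivalent in both versions---your observation that the two segments through the vertical vertex sum to $\pi+\arctan c_{k+1}-\arctan c_{k-1}$ regardless of the sign of $c_{k+1}-c_{k-1}$ is exactly what the paper's formal device $F_k=-\infty$ encodes.
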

\begin{proof}
 First we observe that 
\begin{itemize}
    \item if $f_j'<0$, then $F_j-F_{j-1}<0$;
    \item if $f_j'>0$, then $F_{j+1}-F_j<0$.
\end{itemize}
If $f_j'=0$, we can formally view $F_j$ as $-\infty$. 

We only consider the $\ALFm$ case, as the other cases are similar. Thus, we assume $F_n - F_0 > 0$.  
We begin with the rod $\mathfrak{I}_0$ and choose  
\[
\partial_{\phi_1} + \frac{F_n - F_0}{2} \partial_{\phi_2} = \partial_{x_3} + F_0 \partial_t
\]  
as the representative vector $v_0$.  

From the above observation, if we assign the following representative vectors $v_j$ for $\mathfrak{I}_j$,  

\begin{equation}
    v_j=\begin{cases}
        \partial_{\phi_1} + \left( \frac{F_0 + F_n}{2} - F_j \right) \partial_{\phi_2} = \partial_{x_3} + F_j \partial_t, & \text{if} \quad f_j' < 0, \\
        \partial_{\phi_2} = -\partial_t, & \text{if} \quad f_j' = 0, \\
        -\left( \partial_{\phi_1} + \left( \frac{F_0 + F_n}{2} - F_j \right) \partial_{\phi_2} \right) = -(\partial_{x_3} + F_j \partial_t), & \text{if} \quad f_j' > 0,
    \end{cases}
\end{equation}  
then $v_j$ rotates in a counterclockwise direction. It is then straightforward to see that $d(\mathfrak{R}) = 1$.  
\end{proof}

To finish the proof of Theorem \ref{thm:degree equivalence Type II} in the $\ALF^\pm/\AFa$ case, we fix the  positions of the turning points $z_1,\ldots,z_n$ and $\sharp\in \{\ALFm, \ALFp, \AFa (\beta\in \R)\}$. Denote by $\mathcal{S}_{1,\sharp}\subset\mathcal S(\sharp|z_1, \cdots, z_n)$ the subset consisting of rod structures  of degree $1$. By Theorem \ref{p:degreee topological invariant} we know that $\mathcal S_{1,\sharp}$ is connected.  Denote by $\mathcal S_{1,\alpha,\sharp}$ the subset of $\mathcal S_{1,\sharp}$ which consists of rod structures $\mR$ with $\mv_{0}\wedge\mv_{n}=\alpha$ and $\mathcal{S}'_{1,\alpha,\sharp}$ those with $\mv_{0}\wedge\mv_{n}=\alpha$ that are of Type $\II$.  Note that if $\sharp=\AFa$ then $\alpha=0$ and if $\sharp=\ALF^\pm$ then $\alpha>0$. Similar to the proof of Theorem \ref{p:degreee topological invariant} we also have that each $\mathcal S_{1,\alpha,\sharp}$ is connected. From the Biquard-Gauduchon construction we know each $\mathcal S_{1,\alpha,\sharp}'$ is nonempty. All we need to show is that for each $\alpha$, $\mathcal{S}_{1,\alpha,\sharp}'=\mathcal{S}_{1,\alpha,\sharp}$. This implies that $\mathcal S_1'=\mathcal{S}_1$, which finishes the proof.

As in the discussion in Section \ref{ss:uniform control of tame harmonic maps} if we have the convergence of rod structures $\mR_i\rightarrow \mR_\infty$, then the associated harmonic maps $\Phi_i$ converges to $\Phi_\infty$ smoothly away from the rods. In particular, this implies that $\mathcal S_{1,\alpha,\sharp}'$ is closed. It  remains to prove that $\mathcal S_{1,\alpha,\sharp}'$  is open. One possible strategy to show this is by using the implicit function theorem and the explicit knowledge above on the rod structures of elements in $\mathcal S_1'$. The naive count of parameters matches, but it does not seem obvious to see the infinitesimal openness (i.e., non-vanishing of the associated derivative map). We will instead use the approach motivated to prove related result on gravitational instantons by Biquard-Gauduchon-LeBrun \cite{BGL}. The latter is a generalization of the surprising openness of Type $\II$ Einstein metrics on compact 4-manifolds, discovered by Wu \cite{Wu} and LeBrun \cite{LeBrun}. The advantage of this approach is that one obtains the openness without proving infinitesimal openness.

\subsubsection{Openness of $\mathcal S_{1,\alpha,\sharp}'$} 
\label{subsubsec:openness of Type II}

In this section we prove
\begin{proposition}\label{prop:openness Type II}
    The set $\mathcal{S}_{1,\alpha,\sharp}'$ is open.
\end{proposition}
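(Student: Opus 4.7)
The plan is to combine the continuous dependence of the unique tame harmonic map on the rod structure with the openness of the Type $\II$ (Hermitian non-K\"ahler) condition among Ricci-flat $4$-manifolds, as established for gravitational instantons by Biquard-Gauduchon-LeBrun~\cite{BGL}.

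First, I would fix $\mR_0\in \mathcal S_{1,\alpha,\sharp}'$ with associated strongly tamed harmonic map $\Phi_0$, normalized augmentation $\nu_0$, and the resulting $\dT$-invariant Ricci-flat metric $g_0$ on $\mathbb H^\circ\times \R^2$ (or the quotient by the canonical lattice, which is smooth except along codimension $2$ submanifolds with mild conical singularities as in Definition~\ref{def:cone angle}). By assumption, $g_0$ is Type $\II$, so in light of the discussion in Section~\ref{subsubsec:Type II case}, it comes from the LeBrun-Tod ansatz with some convex piecewise affine function $f_0$ and a positive constant $A_0$. In particular, there is a distinguished integrable complex structure $J_0$ on the smooth locus making $g_0$ Hermitian non-K\"ahler, and the nondegenerate eigentwo-form $\omega_0^+$ of $W^+_{g_0}$ is a smooth, nowhere-vanishing, $\dT$-invariant self-dual $2$-form.

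Next, I would invoke the continuous dependence of $(\Phi_{\mR},\nu_{\mR})$ on $\mR$: as discussed in Remark~\ref{rem:continuity of cone angles} and the bubbling/regularity analysis of Section~\ref{ss:uniform control of tame harmonic maps}, for any sequence $\mR_k\to \mR_0$ in $\mathcal S_{1,\alpha,\sharp}$ the corresponding harmonic maps $\Phi_{\mR_k}$ converge in $C^{1,\alpha}_{\mathrm{loc}}$ to $\Phi_0$ on $\mathbb H^\circ$, with the appropriate decay at infinity that is uniform in $k$. Correspondingly, the toric Ricci-flat metrics $g_{\mR_k}$ converge to $g_0$ in $C^k$ (for any $k$) on compact subsets of the smooth locus. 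The key now is the openness theorem of~\cite{BGL}: in the space of Ricci-flat $4$-manifolds, the Hermitian non-K\"ahler property is preserved under small $C^{\infty}$-deformations. More precisely, the distinguished eigenform $\omega_0^+$ persists as the nondegenerate eigenform $\omega^+$ of $W^+_g$ for any Ricci-flat $g$ sufficiently close to $g_0$, which is established via an implicit function theorem argument applied to a suitably chosen elliptic equation on the twistor space (or equivalently on the bundle of self-dual $2$-forms). Applied to $g=g_{\mR_k}$, this would imply that $g_{\mR_k}$ is Type $\II$ for $k$ large, hence $\mR_k\in \mathcal S_{1,\alpha,\sharp}'$.

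The main obstacles are technical and concern the transfer of the BGL openness result to our present setting. First, one must check that the topology provided by the continuous dependence of $\Phi_{\mR}$ on $\mR$ (locally smooth on $\mathbb H^\circ$ with controlled decay at infinity matching the Asymptotic Type) is strong enough to apply the implicit function theorem underlying BGL's argument. This requires working in weighted H\"older spaces adapted to $\sharp$ and verifying uniform Fredholm-type bounds. Second, the metric $g_{\mR}$ carries mild codimension-$2$ conical singularities along the 2-spheres corresponding to interior rods, and one must check that the BGL argument extends across these singularities; this should follow from the fact that the full Riemann curvature (and in particular $W^+$ and its eigenstructure) remains uniformly bounded across such conical singularities, as established in Section~\ref{subsubsec:local regularity near rods}. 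Third, the deformed complex structure $J$ and its eigenform must be $\dT$-invariant; since the whole perturbation is through toric Ricci-flat metrics, this follows automatically by the essential uniqueness (up to sign) of the simple eigenform of $W^+$. Granting these points, the conclusion follows and $\mR_k\in \mathcal S_{1,\alpha,\sharp}'$ for all sufficiently large $k$, which proves the openness of $\mathcal S_{1,\alpha,\sharp}'$.
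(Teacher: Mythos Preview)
Your outline correctly identifies the BGL result as the key ingredient, but it mischaracterizes how that result works and misses the concrete analytic step that makes the argument go through in this non-compact, conically singular setting.

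First, the BGL argument is \emph{not} an implicit function theorem; as the paper notes just before this proposition, its whole point is to bypass infinitesimal openness. It is a Bochner-type integral inequality: one writes down the pointwise inequality
\[
3\,d(\omega_\epsilon\wedge *d\omega_\epsilon)\ \ge\ *\bigl(\tfrac12|\nabla\omega_\epsilon|^2+3|d\omega_\epsilon|^2\bigr),
\]
integrates, and shows the boundary term vanishes. To even define $\omega_\epsilon$ globally one needs $\det W^+_{g_\epsilon}>0$ everywhere, and this is where the non-compactness bites: on compact sets $C^\infty$-closeness suffices, but at infinity $|W^+_{g_0}|$ decays like $r^{-3}$, so you need $|W^+_{g_\epsilon}-W^+_{g_0}|<C(\epsilon)r^{-3}$ with $C(\epsilon)\to 0$. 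The paper obtains this from a refined estimate (Lemma~\ref{lem:finer control on Phi epsilon}) showing $d(\Phi_\epsilon,\Phi)<C(\epsilon)/r$ on the end, proved by building a carefully glued background map $\Psi_\epsilon$ that agrees with $\Phi$ outside a compact set and has $|\Delta\Psi_\epsilon|<C(\epsilon)$. Your invocation of ``uniform decay at infinity'' from Section~\ref{ss:uniform control of tame harmonic maps} gives only a bound $C/r$ with $C$ independent of $\epsilon$, which is not enough.

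Second, your treatment of the conical singularities is too vague. The paper does not try to extend BGL across the cones; instead it descends the integrand to a $1$-form $\gamma=\iota_{\partial_{\phi_2}}\iota_{\partial_{\phi_1}}(\omega_\epsilon\wedge *d\omega_\epsilon)$ on $\mathbb H^\circ$, integrates over $V_{\tau,N}=B_N\cap\{\rho>\tau\}$, and shows the $\{\rho=\tau\}$ boundary term vanishes as $\tau\to 0$ by passing to a \emph{local} smooth quotient near each rod or turning point. The boundary term at infinity is handled on the smooth end $(E,g_{\epsilon,\Lambda_\mR})$ using Proposition~\ref{prop:appendix 2}. None of this is captured by saying the curvature is ``bounded across the singularities''.
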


Given a rod structure $\mR$ in $\mathcal{S}_{1,\alpha,\sharp}'$ we let $\Phi$ be the associated harmonic map which is strongly tamed by $\mR$. Suppose the rod vectors of $\mR$ are given by $\mv_0,\ldots,\mv_n$. Without loss of generality we may assume $\mv_0=[\bv_{\alpha/2}]$ and $\mv_n=[\bv_{-\alpha/2}]$. Fix $\epsilon>0$ small. Then an open neighborhood of $\mR$ in $\mathcal{S}_{1,\alpha,\sharp}$ consists of rod structures with the same positions of the turning points but with rod vectors given by $\mv_j^\epsilon$ where for $j=0,\ldots,n$,
$$\text{the angle between }\mv_j^{\epsilon} \text{ and }\mv_j<\epsilon.$$
It suffices to show that for $\epsilon>0$ small any harmonic map $\Phi_\epsilon$ strongly tamed by such a rod structure $\mR_\epsilon$ is also of Type $\II$. Without loss of generality, by applying a suitable $P\in PSO(2)$ on $\mR_\epsilon$ we can again assume $\mv_0^\epsilon=[\bv_{\alpha/2}]$ and $\mv_n^\epsilon=[\bv_{-\alpha/2}]$.
We first prove

\begin{lemma}\label{lem:finer control on Phi epsilon}
    There is a constant $C(\epsilon)>0$ with $C(\epsilon)\to0$ as $\epsilon\to0$, such that over $\mathbb{H}^\circ\setminus B_{N}(0)$ for a fixed large $N$, $d(\Phi_\epsilon,\Phi)<C(\epsilon)/r$.
\end{lemma}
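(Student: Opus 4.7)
The plan is to apply a background-map argument as in the proof of Theorem \ref{t:existence result}, but now with a background map that reduces to $\Phi$ itself outside a fixed compact set, so that the effect of the perturbation from $\mR$ to $\mR_\epsilon$ is isolated in a bounded region. Specifically, I would construct a smooth map $\widehat P_\epsilon: \mathbb H \to PSO(2)$ of the form
\begin{equation*}
\widehat P_\epsilon(\rho, z) = \exp\left(\chi(\rho)\sum_{j=0}^{n} \phi_j(z) \log\bigl(P_{\mv_j^\epsilon}^{-1} P_{\mv_j}\bigr)\right),
\end{equation*}
where $\chi(\rho)$ is a cutoff equal to $1$ on $\{\rho \leq \tau/2\}$ vanishing for $\rho \geq \tau$ (for some small fixed $\tau>0$), and $\{\phi_j(z)\}_{j=0}^n$ is a partition of unity on $\Gamma$ with $\phi_j$ supported in a neighborhood of $\mI_j$.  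Since $\mv_0^\epsilon = \mv_0$ and $\mv_n^\epsilon = \mv_n$ the corresponding logarithms vanish identically on the two infinite rods, so $\widehat P_\epsilon \equiv \Id$ outside a fixed bounded set $K \subset \mathbb H$ while $\widehat P_\epsilon = \Id + O(\epsilon)$ uniformly.  Setting $\Psi_\epsilon := \widehat P_\epsilon . \Phi$, one verifies that on the rod $\mI_j$ the null direction of $\Psi_\epsilon$ is $(P_{\mv_j^\epsilon}^{-1} P_{\mv_j}).\mv_j = \mv_j^\epsilon$ and $d(\Psi_\epsilon, \Phi_{\mv_j^\epsilon}) = d(\Phi, \Phi_{\mv_j})$ remains bounded by local tameness of $\Phi$; analogous checks near the turning points show that $\Psi_\epsilon$ is globally tamed by $\mR_\epsilon$.

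The tension field $\tau(\Psi_\epsilon)$ is then supported only on the transition regions---the strip $\{\tau/2 \leq \rho \leq \tau\}$ together with small neighborhoods of the turning points---because elsewhere $\widehat P_\epsilon$ is either identically $\Id$ (on the bulk $\{\rho \geq \tau\}$) or a \emph{constant} isometry of $\mathcal H$ (on the interior-rod region $\{\rho \leq \tau/2\}$ where $\phi_j \equiv 1$), and hence $\Psi_\epsilon$ is harmonic on those regions.  On the transition strip away from turning points one has $|\tau(\Psi_\epsilon)| \leq C\epsilon$ on a set of bounded volume.  Within a small ball around a turning point $z_j$, the derivatives of $\Phi$ blow up like $\dist(\cdot, z_j)^{-1}$ by Section \ref{subsubsec:local regularity near a turning point}, producing $|\tau(\Psi_\epsilon)| \leq C\epsilon\,\dist(\cdot, z_j)^{-2}$, which remains integrable against the axisymmetric volume element $\rho\,d\rho\,dz$ on $\R^3$.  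Altogether $|\tau(\Psi_\epsilon)|$ is supported in $K$ with $\int_{\R^3}|\tau(\Psi_\epsilon)|\,dV \leq C\epsilon$, so the Newton potential satisfies $\mathcal N_{|\tau(\Psi_\epsilon)|}(x) \leq C\epsilon / r(x)$ for $r(x) \geq 2\diam(K)$.

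Finally, the subharmonic comparison argument from the proof of Theorem \ref{t:existence result} applies: $d(\Phi_\epsilon, \Psi_\epsilon)$ is axisymmetric on $\R^3$, extends continuously across the rods since both maps share the rod vectors $\mv_j^\epsilon$, vanishes at infinity by strong tameness for $\mR_\epsilon$, and satisfies $\Delta d(\Phi_\epsilon, \Psi_\epsilon) \geq -|\tau(\Psi_\epsilon)|$ weakly on $\R^3$.  The maximum principle applied to $d(\Phi_\epsilon, \Psi_\epsilon) - \mathcal N_{|\tau(\Psi_\epsilon)|}$ yields $d(\Phi_\epsilon, \Psi_\epsilon) \leq \mathcal N_{|\tau(\Psi_\epsilon)|}$.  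Since $\Psi_\epsilon = \Phi$ outside $K$, choosing $N > \diam(K)$ gives $d(\Phi_\epsilon, \Phi) \leq C\epsilon/r$ on $\mathbb H^\circ \setminus B_N(0)$, so the claim holds with $C(\epsilon) = C\epsilon$.  The main technical obstacle lies in verifying that $|\tau(\Psi_\epsilon)|$ is integrable and that tameness is preserved near the turning points---where $\Phi$ is only $C^{1,\alpha}$ in the bulk and blows up to $\partial\mathcal H$ along the rods.  This is resolved by the normal-form analysis of Section \ref{subsec:local regularity}, together with the key observation that $\widehat P_\epsilon$ is \emph{constant} (not merely small) on the interior-rod region, so that $\Psi_\epsilon$ inherits full local tameness and harmonicity from $\Phi$ there without any twist distortion.
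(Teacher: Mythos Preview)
Your overall strategy—constructing a background map $\Psi_\epsilon$ that agrees with $\Phi$ outside a compact set and then applying the subharmonic comparison of Theorem~\ref{t:existence result}—is exactly the right one, and the paper follows the same route. However, your specific construction of $\Psi_\epsilon$ via a pointwise $PSO(2)$-twist has a genuine gap near the turning points that breaks the argument.

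The problem is the claim that $\Psi_\epsilon$ ``shares the rod vectors $\mv_j^\epsilon$'' so that $d(\Phi_\epsilon,\Psi_\epsilon)$ extends boundedly across $\Gamma$. On the interior of each $\mI_j$ (where $\phi_j\equiv 1$) this is fine, since $\widehat P_\epsilon = P_{\mv_j^\epsilon}^{-1}P_{\mv_j}$ does send $\mv_j$ to $\mv_j^\epsilon$. But in the transition interval of the partition of unity near a turning point $z_j$, at a point on the rod $\mI_j$ with $z\in(z_j,z_j+\delta)$ and $\rho\to 0$, you have $\widehat P_\epsilon(z)=\exp\bigl(\phi_{j-1}(z)\log P_{j-1}+\phi_j(z)\log P_j\bigr)$, a rotation by the intermediate angle $\phi_{j-1}(z)\theta_{j-1}+\phi_j(z)\theta_j$. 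This sends $\mv_j$ to a line that differs from $\mv_j^\epsilon$ by $\phi_{j-1}(z)(\theta_{j-1}-\theta_j)$, which is nonzero whenever the perturbation angles at adjacent rods disagree. Consequently $\Psi_\epsilon$ escapes to a boundary point of $\mathcal H$ \emph{different} from $\mv_j^\epsilon$, and $d(\Phi_\epsilon,\Psi_\epsilon)$ diverges like $|\log\rho|$ there. The function is then unbounded on $\dH^\circ$ and the maximum principle step does not apply.

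The underlying obstruction is that a single element of $PSO(2)$ has only one degree of freedom and cannot in general carry the pair $(\mv_{j-1},\mv_j)$ to $(\mv_{j-1}^\epsilon,\mv_j^\epsilon)$ simultaneously. The paper resolves this by working instead with elements of $PSL(2;\R)$: near each turning point $z_j$ it applies the constant twist $P_{\mv_{j-1}^\epsilon,\mv_j^\epsilon}^{-1}P_{j,\epsilon}P_{\mv_{j-1},\mv_j}$, where $P_{j,\epsilon}\in PSL(2;\R)$ sends $[\bv_{\pm\alpha_j/2}]$ to $[\bv_{\pm\alpha_j^\epsilon/2}]$; this carries \emph{both} adjacent rod vectors correctly and hence preserves tameness at the turning point. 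The gluing on the interior of each rod is then done not by interpolating the twist, but by interpolating the $(u,v)$ coordinates of Section~\ref{subsubsec:local regularity near rods} after rotating by $P_{\mv_j^\epsilon}$, which by construction keeps the null direction equal to $\mv_j^\epsilon$ throughout the cut-off region.
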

\begin{proof}
    We modify $\Phi$ to a model map $\Psi_{\epsilon}$ which is strongly tamed by $\mR_\epsilon$ and has controlled hyperbolic distance to $\Phi_\epsilon$. For $\delta>0$, we divide $\mathbb{H}^\circ$ into a union of regions
    \begin{align*}
        &U_{\infty,\delta}=\mathbb{H}^\circ\setminus N_{\delta}((z_1,z_n)),\\
        &U_{j,1,\delta}=N_{2\delta}((z_j,\frac{z_j+z_{j+1}}{2})),\\
        &U_{j,2,\delta}=N_{2\delta}((\frac{z_j+z_{j+1}}{2},z_{j+1})).
    \end{align*}
    Here $j\in\{1,\ldots,n-1\}$. Fix $\delta_0$ to be a small positive constant. Define 
    $$\alpha_j:=\mv_{j-1}\wedge \mv_j,\quad \alpha_j^\epsilon:=\mv_{j-1}^\epsilon\wedge \mv_j^\epsilon.$$
    Moreover, set $P_{j,\epsilon}\in PSL(2;\R)$ to be such that  
    $$P_{j,\epsilon}.[\bv_{\pm\frac{\alpha_j}{2}}]=[\bv_{\pm\frac{\alpha_j^\epsilon}{2}}].$$
    Define the map $\Psi_{j,1}$ over $U_{j,1,3\delta_0}$ as 
    $$\Psi_{j,1}:=P_{\mv_{j-1}^\epsilon,\mv_j^\epsilon}^{-1}P_{j,\epsilon} P_{\mv_{j-1},\mv_j}.\Phi$$ 
    and $\Psi_{j,2}$ over $U_{j,2,3\delta_0}$ as $$\Psi_{j,2}:=P_{\mv_j^\epsilon,\mv_{j+1}^\epsilon}^{-1}P_{j+1,\epsilon}P_{\mv_j,\mv_{j+1}}.\Phi.$$
    
    Next, we glue the above maps together to obtain the map $\Psi_\epsilon$. We first glue the maps $\{\Psi_{j,1},\Psi_{j,2}\}_{j=1,\ldots,n-1}$. On each overlap $U_{j,1,3\delta_0}\cap U_{j,2,3\delta_0}$, we can write $P_{\mv_{j}^\epsilon}.\Psi_{j,1}$ and $P_{\mv_{j}^\epsilon}.\Psi_{j,2}$ as 
    $$P_{\mv_j^\epsilon}.\Psi_{j,1}=\frac{1}{\rho}\left(\begin{matrix}
        \rho^2e^{u_1}+e^{-u_1}v_1^2 & e^{-u_1}v_1 \\ e^{-u_1}v_1 & e^{-u_1}
    \end{matrix}\right),\quad P_{\mv_j^\epsilon}.\Psi_{j,2}=\frac{1}{\rho}\left(\begin{matrix}
        \rho^2e^{u_2}+e^{-u_2}v_2^2 & e^{-u_2}v_2 \\ e^{-u_2}v_2 & e^{-u_2}
    \end{matrix}\right).$$
    Pick a cut-off function $0\leq\chi_j\leq1$ which equals one over $U_{j,1,2\delta_0}$ and has support in $U_{j,1,3\delta_0}$. Set $$u:=\chi_j u_1+(1-\chi_j)u_2, \quad v:=\chi_j v_1+(1-\chi_j)v_2$$ and define
    $$\Psi'_j:=\begin{cases}
        \Psi_1,\text{ over }U_{j,1,\delta_0},\\
        \frac{1}{\rho} P_{\mv_j^\epsilon}^{-1}. \left(\begin{matrix}
        \rho^2e^{u}+e^{-u}v^2 & e^{-u}v \\ e^{-u}v & e^{-u}
    \end{matrix}\right), \text{ over } U_{j,1,\delta_0}\cap U_{j,2,\delta_0},\\
    \Psi_2, \text{ over }U_{j,2,\delta_0}.
    \end{cases}$$
    This way we have obtained the glued map $\Psi_j'$ over $U_{j,1,2\delta_0}\cup U_{j,2,2\delta_0}$. Similarly we can glue all the $\Psi_j'$ on the overlaps of $U_{j,1,2\delta_0}\cup U_{j,2,2\delta_0}$ to obtain a glued map $\Psi'$ over $N_{2\delta_0}((z_1,z_n))$.

    Finally, on the overlap $N_{2\delta_0}((z_1,z_n))\cap U_{\infty,\delta_0/2}$, we can write $\Psi'$ and $\Phi$ as 
    $$\Psi'=\left(\begin{matrix}
        e^{u'}+e^{-u'}v'^2 & e^{-u'}v' \\ e^{-u'}v' & e^{-u'}
    \end{matrix}\right),\quad \Phi=\left(\begin{matrix}
        e^{u}+e^{-u}v^2 & e^{-u}v \\ e^{-u}v & e^{-u}
    \end{matrix}\right).$$
    Pick a cut-off function $0\leq\chi\leq1$ which equals one over $U_{\infty,\delta_0}$ and supports in $U_{\infty,\delta_0/2}$. Set $\bar u:=\chi u+(1-\chi)u',\bar v:=\chi v+(1-\chi)v'$ and define
    $$\Psi_\epsilon:=\begin{cases}
        \Phi,\text{ over }U_{\infty,\delta_0},\\
        \left(\begin{matrix}
        e^{\bar u}+e^{-\bar u}\bar v^2 & e^{-\bar u}\bar v \\ e^{-\bar u}\bar v & e^{-\bar u}
    \end{matrix}\right), \text{ over } U_{\infty,\delta_0/2}\cap N_{\delta_0}((z_1,z_n)),\\
    \Psi', \text{ over }N_{\delta_0}((z_1,z_n))\setminus U_{\infty,\delta_0/2}.
    \end{cases}$$
    From the construction and our local regularity result in Section \ref{subsec:local regularity}, there is a constant $C(\epsilon)$ which depends on $\epsilon$ and goes to zero as $\epsilon$ tends to zero, such that $|\Delta\Psi_\epsilon|<C(\epsilon)$. The map $\Psi_\epsilon$ is globally tamed by the rod structure $\mR_\epsilon$, and moreover, an important feature is that the map $\Psi_\epsilon$ coincides with $\Phi$ over $\mathbb{H}^\circ\setminus B_N(0)$ for large $N$.

    Now as in the proof of Theorem \ref{t:existence result}, setting $$F_\epsilon(x)=\int_{\mathbb{R}^3}|x-y|^{-1}|\Delta\Psi_{\epsilon}|(y)dy,$$ we have $$d(\Phi_\epsilon,\Psi_\epsilon)\leq F_\epsilon,$$
    which particularly gives $$d(\Phi_\epsilon,\Psi_\epsilon)<C(\epsilon)/r$$ when $r$ is large. This implies that $$d(\Phi_\epsilon,\Phi)<C(\epsilon)/r$$ when $r$ is large.
\end{proof}

When $\epsilon$ is sufficiently small, for the normalized augmentation $\nu_\epsilon$, we have  the metric $$g_\epsilon:=e^{2\nu_\epsilon}(d\rho^2+dz^2)+\rho \Phi_\epsilon.$$  
Denote by $g$ the corresponding metric associated to $\Phi$. From the lemma above, by the local regularity estimate in Proposition \ref{thm:C11 regularity} we know that on the end the difference between the Weyl curvatures of $g_{\epsilon}$ and $g$ is controlled as $$|W^+_{g_\epsilon}-W^+_g|_g<C(\epsilon) r^{-3}.$$ Moreover, since $g$ is Type $\II$, we know that $|W^+_g|$ is exactly comparable to $r^{-3}$ (see \cite{BG} for example). Hence when $\epsilon$ small, it satisfies that $\det(W^+_{g_\epsilon})>0$. Therefore, by \cite{BGL}, setting $\alpha_{\epsilon}$ to be the unique positive eigenvalue of $W^+_{g_\epsilon}$, which is a positive function over $\mathbb{H}^\circ$, we define $$f_{\epsilon}:=\alpha_{\epsilon}^{-1/3}$$ and $$g_{K,\epsilon}:=f_{\epsilon}^2g_{\epsilon}.$$ Moreover, let $\omega_\epsilon$ be an eigen-two-form of $W^+_{g_\epsilon}$ such that $|\omega_\epsilon|^2_{g_{K,\epsilon}}=2$. Over the manifold $\mathbb{H}^\circ\times\mathbb{R}^2$ there is a pointwise inequality
\begin{equation}\label{eq:pointwise 1}
    3d(\omega_{\epsilon}\wedge *d\omega_{\epsilon})\geq *(\frac{1}{2}|\nabla\omega_\epsilon|^2+3|d\omega_{\epsilon}^2|).
\end{equation}
We also have
\begin{equation}\label{eq:pointwise 2}
    2(2\sqrt{6}|W^+|+|s|)^{1/2}\geq|\omega_{\epsilon}\wedge *d\omega_{\epsilon}|.
\end{equation}
All the norms, Hodge stars, etc. in the above two inequalities are taken under $g_{K,\epsilon}$ and $s$ denotes the scalar curvature of $g_{K,\epsilon}$. For the derivation of these two inequalities, see \cite{BGL}.

\begin{lemma}
    We have
    \begin{equation}
        \iota_{\partial_{\phi_2}}\iota_{\partial_{\phi_1}}d(\omega_{\epsilon}\wedge *d\omega_{\epsilon})=d(\iota_{\partial_{\phi_2}}\iota_{\partial_{\phi_1}}(\omega_{\epsilon}\wedge *d\omega_{\epsilon})).
    \end{equation}
\end{lemma}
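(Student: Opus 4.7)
The plan is to show that both commuting Killing fields $\partial_{\phi_1}$ and $\partial_{\phi_2}$ preserve the three-form $\alpha := \omega_\epsilon \wedge *d\omega_\epsilon$, after which the identity reduces to a routine double application of Cartan's magic formula $\mathcal L_X = d\iota_X + \iota_X d$.

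First I would verify that $\omega_\epsilon$ itself is $\dT$-invariant. The fields $\partial_{\phi_1}, \partial_{\phi_2}$ are isometries of $g_\epsilon$ and hence preserve $W^+_{g_\epsilon}$ together with its eigenvalues and eigenspaces. In particular the unique positive eigenvalue $\alpha_\epsilon$, the conformal factor $f_\epsilon = \alpha_\epsilon^{-1/3}$, and the Hermitian metric $g_{K,\epsilon} = f_\epsilon^2 g_\epsilon$ are all $\dT$-invariant; consequently the Hodge star $*$ appearing in the statement commutes with the $\dT$ action. The form $\omega_\epsilon$ is determined pointwise up to sign by lying in the positive eigenspace of $W^+_{g_{K,\epsilon}}$ and satisfying $|\omega_\epsilon|^2_{g_{K,\epsilon}} = 2$; because $\dT$ is connected and acts trivially at the identity, each torus element must fix $\omega_\epsilon$ (rather than $-\omega_\epsilon$) pointwise by continuity. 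Therefore $d\omega_\epsilon$, $*d\omega_\epsilon$, and hence $\alpha$ are $\dT$-invariant.

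Granting $\mathcal L_{\partial_{\phi_1}}\alpha = \mathcal L_{\partial_{\phi_2}}\alpha = 0$, Cartan's formula gives $\iota_{\partial_{\phi_1}} d\alpha = -d\iota_{\partial_{\phi_1}}\alpha$. Contracting once more with $\partial_{\phi_2}$,
\begin{align*}
    \iota_{\partial_{\phi_2}}\iota_{\partial_{\phi_1}} d\alpha
    &= -\iota_{\partial_{\phi_2}} d\iota_{\partial_{\phi_1}}\alpha
    = -\mathcal L_{\partial_{\phi_2}}\iota_{\partial_{\phi_1}}\alpha + d\iota_{\partial_{\phi_2}}\iota_{\partial_{\phi_1}}\alpha
    = d\iota_{\partial_{\phi_2}}\iota_{\partial_{\phi_1}}\alpha,
\end{align*}
where in the last step I use that $[\partial_{\phi_1}, \partial_{\phi_2}] = 0$ and both fields preserve $\alpha$, so $\mathcal L_{\partial_{\phi_2}}\iota_{\partial_{\phi_1}}\alpha = \iota_{[\partial_{\phi_2},\partial_{\phi_1}]}\alpha + \iota_{\partial_{\phi_1}}\mathcal L_{\partial_{\phi_2}}\alpha = 0$.

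The only subtle point is the pointwise sign ambiguity in the definition of $\omega_\epsilon$, which is handled cleanly by connectedness of $\dT$. I do not foresee any serious obstacle beyond recording these invariance observations carefully.
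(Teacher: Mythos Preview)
Your proposal is correct and follows essentially the same route as the paper: both prove the identity by invoking the $\dT$-invariance of $\omega_\epsilon\wedge *d\omega_\epsilon$, the commutativity $[\partial_{\phi_1},\partial_{\phi_2}]=0$, and two applications of Cartan's formula. The paper simply asserts the invariance, whereas you supply the extra justification (eigenspace preservation, connectedness of $\dT$ to resolve the sign) that the paper leaves implicit.
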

\begin{proof}
    Since the form $\omega_\epsilon\wedge*d\omega_\epsilon$ is invariant under $\partial_{\phi_1},\partial_{\phi_2}$ and the two Killing fields commute, we have
    \begin{align*}
        d(\iota_{\partial_{\phi_2}}\iota_{\partial_{\phi_1}}(\omega_{\epsilon}\wedge *d\omega_{\epsilon}))&=\mathcal{L}_{\partial_{\phi_2}}(\iota_{\partial_{\phi_1}}(\omega_{\epsilon}\wedge *d\omega_{\epsilon}))-\iota_{\partial_{\phi_2}}(d(\iota_{\partial_{\phi_1}}\omega_{\epsilon}\wedge *d\omega_{\epsilon}))\\
        &=-\iota_{\partial_{\phi_2}}\left(\mathcal{L}_{\partial_{\phi_1}}(\omega_{\epsilon}\wedge *d\omega_{\epsilon})-\iota_{\partial_{\phi_1}}(d(\omega_{\epsilon}\wedge *d\omega_{\epsilon}))\right)\\
        &=\iota_{\partial_{\phi_2}}\iota_{\partial_{\phi_1}}d(\omega_{\epsilon}\wedge *d\omega_{\epsilon}).
    \end{align*}
\end{proof}

Denote $$\gamma:=\iota_{\partial_{\phi_2}}\iota_{\partial_{\phi_1}}(\omega_{\epsilon}\wedge *d\omega_{\epsilon}),$$ which descends to a 1-form on $\mathbb{H}^\circ$. Denote $$\overline{\mathrm{dvol}}:=\iota_{\partial_{\phi_2}}\iota_{\partial_{\phi_1}}\dvol_{g_{K,\epsilon}},$$ which descends to a 2-form on $\mathbb{H}^\circ$. Then  \eqref{eq:pointwise 1} implies that over $\mathbb{H}^\circ$
\begin{equation}\label{eq:pointwise 3}
    3d\gamma\geq(\frac{1}{2}|\nabla\omega_\epsilon|^2+3|d\omega_{\epsilon}^2|)\overline{\dvol}.
\end{equation}
For any small $\tau>0$ and large $N>0$, define $V_{\tau,N}:=B_{N}(0)\cap\{\rho>\tau\}$, whose boundary consists of the union of $\partial V_{\tau,N}^1:=\partial V_{\tau,N}\cap\{\rho=\tau\}$ and $\partial V_{\tau,N}^2:=\partial B_N(0)\cap\{\rho>\tau\}$. By integrating by parts,
$$3\int_{\partial V_{\tau,N}^1}\gamma+3\int_{\partial V_{\tau,N}^2}\gamma\geq \int_{V_{\tau,N}}(\frac{1}{2}|\nabla\omega_\epsilon|^2+3|d\omega_{\epsilon}^2|)\overline{\dvol}.$$

\begin{lemma} We have 
$$\lim_{\tau\rightarrow0}\int_{\partial V_{\tau,N}^1}\gamma\to0.$$    
\end{lemma}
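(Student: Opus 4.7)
The plan is to establish a uniform pointwise bound of the form $|\gamma(\partial_z)| \leq C(N) \cdot \tau \cdot e^{\nu_\epsilon}$ on $\{\rho = \tau\} \cap B_N(0)$, which upon integration in $z$ over a bounded interval yields the desired vanishing. The key observation is that the contraction $\iota_{\partial_{\phi_2}}\iota_{\partial_{\phi_1}}$ automatically produces a factor of $\rho$, since $|\partial_{\phi_1} \wedge \partial_{\phi_2}|_{g_\epsilon}^2 = \det(G_\epsilon) = \rho^2$ by the very definition of the Weyl--Papapetrou coordinate $\rho = \sqrt{\det G_\epsilon}$.

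To carry this out, first I would expand $\gamma$ in local coordinates on $\dH^\circ$ as $\gamma = \gamma_\rho \, d\rho + \gamma_z \, dz$, with $\gamma_z = -(\omega_\epsilon \wedge *d\omega_\epsilon)(\partial_{\phi_1}, \partial_{\phi_2}, \partial_z)$, so that on $\{\rho = \tau\}$ the line integral becomes $\int \gamma_z \, dz$. Cauchy--Schwarz then gives
$$|\gamma_z| \leq |\omega_\epsilon \wedge *d\omega_\epsilon|_{g_{K,\epsilon}} \cdot |\partial_{\phi_1} \wedge \partial_{\phi_2} \wedge \partial_z|_{g_{K,\epsilon}}.$$
Since $\partial_z$ is $g_\epsilon$-orthogonal to the $\dT$-fibers with $|\partial_z|_{g_\epsilon} = e^{\nu_\epsilon}$, the identity above promotes to $|\partial_{\phi_1} \wedge \partial_{\phi_2} \wedge \partial_z|_{g_\epsilon} = \rho \, e^{\nu_\epsilon}$, and the conformal rescaling $g_{K,\epsilon} = f_\epsilon^2 g_\epsilon$ gives $|\partial_{\phi_1} \wedge \partial_{\phi_2} \wedge \partial_z|_{g_{K,\epsilon}} = f_\epsilon^3 \rho \, e^{\nu_\epsilon}$.

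The remaining factor $|\omega_\epsilon \wedge *d\omega_\epsilon|_{g_{K,\epsilon}} \cdot f_\epsilon^3$ will be uniformly bounded on the compact toric region sitting over $B_N(0)$: by construction $|\omega_\epsilon|_{g_{K,\epsilon}}^2 = 2$; since $\alpha_\epsilon$ is everywhere positive (as noted just before the lemma) and is a simple eigenvalue of $W^+_{g_{K,\epsilon}}$, both $\omega_\epsilon$ and $f_\epsilon = \alpha_\epsilon^{-1/3}$ are smooth and hence bounded together with their first derivatives on this compact set. This gives $|\gamma_z| \leq C(N) \cdot \tau \cdot e^{\nu_\epsilon}$ on $\{\rho = \tau\} \cap B_N(0)$.

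Finally I would integrate in $z$: in the interior of a rod, $e^{\nu_\epsilon}$ is uniformly bounded, so the contribution is $O(\tau)$; near a turning point $z_j$, by Lemma \ref{lem:L infinity estimate} one has $e^{\nu_\epsilon} \sim r^{-1/2}$ with $r = \sqrt{\tau^2 + (z - z_j)^2}$, and $\int_{|z - z_j| \leq \delta} r^{-1/2} \, dz \leq 4\sqrt{\delta}$ uniformly in $\tau$, so the contribution is again $O(\tau)$. Summing over the finitely many turning points in $B_N(0)$ yields $\big|\int_{\partial V_{\tau, N}^1} \gamma\big| = O(\tau) \to 0$ as $\tau \to 0$. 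The step requiring most care is justifying the smoothness of $\omega_\epsilon$ and $f_\epsilon$ up to (and across) the axis $\{\rho = 0\}$, i.e., on rod interiors and at turning points; this reduces to the simplicity of the distinguished positive eigenvalue of $W^+_{g_{K,\epsilon}}$ there, which for $\epsilon$ small follows by continuity from the corresponding property of the Type $\II$ metric $g$.
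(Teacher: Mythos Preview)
Your argument is correct and is an explicit realization of the paper's one-line sketch. The paper simply says to pick the correct lattice locally so that the quotient of $g_\epsilon$ is smooth, after which ``it is easy to see the behavior of $\gamma$''; your computation makes this precise via the clean identity $|\partial_{\phi_1}\wedge\partial_{\phi_2}|_{g_\epsilon}=\rho$, and you correctly note that the boundedness of the remaining factor $|\omega_\epsilon\wedge *d\omega_\epsilon|_{g_{K,\epsilon}}\cdot f_\epsilon^3$ up to the axis reduces exactly to the local smooth model together with the simplicity of the positive eigenvalue of $W^+_{g_\epsilon}$ (inherited from the Type~$\II$ metric for small $\epsilon$).
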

\begin{proof}
    One only needs to compute $\int_{\partial V_{\tau,N}^1}\gamma$ locally. Near the interior of each rod or near each turning point, we can always pick the correct lattice to make the quotient of $g_{\epsilon}$ smooth. It is then easy to see the behavior of $\gamma$ and conclude the lemma.
\end{proof}
By setting $\tau\to0$, we have 
$$3\int_{\partial V_{0,N}^2}\gamma\geq \int_{V_{0,N}}(\frac{1}{2}|\nabla\omega_\epsilon|^2+3|d\omega_{\epsilon}^2|)\overline{\dvol}.$$
Next we take the canonical lattice $\Lambda_{\mR_\epsilon}$ as given in Definition \ref{def:canonical enhancement}, which is the same as $\Lambda_{\mR}$. The lattice $\Lambda_{\mR}$ gives rise to smooth Ricci-flat ends $(E_\epsilon,g_{\epsilon,\Lambda_{\mR}})$ and $(E,g_{\Lambda_{\mR}})$, which are obtained by taking the quotient of $g_{\epsilon}$ and $g$ respectively. By Proposition \ref{prop:appendix 2} we know that there is a diffeomorphism $\varpi:E\to E_\epsilon$ such that
     $$|\nabla_{g_{\Lambda_\mR}}^k(\varpi^*g_{\epsilon,\Lambda_\mR}-g_{\Lambda_\mR})|_{g_{\Lambda_\mR}}<C_k(\epsilon)/\mathfrak{r}^{k+1}.$$ 
     Here $C_k(\epsilon)$ depends on $k$ and  $C_k(\epsilon)\to0$ as $\epsilon\to0$, and $\mathfrak{r}$ denotes the distance function under $g_{\Lambda_\mR}$.

Over the smooth end we have
\begin{align}
    \int_{\partial V_{0,N}^2}\gamma&=C\int_{\pi^{-1}(\partial V_{0,N}^2)}\omega_{\epsilon}\wedge* d\omega_{\epsilon}\notag\\
    &\leq 2C\left(\int_{\pi^{-1}(\partial V_{0,N}^2)}\dvol_{g_{K,\epsilon}}\right)^{1/2}\left(\int_{\pi^{-1}(\partial V_{0,N}^2)}(2\sqrt{6}|W^+|+|s|)\dvol_{g_{K,\epsilon}}\right)^{1/2},\label{eq:lebrun estimate for boundary}
\end{align}
where here $\pi:E\to\mathbb{H}^\circ$ is the projection map from the smooth end into the base $\mathbb{H}^\circ$ and the constant $C$ depends on the lattice $\Lambda_{\mR}$. Now the same arguments as in the proof of Theorem A in \cite{BGL}, but performed over the smooth end $E$, shows that both integrals in \eqref{eq:lebrun estimate for boundary} are going to 0 as $N\to\infty$. This shows $\nabla\omega_{\epsilon}\equiv0$ and $d\omega_\epsilon\equiv0$, which implies that the conformal metric $g_{K,\epsilon}$ is K\"ahler. Therefore $g_\epsilon$ is Type $\II$ and this finishes the proof of Proposition \ref{prop:openness Type II}.

\subsubsection{The  \texorpdfstring{$\Ae$}{} case} \label{sss:case Ae}

Here we prove Theorem \ref{thm:degree equivalence Type II} in the case of Asymptotic Type $\Ae$. It is possible to argue as in the other cases as above, but there is a simpler argument which we present below.

We first show that a rod structure $\mR$ with  Asymptotic Type  $\Ae$ and degree $d(\mR)=1$ is of Type $\II$. To see this, for $\delta>0$ we set $\mR'_{\delta}$ to be the rod structure with  Asymptotic Type  $\ALFp$ and turning points $\delta z_j$ and with the same rod vectors as $\mR$. Then $d(\mR'_{\delta})=1$. Hence the associated harmonic map $\Phi_\delta$ is of Type $\II$.  Letting $\delta\to0$, by our analysis in Section \ref{ss:uniform control of tame harmonic maps} and \ref{ss:limiting behavior of the cone angles}, the rescaled harmonic map $\varphi_{1/\delta}^*\Phi_\delta$ converges to a limit harmonic map $\Phi_\infty$, which is globally tamed by $\mR$. Since $\Phi_\delta$ is strongly tamed by $\mR_\delta'$, by Proposition \ref{p:AE rigidity} it follows that $\Phi_\infty$ is strongly tamed by $\mR$. Notice that being the limit of Type $\II$ harmonic maps,  $\Phi_\infty$ must be of Type $\I$ or Type $\II$, but we have $d(\mR)=1$ so by Theorem \ref{thm:Type I equivalent to degree 0}, $\Phi_\infty$ can not be of Type $\I$. Thus $\mR$ is of Type $\II$.

Next we show that a Type $\II$ rod structure $\mR$ of  Asymptotic Type  $\Ae$ must have degree $1$. Choosing the normalized augmentation $\nu$, we obtain a Type $\II$ Ricci-flat metric 
    $$g=e^{2\nu}(d\rho^2+dz^2)+\rho\Phi$$ 
over $\mathbb{H}^\circ\times\mathbb{R}^2$. As in Section \ref{subsubsec:openness of Type II}, $g$ is conformal to a Bach-flat K\"ahler metric $g_K=f^2g$. Using the canonical lattice $\Lambda_{\mR}$ given in Definition \ref{def:canonical enhancement}, we obtain a smooth toric Ricci-flat end $(E,g_{\Lambda_\mR})$ of Asymptotic Type $\ALE$. Indeed, it is asymptotic to the flat $\R^4$, and $g_{\Lambda_{\mR}}$ is conformal to a Bach-flat K\"ahler metric $g_{K,\Lambda_{\mR}}$. As is proved in \cite{Li1}, the K\"ahler metric $g_{K,\Lambda_\mR}$ is incomplete at the infinity of the end $E$, and  the metric completion is given by adding exactly one point $\{p_\infty\}$. Moreover, by  a singularity removal theorem for  $g_{K,\Lambda_\mR}$, it follows that the resulting metric is smooth, which we denote by $(\overline{E},\overline{g}_{K,\Lambda_\mR})$. The  torus action naturally extends to $\overline{E}$, fixing $p_\infty$. 

Notice that in general $\Lambda_{\mR}$ may not be an enhancement for $\mR$ and we do not have a global quotient by $\Lambda_{\mR}$. Nevertheless, we can work directly with the metric $g_K$ on $\dH^\circ\times \R^2$, with a natural $\R^2$ action. It is easy to see that this action admits a moment map, which descends to a map from $\mathbb{H}^\circ$ to $\mathbb{R}^2$, whose image $ \Delta$ is bounded. We claim that the closure $\overline{\Delta}$ (given by adding one point to $\Delta$, which corresponds to the point at infinity $p_\infty$) is a convex polygon, with the normal vector of each edge agreeing with the corresponding rod vector of $\mR$. From this, it is easy to see that $d(\mR)=1$.  
  
The claim follows from the Atiyah-Guillemin-Sternberg's convexity theorem. We are not in the exact situation to apply the convexity theorem, but its proof can be carried over without essential difficulty. The point is that, locally with a suitable change of the quotient lattice  we may obtain a smooth toric quotient. More precisely, near each turning point, the interior of each rod, and the infinity of $\mathbb{H}$, by picking the correct lattice, the quotient of $g_K$ locally extends as a smooth K\"ahler metric, with stabilizers described by the rod vectors. This implies that $\overline\Delta$ is a locally convex polygon, hence it is a convex polygon.

\section{Discussion}\label{sec:discussion}
\subsection{Riemannian black hole uniqueness}

In Theorem \ref{t:main} we require $\beta\in (0, 1)$. Notice that when $n=1$, by Proposition \ref{p:Kerr family} we may allow $\beta$ to be $0$, which then gives rise to the Schwarzschild space, but when $n\geq 2$ it is easy to see that the arguments in Section \ref{ss:general case} does not go through. Similarly the arguments fail to work for all $n\geq 1$ when $\beta=1$. There is a deeper reason behind these technical constraints.
In fact, in these cases it can be shown that one can not obtain gravitational instantons with the desired Asymptotic Type. In other words, conical singularities must appear.  This follows from the uniqueness result of Mars-Simon \cite{MarsSimon}, with a mild extension. See Appendix \ref{sec:appendix b}.

Motivated by this and Theorem \ref{t:main}, it seems that the following version of the Riemannian black hole conjecture is likely to hold

\begin{conjecture}[Uniqueness conjecture I]
         The Schwarzschild space is the unique $\AF_0$ gravitational instanton up to isometry and scaling. 
\end{conjecture}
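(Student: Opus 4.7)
The plan is a three-step reduction combining symmetry extension with classical static-vacuum rigidity. First, I would produce a global Killing field from the asymptotic structure: the $\AF_0$ hypothesis supplies a unit Killing field $X_\infty$ on the end $E \subset M$---the generator of the $S^1$ factor of $\R^3 \times S^1$---and on a Ricci-flat manifold a Killing field $X$ satisfies the elliptic equation $\nabla^*\nabla X = 0$. Using the exponential decay $|\nabla^k(g - g^{\AF_0})|_g = O(\mathfrak r^{-k-\delta})$, I would solve this equation on weighted Sobolev spaces with prescribed asymptotic value $X_\infty$. Fredholm theory on $\AF_0$ ends, together with unique continuation from the end, should yield a global Killing field $X_1$ on $M$, and the fact that $|X_1|^2 \to 1$ at infinity precludes degeneracy.

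The second step reduces the problem to a static three-manifold. The key claim is that the twist form $d\mathcal{E}_1 = *(X_1^\flat \wedge dX_1^\flat)$ vanishes globally, so that $X_1$ is hypersurface-orthogonal. Asymptotically $X_1^\flat \to d\tau$ is closed, hence $d\mathcal{E}_1 \to 0$; combined with the structure of the fixed-point set $\Sigma := \{|X_1|^2 = 0\}$---which by standard equivariant arguments must be a compact, codimension-two, totally geodesic submanifold (a \emph{bolt})---and a maximum-principle argument on the Ernst equation, one expects to force $d\mathcal{E}_1 \equiv 0$. The quotient $(N := M/\langle X_1 \rangle,\, h,\, u := |X_1|)$ is then a complete asymptotically flat Riemannian $3$-manifold with inner boundary $\Sigma$ solving the static vacuum equations
\begin{equation*}
    u\, \Ric_h \;=\; \nabla^2_h u, \qquad \Delta_h u \;=\; 0,
\end{equation*}
with $u \to 1$ at infinity and $u|_\Sigma = 0$. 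This is precisely the data classified by the Israel / Bunting--Masood-ul-Alam uniqueness theorem: the conformal doubling of $(N,h)$ along $\Sigma$ via the factors $(1\pm u)^2/4$ produces a complete asymptotically flat $3$-manifold of zero ADM mass and nonnegative scalar curvature, so Schoen--Yau rigidity identifies it with Euclidean $\R^3$, whence $(N,h,u)$ is the Schwarzschild slice and $(M,g)$ is Riemannian Schwarzschild.

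The principal obstacle lies in Step 2: proving the integrability $d\mathcal{E}_1 \equiv 0$. In the Lorentzian setting this step is accomplished by Hawking's rigidity theorem, whose proof exploits the analyticity of the event horizon and produces a \emph{second} commuting Killing field as a by-product. No direct Riemannian analogue appears to be available in the literature, and I expect that either a Riemannian Hawking-type theorem tailored to the $\AF_0$ setting and the bolt geometry of $\Sigma$, or a global Bochner/monotonicity identity for the Ernst potential exploiting the cubic volume growth and the finite-energy condition $\int_M |\Rm_g|^2\,\dvol_g < \infty$, will be required. A secondary technical challenge in Step 1 is controlling the cokernel of the Lichnerowicz Laplacian on $\AF_0$ ends, which is less standard than the asymptotically Euclidean case and would likely be handled through the tame harmonic map framework developed in Sections~\ref{sec:harmonic maps}--\ref{sec:degeneration of rod structures}.
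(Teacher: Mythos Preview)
The statement you are attempting to prove is explicitly presented in the paper as an \emph{open conjecture} (Section~\ref{sec:discussion}); there is no proof in the paper to compare against. The nearest proven result is Proposition~\ref{p:MarsSimon} in Appendix~\ref{sec:appendix b}, which establishes uniqueness under the substantial additional hypothesis that $(M,g)$ is already \emph{toric} with a semi-free $S^1$ action---so the Killing field whose existence you hope to deduce in Step~1 is instead assumed from the outset.

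Your Step~1 contains a genuine gap: solving $\nabla^*\nabla X = 0$ with prescribed asymptotics produces a harmonic vector field, not a Killing field. The Killing condition $\mathcal L_X g = 0$ is a first-order overdetermined system, and recovering it from an elliptic scalar equation on the end requires an additional argument (e.g., that the asymptotic isometry group of an $\AF_0$ Ricci-flat end acts by genuine isometries). This is not established in the literature for $\AF_0$ geometry and is, in effect, the heart of the conjecture.

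Your Step~2 is misdirected even granting a global $S^1$ symmetry. You do \emph{not} need the twist $d\mathcal E_1$ to vanish: the Mars--Simon argument reproduced in Appendix~\ref{sec:appendix b} handles the full Ernst potential directly, via the conformal metrics $g^\pm = \tfrac{1}{16}\bigl(\sqrt{(1+V^2)^2 - \mathcal E^2}\pm 2V\bigr)^2 h$ and the positive mass theorem, without ever reducing to the static case. Your insistence on hypersurface-orthogonality sends you through a Riemannian Hawking-type rigidity that does not exist, whereas the paper's partial result bypasses this entirely. Separately, your assertion that the fixed-point set consists only of bolts is incorrect: isolated fixed points (nuts) can occur, producing conical singularities $dr^2 + \tfrac{1}{2}r^2 g_{S^2}$ in the quotient $N$; Appendix~\ref{sec:appendix b} addresses these via a positive mass theorem with isolated singularities (citing~\cite{Dai}), a point your outline omits.
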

    It is a natural question to investigate the geometric behavior of the $\AFa$ gravitational instanton $g_\beta$ constructed in Theorem \ref{t:adjusting angle} when $\beta\rightarrow 0$ or $\beta\rightarrow 1$. In the case $n=1$ from Example \ref{ex:Kerr} we know that as $\beta\rightarrow1$ the Kerr spaces split into the union of a  Taub-NUT space and an anti-Taub-NUT space. In general we have a similar picture. Below we assume $n\neq 2$ and only consider the case $\beta\rightarrow 0$. One can similarly deal with the case $\beta\rightarrow 1$, which we leave for the interested readers.

    We first prove a technical result, which generalizes Proposition \ref{p:AE rigidity} to the  $\ALF^{\pm}$ and $\AFa$ case. 
\begin{proposition} \label{p:ALF/AF rigidity}
    Suppose $\Phi$ is a harmonic map globally tamed by a rod structure $\mR$ of  Asymptotic Type  $\ALFm$, $\ALFp$, or $\AFa$. Then there is a $Q\in SL(2; \R)$ such that $Q.\mv_0=\mv_0$, $Q.\mv_n=\mv_n$, and $Q.\Phi$ is the unique harmonic map which is strongly tamed by $Q.\mR$ (as produced in Theorem \ref{t:existence result}). Here we make $Q.\mR$ into a rod structure by assigning its Asymptotic Type to be $\ALFm$, $\ALFp$, or $\AFa$ accordingly.
\end{proposition}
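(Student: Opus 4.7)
The overall strategy will mirror the five-step scheme used in Proposition \ref{p:AE rigidity}. First, I will choose the augmentation $\nu$ and apply a matrix $Q \in SL(2;\R)$ fixing both $\mv_0$ and $\mv_n$ to arrange the normalized rod vectors on the two infinite rods to coincide with the model vectors. In the $\ALF^-$ case with $\mv_0 = [\bv_{\alpha/2}]$, $\mv_n = [\bv_{-\alpha/2}]$, we arrange $\overline{\mv}_0 = \bv_{\alpha/2}$ and $\overline{\mv}_n = \bv_{-\alpha/2}$ after adjusting $\nu$ by a constant and acting by a diagonal matrix in the stabilizer of $\mv_0,\mv_n$. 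Analogous normalizations work for $\ALF^+$ (using $\bv_{\pm 2/\alpha}$) and $\AFa$ (using $\bv_0$ and $[(-\beta,1)^\top]$, together with a unipotent conjugation). Throughout I then take the canonical lattice $\Lambda_\mR$ from Definition \ref{def:canonical enhancement} so the resulting end $(E,g_{\Lambda_\mR})$ is a smooth toric Ricci-flat end with no conical singularities.

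Next I will establish the correct asymptotic model. The local analysis of Section \ref{subsec:local regularity}, together with the global tameness of $\Phi$ and the normalization above, provides a pointwise comparison between $g_{\Lambda_\mR}$ and the flat model end $g^\sharp$, as well as a uniform curvature bound $|\Rm_{g_{\Lambda_\mR}}| = O(\mathfrak r^{-2})$ (indeed $O(\mathfrak r^{-3})$ in the $\AFa$ case, since the model is flat). Combined with Lemma \ref{lem:Gauss-Bonnet}, this yields finite $L^2$ curvature and cubic volume growth. I will then invoke the $\ALF$/$\AF$ analog of the Bando--Kasue--Nakajima theorem (this is exactly the content of Proposition \ref{prop:appendix 1} in the Appendix, which I take as given) to conclude that $g_{\Lambda_\mR}$ has Asymptotic Type $\sharp$, with polynomial decay $|\nabla^k_g(g_{\Lambda_\mR} - g^\sharp)|_g = O(\mathfrak r^{-k-\delta})$ for some $\delta>0$.

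The third step identifies the torus action. Any Killing field $X$ for $g_{\Lambda_\mR}$ satisfies $\nabla^*\nabla X = 0$, and using the sharp asymptotics on $g_{\Lambda_\mR}$ together with the explicit kernel description of this elliptic operator on the flat model end, one produces a model Killing field $\widetilde X$ with $|X - \widetilde X|_{g_{\Lambda_\mR}} = O(\mathfrak r^{1-\epsilon-k})$ after $k$ derivatives. A Grove--Karcher center-of-mass argument (as in the $\Ae$ case) then upgrades the $\dT$-equivariance up to an equivariant diffeomorphism identifying the two torus actions; in the $\ALF^\pm$ case one has to be careful that the generator of the Hopf $\dS^1$ at infinity matches the correct rod vector, and in the $\AFa$ case that the translation direction on the $\R^1$ factor lines up with $\mv_0$. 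Once the torus actions are matched, the comparison of Gram matrices $G = \rho\Phi$ and $\widetilde G = \widetilde\rho\widetilde\Phi$ together with $\widetilde\rho \rho^{-1} = 1 + O(\mathfrak r^{-\epsilon})$ gives $d(\Phi, \Phi^\sharp_{\mv_0,\mv_n}) = O(r^{-\epsilon})$. Finally, the uniqueness part of Theorem \ref{t:existence result} (whose proof only used strong tameness in the form of a bounded and decaying subharmonic function $d(\Phi_1,\Phi_2)$ on $\R^3$) forces $\Phi$ to equal the unique harmonic map strongly tamed by $Q.\mR$.

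The main obstacle will be step three: asymptotic control of Killing fields on the $\ALF^+$ end. The Taub-NUT model has a non-trivial Hopf fibration at infinity, so the space of model Killing fields is different from that of a flat quotient, and the mapping properties of $\nabla^*\nabla$ on $1$-forms must be analyzed in the right weighted spaces before one can extract a unique model limit $\widetilde X$. The $\ALF^-$ and $\AFa$ cases are essentially flat-modelled at infinity and are thus significantly simpler; the $\ALF^+$ case is where the analysis genuinely differs from the $\Ae$ proof.
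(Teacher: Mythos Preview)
Your approach has a genuine gap, and it is different from the obstacle you flagged. The key tool you invoke in step two, Proposition \ref{prop:appendix 1}, takes as input a harmonic map that is already \emph{strongly} tamed by $\mR$ and concludes that the resulting Ricci-flat end has Asymptotic Type $\sharp$ with decay $\delta=1$. It is a regularity statement, not a rigidity statement. Since the proposition you are trying to prove starts from a harmonic map that is only \emph{globally} tamed, you cannot feed it into Proposition \ref{prop:appendix 1} without assuming what you want to show. More broadly, there is no Bando--Kasue--Nakajima theorem for $\ALF$/$\AF$ ends available in the paper (nor a clean one in the literature that applies here without additional hypotheses), so the direct transplant of the $\Ae$ argument stalls already at step two: global tameness only gives $d(\Phi,\Phi^\sharp_{\mv_0,\mv_n})\le C$ with no decay, and from that you cannot extract the polynomial $g-g^\sharp$ asymptotics needed to run your Killing-field matching.

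The paper bypasses this entirely with a scaling-down argument. One considers the twisted rescalings $\Phi_R:=D_{\sqrt{R}}.\varphi_{R^{-1}}^*\Phi$ (with $U_\beta$ inserted in the $\AFa$ case), observes that the bound $d(\Phi_R,\Phi^{\AF_0})\le C$ persists on any fixed annulus, and passes to a limit $\Phi_\infty$ globally tamed by the trivial rod structure $\mR^{\AF_0}$. Choosing an augmentation and lattice, $(\Phi_\infty,\nu_\infty)$ produces a smooth complete toric Ricci-flat metric on $\R^3\times S^1$ with quadratic curvature decay; Lemma \ref{lem:Gauss-Bonnet} and $\chi(\R^3\times S^1)=0$ then force it to be \emph{flat}. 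Flatness together with $d(\Phi_\infty,\Phi^{\AF_0})<C$ pins down $\Phi_\infty=\mathrm{diag}(a\rho,(a\rho)^{-1})$, from which one reads off $Q=D_{\sqrt a}$ so that $d(Q.\Phi,\Phi^\sharp_{\mv_0,\mv_n})\to 0$ at infinity; uniqueness in Theorem \ref{t:existence result} then finishes. The $\ALF^\pm$ cases reduce to the same picture since $D_{\sqrt R}.\varphi_{R^{-1}}^*\Phi^{\TN^-_\alpha}=\Phi^{\TN^-_{\alpha/R}}\to\Phi^{\AF_0}$. The point is that the Gauss--Bonnet trick on the blow-down replaces any need for an $\ALF$ version of BKN or for Killing-field asymptotics.
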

\begin{proof}
    Without loss of generality we may assume $\mv_0=[\bv_{\frac{\alpha}{2}}]$ and $\mv_n=[\bv_{-\frac{\alpha}{2}}]$ for some $\alpha>0$ when the Asymptotic Type is $\ALFm$ or $\ALFp$ and $\alpha=0$ when the Asymptotic Type is $\AFa$. Note that by Theorem \ref{t:existence result} there is the unique harmonic map $\Psi$ strongly tamed by $\mR$, and by the global tameness of $\Phi$ we have $d(\Phi,\Psi)<C$. 

    Next we first prove the proposition in the $\AFa$ case. For simplicity, by applying a $U_\beta$ action we may assume the Asymptotic Type is $\AF_0$. So on the end we have $d(\Phi,\Phi^{\AF_0})<C$. For each $R\gg1$, consider the scaling down 
    $$\Phi_R:=D_{\sqrt{R}}.\varphi_{R^{-1}}^*\Phi,$$
    and notice that 
    $$\Phi^{\AF_0}=D_{\sqrt{R}}.\varphi_{R^{-1}}^*\Phi^{\AF_0}.$$
    Hence for any $\epsilon>0$ and $R$ large enough depending on $\epsilon$, on the annulus $A_{\epsilon, \infty}\subset \dH^\circ$ we also have $d(\Phi_R,\Phi^{\AF_0})<C$. By the uniform estimate from Section \ref{subsec:local regularity}, up to passing to a subsequence we can take the limit of $\Phi_R$, which we denote by $\Phi_{\infty}$ defined on $\mathbb{H}^\circ$. The limit of the rod structure $\mR_R:=D_{\sqrt{R}}.\varphi_{R^{-1}}^*\mR$ is a weak rod structure $\mR_\infty$ with only two infinite rods, sharing the same rod vector $\bv_0$. After merging the two infinite rods we obtain the rod structure $\mR_\infty'=\mR^{\AF_0}$ assigned with Asymptotic Type $\AF_0$, and $\Phi_\infty$ is globally tamed by the rod structure $\mR^{\AF_0}$. As a consequence, we know that by choosing a suitable augmentation $\nu_\infty$ and a lattice $\Lambda_\infty$, $(\Phi_\infty,\nu_\infty)$ together with $(\mR'_\infty,\Lambda_\infty)$ defines a smooth complete toric Ricci-flat metric $(M_\infty,g_\infty)$ where $M_\infty$ is diffeomorphic to $\R^3\times S^1$. Moreover, we have  $d(\Phi_\infty,\Phi^{\AF_0})<C$, which together with our estimate Proposition \ref{thm:C11 regularity} imply that the curvature of $g_\infty$ decays quadratically, hence by Lemma \ref{lem:Gauss-Bonnet} we know $\int_{M_\infty}|\Rm_{g_\infty}|_{g_\infty}^2\dvol_{g_\infty}<\infty$. Because of Lemma \ref{lem:Gauss-Bonnet}, the Gauss-Bonnet formula holds and we then have $\int_{M_\infty}|\Rm_{g_\infty}|_{g_\infty}^2\dvol_{g_\infty}=\chi(M_\infty)=0$, which implies that $(M_\infty,g_\infty)$ is actually flat. Since $\Phi_\infty$ is associated to the flat metric $(M_\infty,g_\infty)$ and $d(\Phi_\infty,\Phi^{\AF_0})<C$, it follows that there is a positive constant $a>0$ such that $\Phi_\infty=\mathrm{diag}(a\rho,1/(a\rho))$. Hence, if we take $P=D_{\sqrt{a}}$ and consider the rod structure $P.\mR$ assigned with Asymptotic Type $\AF_0$, the harmonic map $P.\Phi$ satisfies $d(P.\Phi,\Phi^{\AF_0})\to0$ as we approach the infinity of $\mathbb{H}$. By the uniqueness part of proof of Theorem \ref{t:existence result}, this already implies that $P.\Phi$ is strongly tamed by $P.\mR$.

    Now we prove the proposition for the case of $\ALFm$, where the case of $\ALFp$ is similar. We can again perform the scaling down $$\Phi_R:=D_{\sqrt{R}}.\varphi_{R^{-1}}^*\Phi.$$ But this time notice that $$D_{\sqrt{R}}.\varphi_{R^{-1}}^*\Phi^{\TNam}=\Phi^{\TN^-_{\alpha/R}}.$$  We still have $d(\Phi_R,\Phi^{\TN^-_{\alpha/R}})<C$ and since $\Phi^{\TN^-_{\alpha/R}}$ converges to $\Phi^{\AF_0}$, the uniform estimate shows that $\Phi_R$ converges to a limit $\Phi_\infty$ globally tamed by $\mR^{\AF_0}$ as above. Taking a suitable augmentation $\nu_\infty$ and a lattice $\Lambda_\infty$ again we get a smooth complete toric Ricci-flat metric $g_\infty$ on $M_\infty=\mathbb{R}^3\times S^1$, which has $\int_{M_\infty}|\Rm_{g_\infty}|_{g_\infty}^2\dvol_{g_\infty}<\infty$. It follows from the Gauss-Bonnet formula that $g_\infty$ is flat, and $d(\Phi_\infty,\Phi^{\AF_0})<C$ implies that there is a positive constant $a>0$ such that $\Phi_\infty=\mathrm{diag}(a\rho,1/(a\rho))$. So take $P=D_{\sqrt{a}}$ we have $d(P.\Phi,\Phi^{\TNam})\to0$ as  we approach the infinity of $\mathbb{H}$. Assign $P.\mR$ with Asymptotic Type $\ALFm$, we then have $P.\Phi$ is strongly tamed by $P.\mR$.
\end{proof}

    Notice that in Theorem \ref{t:adjusting angle} for each $\beta$ the length vector $\bl(\beta)$ which gives rise to an $\AFa$ gravitational instanton is not a priori known to be unique. We make an arbitrary choice of such $\bl(\beta)$ for each $\beta$.

\begin{proposition}
   As $\beta\rightarrow 0$, in the pointed Gromov-Hausdorff sense, without rescalings, $(X_{n}, g_\beta)$ splits into the union of 
   \begin{itemize}
       \item A  Taub-NUT space, an anti-Taub-Bolt space and $k-1$ Schwarzschild spaces (when $n=2k\geq 2$);
       \item An anti-Taub-Bolt space, a  Taub-Bolt space and $k-1$ Schwarzschild spaces (when $n=2k+1\geq3$);
   \end{itemize}
\end{proposition}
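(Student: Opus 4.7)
The plan is to analyze subsequential pointed Gromov--Hausdorff limits of $(X_n, g_{\beta_i})$ along $\beta_i \to 0^+$ by tracking the chosen length vectors $\bl(\beta_i) = (l_1^i, \ldots, l_n^i)$ and invoking the bubble-tree machinery of Section \ref{sec:degeneration of rod structures} together with the classification in Proposition \ref{c:classification of toric GI}. First I would rule out degeneration of any length to $0$: if $l_j^i \to 0$ for some $j$ along a subsequence, then Proposition \ref{p:cone angle AF rod length go to zero} and Proposition \ref{p:deep scale angle goes to infinity} force the cone angle along some rod in the first non-trivial bubble limit to diverge to $+\infty$, contradicting $\vartheta_j^i \equiv 2\pi$. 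Next I would show, for $n \geq 2$, that not all $l_j^i$ can stay bounded: otherwise $(\Phi_i, \nu_i)$ converges (by Remark \ref{rem:continuity of cone angles}) to an augmented harmonic map strongly tamed by an $\AF_0$ rod structure with $n + 1 \geq 3$ turning points, producing a smooth $\AF_0$ gravitational instanton on $X_n$ and contradicting the extended Mars--Simon rigidity of Appendix \ref{sec:appendix b}, whose only simply-connected $\AF_0$ member is Schwarzschild (with just $2$ turning points).

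Passing to a subsequence one may therefore assume each $l_j^i$ tends either to a positive finite number or to $+\infty$, with $\mathcal J := \{j : l_j^i \to \infty\} = \{j_1 < \cdots < j_m\}$ nonempty. This partitions $\{z_1, \ldots, z_{n+1}\}$ into $m+1$ clusters $\mC_1, \ldots, \mC_{m+1}$ of consecutive turning points whose mutual distances remain bounded. Based at a preimage of $\mC_p$, the pointed Gromov--Hausdorff limit (without rescaling) gives a smooth complete simply-connected toric Ricci-flat $4$-manifold $(Y_p, h_p)$, whose rod structure is obtained by restricting $\mR_{\bl(\beta_i)}$ to $\mC_p$ and appending two infinite rods with rod vectors $\mv_{j_{p-1}}$ and $\mv_{j_p}$. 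Using the alternating rod-vector pattern of $\mR_{\bl}$, the leftmost and rightmost clusters carry distinct outer rod vectors (one being $[(1,0)^\top]$), so $(Y_1, h_1)$ and $(Y_{m+1}, h_{m+1})$ are of Asymptotic Type $\ALF^{\pm}$; each interior cluster has coinciding outer rod vectors ($[(0,1)^\top]$ when $n = 2k$, or $[(-1,1)^\top]$ when $n = 2k+1$) and is hence of Asymptotic Type $\AF_0$.

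By Proposition \ref{c:classification of toric GI} together with the extended Mars--Simon rigidity, each interior $(Y_p, h_p)$ must be a Schwarzschild space, and the two end pieces belong to the list \{Taub-NUT, anti-Taub-NUT, Taub-Bolt, anti-Taub-Bolt\}. An explicit $\SL(2;\mathbb{R})$-match of the outer rod vectors in each cluster, preserving the lattice $\mathbb{Z}^2$, both pins down the identification and forces the cluster-size distribution. One finds that for $n = 2k$ the only possibility is $(|\mC_1|, \ldots, |\mC_{m+1}|) = (1, 2, 2, \ldots, 2)$ with $m+1 = k+1$ clusters, giving one Taub-NUT, one anti-Taub-Bolt, and $k-1$ Schwarzschild pieces; for $n = 2k+1$ the only possibility is $(2, 2, \ldots, 2)$ with $m+1 = k+1$, giving one anti-Taub-Bolt, one Taub-Bolt, and $k-1$ Schwarzschild pieces. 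The distinction between Taub-Bolt and anti-Taub-Bolt is detected by the self-intersection of the sphere inherited from the ambient $X_n$.

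The main obstacle will be the combinatorial step in the previous paragraph: one must simultaneously rule out every alternative cluster-size distribution (interior clusters of size $\geq 3$, end clusters of Asymptotic Type $\Ae$, wrong orientations, and so on) by showing that the corresponding truncated rod structure does not admit a smooth toric gravitational instanton with all cone angles $2\pi$. This reduces to a case-by-case verification using only the rod vectors $[(1,0)^\top], [(0,1)^\top], [(-1,1)^\top]$ together with Proposition \ref{c:classification of toric GI}. A secondary obstacle is the non-existence step for $n \geq 2$, which genuinely relies on the extended Mars--Simon uniqueness in Appendix \ref{sec:appendix b} rather than solely on the framework developed in the main body.
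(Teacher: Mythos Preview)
Your high-level picture is correct, but the argument as written has a substantive gap concerning the \emph{Asymptotic Type} of the cluster limits, and this propagates into the combinatorial step.

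You assert that, after translating to a cluster $\mC_p$ without rescaling, the pointed limit $(Y_p,h_p)$ is a toric gravitational instanton whose Asymptotic Type is read off from the outer rod vectors alone ($\ALF^\pm$ if distinct, $\AF_0$ if equal). But the machinery of Section~\ref{sec:degeneration of rod structures} does not give this directly. To obtain a harmonic-map limit that is \emph{globally} tamed by the expected rod structure one has to pass through the twisted rescalings (the matrices $D_{\sqrt{\mathfrak s_i}}U_{\beta_i}$, $D_{\sqrt{\mu_i}}P_{\mv_0,\mv_{i,S}}$, etc.) as in the paper's Steps~3--4; the Asymptotic Type of the bubble is then determined by the \emph{scale} data (the limit of quantities like $\alpha\lambda$), not just by whether the two outer rod vectors coincide. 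In particular, the distinction $\ALF^+$ versus $\ALF^-$ for an end cluster---hence Taub-NUT versus anti-Taub-NUT, Taub-Bolt versus anti-Taub-Bolt---cannot be read off by an ``$\SL(2;\mathbb R)$-match of rod vectors''; it comes from the sign of the twist as in Cases~(B)/(C). Likewise, the claim that interior clusters automatically have coinciding outer rod vectors is false for an arbitrary $\mathcal J$: nothing prevents adjacent elements $j_{p-1},j_p\in\mathcal J$ from having opposite parity, yielding outer rod vectors $[(0,1)^\top]$ and $[(-1,1)^\top]$.

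The paper sidesteps all of this by proving directly that \emph{slant} rod lengths diverge and \emph{horizontal} rod lengths stay in a compact interval (Steps~2--6), using the twisted normalized rod vectors \eqref{eq:twisted normalized horizontal rod vector}--\eqref{eq:twisted normalized horizontal rod vector 2} and Proposition~\ref{p:ALF/AF rigidity} to identify each bubble, with Proposition~\ref{p:MarsSimon} forcing each non-trivial bubble to have a single finite rod. Your proposed combinatorial case-check could in principle recover the same partition, but to run it you would first need the Asymptotic Type of every candidate piece, which brings you back to the twist computation you are trying to avoid. A smaller point: your opening step (ruling out $l_j^i\to0$ via Propositions~\ref{p:cone angle AF rod length go to zero}--\ref{p:deep scale angle goes to infinity}) is stated in the paper under the hypothesis of bounded size; once $\mathfrak s_i\to\infty$ the relevant enhancement may degenerate under rescaling, and the paper's Step~1 handles this separately.
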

\begin{remark}
    When $n=2$ it is also possible to see the above directly using the explicit expression of the Chen-Teo spaces \cite{Chen-Teo}, similar to the discussion on the Kerr spaces in Example \ref{ex:Kerr}.
\end{remark}
\begin{proof} 
    Clearly it suffices to prove that given an arbitrary sequence $\beta_i\rightarrow 0$ and $\bl^i=(l^{i}_1, \cdots, l^i_n)=\bl(\beta_i)$, after passing to a subsequence we have the desired splitting.

    Denote the corresponding enhanced rod structure by $(\mR_i, \Lambda)$  and the augmented harmonic map by $(\Phi_i, \nu_i)$.  By assumption we have  $\vartheta_{j}(\bl^i)=2\pi$ for all $i, j$. It follows that the normalized rod vector along each rod agrees with the primitive vector. 
    For simplicity we will call a rod \emph{vertical} if the normalized rod vector is given by $(1,0)^\top$,  \emph{horizontal} if the normalized rod vector is given by $(0, 1)^\top$ and  \emph{slant} if the normalized rod vector is given by $(1,-1)^\top$.  The key in our argument is to show that  the lengths of all horizontal rods are uniformly bounded above and bounded away from zero, whereas the lengths of all slant rods are going to infinity. This will be proved using the bubbling analysis in Section \ref{sec:degeneration of rod structures} and the uniqueness theorem of Mars-Simon (see Proposition \ref{p:MarsSimon}).

We denote by $S_i$ the maximal length of a slant rod and by $H_i$ the maximal length of a horizontal rod.  Passing to a subsequence we may assume that $H_i$ is achieved by some odd $j_0$ and $S_i$ is achieved by some even $j_1$, both independent of $i$. Denote by $\ms_i$  the size of $\mR_i$.
 
\

\noindent {\bf Step 1.} We must have $\ms_i\rightarrow\infty$. Suppose not, then passing to a subsequence we may assume $\ms_i\leq C$ uniformly. Passing to a further subsequence we may reduce to the following two cases. We will draw a contradiction in both cases.

\

{\bf Case (a).} $\ms_i\geq C'>0$ uniformly. Then using the analysis in Section \ref{sec:degeneration of rod structures} we obtain a limit harmonic map $\Phi_\infty$ tamed by an enhanced rod structure $(\mR_\infty,\Lambda)$ of Asymptotic Type $\AF_0$ and with all the cone angles $2\pi$. This leads to a toric gravitational instanton $(X, g)$ of Asymptotic Type $\AF_0$. As in the above discussion we may apply Proposition \ref{p:MarsSimon} to conclude that $(X, g)$ must be isometric to either a Schwarzschild metric or a Taub-Bolt metric. Since the Asymptotic Type of $(X,g)$ is $\AF_0$, we conclude that it is the Schwarzschild metric. Since $\mR_i$ has more than 2 turning points, there must be turning points coming together, then we arrive at a contradiction by Proposition \ref{p:angle comparision before limit} and Proposition \ref{p:deep scale angle goes to infinity}.

\

{\bf Case (b).} $\ms_i\rightarrow 0$ as $i\rightarrow\infty$. In this case we may apply the discussion in Section \ref{sss:limit of cone angles when turning points come together}. We rescale the sequence by $\ms_i^{-1}$ and consider the first bubble limit $(\mR_{\infty, p}', \Phi_{\infty, p}, \nu_{\infty, p})$. It is easy to see that $\mR_{\infty, p}'$ must be of Asymptotic Type $\AF_0$. Since the cone angles along all the rods are fixed to be $2\pi$, we arrive at a contradiction from Proposition \ref{p:cone angle AF rod length go to zero}.

\

\noindent {\bf Step 2.} We must have $H_i/S_i\rightarrow 0$. Suppose otherwise, then consider the scaling down by $\ms_i^{-1}\sim H_i^{-1}\to0$. In the limit the horizontal rod $\mI_{j_0}$ survives. We may argue as in the proof of Proposition \ref{p:cone angle AF rod length go to infinity} to conclude that the angle $\vartheta_{j_0}(\bl^i)\rightarrow 0$. Contradiction. 

\

\noindent {\bf Step 3.} From \textbf{Step 2} we know that $\ms_i\sim S_i$. We now consider the twisted scaling-down 
\begin{equation}\label{eq:first twist scale down}
    (\mR_{0,i},\Phi_{0,i},\nu_{0,i}):=(D_{\sqrt{S_i}}U_{\beta_i}.\varphi_{S_i^{-1}}^*\mR_i,D_{\sqrt{S_i}}U_{\beta_i}.\varphi_{S_i^{-1}}^*\Phi_i,\varphi_{S_i^{-1}}^*\nu_i),
\end{equation} 
which converges to the first bubble limit $(\mR, \Phi, \nu)$ where the slant rod $\mI_{j_1}$  survives.  As in Section \ref{sss:limit of cone angles with turning points go to infinity} we can analyze the twist action on the lattice $\Lambda$ due to the scaling effect. As a result we see that the normalized rod vector along a horizontal rod for $D_{\sqrt{S_i}}U_{\beta_i}.\varphi_{S_i^{-1}}^*\Phi_i$ becomes 
$$\overline{\mv}_{0,i, H}=(\beta_i, S_i^{-1})^\top,$$
 the normalized rod vector along a slant rod becomes 
$$\overline{\mv}_{0,i,S}=(1-\beta_i, -S_i^{-1})^\top,$$
and the normalized rod vector a long a vertical rod is still 
$$\overline{\mv}_{0}=\overline{\mv}_n=\bv_0=(1,0)^\top.$$ 
Notice that we have 
$$\lim_{i\rightarrow\infty} \overline{\mv}_{0,i, S}=\overline{\mv}_0.$$
See Figure \ref{Figure:the first bubble for Step 3} for a picture illustrating the scaling from $(\mR_i,\Phi_i,\nu_i)$ to $(\mR_{0,i},\Phi_{0,i},\nu_{0,i})$.
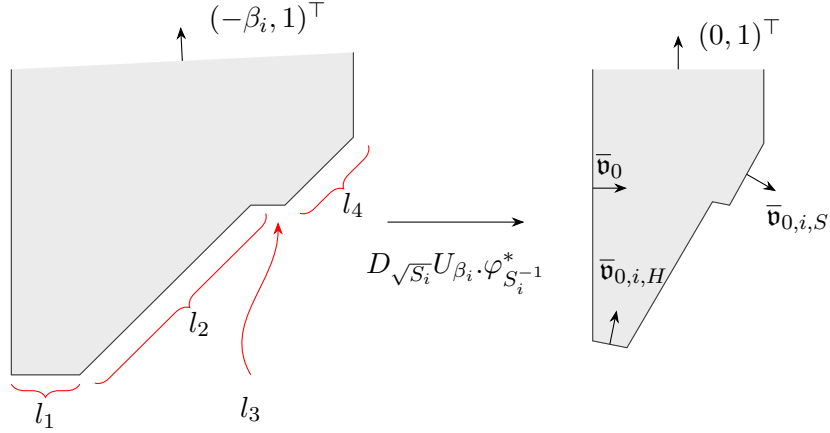
\begin{figure}[ht]
    \begin{tikzpicture}[scale=0.45]
        \draw (0,9)--(0,0)--(2,0)--(7,5)--(8,5)--(10,7)--(10,9.5);
        \filldraw[color=gray!50,fill=gray!50,opacity=0.3](0,9)--(0,0)--(2,0)--(7,5)--(8,5)--(10,7)--(10,9.5);

        \draw[-Stealth] (5,9.25)--(4.95,10.25);
        \node at (7.5,10.5) {$(-\beta_i,1)^\top$};

        \draw (17,9)--(17,1)--(18,0.8)--(20.5,5.1)--(21,5)--(22,6.82)--(22,9);
        \filldraw[color=gray!50,fill=gray!50,opacity=0.3] (17,9)--(17,1)--(18,0.8)--(20.5,5.1)--(21,5)--(22,6.82)--(22,9);

        \draw[-Stealth] (19.5,9)--(19.5,10);
        \node at (21.3,10) {$(0,1)^\top$};

        \draw[-Stealth] (17,5.5)-- node [above]{$\overline\mv_0$} (18,5.5);
        \draw[-Stealth] (17.5,0.9)--(17.7,1.9);
        \node at (18.2,3) {$\overline{\mv}_{0,i,H}$};
        \draw[-Stealth] (21.5,5.91)--(22.36,5.41);
        \node at (23,4.6) {$\overline{\mv}_{0,i,S}$};

        \draw[-Stealth] (11,4.5)--(15,4.5);

        \draw [draw = red,decorate,decoration={brace,amplitude=5pt,mirror,raise=4ex}]
  (0,1.2)--(2,1.2) node[midway,yshift=-3em]{};
        \node at (1,-1.2) {$l_1$};
        \draw [draw = red,decorate,decoration={brace,amplitude=5pt,mirror,raise=4ex}]
  (1.4,0.8)--(6.4,5.8) node[midway,yshift=-3em]{};
        \node at (5.5,1.5) {$l_2$};
        \draw [draw = red,decorate,decoration={brace,amplitude=5pt,mirror,raise=4ex}]
  (7.4,5.8)--(9.4,7.8) node[midway,yshift=-3em]{};
        \node at (10,5) {$l_4$};

        \draw [draw = red,-Stealth] (7,0) to [out=120,in=270](7.8,4.5);
        \node at (7,-1) {$l_3$};

        \node at (13,3) {$D_{\sqrt{S_i}}U_{\beta_i}.\varphi_{S_i^{-1}}^*$};

    \end{tikzpicture}
	\caption{The twisted scaling from $\mR_i$ to $\mR_{0,i}$. In this example $l_2\gg l_4\gg l_1\gg l_3$, and $S_i=l_2\to\infty,H_i=l_1$.}
	 \label{Figure:the first bubble for Step 3}   
\end{figure}

\

\noindent {\bf Step 4.} As in Section \ref{sec:degeneration of rod structures} after passing to a subsequence we obtain a bubble tree convergence. More precisely,  we first have the convergence of $(\mR_{0,i},\Phi_{0,i},\nu_{0,i})$ to a weak limit $(\mR,\Phi,\nu)$. For each cluster $\mC_{i,p}$ of $\mR_{0,i}$ with $m_p>1$ we consider the rescaled rod structures $\mR_{\mC_{i,p}}$ as in Section \ref{ss:uniform control of tame harmonic maps}, and the associated twisted rescaled augmented harmonic maps $(\Phi_{\mC_{i,p}},\nu_{\mC_{i,p}})$. Taking  the limits of $\mR_{\mC_{i,p}}$ and $(\Phi_{\mC_{i,p}},\nu_{\mC_{i,p}})$, we obtain the first level of bubble limits. Repeating this process for all of the further clusters we obtain all levels of bubble limits.

In this step we consider the bubble limit $(\mR', \Phi',\nu')$ where the rod $\mI_{j_0}$ survives and the goal is to show that $C^{-1}\leq H_i\leq C$.
We denote by $\mI'$ the rod of $\mR'$ that contains $\mI_{j_0}$. Notice that in general $\mI'$ may be bigger than $\mI_{j_0}$ due to possible merging of rods. By assumption, in all bigger scales no horizontal rod can survive.  We want to compute the normalized rod vector  along $\mI_{j_0}$ with respect to  $(\mR',\Phi',\nu')$. For this purpose we will compute the twist necessary in order to obtain the bubble limit $(\mR',\Phi',\nu')$. There are three cases. 

\

{\bf Case (A).} Both infinite rods of $\mR'$ arise from a slant rod. In particular, $\mR'$ is of Type $\AF_0$. In this case, let us label all the intermediate bubble limits between the bubble limit $(\mR', \Phi',\nu')$ and the bubble limit $(\mR, \Phi,\nu)$, in the order of decreasing scales as
$$(\mR_1', \Phi_1',\nu_1'),\ldots, (\mR_{K-1}', \Phi_{K-1}',\nu_{K-1}'),$$
with 
$$(\mR_0', \Phi_0',\nu_0'):=(\mR, \Phi,\nu),\quad (\mR_K', \Phi_K',\nu_K'):=(\mR', \Phi',\nu').$$ 
Note that the bubble $(\mR_0', \Phi_0',\nu_0')$ has its infinite rods arising from $\mI_0$ and $\mI_n$. From our specific choice of the rod vectors for $\mR_i$, we know that there is a unique $k_0\in\{1,\ldots,K\}$, such that 
\begin{itemize}
    \item all the bubbles $(\mR_k', \Phi_k',\nu_k')$ with $1\leq k<k_0$ have one of their infinite rods arising from $\mI_0$ or $\mI_n$ and the other infinite rod arising from a slant rod;
    \item all the bubbles $(\mR_k', \Phi_k',\nu_k')$ with $k\geq k_0$ have both infinite rods arising from a slant rod.
\end{itemize}
It is possible to have $k_0=1$, in which case all of the intermediate bubble limits have their infinite rods both arising from a slant rod. Moreover, for each bubble limit $(\mR_k',\Phi_k',\nu_k')$ with $k<k_0$, it must have Asymptotic Type $\AF_0$. For otherwise, the infinite rod vectors of $(\mR_k',\Phi_k',\nu_k')$  are different. In particular, if we denote  by the twisted rescaled augmented harmonic map that gives rise to this bubble limit, and denote the corresponding normalized rod vectors along the horizontal/slant/vertical rods as
$$\overline{\mv}_{k,i,H},\quad \overline{\mv}_{k,i,S},\quad \overline{\mv}_{k,i,0}$$
respectively, then we always have the additive relation
\begin{equation}\label{eq:additive relation}
    \overline{\mv}_{k,i,H}=-\overline{\mv}_{k,i,S}+\overline{\mv}_{k,i,0}.
\end{equation}
The assumption that the infinite rod vectors of $(\mR_k',\Phi'_k,\nu'_k)$ are different implies that the naturally associated lattice, which is generated by the limit of $\overline{\mv}_{k,i,S},\overline{\mv}_{k,i,0}$, is non-degenerate.
Since both infinite rods of $\mR'$ arise from a slant rod, by passing to the first non-trivial bubble limit of the convergence $(\mR_{k,i},\Phi_{k,i},\nu_{k,i})\to (\mR_k',\Phi_k',\nu_k')$ where one of the finite rod would be $\mI_{j_0}$, by Proposition \ref{p:deep scale angle goes to infinity} we arrive at a contradiction. Notice that this argument fails if $(\mR_k',\Phi_k',\nu_k')$ is of Asymptotic Type $\AF_0$, since in this case the associated lattice for $(\mR_k',\Phi_k',\nu_k')$ degenerates and one can not apply Proposition \ref{p:deep scale angle goes to infinity} directly.

If $k_0=1$, from the above and the discussion in Section \ref{ss:uniform control of tame harmonic maps} we know that the normalized rod vector along $\mI'$ is given by the limit of 
\begin{equation}\label{eq:twisted normalized horizontal rod vector}
    \overline{\mv}_{i,H}'=\sqrt{\mu_i^{-1}}D_{\sqrt{\mu_i}}.P_{\mv_{i, S}}.\overline{\mv}_{i, H},
\end{equation} 
where $\mu_i$ is the scale such that the bubble limit $(\mR',\Phi',\nu')$ arises, so $\mu_i\sim S_i^{-1}H_i$. Notice that we only need to rotate once using $P_{\mv_{i, S}}$, since all the intermediate scales share the same infinite rod vectors, and the rest contribution to the twist all comes from diagonal matrices of the form $D_\lambda$. Also from the explicit formula of $\overline{\mv}_{i, S}$ we know that $P_{\mv_{i, S}}$ is given by a counter-clockwise rotation by angle $\theta_i$, which is small and at the order $S_i^{-1}(1-\beta_i)$. By construction we know $C^{-1}\leq |\overline{\mv}_{i,H}'|\leq C$ for a uniform constant $C$. It is now a straightforward computation based on \eqref{eq:twisted normalized horizontal rod vector} and the explicit expression of $\overline{\mv}_{i,H}$ to see that $C^{-1}\leq H_i\leq C$.
In this case it is also easy to see that the total twist $$\sqrt{\mu_i^{-1}}D_{\sqrt{\mu_i}}.P_{\mv_{i, S}}.\sqrt{S_i^{-1}}D_{\sqrt{S_i}}U_{\beta_i}$$ is uniformly non-degenerate, such that $\Lambda$ induces a natural enhancement $\Lambda'$ for $\mR'$.  By assumption the cone angle along each rod in $\mR'$ is $2\pi$. It follows that $(\mR',\Phi',\nu', \Lambda')$ defines a 4-dimensional complete toric Ricci-flat manifold $(X',g')$. A priori from Section \ref{sec:degeneration of rod structures} we only know $\Phi'$ is globally tame by $\mR'$, but by Proposition \ref{p:ALF/AF rigidity}  we may conclude that $(X', g')$ is a toric gravitational instanton of Asymptotic Type $\AF_0$. An application of the uniqueness result Proposition \ref{p:MarsSimon} then gives that $\mR'$ has exactly one finite rod and $(X', g')$ must be a Schwarzschild metric.

If $k_0>1$, the situation is more complicated. Let us denote by $\mu_{1,i}$ the scale where the bubble limit $(\mR_{k_0-1},\Phi_{k_0-1},\nu_{k_0-1})$ arises, so $\mu_{1,i}$ is comparable to the length of the finite rods in $\mR_{0,i}$ that survive in the bubble limit $(\mR_{k_0-1},\Phi_{k_0-1},\nu_{k_0-1})$. Let us moreover denote by $\mu_{2,i}\mu_{1,i}$ the scale where the bubble $(\mR_{K},\Phi_{K},\nu_{K})$ arises, hence $\mu_{2,i}\mu_{1,i}$ is comparable to the length of the rod $\mI_{j_0}$ in $\mR_{0,i}$, which is then comparable to $S_i^{-1}H_i$.
For simplicity, also set 
\begin{equation}\label{eq:normalized rod vector u at k0-1}
    \overline{\mathfrak{u}}_{i,S}:=\sqrt{\mu_{1,i}^{-1}}D_{\sqrt{\mu_{1,i}}}.P_{\mv_0,\mv_{i,S}}.\overline{\mv}_{i,S}.
\end{equation}
By assumption the bubble limit $(\mR_{k_0-1},\Phi_{k_0-1},\nu_{k_0-1})$ has one of its infinite rods arising from $\mI_0$ or $\mI_n$, and the other infinite rod arising from a slant rod. 
The vector $\overline{\mathfrak{u}}_{i,S}$ is precisely the normalized rod vector along the slant rod, and the corresponding rod vector  is converging to $[(1,0)^\top]$ as $i\to\infty$, since the Asymptotic Type of $\mR_{k_0-1}$ is $\AF_0$. The rotation  $P_{\mv_0,\mv_{i,S}}$ is counter-clockwise by angle $\theta_{1,i}$, which is small at the order $S_i^{-1}(1-\beta_i)$. Simple computation gives that $S_i^{-1}\mu_{1,i}^{-1}\to0$.

From the discussion in Section \ref{ss:uniform control of tame harmonic maps} we know that in the bubble limit $(\mR',\Phi',\nu')$ the normalized rod vector along $\mI'$ is given by the limit of
\begin{equation}\label{eq:twisted normalized horizontal rod vector 2}
    \overline{\mv}_{i,H}'=\sqrt{\mu_{2,i}^{-1}}D_{\sqrt{\mu_{2,i}}}.P_{\overline{\mathfrak{u}}_{i,S}}.\sqrt{\mu_{{1,i}}^{-1}}D_{\sqrt{\mu_{1,i}}}.P_{\mv_{0},\mv_{i,S}}.\overline{\mv}_{i,H}.
\end{equation}
This is because in total we only need to rotate twice using $P_{\mv_0,\mv_{i,S}}$ and $P_{\overline{\mathfrak{u}}_{i,S}}$, since all the scales before $(\mR_{k_0},\Phi_{k_0},\nu_{k_0})$ share the same infinite rod vectors, and all the scales from $(\mR_{k_0},\Phi_{k_0},\nu_{k_0})$ to $(\mR_K,\Phi_K,\nu_K)$ also share the same infinite rod vectors.
See Figure \ref{Figure:step 4 case a} for a picture.
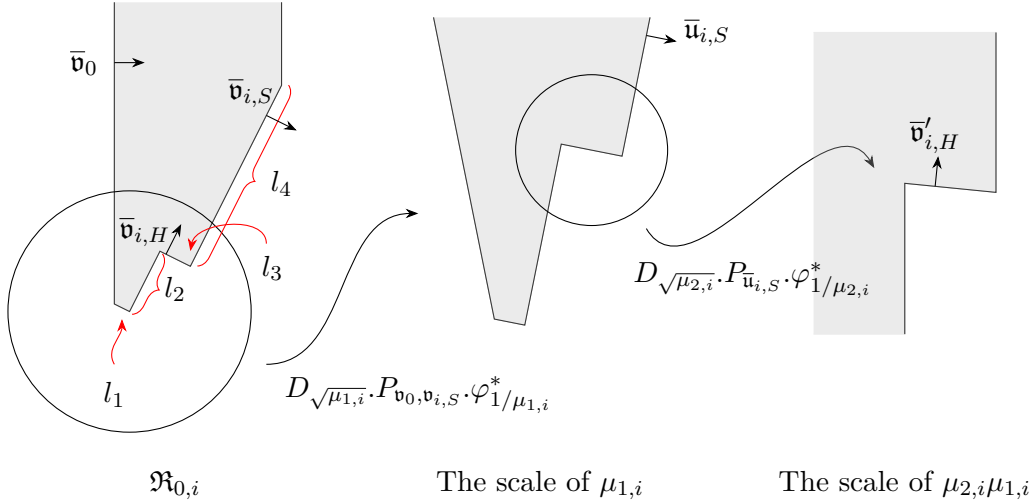
\begin{figure}[ht]
    \begin{tikzpicture}[scale=0.4]
        \draw (-2,10)--(-2,0)--(-1.5,-0.25)--(-0.5,1.75)--(0.5,1.25)--(3.5,7.25)--(3.5,10);
        \filldraw[color=gray!50,fill=gray!50,opacity=0.3](-2,10)--(-2,0)--(-1.5,-0.25)--(-0.5,1.75)--(0.5,1.25)--(3.5,7.25)--(3.5,10);
        \draw (-1.5,-0.25) circle (4);
        \draw [-Stealth] (3,-2) to [out=0,in=180] (8,3);
        \draw [draw = red,decorate,decoration={brace,amplitude=5pt,mirror,raise=4ex}] (-0.75,2)--(2.25,8) node[midway,yshift=-3em]{};
        \node at (3.5,4) {$l_4$};
        \draw [draw = red,decorate,decoration={brace,amplitude=5pt,mirror,raise=4ex}] (-2.9,0.4)--(-1.9,2.4) node[midway,yshift=-3em]{};
        \node at (0,0.5) {$l_2$};
        \draw [draw = red,-Stealth] (-2,-2) to [out=120,in=270](-1.75,-0.5);
        \node at (-2,-3) {$l_1$};
        \draw [draw = red,-Stealth] (3,2) to [out=90,in=80](0.5,1.75);
        \node at (3.2,1.2) {$l_3$};

        \draw[-Stealth] (-2,8)--(-1,8); 
        \node at (-3,8) {$\overline\mv_0$};

        \draw[-Stealth] (3,6.25)--(4,5.75);
        \node at (2.5,7) {$\overline\mv_{i,S}$};

        \draw[-Stealth] (-0.3,1.65)--(0.2,2.65);
        \node at (-1,2.4) {$\overline\mv_{i,H}$};

        \draw (8.5,9.5)--(10.5,-0.5)--(11.5,-0.7)--(12.7,5.3)--(14.7,4.9)--(15.62,9.5);
        \filldraw[color=gray!50,fill=gray!50,opacity=0.3](8.5,9.5)--(10.5,-0.5)--(11.5,-0.7)--(12.7,5.3)--(14.7,4.9)--(15.62,9.5);
        
        \draw (13.7,5.1) circle (2.5);
        \draw [-Stealth] (15.5,2.5) to [out=-60,in=120] (23,4.5);

        \draw[-Stealth] (15.5,8.9)--(16.5,8.7);
        \node at (17.5,9) {$\overline{\mathfrak{u}}_{i,S}$};

        \draw (24,-1)--(24,4)--(27,3.7)--(27,9);
        \filldraw[color=gray!50,fill=gray!50,opacity=0.3](21,-1)--(24,-1)--(24,4)--(27,3.7)--(27,9)--(21,9); 
        \draw[-Stealth] (25,3.9)--(25.1,4.9);
        \node at (25,5.6) {$\overline\mv_{i,H}'$};

        \node at (0,-6) {$\mR_{0,i}$};
        \node at (12,-6) {The scale of $\mu_{1,i}$};
        \node at (24,-6) {The scale of $\mu_{2,i}\mu_{1,i}$};

        \node at (8,-3) {$D_{\sqrt{\mu_{1,i}}}.P_{\mv_{0},\mv_{i,S}}.\varphi^*_{1/\mu_{1,i}}$};
        \node at (19,0.8) {$D_{\sqrt{\mu_{2,i}}}.P_{\overline{\mathfrak{u}}_{i,S}}.\varphi^*_{1/\mu_{2,i}}$};
        
    \end{tikzpicture}
	\caption{A twisted scaling-down $(\mR_{0,i},\Phi_{0,i},\nu_{0,i})$ with four finite rods $l_1\ll l_3\ll l_2\ll l_4\sim1$ and the intermediate scales between the bubble $(\mR,\Phi,\nu)$ and $(\mR',\Phi',\nu')$. In this example $k_0=2$, and $\mu_{1,i}\sim l_2,\mu_{2,i}\sim l_3/l_2$.}
	 \label{Figure:step 4 case a}   
\end{figure}
The rotation $P_{\overline{\mathfrak{u}}_{i,S}}$ is  counter-clockwise by angle $\theta_{2,i}$ that is small at the order $S_i^{-1}(1-\beta_i)\mu_{1,i}^{-1}$. A direct computation based on \eqref{eq:twisted normalized horizontal rod vector 2} and the above give that 
$$\overline\mv_{i,H}'\sim(\beta_i,\frac{S_i^{-1}}{\mu_{1,i}\mu_{2,i}})\sim(\beta_i,\frac{1}{H_i}).$$
Because of the convergence to the bubble $(\mR',\Phi',\nu')$, we have $C^{-1}\leq|\overline{\mv}_{i,H}'|\leq C$, which forces $C^{-1}\leq H_i\leq C$. Similar to the case $k_0=1$, we know the total twist is controlled and $\Lambda$ induces a natural enhancement $\Lambda'$ for $(\mR',\Phi',\nu')$. The toric gravitational instanton $(X',g')$ associated to $(\mR',\Phi',\nu',\Lambda')$ must be a Schwarzschild space by Proposition \ref{p:MarsSimon}.

\

{\bf Case (B).} Exactly one infinite rod of $\mR'$ arises from the rod $\mI_0$ of $\mR_i$. By assumption the other infinite rod of $\mR$ must come from a slant rod. It then follows that all the intermediate scales between the scale giving the bubble $(\mR', \Phi',\nu')$ and the scale giving the bubble $(\mR, \Phi,\nu)$ also have one infinite rod from $\mI_0$ and the other infinite rod from a slant rod.  Similar to {\bf Case (A)}, since passing to the bubble limit $(\mR',\Phi',\nu')$ the horizontal rod $\mI_{j_0}$ survives, if we denote the normalized rod vector along the horizontal/slant/vertical rods in $(\mR',\Phi',\nu')$ as 
$$\overline{\mv}_{\infty,H}',\quad \overline{\mv}_{\infty,S}',\quad \overline{\mv}_{\infty,0}',$$
then we also have the additive relation
\begin{equation}
    \overline{\mv}_{\infty,H}'=-\overline{\mv}_{\infty,S}'+\overline{\mv}_{\infty,0}'.
\end{equation}
This forces the infinite rod vectors of $(\mR',\Phi',\nu')$ to be different, as otherwise by the relation above $\mR'$ would have only one rod vector (since the rod vectors of the slant/vertical rods in $\mR'$ now are all the same which with the additive relation implies the rod vectors of the horizontal rods are also the same), and in particular, up to sign,  $\overline{\mv}_{\infty,H}'=\overline{\mv}_{\infty,S}'=\overline{\mv}_{\infty,0}'$. This violates the additive relation above.

To obtain the normalized rod vector of $(\mR',\Phi',\nu')$ along $\mI_{j_0}$, note that starting from $(\mR_{0,i},\Phi_{0,i},\nu_{0,i})$ we only need to rotate once using $P_{\mv_0,\mv_{i,S}}$, and the rest of the twist is solely contributed by diagonal matrices. The rotation by $P_{\mv_0,\mv_{i,S}}$ is again a counter-clockwise rotation with angle $\theta_i$ small at the order $S_i^{-1}(1-\beta_i)$.
It follows that the normalized rod vector along $\mI'$ is given by the limit of 
$$\overline\mv_{i,H}'=\sqrt{\mu_i^{-1}}D_{\sqrt{\mu_i}}.P_{\mv_{i, S}, \mv_0}.\overline\mv_{i, H},$$ 
where $\mu_i$ is the scale such that the bubble limit $(\mR',\Phi',\nu')$ arises hence $\mu_i\sim S_i^{-1}H_i$.  A computation shows the two infinite rod vectors are comparable to $[(1,\pm H_i^{-1})^\top]$ respectively. Since the infinite rod vectors of $(\mR',\Phi',\nu')$ are different, again this implies that $C^{-1}\leq H_i\leq C$.

In this case from the discussion in Section \ref{ss:uniform control of tame harmonic maps} we know $\mR'$ is of Asymptotic Type either $\Ae$ or $\ALFm$. It cannot be of Asymptotic Type $\ALFp$, since in $\mR_{0,i}$, between the rod vector $\mv_0=[(1,0)^\top]$ of $\mI_0$ and the slant rod vector $\mv_{0,i,S}=[(1-\beta_i,-S_i^{-1})^\top]$ of the slant rods we have
\begin{equation}\label{eq:alpha between I0 and slant}
    \alpha_{0,S}:=\mv_0\wedge\mv_{0,i,S}\sim S_{i}^{-1}\to0.
\end{equation}
Hence by our discussion of the five cases in Section \ref{ss:uniform control of tame harmonic maps}, if the infinite rod vectors of $\mR'$ are non-parallel, it is of Asymptotic Type $\ALFm$ or $\Ae$.
In either case as in {\bf Case (A)} one can argue that there is a natural enhancement $\Lambda'$ in the limit and we obtain a complete toric Ricci-flat 4-manifold $(X, g)$. If $\mR'$ is of Asymptotic Type $\Ae$, then by Proposition \ref{p:AE rigidity} we know $(X, g)$ is an asymptotically Euclidean gravitational instanton. But then $(X, g)$ has to be flat by the Bishop-Gromov volume comparison. So this case can not happen and $(X, g)$ is a toric gravitational instanton of Asymptotic Type $\ALFm$. As in {\bf Case (A)} one can then argue that $\mR'$ has exactly one finite rod and $(X', g')$ is an anti-Taub-Bolt space by Proposition \ref{p:MarsSimon}.

\

{\bf Case (C).} Exactly one infinite rod of $\mR$ arises from the rod $\mI_n$ of $\mR_i$. This is similar to the discussion in {\bf Case (B)} with only one difference that the bubble limit $(\mR',\Phi',\nu')$ now is of Asymptotic Type $\ALFp$. We have $C^{-1}\leq H_i\leq C$ and the bubble limit defines a Taub-Bolt metric by Proposition \ref{p:MarsSimon}. 

\

\noindent {\bf Step 5.}
In all the three cases in {\bf Step 4} we have shown that the maximum length of a horizontal rod is controlled by $C^{-1}\leq H_i\leq C$. Examining our arguments above we see that a horizontal rod will have a similar length control if it is the first one to see when we follow the bubble tree convergence, namely, in all of its previous scales no horizontal rods survive. 

Next we want to say that there is no further bubbling after this scale. As a consequence this shows that all horizontal rods are uniformly controlled. We need to divide into three cases according to {\bf Step 4}. 

\

{\bf Case (A).} If there is a further bubbling at some point, then we go to the first non-trivial bubble limit starting from this scale. The new bubble limit will have to be of Asymptotic Type $\AF_0$ or $\Ae$. In either case, by Proposition \ref{p:cone angle AF rod length go to zero} or \ref{p:angle comparision before limit} (together with the Bishop-Gromov volume comparison in the $\Ae$ case) we end up with a contradiction. The key point is that in the bubble limit $(\mR',\Phi',\nu')$, the naturally associated lattice $\Lambda'$ is non-degenerate so these two Propositions are applicable to further deeper scales of it.

\

{\bf Case (B).} and {\bf Case (C).} can be dealt with in a similar fashion. 

\

\noindent {\bf Step 6} It follows from {\bf Step 5} that all the slant rods have a uniform lower bound on their length. Combining with {\bf Step 4} we know their lengths must all go to infinity, for otherwise at the scale of horizontal rods (the length of them in $\mR_i$ all have uniform upper bound and lower bound away from zero by {\bf Step 4}) we would obtain a limit rod structure with more than one finite rod, which again contradicts the uniqueness result in  Proposition \ref{p:MarsSimon}. 

It then follows that if we consider pointed Gromov-Hausdorff limits based on a point on a horizontal rod we always obtain a  limit rod structure with exactly one finite rod. Again using Proposition \ref{p:ALF/AF rigidity} and Proposition \ref{p:MarsSimon} we know  that the limit gravitational instanton is either a (anti-)Taub-NUT space, a (anti-)Taub-Bolt space, or a Schwarzschild space. More precisely:
\begin{itemize}
    \item If the pointed Gromov-Hausdorff limit is based on $\mI_1$, then since as in \eqref{eq:alpha between I0 and slant} we have $\alpha_{0,S}\to0$, by our consideration in Section \ref{ss:uniform control of tame harmonic maps} and the structure of $\mR_{i,0}$, we know the limit is an anti-Taub-Bolt space.

    \item If $n=2k+1$, then the rod $\mI_{n-1}$ is a horizontal rod. Similar consideration shows that the pointed Gromov-Hausdorff limit based on $\mI_{n-1}$ is a  Taub-Bolt space, since in $\mR_{0,i}$ between the slant rod vector $\mv_{0,i,S}=[(1-\beta_i,-S_i^{-1})^\top]$ of $\mI_{n-2}$ and the rod vector $\mv_0=[(1,0)^\top]$ of $\mI_n$ we have
    $$\alpha_{S,0}:=\mv_{0,i,S}\wedge \mv_{0}\sim S_i\to\infty.$$

    \item If $n=2k$, then we can instead consider the pointed Gromov-Hausdorff limit based on the fixed point corresponding to the turning point $z_n$, and similar consideration shows that the limit is a  Taub-NUT space.

    \item For all the other horizontal rods, the pointed Gromov-Hausdorff limit is a Schwarzschild space.
\end{itemize}

\

{\bf Step 7}. Using Lemma \ref{lem:Gauss-Bonnet} we have 
$$\frac{1}{8\pi^2}\int_{X_n}|\Rm_{g_\beta}|^2\dvol_{g_\beta}=\chi(X_n)=n+1, $$
and for the Taub-NUT metric $g_{\text{TN}}$, Taub-Bolt metric $g_{\text{TB}}$, and Schwarzschild metric $g_{\text{Sc}}$ we have 
$$\frac{1}{8\pi^2}\int_X|\Rm_{g_{\text{TN}}}|^2\dvol_{g_{TN}}=1,$$
$$\frac{1}{8\pi^2}\int_{X}|\Rm_{g_{\text{TB}}}|^2\dvol_{g_{\text{TB}}}=\frac{1}{8\pi^2}\int_{X}|\Rm_{g_{\text{Sc}}}|^2\dvol_{g_{\text{Sc}}}=2.$$
It follows that the above pointed Gromov-Hausdorff limits have caught all the curvature energy of $g_{\beta_i}$ under the degeneration. Any other pointed Gromov-Hausdorff limit is the limit with locally uniformly bounded curvature.
\end{proof}

The above discussion suggests a possible new gluing construction of $\AFa$ gravitational instantons for special range of $\beta$, which may help understand the uniqueness question:

\begin{conjecture}[Uniqueness conjecture, II]
   In the setting of Theorem \ref{t:adjusting angle}, for all $\beta\in (0, 1)$, the length vector $\bl=\bl(\beta)$ is unique.
\end{conjecture}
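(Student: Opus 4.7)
The plan is to attack the conjecture by a continuity/degree argument in the parameter $\beta$, combined with an infinitesimal rigidity analysis of the cone angle map. First I would establish that the map
\[
\mathcal{A}:\mathbb{R}^n_+\to\mathbb{R}^n_+,\qquad (l_1,\dots,l_n)\mapsto(\vartheta_1(\bl),\dots,\vartheta_n(\bl))
\]
is smooth (in fact real analytic). This requires extending the differentiability result of \cite{HKWX2} (alluded to in Remark \ref{rem:continuity of cone angles}) from the Lorentzian to the Riemannian setting: one writes $\Phi_{\bl,\beta}=\Psi_{\bl}+\Xi_{\bl}$ where $\Psi_{\bl}$ is the explicit background map of Proposition \ref{p:background map} (whose dependence on $\bl$ is transparent) and $\Xi_{\bl}$ solves a linear elliptic equation with $\bl$-dependent coefficients on $\mathbb{H}^\circ$. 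Differentiating through the Dirichlet-to-Neumann type map that defines $\nu_{\bl,\beta}$ along each rod would then give smoothness of $\mathcal{A}$.

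Next I would prove the key infinitesimal nondegeneracy statement: at any $\bl^*\in\mathcal{A}^{-1}(\btheta_0)$, the differential $d\mathcal{A}_{\bl^*}$ is an isomorphism. This amounts to showing that the linearized toric Ricci-flat deformation equation on the smooth 4-manifold $X_n$ (with the $\AF_\beta$ asymptotic condition and preserving the cone-angle-$2\pi$ condition along each rod) has trivial kernel modulo gauge. Following the strategy of Biquard--Gauduchon--LeBrun \cite{BGL} one reduces this to a Bianchi-type identity and a Weitzenb\"ock computation on the associated gravitational instanton; here the noncompactness and the non-Hermitian character (for $n\geq 3$) make the analysis more delicate than the Type II case, but since the underlying metric is Ricci-flat with quadratic curvature decay one expects a spectral gap argument to work. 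Combined with the properness estimates proved in Section~\ref{ss:general case} (i.e.\ Proposition \ref{p:boundary avoid} applied at the parameter $\beta$ and nearby), $d\mathcal{A}$ being invertible at each preimage implies $\#\mathcal{A}^{-1}(\btheta_0)$ is finite and locally constant in $\beta$.

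Having established that, I would run a continuity method in $\beta\in(0,1)$. Let $\mathcal{U}\subset(0,1)$ be the set of $\beta$ for which $\bl(\beta)$ is unique. Openness of $\mathcal{U}$ follows from the implicit function theorem applied to the nondegenerate $\mathcal{A}$. Closedness follows from the bubbling analysis of Section~\ref{sec:degeneration of rod structures}: any sequence $\bl(\beta_i)$ with $\beta_i\to\beta_\infty$ and two distinct accumulation points would, as in the splitting analysis at the end of Section~\ref{sec:discussion}, produce two distinct toric $\AF_{\beta_\infty}$ gravitational instantons with the same asymptotic and topological data, contradicting the already-unique case (treated by hand) or the nondegeneracy. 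Non-emptiness of $\mathcal{U}$ can be checked at $n=1,2$ using the Mars--Simon uniqueness theorem (Proposition \ref{p:MarsSimon}) and the Biquard--Gauduchon explicit parametrization. The main obstacle is unquestionably the infinitesimal nondegeneracy of $d\mathcal{A}$: for $n\geq 3$ the metrics are of genuinely generic Type $\III$, with no Hermitian or hyperk\"ahler structure to exploit, and the kernel of the linearized operator need not be controlled by any topological invariant. A successful treatment may require first proving a refined angle comparison theorem of Proposition \ref{p:angle comparison} type that is strict rather than weak, thereby giving strict monotonicity of suitable components of $\mathcal{A}$ along one-parameter families of rod lengths.
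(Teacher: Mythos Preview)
The statement you are addressing is a \emph{conjecture}, not a theorem: the paper does not prove it and explicitly leaves it open (see the remark immediately following Theorem~\ref{t:adjusting angle}). So there is no proof in the paper to compare your proposal against; what you have written is a research outline for an open problem.

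That said, your outline has substantive gaps beyond the one you flag yourself. First, the non-emptiness step in your continuity argument is circular: the set $\mathcal{U}\subset(0,1)$ is defined for a \emph{fixed} $n$, so checking uniqueness at $n=1,2$ (via Mars--Simon or Biquard--Gauduchon) tells you nothing about $\mathcal{U}$ when $n\geq 3$. You would need a base case \emph{within} each $n$, and none is available. Second, your closedness argument assumes that two distinct accumulation points of $\bl(\beta_i)$ yield two distinct gravitational instantons at $\beta_\infty$, but the bubbling analysis of Section~\ref{sec:degeneration of rod structures} only controls the harmonic maps up to the twisting by $SL(2;\mathbb{R})$, and a priori the two limits could coincide after such a twist while the length vectors differ; ruling this out is essentially the uniqueness you are trying to prove. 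Third, and most seriously, the infinitesimal nondegeneracy of $d\mathcal{A}$ in the Type~$\III$ regime is not just ``delicate'': the Biquard--Gauduchon--LeBrun mechanism you invoke is fundamentally a K\"ahler argument (it uses the eigen-two-form of $W^+$ and the pointwise inequality \eqref{eq:pointwise 1}), and there is no known substitute when $W^+$ has three distinct eigenvalues. A ``spectral gap'' for the linearized Ricci-flat operator on a non-compact, non-Hermitian 4-manifold with cubic volume growth is not something one can expect without a new idea; indeed, the paper's own discussion of stability (the question following Conjecture~8.2) suggests the linearized operator may well have kernel.
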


A more ambitious question is to prove general black hole uniqueness theorem with fixed topology

\begin{question}[Uniqueness conjecture, III]
    For each $\beta\in (0,1)$, is the $\AFa$ gravitational instanton on $X_n$ discovered in Theorem \ref{t:main} unique up to isometry? 
\end{question}

A positive answer is known when $n=1,2$ by \cite{AADNS} under the assumption of an $S^1$ symmetry, where the uniqueness is proved by exploiting an identity characterising the property of being conformally K\"ahler.

\subsection{Explicit description}
Notice that our construction in Theorem \ref{t:adjusting angle} does not yield an explicit formula for the  axisymmetric harmonic map that corresponds to a toric $\AFa$ gravitational instanton. As discussed in Section \ref{sec:degree 0 and 1}, in the low degree case we 
can further reduce to the setting of axisymmetric harmonic functions, which can be explicitly written down. On the other hand,  there is a general method using integrable systems, namely the \emph{Inverse Scattering Transform} (see for example \cite{BZ}), to construct both Lorentzian and Riemannian solutions to the Einstein equation with symmetry. Indeed, the Chen-Teo metrics were first discovered using the Inverse Scattering Transform. One may wonder whether the gravitational instantons constructed in Theorem \ref{t:adjusting angle} can be recovered using the inverse scattering transform or another ansatz. It is also interesting to develop a systematic study relating the harmonic map viewpoint and the Inverse Scattering Transform viewpoint.

\begin{question}
  Is there an explicit description of axisymmetric harmonic maps which are strongly tamed by a rod structure,  when the degree is bigger than 1?
\end{question}

A related question is 

\begin{question}[Stability]
Are the metrics constructed in Theorem \ref{t:main} stable in terms of the second variation of the Einstein-Hilbert functional?
\end{question}
Notice that Biquard-Ozuch \cite{BO} showed that the Kerr metrics, Taub-Bolt metrics and  Chen-Teo metrics  are all unstable. It is a folklore open question that whether a gravitational instanton is stable if and only if it has special holonomy (i.e., locally K\"ahler up to orientation reversal).

\subsection{Classification}
It is  far-reaching to classify all gravitational instantons. The following is a more sensible question
\begin{question}
    Classify all (simply-connected) toric gravitational instantons. 
\end{question}

One expects that the similar strategy as developed in this paper may generate more examples of gravitational instantons of Asymptotic Type $\AFa$. Similarly, one may ask

\begin{question}
    Are there toric $\ALF^\pm$ or $\ALE$ gravitational instantons which are not locally Hermitian? Equivalently, are there augmented harmonic maps which are strongly tamed by a smooth enhanced rod structure of degree bigger than 1, such that the associated cone angles along all rods are all $2\pi$?
\end{question}

 This is related to the Riemannian black hole uniqueness question in the $\ALF^\pm$ case  and the Bando-Kasue-Nakajima conjecture \cite{BKN} in the $\ALE$ case. There are extra difficulties to construct toric gravitational instantons in these cases: in the $\ALF^\pm$ case the Mars-Simon uniqueness \cite{MarsSimon} gives constraints on the possible torus action;  in the $\ALE$ case due to the scaling invariance of the model map, there is one less rod length parameter than the angle parameters. 

\appendix

\section{Gauge fixing}

We first prove 
\begin{proposition}\label{prop:appendix 1}

    Given a rod structure $\mR$ of  Asymptotic Type  $\sharp\in\{\Ae,\ALFm,\ALFp, \AFa\}$ and a harmonic map $\Phi$ strongly tamed by $\mR$, with the normalized augmentation $\nu$ and the lattice $\Lambda_\mR$, the associated smooth Ricci-flat end $(E,g_{\Lambda_\mR})$ also has  Asymptotic Type  $\sharp$ with decay rate $\delta=1$.
\end{proposition}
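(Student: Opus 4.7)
The plan is to compare $g_{\Lambda_\mR}$ with the model end $g^\sharp$ built from $(\Phi_{\mv_0,\mv_n}^\sharp, \nu^\sharp)$ and the same lattice $\Lambda_\mR$, and to exhibit an explicit $\dT$-equivariant diffeomorphism identifying the two. The canonical choice of $\Lambda_\mR$ in Definition \ref{def:canonical enhancement} was made so that the primitive rod vectors along the two infinite rods of $\mR$ agree (up to sign) with those of the model; hence the smooth compactification furnished by Proposition \ref{p:rod smooth compactification} matches that of the model over the infinite rods, and yields a natural $\dT$-equivariant identification of the exterior of a large compact set in $E$ with the corresponding region in $M^\sharp$.

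The first step is to upgrade the estimates $d(\Phi, \Phi^\sharp_{\mv_0,\mv_n})=O(r^{-1})$ and $|\nu-\nu^\sharp|=O(r^{-1})$ to higher-derivative bounds of order $O(r^{-1-k})$. Away from $\Gamma$ this is routine: on an annulus $A_{R,2R}$ at distance $\geq \epsilon R$ from $\Gamma$, rescale by $\varphi_R$ and compare $(\varphi_R^*\Phi,\varphi_R^*\nu)$ with the appropriately twisted pullback of $(\Phi^\sharp,\nu^\sharp)$ discussed in Section \ref{ss:rescaling of model harmonic maps}. Since $\Phi^\sharp$ is either scale-invariant ($\Ae$) or becomes uniformly controlled after an explicit bounded twist in $SL(2;\R)$, standard elliptic regularity for the harmonic map equation, combined with \eqref{eqn-Ricci-1}--\eqref{eqn-Ricci-2} for $\nu$, gives the claimed decay in the interior.

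Near the infinite rods the argument is more delicate. Writing both maps in the form of Section \ref{subsubsec:local regularity near rods} via pairs $(u,v)$ and $(u^\sharp,v^\sharp)$, the difference $(u-u^\sharp, v-v^\sharp)$ satisfies a linear second-order elliptic system obtained by subtracting \eqref{eq:u}--\eqref{eq:v}, whose coefficients are uniformly controlled via Proposition \ref{thm:C11 regularity} applied to both $\Phi$ and $\Phi^\sharp$. A scaling argument on shrinking cylindrical regions $A_{R,2R}\cap N_\tau(\Gamma)$, combined with the Caccioppoli-type inequality underlying Proposition \ref{thm:C11 regularity}, forces $(u-u^\sharp,v-v^\sharp)$ together with all its derivatives (with $v$ vanishing like $\rho^2$) to decay at rate $O(r^{-1-k})$ uniformly up to the rod. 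The same rate for $\nu-\nu^\sharp$ and its derivatives then follows from \eqref{eqn-Ricci-1}--\eqref{eqn-Ricci-2}. Substituting back into the Weyl--Papapetrou form \eqref{e:from aug harmonic map to Ricci flat}, this gives $|\nabla^k_{g^\sharp}(g_{\Lambda_\mR}-g^\sharp)|_{g^\sharp}=O(\mathfrak r^{-1-k})$ away from the rods, and after changing to the smooth coordinates of Proposition \ref{p:rod smooth compactification}, which coincide to leading order for the two metrics precisely because their normalized rod vectors on $\mI_0$ and $\mI_n$ agree, the same decay persists across the rods, yielding $\delta=1$.

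The main obstacle is the near-rod analysis: the functions $u,v$ are only $C^{1,\alpha}$ across $\Gamma$ and the hyperbolic distance $d(\Phi,\Phi^\sharp)$ does not by itself translate into elliptic-type control of $(u-u^\sharp,v-v^\sharp)$ up to the axis. Getting uniform higher-order estimates up to the rod requires carefully rescaling the Caccioppoli and Schauder steps in the proof of Proposition \ref{thm:C11 regularity} to capture the $r^{-1}$ decay, and exploiting the explicit form of the models to match the $C^\infty$ structures at the two compactifications; this is the step where the fit between $\Lambda_\mR$ and the asymptotic model plays an essential role.
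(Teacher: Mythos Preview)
Your approach differs from the paper's in a way that creates a genuine gap at precisely the point you yourself flag as the ``main obstacle.'' The paper does \emph{not} try to extract higher-order decay from the harmonic-map system \eqref{eq:u}--\eqref{eq:v}; instead, on each rescaled annulus it passes to \emph{harmonic coordinates for the Ricci-flat metric $h$}, where the Ricci-flat equation is elliptic with $C^{0,\alpha}$ coefficients, and bootstraps there. These local harmonic-coordinate gauges are then patched $\dT$-equivariantly using a center-of-mass construction (first within each annulus, then across adjacent annuli). The Weyl--Papapetrou based coordinates $(x,y,z,\phi_2)$ and the pair $(u,v)$ are only used to get the initial $C^{0,1}$ bound on $h$ and the $C2^{-j}$ smallness; all higher-order control is obtained from the Einstein equation in harmonic gauge, not from the axisymmetric harmonic-map equations.

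Your plan to obtain $|\nabla^k(u-u^\sharp,v-v^\sharp)|=O(r^{-1-k})$ \emph{uniformly up to the rod} cannot work as stated. The system \eqref{eq:u}--\eqref{eq:v} is degenerate at $\rho=0$, and Proposition \ref{thm:C11 regularity} only yields $|\nabla^k u|\le C\rho^{1-k}$, $|\nabla^k v|\le C\rho^{2-k}$, which blow up for $k\ge2$. In the $\AF_0$ case one has $u^\sharp\equiv0$, $v^\sharp\equiv0$, so the ``difference'' is literally $(u,v)$ itself; after the modified rescaling this gives $|\nabla^k u|\le C\rho^{1-k}r^{-2}$ on $A_{R,2R}$, which is \emph{not} uniform in $\rho$ for $k\ge2$. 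No Caccioppoli or Schauder argument on the degenerate linearized system removes that $\rho^{1-k}$ factor. The improved smoothness of $u$ mentioned in the Remark after Proposition \ref{p:rod smooth compactification} is an \emph{a posteriori} consequence of the smoothness of $g$ in good coordinates, not something you can feed back into the $(u,v)$ equations to iterate. Likewise, the smooth coordinates of Proposition \ref{p:rod smooth compactification} for $g$ and for $g^\sharp$ are only known to be $C^{1,\alpha}$ changes of the naive coordinates, and they depend on the metric; ``coinciding to leading order'' does not control $\nabla^k(g-g^\sharp)$ for $k\ge1$. You therefore need to replace the near-rod step by the paper's harmonic-coordinate bootstrap for the Ricci-flat metric, and the global identification by an explicit center-of-mass patching rather than the identity in Weyl--Papapetrou coordinates.
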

\begin{proof}
    We prove the proposition for the $\AF_0$ case and the other cases are similar. Over the end $E$, which is the completion of $(\mathbb{H}^\circ\setminus B_N(0))\times\mathbb{R}^2/\Lambda_\mR$, there is the model metric $g^{\AF_0}$. It suffices to find a diffeomorphism $\varpi:E\to E$, such that $\varpi_*g_{\Lambda_\mR}$ is smooth, and 
    $$|\nabla_{g^{\AF_0}}^k(\varpi_*g_{\Lambda_{\mR}}-g^{\AF_0})|_{g^{\AF_0}}=O(\mathfrak{r}^{-k-1}).$$

    We construct the diffeomorphism on each annuli $A_{2^{j},2^{j+2}}$ with $j\in\mathbb{N}$ large, then patch them together. By the modified rescaling $\Phi_{2^j}=D_{\sqrt{2^j}}.\varphi_{2^{-j}}^*\Phi$, we scale the harmonic map $\Phi$ on $A_{2^{j-1},2^{j+3}}$ to $\Phi_{2^j}$ on $A_{1/2,8}$ which satisfies $d(\Phi_{2^j},\Phi^{\AF_0})\leq C2^{-j}$. Set
    $$\Phi_{2^j}=\frac{1}{\rho}\left(\begin{matrix}
        \rho^2e^{u_j}+e^{-u_j}v_j^2 & e^{-u_j}v_j \\ e^{-u_j}v_j & e^{-u_j} 
    \end{matrix}\right),\quad \Phi=\frac{1}{\rho}\left(\begin{matrix}
        \rho^2e^{u}+e^{-u}v^2 & e^{-u}v \\ e^{-u}v & e^{-u} 
    \end{matrix}\right)$$
    over $A_{1/2,8}$ and $A_{2^{j-1},2^{j+3}}$ as before, then $u_j=\varphi_{2^{-j}}^*u$ and $v_j=2^{-j}\varphi_{2^{-j}}^*v$. The uniform local regularity Proposition \ref{thm:C11 regularity} gives that on $A_{1/2,8}$
    \begin{equation}\label{eq:C1a scale to A12 appendix}
        \|u_j\|_{C^{1,\alpha}}\leq C2^{-j},\quad |\nabla^kv_j|\leq C_k2^{-j}\rho^{2-k}
    \end{equation}
    for $0<\alpha<1$ and $k=0,1,2$.

    \vspace{10pt}
    \noindent\textbf{Step (1). Construction of local gauge in each annulus.}
    \vspace{10pt}

    Now, under the augmentation $\varphi_{2^{-j}}^*\nu$ on $A_{1/2,8}$, with the lattice $\Lambda_{\mR}$, we get  the Ricci-flat metric $h$ associated to $\Phi_{2^j}$ defined on the completion of $A_{1/2,8}\times\mathbb{R}^2/\Lambda_{\mR}$ (denoted by $E_{1/2,8}$).  Define the sector region
    $$Sec_\tau:=\{\tau<\theta<\pi-\tau\}\subset\mathbb{H}^\circ.$$
    For a fixed small $\tau$, over $A_{1/2,8}\cap Sec_\tau$, the harmonic map equation is non-degenerate. Hence, by \eqref{eq:C1a scale to A12 appendix} on $A_{1/2,8}\cap Sec_\tau$ standard elliptic estimates give the improvement 
    \begin{equation}
        \|u_j\|_{C^{k,\alpha}}\leq C_k2^{-j},\quad \|v_j\|_{C^{k,\alpha}}\leq C_k2^{-j}
    \end{equation}
    for any $k\in\mathbb{N}$. Therefore, on the region $\pi^{-1}(A_{1/2,8}\cap Sec_\tau)\subset E_{1/2,8}$, under the coordinate $(x_1,x_2,x_3,x_4):=(r\cos\phi_1,r\sin\phi_1,z,\phi_2)$, we already have
    $$|\nabla_{g^{\AF_0}}^k(h-g^{\AF_0})|_{g^{\AF_0}}\leq \frac{C_k}{2^j}.$$

    As for the region in $A_{1,4}$ near the rod, we cover them by balls $B_t\subset A_{1/2,8}$ with fixed radius $1/4$. Then we cover the near rod region in $E_{1,4}:=\pi^{-1}(A_{1,4})$ by simply-connected open sets $U_{t,s}\subset E_{1/2,8}$ with smooth controlled boundary over $B_t$.  Introduce the coordinate
    $$x_1:=r\cos\phi_1,\quad x_2:=r\sin\phi_1,\quad x_3:=z,\quad x_4:=\phi_2$$
    on $U_{t,s}$. On each $U_{t,s}$, we solve for the harmonic coordinates
    \begin{equation}
        {h}^{kl}\frac{\partial^2}{\partial x_k\partial x_l}\overline{x}_j+{h}^{kl}{\Gamma}^r_{kl}\frac{\partial}{\partial x_r}\overline{x}_j=0,\quad \text{with }\overline{x}_j=x_j\text{ along }\partial U_{t,s}.\label{eq:harmonic coordinates for h appendix}
    \end{equation}
    Since we know the metric $h$ is $C^{0,1}$ under $(x_1,x_2,x_3,x_4)$ by Lemma \ref{lem:Lipschitz metric}, the harmonic coordinates are $W^{2,p}$ for any large $p$, hence is $C^{1,\alpha}$. Our uniform local regularity Proposition \ref{thm:C11 regularity} gives that under $(x_1,x_2,x_3,x_4)$
    \begin{equation}\label{eq:C11 bound for h appendix}
        |h_{kl}-g^{\AF_0}_{kl}|+|\Gamma^r_{kl}|\leq\frac{C}{2^j}.
    \end{equation}
    The elliptic estimate based on \eqref{eq:harmonic coordinates for h appendix} gives $\|\overline{x}_j-x_j\|_{W^{2,p}}\leq C2^{-j}$ for any large $p$, which implies $\|\overline{x}_j-x_j\|_{C^{1,\alpha}}\leq C2^{-j}$ under $(x_1,x_2,x_3,x_4)$. The Ricci-flat metric $h$ is smooth under $(\overline{x}_1,\overline{x}_2,\overline{x}_3,\overline{x}_4)$. Define the following map where we use the coordinates $(x_1,x_2,x_3,x_4)$ for $U_{t,s}$
    $$\zeta_{t,s}:U_{t,s}\to U_{t,s},$$
    $$x\mapsto \zeta_{t,s}(x):=(\overline{x}_1,\overline{x}_2,\overline{x}_3,\overline{x}_4).$$
    Shrinking the domain for $\zeta_{t,s}$ slightly, the map is a $C^{1,\alpha}$ diffeomorphism into its image when $j$ is large. Now the metric $(\zeta_{t,s})_*h$ under $(x_1,x_2,x_3,x_4)$ is smooth. The bound $\|\overline{x}_j-x_j\|_{C^{1,\alpha}}\leq C2^{-j}$ and \eqref{eq:C11 bound for h appendix} imply
    $$\|(\zeta_{t,s})_*h-g^{\AF_0}_{kl}\|_{C^{0,\alpha}}\leq \frac{C}{2^j}.$$
    Since the Ricci-flat equation is elliptic in harmonic coordinates, starting from the above, it is standard to show $\|(\zeta_{t,s})_*h-g^{\AF_0}_{kl}\|_{C^{k,\alpha}}\leq C_k2^{-j}$.

    \vspace{10pt}
    \noindent\textbf{Step (2). Equivariant patching to global gauge in each annulus.}
    \vspace{10pt}

    In the next we need to patch local diffeomorphisms on $\pi^{-1}(A_{1/2,8}\cap Sec_\tau)$ and $U_{t,s}$ together to construct a global equivariant diffeomorphism on $E_{1,4}$. Denote the identity diffeomorphism on $U_{0,0}:=\pi^{-1}(A_{1,4}\cap Sec_\tau)$ as $\zeta_{0,0}$. For the open cover of $E_{1,4}$ by $U_{t,s}$, we take standard cut-off functions $0\leq \chi_{t,s}\leq1$ that
    \begin{itemize}
        \item is smooth under the harmonic coordinate for $U_{t,s}$;
        \item has support in $U_{t,s}$; 
        \item equals one in a slightly smaller open subset of $U_{t,s}$ and $\sum\chi_{t,s}\equiv1$.
    \end{itemize}
    Consider the center-of-mass function
    $$\widetilde{\mathfrak{C}}_j:E_{1/2,8}\times E_{1/2,8}\to\mathbb{R}_+,$$
    $$(x,y)\mapsto \sum_{t,s}\chi_{t,s}(x)\cdot d_{h}(\zeta_{t,s}(x),y)^2.$$
    The function is almost invariant under the $\mathbb{T}$ action, in the sense that for $a\in \mathbb{T}$, $$\|\widetilde{\mathfrak{C}}_j(a\cdot x,a\cdot y)-\widetilde{\mathfrak{C}}_j(x,y)\|_{C^{k,\alpha}}\leq C_k2^{-j},$$ where the $C^{k,\alpha}$ norm is measured under the metric $h$. Denote by $\mathfrak{C}_j$ the average of  $\widetilde{\mathfrak{C}}_j$ over the $\mathbb T$ orbits.
    Now for any $x\in E_{1,4}$, we can minimize the function $y\mapsto \widetilde{\mathfrak{E}}_j(x,y)$. The minimizing point must take place at $y$  close to $x$, hence by convexity, the minimizing point is unique. Denote the minimizing point as $\mathfrak{z}_j(x)$. Then the map 
    $$\mathfrak{z}_j:E_{1,4}\to E_{1/2,8}$$
    is a $\mathbb{T}$-equivariant diffeomorphism into its image, and the push-forward metric satisfies
    $$\|(\mathfrak{z}_j)_*h-g^{\AF_0}_{kl}\|_{C^{k,\alpha}}\leq\frac{C_k}{2^j}.$$

    To recover the diffeomorphism on the annulus $A_{2^k,2^{j+2}}$, we can scale $A_{1,4}$ back to $A_{2^j,2^{j+2}}$, where the metric $h$ is rescaled to $2^{2j}h$. Because of the modified rescaling for $\Phi$, after further taking the quotient of $2^{2j}h$ by the $\mathbb{Z}_{2^j}$ action along $\phi_2$, we exactly recover the metric $g_{\Lambda_{\mR}}$ on $\pi^{-1}(A_{2^j,2^{j+2}})$. Since $\mathfrak{z}_j$ is $\mathbb{T}$-equivariant, it induces a $\mathbb{T}$-equivariant diffeomorphism $\overline{\mathfrak{z}}_j$ on $\pi^{-1}(A_{2^j,2^{j+2}})$ smoothing $g_{\Lambda_{\mR}}$ with the desired regularity.

    \vspace{10pt}
    \noindent\textbf{Step (3). Equivariant patching of adjacent annuli.}
    \vspace{10pt}

    Finally, we need to patch the diffeomorphisms $\overline{\mathfrak{z}}_j$ and $\overline{\mathfrak{z}}_{j+1}$ on the overlap of adjacent annuli $A_{2^j,2^{j+2}}$ and $A_{2^{j+1},2^{j+3}}$. Recall for the construction of $\mathfrak{z}_{j+1}$, we need to consider the modified rescaling $\Phi_{2^{j+1}}$ on $A_{1/2,8}$ and repeat the construction in \textbf{Step (1)}, which is equivalent to consider the modified rescaling $\Phi_{2^j}$ on $A_{1,16}$. Therefore, it suffices to patch the $\mathbb{T}$-equivariant diffeomorphisms $\mathfrak{z}_j$ on $A_{1,4}$ and $\mathfrak{z}_{j+1}$ on $A_{2,8}$. One just needs to perform the center-of-mass construction again. Hence we have obtained the desired global diffeomorphism $\varpi$.
\end{proof}

\

For any two harmonic maps $\Phi^1,\Phi^2$ that are strongly tamed by two rod structures $\mR_1,\mR_2$ with the same  Asymptotic Type  $\sharp\in\{\ALFm,\ALFp, \AFa\}$ and the same infinite rod vectors,  suppose moreover that 
\begin{equation}\label{eqn:uniform smallness} d(\Phi^1,\Phi^2)<\frac{\epsilon}{r}
\end{equation}
for small $\epsilon$ and $r>N$, where $N$ is a fixed large constant. Then under the normalized augmentations $\nu_1,\nu_2$, the lattices $\Lambda_{\mR_1}=\Lambda_{\mR_2}$, which we denote by $\Lambda$. These give rise to $C^{0,1}$ Ricci-flat ends $(E,g_{1,\Lambda})$ and $(E,g_{2,\Lambda})$. By Proposition \ref{prop:appendix 1} they are both of  Asymptotic Type  $\sharp$. We next prove

\begin{proposition}\label{prop:appendix 2}
    There is a constant $C(\epsilon)$ depending on $\epsilon$ and $k$ where $C(\epsilon)\to0$ as $\epsilon\to0$, such that there are diffeomorphisms $\varpi_1,\varpi_2:E\to E$, where on the end $(\varpi_1)_*g_{1,\Lambda}$ and $(\varpi_2)_*g_{2,\Lambda}$ are smooth with  Asymptotic Type  $\sharp$ and decay rate $\delta=1$, and with estimates 
    $$|\nabla^k_{g^\sharp}((\varpi_1)_*g_{1,\Lambda}-(\varpi_2)_*g_{2,\Lambda})|_{g^\sharp}<\frac{C(\epsilon)}{\mathfrak{r}^{k+1}}.$$
\end{proposition}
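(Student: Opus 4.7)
The plan is to mirror the three-step construction in the proof of Proposition~\ref{prop:appendix 1} applied simultaneously to $\Phi^1$ and $\Phi^2$, while tracking in each step the extra smallness $\epsilon$ inherited from the hypothesis \eqref{eqn:uniform smallness}. First I would apply the same modified rescaling $\Phi^k_{2^j} := D_{\sqrt{2^j}}.\varphi_{2^{-j}}^*\Phi^k$ (adapted to each Asymptotic Type $\sharp$) to transfer both harmonic maps to the fixed annulus $A_{1/2,8}$. Because $D_{\sqrt{2^j}}$ is an isometry of $\mathcal{H}$ and the pullback $\varphi_{2^{-j}}^*$ simply reparametrizes the base by a factor of $2^j$, the hypothesis $d(\Phi^1,\Phi^2)<\epsilon/r$ translates into $d(\Phi^1_{2^j},\Phi^2_{2^j})\leq C\epsilon/2^j$ on $A_{1/2,8}$, in addition to the bound $d(\Phi^k_{2^j},\Phi^\sharp)\leq C/2^j$ already used in Proposition~\ref{prop:appendix 1}. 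By the uniform local regularity of Proposition~\ref{thm:C11 regularity}, the corresponding Ricci-flat metrics $h^1,h^2$ on $E_{1/2,8}$ are then $C\epsilon/2^j$-close in $C^{0,1}$ globally, and $C_k\epsilon/2^j$-close in $C^{k,\alpha}$ throughout the sector region $\pi^{-1}(A_{1/2,8}\cap Sec_\tau)$.

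Next, for the harmonic coordinate construction on each $U_{t,s}$, I would solve the elliptic system \eqref{eq:harmonic coordinates for h appendix} for both $h^1$ and $h^2$ with the \emph{same} boundary data $\bar{x}_j=x_j$ on $\partial U_{t,s}$. Subtracting the two systems yields an elliptic equation for $\bar{x}_j^1-\bar{x}_j^2$ whose coefficients differ by $C\epsilon/2^j$ while the individual solutions are $O(1/2^j)$-close to $x_j$, so the standard elliptic estimate gives $\|\bar{x}_j^1-\bar{x}_j^2\|_{C^{1,\alpha}}\leq C\epsilon/2^j$. Consequently the local diffeomorphisms $\zeta^1_{t,s}$ and $\zeta^2_{t,s}$ are $C^{1,\alpha}$-close with error $C\epsilon/2^j$. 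Because both pushforwards $(\zeta^k_{t,s})_*h^k$ satisfy the Ricci-flat equation in harmonic coordinates, which is elliptic, bootstrapping the difference $(\zeta^1_{t,s})_*h^1-(\zeta^2_{t,s})_*h^2$ against this linear elliptic equation improves the estimate to $\|(\zeta^1_{t,s})_*h^1-(\zeta^2_{t,s})_*h^2\|_{C^{k,\alpha}}\leq C_k\epsilon/2^j$ for every $k$.

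For the equivariant patching, the center-of-mass functions $\widetilde{\mathfrak{C}}^k_j(x,y)=\sum_{t,s}\chi_{t,s}(x)d_{h^k}(\zeta^k_{t,s}(x),y)^2$ differ by $C_k\epsilon/2^j$ in $C^{k,\alpha}$, so by the implicit function theorem applied at the unique minimizer (whose non-degeneracy comes from the convexity used in Proposition~\ref{prop:appendix 1}) the resulting equivariant diffeomorphisms $\mathfrak{z}^1_j,\mathfrak{z}^2_j$ obey $\|\mathfrak{z}^1_j-\mathfrak{z}^2_j\|_{C^{k,\alpha}}\leq C_k\epsilon/2^j$. The same construction handles the patching between $\mathfrak{z}_j$ and $\mathfrak{z}_{j+1}$ on overlapping annuli. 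Finally, rescaling back to $A_{2^j,2^{j+2}}$ multiplies the metric by $2^{2j}$ and each derivative by $2^{-j}$, so the $C^{k,\alpha}$ closeness $C_k\epsilon/2^j$ becomes closeness of order $C_k\epsilon/2^{j(k+1)}$ for $|\nabla^k(\cdot-\cdot)|_{g^\sharp}$; together with $\mathfrak{r}\sim 2^j$ on this annulus this gives the claimed bound $C(\epsilon)/\mathfrak{r}^{k+1}$.

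The main technical obstacle is ensuring the \emph{Lipschitz dependence} of the two local constructions (harmonic coordinate solution and center-of-mass minimization) on the underlying metric data, with a constant uniform in $j$. For the harmonic coordinates this is handled by imposing identical boundary conditions and invoking an $L^p$-based elliptic estimate that does not degenerate as the metric is only $C^{0,1}$; for the center-of-mass one must verify uniform convexity of $y\mapsto\widetilde{\mathfrak{C}}^k_j(x,y)$ near the minimizer, which follows from the smallness of the perturbation of the flat model together with convexity of the squared distance on small balls. Once these Lipschitz-type estimates are in place, the full statement reduces to bookkeeping of the scaling factors described above.
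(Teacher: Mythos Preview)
Your overall strategy—run the construction of Proposition~\ref{prop:appendix 1} for $\Phi^1$ and $\Phi^2$ in parallel and track the $\epsilon$-smallness through each step—is exactly what the paper does. However, there is a genuine gap at the first step.

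You assert that ``by the uniform local regularity of Proposition~\ref{thm:C11 regularity}, the corresponding Ricci-flat metrics $h^1,h^2$ on $E_{1/2,8}$ are then $C\epsilon/2^j$-close in $C^{0,1}$ globally.'' But Proposition~\ref{thm:C11 regularity} only gives a priori bounds on a \emph{single} solution in terms of its distance to the rod model; it does not give a Lipschitz estimate for the \emph{difference} of two solutions. Near the rods the system \eqref{eq:u}--\eqref{eq:v} is degenerate, and the hypothesis $d(\Phi^1_{2^j},\Phi^2_{2^j})\le C\epsilon/2^j$ only yields $|u^1-u^2|+\rho^{-1}|v^1-v^2|\le C\epsilon/2^j$ in $C^0$; upgrading this to control of $\nabla(u^1-u^2)$ and $\rho^{-2}\nabla(v^1-v^2)$ (which is what the metric and Christoffel differences require) is not immediate. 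Your subsequent claim that the subtracted harmonic-coordinate equation has coefficients differing by $C\epsilon/2^j$ therefore rests on this unproved Lipschitz bound.

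The paper sidesteps this issue by a different device. Near the rods it uses only the crude triangle-inequality bound $|h_1-h_2|+|\Gamma_1-\Gamma_2|\le C/2^j$ (no $\epsilon$), while in the sector region $Sec_\tau$, where the equation is non-degenerate, it does get the sharp $C_k(\epsilon)/2^j$ closeness. It then writes the difference $\bar x^1_j-\bar x^2_j$ via the Dirichlet Green's function and splits the integral into the sector part (contributing $C(\epsilon)/2^j$) and the near-rod part $U_{t,s}\setminus U_{0,0}$ (contributing $C(\tau)/2^j$, since this region has small measure as $\tau\to 0$). Choosing $\tau=\tau(\epsilon)\to 0$ with $\epsilon\ll\tau$ gives a final constant that tends to $0$ with $\epsilon$, though not linearly. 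The same splitting is repeated at the center-of-mass step. Your linear-in-$\epsilon$ estimate may well be true, but it requires proving a Lipschitz version of Proposition~\ref{thm:C11 regularity}; the paper's $\tau$-parameter trick avoids that work entirely.
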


\begin{proof}
    We follow the proof of Proposition \ref{prop:appendix 1}. Again we prove the proposition for the $\AF_0$ case.  The diffeomorphisms $\varpi_1$ and $\varpi_2$ are exactly the ones constructed in the proof of Proposition \ref{prop:appendix 1}, but here we need to control the difference between them. Consider the modified rescaling 
    $$\Phi_{2^j}^1=\frac{1}{\rho}\left(\begin{matrix}
        \rho^2e^{u_j^1}+e^{-u_j^1}(v_j^1)^2 & e^{-u_j^1}v_j^1 \\ e^{-u_j^1}v_j^1 & e^{-u_j^1} 
    \end{matrix}\right),\quad \Phi_{2^j}^2=\frac{1}{\rho}\left(\begin{matrix}
        \rho^2e^{u_j^2}+e^{-u_j^2}(v_j^2)^2 & e^{-u_j^2}v_j^2 \\ e^{-u_j^2}v_j^2 & e^{-u_j^2} 
    \end{matrix}\right)$$
    defined on $A_{1/2,8}$. Then the condition \eqref{eqn:uniform smallness} gives  that on $A_{1/2, 8}$, $$d(\Phi^1_{2^j},\Phi^2_{2^j})\leq C\epsilon2^{-j},$$ which implies
    \begin{equation}\label{eq:distance between harmonic maps appendix}
        |u_j^{1}-u_{j}^2|+\rho^{-1}|v_j^1-v_j^2|\leq C\epsilon 2^{-j}.
    \end{equation}

    \vspace{10pt}
    \noindent\textbf{Step (1). Comparison of local gauge in each annulus.} 
    \vspace{10pt}

    Under the augmentations $\varphi_{2^{-j}}^*\nu_1,\varphi_{2^{-j}}^*\nu_2$ with the lattice $\Lambda$ there are the Ricci-flat metrics $h_1,h_2$ associated to $\Phi^1_{2^j},\Phi^2_{2^j}$ on $E_{1/2,8}$. Since the harmonic map equation is non-degenerate over the sector region $A_{1/2,8}\cap Sec_\tau$, by \eqref{eq:distance between harmonic maps appendix} it is straightforward to conclude that 
    \begin{equation}\label{eq:compare u v epislon appendix}
        \|u_j^1-u_j^2\|_{C^{k,\alpha}}\leq C_k\epsilon 2^{-j},\quad \|v_j^1-v_j^2\|_{C^{k,\alpha}}\leq C_k\epsilon 2^{-j}.
    \end{equation}
    Therefore on the region $U_{0,0}=\pi^{-1}(A_{1/2,8}\cap Sec_\tau)$, under the coordinates $(x_1,x_2,x_3,x_4)$, we already have
    $$|\nabla^k_{g^{\AF_0}}(h_1-h_2)|_{g^{\AF_0}}\leq\frac{C_k(\epsilon)}{2^j}$$

    For the near rod region in $E_{1,4}$ covered by the rest of $U_{t,s}$, we need to compare the harmonic coordinates 
    \begin{equation}
        {h_1}^{kl}\frac{\partial^2}{\partial x_k\partial x_l}\overline{x}_j^1+{h_1}^{kl}{\Gamma_1}^r_{kl}\frac{\partial}{\partial x_r}\overline{x}_j^1=0,\quad \text{with }\overline{x}_j^1=x_j\text{ along }\partial U_{t,s},\label{eq:harmonic coordinates for h1 appendix}
    \end{equation}
    \begin{equation}
        {h_2}^{kl}\frac{\partial^2}{\partial x_k\partial x_l}\overline{x}_j^2+{h_2}^{kl}{\Gamma_2}^r_{kl}\frac{\partial}{\partial x_r}\overline{x}_j^2=0,\quad \text{with }\overline{x}_j^2=x_j\text{ along }\partial U_{t,s},
    \end{equation}
   The solutions are in $W^{2,p}$ hence $C^{1,\alpha}$ on $U_{t,s}$.
    Proposition \ref{thm:C11 regularity} gives that on $U_{t,s}$
    \begin{equation}\label{eq:finer regularity appendix}
        |{h_1}_{kl}-{h_2}_{kl}|+|{\Gamma_1}^r_{kl}-{\Gamma_2}^r_{kl}|\leq \frac{C}{2^j}.
    \end{equation}
    Moreover, on the overlap with the sector region $U_{t,s}\cap U_{0,0}$, the above closeness improves to all the higher derivatives
    \begin{equation}\label{eq:finer regularity appendix with e}
        |\nabla^k({h_1}_{kl}-{h_2}_{kl})|\leq \frac{C_k(\epsilon)}{2^j}.
    \end{equation}
    Combining \eqref{eq:finer regularity appendix}-\eqref{eq:finer regularity appendix with e} it follows
    \begin{align}\label{eq:elliptic equation for difference of harmonic coordinates appendix}
        &{h_1}^{kl}\frac{\partial^2}{\partial x_k\partial x_l}(\overline{x}_j^1-\overline{x}_j^2)+{h_1}^{kl}{\Gamma_1}^r_{kl}\frac{\partial}{\partial x_r}(\overline{x}_j^1-\overline{x}_j^2)\\
        =\ &({h_1}^{kl}-{h_2}^{kl})\cdot A+({\Gamma_1}^{r}_{kl}-{\Gamma_2}^{r}_{kl})\cdot B,\quad \text{with }\overline{x}_j^1-\overline{x}_j^2=0\text{ along }\partial U_{t,s}. \notag
    \end{align}
    Here $A,B$ are functions in $L^p$ on $U_{t,s}$. Set the Green's function for the Dirichlet problem for ${h_1}^{kl}\frac{\partial^2}{\partial x_k\partial x_l}\cdot+{h_1}^{kl}{\Gamma_1}^r_{kl}\frac{\partial}{\partial x_r}\cdot$ on $U_{t,s}$ as $\mathcal{G}$, then 
    \begin{align}
        \overline{x}_j^1-\overline{x}_j^2=&\ \int_{U_{t,s}} (({h_1}^{kl}-{h_2}^{kl})\cdot A+({\Gamma_1}^{r}_{kl}-{\Gamma_2}^{r}_{kl})\cdot B)(y)\cdot\mathcal{G}(x,y)dy\\
        =&\ \int_{U_{t,s}\cap U_{0,0}} (({h_1}^{kl}-{h_2}^{kl})\cdot A+({\Gamma_1}^{r}_{kl}-{\Gamma_2}^{r}_{kl})\cdot B)(y)\cdot\mathcal{G}(x,y)dy \notag\\
        &+\int_{U_{t,s}\setminus U_{0,0}} (({h_1}^{kl}-{h_2}^{kl})\cdot A+({\Gamma_1}^{r}_{kl}-{\Gamma_2}^{r}_{kl})\cdot B)(y)\cdot\mathcal{G}(x,y)dy.\notag
    \end{align}
    So \eqref{eq:finer regularity appendix} and \eqref{eq:finer regularity appendix with e} give
    $$|\overline{x}_j^1-\overline{x}_j^2|\leq \frac{C(\epsilon)}{2^j}+\frac{C(\tau)}{2^j}.$$
    Here $C(\tau)\to0$ as $\tau\to0$. Hence, by choosing $\epsilon\ll\tau$ the elliptic estimate gives that on $U_{t,s}$
    \begin{equation}
        \|\overline{x}_j^1-\overline{x}_j^2\|_{C^{1,\alpha}}\leq \frac{C(\tau)}{2^j}.
    \end{equation}
    The bound improves to 
    \begin{equation}\label{eq:compare harmonic coordinates for h1 h2 on overlap}
        \|\overline{x}_j^1-\overline{x}_j^2\|_{C^{k,\alpha}}\leq {C_k(\tau)}{2^{-j}}
    \end{equation} 
    on the overlap $U_{t,s}\cap U_{0,0}$.
    Since on $U_{t,s}$ we have \eqref{eq:finer regularity appendix}, the ellipticity  implies that on $U_{t,s}$
    $$\|(\zeta_{t,s}^1)_*h_1-(\zeta_{t,s}^2)_*h_2\|_{C^{k,\alpha}}\leq \frac{C_k}{2^j}.$$
    However, since on $U_{t,s}\cap U_{0,0}$ we already have \eqref{eq:compare u v epislon appendix}, on $U_{t,s}\cap U_{0,0}$ we must have $$\|(\zeta_{t,s}^1)_*h_1-(\zeta_{t,s}^2)_*h_2\|_{C^{k,\alpha}}\leq {C(\tau)}{2^{-j}}.$$ Hence by making $\tau$ small and taking integration the above improves to
    $$\|(\zeta_{t,s}^1)_*h_1-(\zeta_{t,s}^2)_*h_2\|_{C^{k,\alpha}}\leq \frac{C_k(\tau)}{2^j}.$$

    \vspace{10pt}
    \noindent\textbf{Step (2). Comparison of equivariant patched gauge.} 
    \vspace{10pt}

    As in the construction performed in \textbf{Step (2)} of Proposition \ref{prop:appendix 1}, we have the center-of-mass functions $\widetilde{\mathfrak{C}}_j^1$ and $\widetilde{\mathfrak{C}}_j^2$. As $\widetilde{\mathfrak{C}}_j^1$ and $\widetilde{\mathfrak{C}}_j^2$ are almost invariant and the minimizing points for $y\mapsto{\mathfrak{C}}_j^1(x,y)$ and $y\mapsto{\mathfrak{C}}_j^2(x,y)$ only occur when $y$ is close to $x$, for simplicity we only need to consider $\widetilde{\mathfrak{C}}_j^1$ and $\widetilde{\mathfrak{C}}_j^2$ when $x$ and $y$ fall in a same $U_{t,s}$. Because of the $C(\tau)2^{-j}$-closeness between the harmonic coordinates and the smoothed metrics on $U_{t,s}$, we have 
    $$\|(\zeta_{t,s}^1)_*\widetilde{\mathfrak{C}}_j^1-(\zeta_{t,s}^2)_*\widetilde{\mathfrak{C}}_j^2\|_{C^{k,\alpha}}\leq C_k(\tau)2^{-j},$$
    where the $C^{k,\alpha}$ norm is measured by the standard coordinate $(x_1,x_2,x_3,x_4)$. Here we need the push-forward by $\mathfrak{z}_j^1,\mathfrak{z}_j^2$ since only after the push-forward $h_1,h_2$ are smooth under $(x_1,x_2,x_3,x_4)$. When $x\in U_{0,0}$, because of \eqref{eq:finer regularity appendix with e} and \eqref{eq:compare harmonic coordinates for h1 h2 on overlap},  there exists a $\sigma:=\sigma(\tau)\gg\tau$ that $\sigma\to0$ as $\tau\to0$, such that when $x\in\pi^{-1}(A_{1,4}\cap Sec_\sigma)$, whenever $\zeta_{t,s}(x)$ makes sense, $\zeta_{t,s}(x)\in U_{0,0}$. Clearly when $x\in\pi^{-1}(A_{1,4}\cap Sec_\sigma)$, for any $a\in\mathbb{T}$, $a\cdot x\in\pi^{-1}(A_{1,4}\cap Sec_\sigma)$ as well. Hence, when we minimize $y\mapsto\mathfrak{C}_j^1(x,y)$ and $y\mapsto\mathfrak{C}_j^2(x,y)$, we only need to consider $y\in U_{0,0}$.

    For $a\in\mathbb{T}$, $x\in\pi^{-1}(A_{1,4}\cap Sec_\sigma)$, and $y\in U_{0,0}$, we have 
    $$\|\widetilde{\mathfrak{C}}_j^1(a\cdot x,a\cdot y)-\widetilde{\mathfrak{C}}_j^1(x,y)\|_{C^{k,\alpha}}\leq C_k(\tau)2^{-j},\quad \|\widetilde{\mathfrak{C}}_j^2(a\cdot x,a\cdot y)-\widetilde{\mathfrak{C}}_j^2(x,y)\|_{C^{k,\alpha}}\leq C_k(\tau)2^{-j}.$$
    Hence $$\|\mathfrak{C}^1_j(x,y)-\mathfrak{C}^2_j(x,y)\|_{C^{k,\alpha}}\leq C_k(\tau)2^{-j}.$$ This implies that on $\pi^{-1}(A_{1,4}\cap Sec_\sigma)$
    $$\|(\mathfrak{z}_j^1)_*h_1-(\mathfrak{z}_j^2)_*h_2\|_{C^{k,\alpha}}\leq C_k(\tau)2^{-j}.$$
    Finally, since we obviously have $$\|(\mathfrak{z}_j^1)_*h_1-(\mathfrak{z}_j^2)_*h_2\|_{C^{k,\alpha}}\leq C2_k^{-j}$$ on the entire $E_{1,4}$, by taking integration we end with $$\|(\mathfrak{z}_j^1)_*h_1-(\mathfrak{z}_j^2)_*h_2\|_{C^{k,\alpha}}\leq C(\tau)2^{-j}$$ on the entire $E_{1,4}$. One can similarly control the difference between gauges on adjacent annuli. Hence after scaling back we finished the proof.
\end{proof}

\section{Mars-Simon uniqueness via positive mass theorem}\label{sec:appendix b}
Mars-Simon \cite{MarsSimon} proved a uniqueness theorem for certain gravitational instantons with an $\mathbb R$ action without isolated fixed points. The proof uses the positive mass theorem and is based on the method of Bunting and Masood-ul-Alam in general relativity. 
In this appendix we explain how to extend \cite{MarsSimon} to our setting in Section \ref{sec:discussion} of toric gravitational instantons, where in general isolated fixed points are allowed, by using a version of positive mass theorem with singularities. 

Let $(M,g)$ be a  toric gravitational instanton that arises from an augmented harmonic map $(\Phi,\nu)$ strongly tamed by an enhanced rod structure $(\Phi, \Lambda)$, where the Asymptotic Type of $\mR$ is assumed to be $\AF_0/\ALFp/\ALFm$ and the augmentation $\nu$ is normalized. Without loss of generality, we assume the infinite rod vectors of $\mR$ are $\bv_{\frac{\alpha}{2}}$ and $\bv_{-\frac{\alpha}{2}}$ with $\alpha\geq0$. 
\begin{proposition}\label{p:MarsSimon}
    Suppose the $S^1$ action induced by $(0,1)^\top\in \Lie(\dT^2)$ on $(M,g)$ is semi-free (i.e., the action is free away from its fixed points), then either $(M,g)$ is Type $\I$ up to reversing the orientation, or $(M,g)$ is isometric to a Taub-Bolt metric or a Schwarzschild metric.
\end{proposition}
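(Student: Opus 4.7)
The plan is to adapt the Bunting--Masood-ul-Alam conformal doubling technique used in Mars--Simon \cite{MarsSimon}, together with a positive mass theorem that tolerates orbifold conical singularities. Set $X=\partial_{\phi_2}$ and $V=g(X,X)$. The semi-freeness hypothesis implies that the $S^1$-fixed locus of $X$ in $M$ is a disjoint union of isolated \emph{nuts} (arising from $\dT^2$-fixed points whose full stabilizer contains $X$) and $2$-dimensional totally geodesic \emph{bolts} (coming from rods whose rod vector is parallel to $(0,1)^\top$). Integrability of the $\dT^2$-action forces the orthogonal distribution to $X$ to be integrable too, so on the regular part $g=V\,d\phi_2^2+h$, with $(\Sigma=M/S^1,h)$ a $3$-dimensional orbifold satisfying the Riemannian static vacuum system
\[
\Ric_h=V^{-1}\nabla_h^2 V,\qquad \Delta_h V=0,
\]
where $V>0$ off the fixed set and $V\to 1$ at infinity in the appropriate normalization determined by $\sharp$.

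Following Bunting--Masood-ul-Alam, set $\Psi_\pm=\tfrac12(1\pm\sqrt V)$ and $h_\pm=\Psi_\pm^4 h$. A straightforward computation using the static equations shows that both $h_\pm$ are scalar-flat, and that along each bolt $\{V=0\}$ they can be glued in a $C^{1,1}$ manner to produce a complete $3$-manifold $(\widehat\Sigma,\widehat h)$, smooth away from the nuts where it carries orbifold conical singularities. In the $\AF_0$ case, $(\widehat\Sigma,\widehat h)$ is asymptotically Euclidean with vanishing ADM mass. In the $\ALF^\pm$ case, first passing to a suitable finite cyclic cover of $M$ (so that the Kaluza--Klein circle at infinity trivializes) reduces matters to the $\AF_0$ setting. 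A positive mass theorem that accommodates orbifold singularities (cf.\ Miao, Shi--Tam, Lee--LeFloch) then forces $\widehat h$ to be flat $\mathbb R^3$. Rigidity rules out nut singularities, and pins down $V$ uniquely as a harmonic function on $\mathbb R^3$ vanishing on the bolt locus and tending to $1$ at infinity. The reconstruction of $g$ from the flat quotient yields the Schwarzschild metric in the $\AF_0$ case and the (anti-)Taub-Bolt metric in the $\ALF^\pm$ case.

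When the $S^1$ has no bolts, the doubling argument degenerates; one instead shows that the absence of any rod with rod vector parallel to $(0,1)^\top$ forces the Ernst potential of $X$ to be constant, so that $X$ is tri-holomorphic with respect to some hyper-K\"ahler triple compatible with one of the two orientations of $g$. The Gibbons--Hawking ansatz then reconstructs an axisymmetric harmonic map strongly tamed by the same rod structure, which by the uniqueness part of Theorem \ref{t:existence result} must coincide with $\Phi$; hence $(M,g)$ is of Type $\I$ up to orientation reversal, as in the proof of Theorem \ref{thm:Type I equivalent to degree 0}. The main technical obstacle is the singular positive mass theorem combining the orbifold conical singularities at the nuts with the $\ALF^\pm$ Kaluza--Klein asymptotics; the covering-space reduction to $\AF_0$ together with the existing singular extensions of Schoen--Yau/Witten should be sufficient, while the $C^{1,1}$-regularity of the doubled metric across each bolt is a routine harmonic-coordinate check.
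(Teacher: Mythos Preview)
There is a genuine gap at the very first step. You assert that integrability of the $\dT^2$-action forces the orthogonal distribution to the single Killing field $X=\partial_{\phi_2}$ to be integrable, and hence that the quotient $(\Sigma,h)$ satisfies the \emph{static} vacuum system $\Ric_h=V^{-1}\nabla_h^2 V$, $\Delta_h V=0$. This is false. Integrability in the paper's sense (the vanishing of $\Theta_1,\Theta_2$ in Section~\ref{sec:dimension reduction}) says that the $2$-plane orthogonal to the $\dT^2$-orbits is integrable; it does \emph{not} imply that the $3$-plane orthogonal to a single $S^1$-subgroup is integrable. The twist $1$-form $d\mathcal{E}=*(\eta\wedge d\eta)$ measures exactly this non-integrability, and for the Taub-Bolt metric one checks directly from \eqref{eq:explicit Taub-bolt} that $\eta\wedge d\eta\neq 0$ for $\eta=(\partial_{\phi_2})^\flat$, so $\mathcal{E}$ is non-constant. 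Consequently the correct quotient equations (equations (11)--(13) in \cite{MarsSimon}, reproduced in the paper's proof) carry twist terms in $\mathcal{E}$, and your conformal factors $\Psi_\pm=\tfrac12(1\pm\sqrt V)$---the classical Bunting--Masood-ul-Alam factors for the purely static problem---do not produce a scalar-flat doubled manifold in this generality.

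The paper instead follows Mars--Simon: one works with the pair $(V,\mathcal{E})$, introduces the functions $\mathcal{E}_\pm=V^2\pm\mathcal{E}$ and the conformal metrics $g^\pm=\tfrac{1}{16}\bigl(\sqrt{(1+V^2)^2-\mathcal{E}^2}\pm 2V\bigr)^2 h$, and derives the dichotomy (Type~$\I$ versus the rigidity case) from a maximum principle applied to $\mathcal{E}_\pm$ on $M$, not from an \emph{a priori} assumption that the twist vanishes. Only after the positive mass theorem with isolated conical singularities (applied to the glued space $(\mathcal N,g_0)$) forces flatness does one conclude Taub-Bolt or Schwarzschild. Your final paragraph also does not stand on its own: the absence of bolts (i.e.\ of rods parallel to $(0,1)^\top$) does not by itself force the Ernst potential of $X$ to be constant; that conclusion is the \emph{output} of the Mars--Simon dichotomy, not an input.
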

\begin{proof}
    We follow closely the proof of \cite{MarsSimon}.
 Denote the quotient of $(M,g)$ by the $S^1$ action as $(N,h)$. The latter is a manifold with smooth boundaries and isolated singularities modeled on cones. More precisely, a local consideration shows that each fixed $2$-sphere of the $S^1$ action gives rise to a component of the boundary of $(N,h)$, and each isolated fixed point of the $S^1$ action gives rise to a  singularity modeled on $dr^2+\frac{1}{2}r^2g_{S^2}$, with $g_{S^2}$ being the round metric.   The metric $h$ is smooth away from the boundary and the isolated  singularities. 
 
 Denote by $K$ the Killing field associated to the $S^1$ action chosen suitably such that $V:=|K|_g\to1$ at infinity. By the discussion in Section \ref{subsec:reduction of Ricci-flat} there is an Ernst potential \footnote{In \cite{MarsSimon} the Ernst potential is denoted as $\omega$ and the Killing field $K$ is denoted as $\xi$.} $\mathcal{E}$  such that
    $$d\mathcal{E}=*(\eta\wedge d\eta)$$
    with $\eta=K^\flat$. From the assumption on the $S^1$ action and the asymptotic model, we know that $(N,h)$ is asymptotically flat at infinity, in the sense that $$h=g_{\mathbb{R}^3}+O(r^{-\delta}),$$ where $\mathfrak{r}$ is the distance to a base point and $\delta>0$.

    A computation shows (see (11)-(13) of \cite{MarsSimon}) that on $(N,h)$
    \begin{align}
        \Ric_{ij}&=V^{-1}\nabla_{i}\nabla_jV-\frac{1}{2}V^{-4}(\mathcal{E}_i\mathcal{E}_j-h_{ij}h^{kl}\mathcal{E}_k\mathcal{E}_l),\\
        \Delta V&=\frac{1}{2}V^{-3}h^{ij}\mathcal{E}_i\mathcal{E}_j,\\
        \Delta\mathcal{E}&=3V^{-1}h^{ij}\mathcal{E}_i V_j,
    \end{align}
    and the scalar curvature of $(N,h)$ is 
    \begin{equation}
        s=\frac{3}{2}V^{-4}g^{ij}\mathcal{E}_i\mathcal{E}_j\geq0.
    \end{equation}
    Since near infinity, $(N,h)$ is clearly asymptotically flat, we know that by adjusting $\mathcal{E}$ with a suitable constant we can assume that 
    $$\mathcal{E}=O(\mathfrak r^{-\epsilon}).$$
    Then we consider the conformal change $\gamma:=V^2h$ 
    and let 
    \begin{align}
        v_{\pm}&:=\frac{1-V^2\mp\mathcal{E}}{1+V^2\pm\mathcal{E}},\\
        \Theta&:=1-v_-v_+=\frac{4V^2}{(1+V^2+\mathcal{E})(1+V^2-\mathcal{E})}.
    \end{align}
    Another computation (see \cite{MarsSimon} Lemma 3-5) shows that $\gamma$ is again asymptotically flat at infinity. Let
    \begin{align}
        \Omega_{\pm}&:=\frac{1\pm\sqrt{\Theta}}{2\sqrt{\Theta}},\\
        g^{\pm}&:=\Omega_{\pm}^2\gamma=\frac{1}{16}(\sqrt{(1+V^2)^2-\mathcal{E}^2}\pm2V)^2h.
    \end{align}
   Define
    $$\mathcal{E}_{\pm}:=V^2\pm\mathcal{E}.$$
   Then a computation shows that
    \begin{align}
        \Delta_{g}\mathcal{E}_{\pm}&=V^{-2}|\nabla_g\mathcal{E}_{\pm}|_g^2\geq0.
    \end{align}
    Combining with $\mathcal{E}=O(\mathfrak{r}^{-\epsilon})$ and by applying the maximum principle on $M$, know that either $\mathcal{E}_{\pm}<1$ or $\mathcal{E}_{\pm}\equiv1$. Moreover, notice the identity $\mathcal{E}_{\pm}=2V^2-\mathcal{E}_{\mp}$, we conclude that $-1<\mathcal{E}_{\pm}\leq1$ and
    \begin{enumerate}
        \item either at least one of $\mathcal{E}_{\pm}$ satisfies that $\mathcal{E}_{\pm}\equiv1$ or
        \item both $\mathcal{E}_{\pm}$ satisfies $-1<\mathcal{E}_{\pm}<1$. 
    \end{enumerate}
    This also justifies that $$\Theta=\frac{4V^2}{(1+\mathcal{E}_{\pm})(1+\mathcal{E}_{\mp})}\leq 1,$$ and the equality holds only when case (1) occurs. By considering the asymptotic expansion at infinity, one can show that $(N,g^+)$ is still asymptotically flat but with vanishing mass at infinity. For $(N,g^-)$ one can show that the metric $g^-$ is incomplete at infinity, but it can be compactified by adding one point at infinity to a $C^2$ metric, with resulting manifold denoted as $(\widetilde{N},\widetilde{g^-})$. Both $g^+$ and $\widetilde{g^-}$ have non-negative scalar curvature. 

    Now, if we are in case (1), as in the first part of the proof of Theorem 1 in \cite{MarsSimon}, then $\gamma$ is locally flat and $(X,g)$ is precisely locally given by the Gibbons-Hawking ansatz. We then know it is Type $\I$ and  there is an explicit description from Section \ref{sec:degree 0 and 1}.

    If we are in case  (2), as in the second part of the proof of Theorem 1 in \cite{MarsSimon}, by local consideration near the boundary of $N$, we can patch $(N,g^{+})$ and $(\widetilde N,\widetilde{g^-})$ together, with the resulting manifold denoted as $(\mathcal{N},g_0)$. The key is that by computation along boundary of $(N,g^{\pm})$ the second fundamental form matches, which implies that $g_0$ is $C^{1,1}$ near the patched boundary. It may also have isolated singularities modeled on the cone $dr^2+\frac{1}{2}r^2g_{S^2}$, and is $C^{1,1}$ at the added point in $\widetilde N$. The space $(\mathcal{N},g_0)$ is smooth elsewhere, with only one asymptotically flat end and zero mass at infinity. A computation gives that $(\mathcal{N},g_0)$ also has non-negative scalar curvature (see (38) in \cite{MarsSimon}). Hence by the positive mass theorem with isolated singularities (see for example Theorem 1.1 in \cite{Dai}), we know $(N,g^{\pm})$ has to be flat. Starting from this, a more detailed computation gives that $(X,g)$ must be Taub-Bolt or Schwarzschild (this is the last paragraph of the proof to Theorem 1 in \cite{MarsSimon}).
\end{proof}

\bibliographystyle{alpha}
\bibliography{reference}

\end{document}